\newcommand\R{\mathbb{R}}
\newcommand\Z{\mathbb{Z}}
\newcommand\N{\mathbb{N}}
\newcommand\1{\mathds{1}}
\newcommand{\CC}{\mathcal{C}}
\newcommand{\HH}{\mathcal{H}}
\newcommand{\M}{\mathcal{M}}
\newcommand{\EE}{\textbf{E}}
\newcommand{\Aa}{\textbf{A}}
\newcommand{\Anu}{\textbf{A}_\nu}
\newcommand{\VV}{\textbf{V}}
\newcommand{\WW}{\textbf{W}}
\newcommand{\MM}{\mathbb{M}}
\newcommand{\UU}{\textbf{B}}
\newcommand{\BB}{\overline{\textbf{B}}}
\newcommand{\BL}{\textbf{L}}
\newcommand{\pp}{\textbf{p}}
\newcommand{\BC}{\textbf{C}}
\newcommand{\ee}{\textbf{e}}
\newcommand{\RR}{\mathcal{R}}
\newcommand{\TT}{\mathcal{T}}
\newcommand{\DD}{\mathcal{D}}
\newcommand{\Leb}{\mathcal{L}}
\newcommand{\Phii}{\boldsymbol{\Phi}}
\newcommand{\PhiM}{\boldsymbol{\Phi}_\mathcal{M}}
\newcommand{\mm}{\textswab{m}}
\newcommand{\al}{\boldsymbol{\alpha}}
\newcommand{\bet}{\boldsymbol{\beta}}
\newcommand{\del}{\boldsymbol{\delta}}
\newcommand{\gamm}{\boldsymbol{\gamma}}
\newcommand{\omm}{\boldsymbol{\omega}}
\newcommand{\muu}{\boldsymbol{\mu}}
\newcommand{\nuu}{\boldsymbol{\nu}}
\newcommand{\tauu}{\boldsymbol{\tau}}
\newcommand{\ze}{\boldsymbol{\zeta}}
\newcommand{\spt}{\textnormal{spt}}
\newcommand{\id}{\textnormal{id}}
\newcommand{\Div}{\textnormal{div}}
\newcommand{\sgn}{\textnormal{sgn}}
\newcommand{\dist}{\textnormal{dist}}
\newcommand{\graph}{\textnormal{graph}}
\newcommand{\dd}{\textnormal{d}}
\newcommand{\dT}{\textnormal{d} \lVert T \rVert}
\newcommand{\mT}{\lVert T \rVert} 
\newcommand{\mTnu}{\lVert T_\nu \rVert}
\newcommand{\vi}{{v_i^T}}
\newcommand{\wj}{{w_j^T}}
\newcommand{\epnu}{\varepsilon_\nu}
\newcommand{\vin}{v_i^{(\nu)}}
\newcommand{\wjn}{w_j^{(\nu)}}
\newcommand{\LM}[1]{\hbox{\vrule width.2pt \vbox to#1pt{\vfill \hrule width#1pt height.2pt}}}
\newcommand{\LL}{{\mathchoice{\,\LM7\,}{\,\LM7\,}{\,\LM5\,}{\,\LM{3.35}\,}}}
\newcommand{\extp}{\@ifnextchar^\@extp{\@extp^{\,}}}
\def\@extp^#1{\mathop{\bigwedge\nolimits^{\!#1}}}
\newif\ifshow 
\numberwithin{equation}{section}
	\title{Boundary Regularity of minimal oriented Hypersurfaces on a Manifold}
\author{Simone Steinbr\"uchel
}
\address{Simone Steinbr\"uchel \hfill\break Institute of Mathematics, University of Zurich, Winterthurerstrasse 190, 8057 Zurich, Switzerland.}
\email{simone.steinbruechel@math.uzh.ch}
\begin{document}

	\newtheorem{thm}{Theorem}[subsection]							
	\newtheorem{prop}[thm]{Proposition}
	\newtheorem{lem}[thm]{Lemma}
	\newtheorem{cor}[thm]{Corollary}
	\newtheorem{kor}[thm]{Korollar}
	\newtheorem{satz}[thm]{Satz}
	\newtheorem{defn}[thm]{Definition}
	\newtheorem{rmk}[thm]{Remark}
	\newtheorem{bmk}[thm]{Bemerkung}
	\newtheorem{exa}[thm]{Example}
	\newtheorem{bsp}[thm]{Beispiel}
	\newtheorem{nota}[thm]{Notation}
	\newtheorem{aufgabe}[thm]{Aufgabe}
	\newtheorem{exc}[thm]{Exercise}
	\newtheorem{schritt}[thm]{Schritt}
	\newtheorem{step}[thm]{Step}
	\newtheorem{claim}[thm]{Claim}
	\newtheorem{beh}[thm]{Behauptung}
	
	\newtheorem*{thm*}{Theorem}							
	\newtheorem*{cor*}{Corollary}
	\newtheorem*{prop*}{Proposition}
	\newtheorem*{lem*}{Lemma}
	\newtheorem*{defn*}{Definition}
	
	\newmdtheoremenv{thmr}[thm]{Theorem}					
	\newmdtheoremenv{propr}[thm]{Proposition}
	\newmdtheoremenv{lemr}[thm]{Lemma}
	\newmdtheoremenv{corr}[thm]{Corollary}
	\newmdtheoremenv{defnr}[thm]{Definition}
	\newmdtheoremenv{rmkr}[thm]{Remark}
	\newmdtheoremenv{bmkr}[thm]{Bemerkung}
	\newmdtheoremenv{korr}[thm]{Korollar}
	\newmdtheoremenv{satzr}[thm]{Satz}

	\begin{abstract}
		In this article we prove that all boundary points of a minimal oriented hypersurface in a Riemannian manifold are regular, that is, in a neighborhood of any boundary point, the minimal surface is a $\CC^{1, \frac14}$ submanifold with boundary.
	\end{abstract}

	\maketitle
	\thispagestyle{empty}
	
	\tableofcontents

	\newpage

\section{Introduction}
	Minimal surfaces have been studied in the last two centuries by various mathematicians. In the 1930's, T. Rad\'o \cite{Rado} and J. Douglas \cite{douglas} proved the existence of 2-dimensional minimal surfaces in $\R^3$ and for this work, Douglas was awarded the Fields medal. Since then a lot of progress has been made and moreover, a new language was invented in order to understand the higher dimensional case. The language that we use in this article is the one of Geometric Measure Theory, where we see surfaces as currents supported in a Riemannian manifold and area minimizing currents are those having least mass among all currents sharing the same boundary. The existence of such minimizers has been proven by H. Federer and W. Fleming \cite{federerFleming} in the 1960's. However, such a minimizing (integral) current is supported on a rectifiable set and thus a priori can have many singularities.
	
A posteriori singularities are rare. In his Ph.D. thesis \cite{allard_thesis_boundary}, W. Allard proved that, in case the boundary is contained in the boundary of a uniformly convex set and the ambient manifold is the euclidean space, then all boundary points are regular. This means, in a neighborhood of a boundary point, the support of the current is a regular manifold with boundary. Later,  R. Hardt and L. Simon came to the same conclusion in \cite{hardtSimon} when having replaced the assumption of the uniform convexity by the fact that the current is of codimension $1$. However, as the result of Hardt and Simon is stated and proved only in the euclidean space, in this paper, we provide an adaptation of the arguments to the case of general Riemannian manifolds. We show the following
	\begin{thm*}
		Let $U \subset \R^{n+k}$ be open and $T$ an $n$-dimensional locally rectifiable current in $U$ that is  area minimizing in some smooth $(n+1)$-manifold $\M$ and such that $\partial T$ is an oriented $\CC^{2}$ submanifold of $U$. Then for any point $a \in \spt(\partial T)$, there is a neighborhood $V$ of $a$ in $U$ satisfying that $V \cap \spt(T)$ is an embedded $\CC^{1,\frac14}$ submanifold with boundary.
	\end{thm*}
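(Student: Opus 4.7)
The strategy is to adapt the Euclidean boundary regularity argument of Hardt and Simon \cite{hardtSimon} to the Riemannian setting via a local flattening. I would fix a boundary point $a \in \spt(\partial T)$ and choose coordinates in which $T_a \M$ is a coordinate $(n+1)$-plane; then near $a$, $\M$ is the graph of a $\CC^2$ function vanishing to second order at $a$, and $\partial T$ is a $\CC^2$ graph over an $n$-plane of $T_a\M$. After translating $a$ to the origin and rescaling by a small radius $r$, the ambient manifold $\M$ becomes an $O(r)$ perturbation of $\R^{n+1}$ in $\CC^2$, and $\partial T$ becomes $\CC^2$-close to an $n$-plane. The task then reduces to pushing all the Hardt-Simon estimates through this curvature perturbation with error terms that vanish in the blow-up limit.

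I would carry out the proof in five steps:
\begin{enumerate}
\item[(i)] a boundary monotonicity formula for area minimizing currents in $\M$, with a curvature correction of order $r$, yielding existence of tangent cones at $a$ and continuity of the boundary density;
\item[(ii)] classification of the tangent cones: any blow-up limit is area minimizing in $\R^{n+1}$ with boundary on a linear hyperplane, so by the Hardt-Simon classification it is either a multiplicity-one half-hyperplane, a higher-multiplicity half-hyperplane, or a pair of distinct half-hyperplanes meeting along a common boundary;
\item[(iii)] exclusion of the higher-multiplicity and multi-sheet cones at every boundary point, via comparison with a one-parameter foliation of boundary-sharing area minimizers in $\M$ together with a boundary maximum principle;
\item[(iv)] a boundary excess decay lemma: if the one-sided tilt excess relative to a half-hyperplane is small in a ball of radius $r$, then in a ball of radius $\theta r$ it is at most $\theta^{1/4}$ times the original, up to an additive contribution $O(r)$ coming from the curvature of $\M$;
\item[(v)] iteration of (iv) to obtain uniqueness of the boundary tangent half-hyperplane with Hölder rate $\tfrac14$ in the base point, hence a $\CC^{1,1/4}$ graphical parametrization of $\spt(T)$ in a neighborhood of $a$, with boundary on $\partial T$.
\end{enumerate}

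The chief obstacle will be step (iii). In the flat case, Hardt and Simon exclude multi-sheet tangent cones by producing an explicit one-parameter family of pairs of minimizing half-planes to slide against $T$; in a curved ambient manifold such a family must live inside $\M$, which can be done either by pushing the Euclidean competitors forward by the nearest-point projection onto $\M$ and quantifying the mass defect to order $O(r^2)$, or by applying the implicit function theorem to the boundary minimal surface equation on $\M$. A secondary difficulty is step (iv): the excess decay is most naturally proved by a blow-up/contradiction argument whose limit is a linear half-space boundary problem, but uniform Caccioppoli-type bounds up to the boundary demand cut-off vector fields tangent to $\M$ and careful control of the curvature error so as not to spoil the limiting linear theory. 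The specific exponent $\tfrac14$ then reflects the optimal single-step decay rate produced by (iv); once (iii) and (iv) are in hand, iterated excess decay and standard integration deliver the stated $\CC^{1,1/4}$ regularity.
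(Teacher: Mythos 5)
Your overall strategy — flatten near a boundary point, establish monotonicity and a tangent cone, decay the excess, and iterate — matches the skeleton of the paper's argument, which indeed follows Hardt--Simon with curvature corrections. However, step~(iii) contains a genuine conceptual error. You propose to \emph{exclude} the higher-multiplicity tangent cones at boundary points via a foliation/sliding comparison and a boundary maximum principle. These cones cannot be excluded: for $m>1$ the boundary density is $m-\tfrac12$ and the tangent cone genuinely is a hyperplane carrying multiplicity $m$ on one half and $m-1$ on the other (this occurs whenever the minimizer passes through $\spt(\partial T)$ with extra sheets on both sides). What the paper proves instead — and what your step~(iii) should be aiming for — is a \emph{collapsing} result: after writing the current away from the boundary as $m$ and $m-1$ interior graphs $v_i^T, w_j^T$ (Theorem~\ref{thm:graph}), one shows that the associated harmonic blow-ups $f_1,\dots,f_m$ and $g_1,\dots,g_{m-1}$ all coincide (Theorem~\ref{thm:blowupsC2}), that $Df(0)=Dg(0)$, and then, after the excess decay and Corollary~\ref{cor:graphsOfGammT} produce $\CC^{1,\frac14}$ extensions $\tilde v_i,\tilde w_j$ with matching derivative at the origin, the E.~Hopf boundary point lemma (as in the proof of Lemma~\ref{lem:IfConeHyperplane}) forces $\tilde v_1=\cdots=\tilde v_m$ and $\tilde w_1=\cdots=\tilde w_{m-1}$. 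So the maximum principle does appear, but to prove coincidence of sheets on a cone that is not excluded — an argument that would proceed quite differently from the sliding argument you envisage. The sliding/compression idea itself is not wasted: it reappears inside the Collapsing Lemma~\ref{lem:harmblowequal}, where a Lipschitz competitor $F_\nu^\sigma$ compresses the sheets towards the boundary plane to show that linear harmonic blow-ups must agree on both sides.

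A smaller point is step~(iv): the paper's excess decay (Theorems~\ref{thm:excessThetaMax}, \ref{thm:RotExcessDecay}) is \emph{linear} in the scale, $\EE_C(\gamm_{\eta\#}T,r)\lesssim r$, not a $\theta^{1/4}$ rate per step. The exponent $\tfrac14$ emerges only in Corollary~\ref{cor:graphsOfGammT} from combining linear excess decay with the interior gradient estimates of Theorem~\ref{thm:graph}(iii.)--(iv.), which blow up like $\dist(y,\partial\VV)^{-k-n-1/2}$, and a slowly-shrinking domain $\dist\gtrsim\rho^\beta$ with $\beta=\tfrac{1}{4n+10}$ chosen precisely so that $|D^2\tilde v_i|\lesssim|y|^{-3/4}$. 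The proof of the decay itself is by contradiction and blow-up, as you anticipate, but its viability hinges on first having established the $\CC^2$ coincidence of the harmonic blow-ups in Theorem~\ref{thm:blowupsC2}, which in turn needs the comparison Lemma~\ref{lem:sphercylindr} between spherical and cylindrical excess, the homogeneous second blow-up of Lemma~\ref{lem:fDividedY}, and the zero-trace argument. Without that machinery the contradiction argument in step~(iv) has no limiting linear problem to stand on.
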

	The theorem of Hardt and Simon is then a case of the one stated above, however we follow their strategy of proof with a few modifications in order to deal with additional error terms coming from the ambient manifold in the main estimates.
	
Notice that the complete absence of singular points only happens at the boundary and only in codimension $1$. Indeed, in 2018, C. De Lellis, G. De Philippis, J. Hirsch and A. Massaccesi showed in \cite{DeLellisBoundary} that in the case of higher codimension and on a general Riemannian manifold, there can be singular boundary points but anyway, the set of regular ones is dense. Moreover, in the interior of an area minimizing current, we know thanks to the works of E. Bombieri, E. De Giorgi, E. Giusti \cite{bombieri}, W. Allard \cite{allard2, allard3} and J. Simons \cite{simonsCone}, that the set of singularities of an $n$-dimensional current in an $(n+1)$-dimensional manifold is of dimension at most $n-7$. In the case of higher codimension, the sharp dimension bound is $n-2$ which was first proven in Almgren's Big regularity paper \cite{almgrenBig} and then revisited and shortened by De Lellis and Spadaro in \cite{delellisMultivaluedFunctions, delellisSpadaroMultivaluedCurrents, delellisSpadaroOne, delellisSpadaroTwo, delellisSpadaroThree}.

\subsection{Overview of the proof}
	We would like to measure how flat a current $T$ is. Therefore we introduce its excess in a cylinder of radius $r$ and denote it by $\EE_C(T,r)$. It is the scaled version of the difference between the mass of the current in a cylinder and the mass of its projection. The main ingredient to deduce the boundary regularity is the fact that this excess scales (up to a small rotation) like $r$ assuming that the curvature of both the boundary of the current $\kappa_T$ and the ambient manifold $\mathbf{A}$ are small.
	
	\begin{thm*}[Excess decay]
		Let $\M$ be a smooth manifold and let $T$ be area minimizing in $\M$ such that $ \displaystyle \max\{ \EE_C(T,1), \mathbf{A}, \kappa_T \} \leq \frac{1}{C}$. Then there is a real number $\eta$ such that for all $0<r< R$ the following holds
		$$ \EE_C(\gamm_{\eta \#}T, r)\leq  C r.$$
	\end{thm*}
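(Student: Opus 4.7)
The plan is to derive the statement from a single-scale tilt-excess decay lemma applied inductively along a dyadic sequence of radii. Concretely, I expect to prove: there exist $\theta \in (0,\tfrac12)$ and $\varepsilon_0 > 0$ such that whenever $\EE_C(T,1) \leq \varepsilon^2$ and $\Aa + \kappa_T \leq \varepsilon$ with $\varepsilon \leq \varepsilon_0$, one can find a rotation $\gamm$ with $|\gamm - \id| \lesssim \varepsilon$ satisfying
\[
   \EE_C(\gamm_{\#} T,\theta) \;\leq\; \tfrac{1}{2}\, \theta\, \EE_C(T,1) \;+\; C\theta (\Aa + \kappa_T).
\]
Iterating on the scales $r_k = \theta^k$ with cumulative rotations $\gamm_{\eta_k}$ (whose successive differences decay geometrically and hence converge to some $\gamm_\eta$), and summing the error series $\sum_k C r_k (\Aa + \kappa_T)$, one absorbs every contribution into the claimed linear bound $\EE_C(\gamm_{\eta\#} T, r) \leq Cr$. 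After an initial rescaling, the range $0 < r < R$ in the statement reduces to $0 < r \leq 1$ so that the iteration starts at the first scale where the smallness hypothesis applies.

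\textbf{The single-scale decay.} This one-step lemma I would prove by contradiction and blow-up. Suppose it fails: there is a sequence $T_\nu$ with $\EE_C(T_\nu,1) = \epnu^2 \to 0$ and $\Anu + \kappa_{T_\nu} \leq \epnu$ for which no admissible rotation achieves the required decay. Using a boundary version of the Lipschitz graph approximation (adapting Allard's interior result and tracking the curvature contributions coming from the first variation), the supports of $T_\nu$ can be written, outside an exceptional set of small mass, as graphs of functions $u_\nu$ over the reference hyperplane. Normalizing $v_\nu := u_\nu / \epnu$ to have unit $L^2$ mass yields a sequence that is uniformly bounded and equicontinuous on compact subsets, and hence has a subsequential limit $v_*$. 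Passing to the limit in the first variation identity, the curvature terms vanish because $\Anu, \kappa_{T_\nu} \to 0$, and the limit $v_*$ is therefore harmonic with smooth affine boundary data obtained from the limiting tangent plane of $\partial T_{\nu}$. Standard $\CC^{1,\alpha}$ estimates for harmonic functions then produce a tangent plane $L$ with $\|v_* - L\|_{L^2(B_\theta)} \leq \tfrac14 \theta^2$, whose associated rotation contradicts the supposed failure for large $\nu$.

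\textbf{Main obstacles.} The principal difficulty compared to the Euclidean Hardt--Simon setting is tracking the additional error terms in the first variation formula produced by $\Aa$ and by the curvature of $\partial T$. At each scale $r_k$ these contribute a correction linear in $r_k$ that cannot be absorbed into the contracting main term by smallness alone, and this obstruction is precisely why the final excess bound is only $Cr$ rather than $Cr^{1+\alpha}$, and ultimately why the regularity attained is $\CC^{1,1/4}$ rather than the $\CC^{1,1/2}$ expected in the interior theory. Two further technical points require care: first, verifying that the accumulated rotations $\gamm_{\eta_k}$ form a Cauchy sequence, so that the single rotation $\gamm_\eta$ in the statement is well defined and stays close to the identity; second, proving that the boundary graphical approximation holds with explicit dependence on $\kappa_T$ and $\Aa$, so that the smallness hypothesis of the one-step lemma is preserved at every scale of the iteration.
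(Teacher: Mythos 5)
Your high-level architecture — a single-scale decay lemma proved by compactness/blow-up, iterated along dyadic scales with accumulated rotations summing geometrically — does match the structure of the paper's proof of Theorem~\ref{thm:RotExcessDecay}: the paper's one-step version is Theorem~\ref{thm:excessThetaMax}, and the iteration in Section~6.1 resembles what you describe, including the verification that the $\omega_j$ form a summable sequence.

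However, there is a genuine gap in your blow-up step, and it is precisely where the real content of the theorem lives. You write that the normalized functions $v_\nu = u_\nu/\epnu$ converge to \emph{a} harmonic function $v_*$ ``with smooth affine boundary data,'' to which you then apply $\CC^{1,\alpha}$ estimates. This treats the current as if it were supported on a single graph. But the boundary density is $\Theta^n(\mT,0)=m-\tfrac12$, so near a boundary point the current decomposes into $m$ ordered sheets $v_1^T\le\dots\le v_m^T$ over $\VV$ and $m-1$ sheets $w_1^T\le\dots\le w_{m-1}^T$ over $\WW$ (Theorem~\ref{thm:graph}). The blow-up therefore produces not one harmonic function but a collection $f_1,\dots,f_m$ on $\VV$ and $g_1,\dots,g_{m-1}$ on $\WW$, and the whole difficulty is to prove they \emph{all coincide and glue across $\BL$ into a single $\CC^2$ function}. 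Nothing in your sketch addresses why, say, $f_1=f_m$ or why $f_i$ and $g_j$ match up across the boundary; if they did not, no single rotation could kill the excess and the decay would fail. The paper needs the full machinery of Sections~4--6 for exactly this: the weak-gradient and trace estimates (Lemma~\ref{lem:harmblowupgradient}), the uniform boundary convergence (Lemma~\ref{lem:harmblowbound}), the collapsing lemma~\ref{lem:harmblowequal} for linear blow-ups (which requires a cut-and-paste comparison current construction and the minimality of $T_\nu$), the spherical/cylindrical excess comparison (Theorem~\ref{thm:CylSpherExc}) together with Theorem~\ref{thm:graph}$(v)$ to force homogeneity of the second blow-up, and finally the E.\ Hopf boundary point lemma to pass from the linear to the general case (Theorem~\ref{thm:blowupsC2}). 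Your outline collapses all of this into ``standard $\CC^{1,\alpha}$ estimates for harmonic functions,'' which only works when $m=1$ — and that case is already handled by Allard's one-sided boundary regularity and does not need this theorem. I would also flag a smaller issue: your heuristic that the error terms in $\Aa,\kappa_T$ are ``precisely why'' the final exponent is $\tfrac14$ rather than $\tfrac12$ is not quite the mechanism in the paper; the $\tfrac14$ in Corollary~\ref{cor:graphsOfGammT} is traced to the interaction between the $\EE_C\lesssim r$ rate and the $\dist(y,\partial\VV)^{-k-n-1/2}$ blow-up in the interior derivative estimates of Theorem~\ref{thm:graph}$(iii)$, $(iv)$, not to an inability to absorb $\Aa+\kappa_T$.
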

The precise statement can be found in \ref{thm:RotExcessDecay}. In order to prove it, we first analyze in section \ref{sec:interior} the current away from the boundary. There we can use results from the interior regularity theory and to find that the current is supported on a union of graphs of functions fulfilling the minimal surface equation. When zooming in (up to rescaling), the boundary (and the ambient manifold) become more flat and therefore, we can find the interior graphs closer to the boundary. The point is then to study what happens in the limit when the graphs on both sides of the boundary grow together. This limiting rescaled functions we call the harmonic blow-ups and they are introduced in section \ref{sec:blowup}.

After proving the uniform convergence of the harmonic blow-ups also at boundary points, we show in a first step that in case the harmonic blow-ups are linear, they coincide on both sides of the boundary, see the collapsing lemma \ref{lem:harmblowequal}. Having proven some technical estimates on the excess (Theorem \ref{thm:CylSpherExc}), the assumption of linearity then is dropped in Theorem \ref{thm:blowupsC2}. This follows by blowing up the harmonic blow-ups a second time. To do so we need to make some estimates on the harmonic blow-ups (Lemma \ref{lem:fDividedY}) to guarantee the existence of this second blow-up. 

Then knowing that the harmonic blow-ups coincide and in fact merge together in a smooth way, we prove the excess decay via a compactness argument: if the excess decay did not hold, there would be a sequence of currents whose blow-ups cannot coincide. Then this decay leads to a $\CC^{1, \frac14}$-continuation up to the boundary of the functions whose graphs describe the current (Corollary \ref{cor:graphsOfGammT}) assuming that the excess and the curvatures are sufficiently small. In section \ref{sec:final} we then collect everything together and deduce that either the current lies only on one side of the boundary or both sides merge together smoothly. In case of a one-sided boundary, Allard's boundary regularity theory \cite{allard3} covers the result.
	
\newpage

\section{Notation and preliminaries}
	\subsection{Notation}
		In this paper, $k$, $m$ and $n$ denote fixed natural numbers with $m \geq1$ and $n, k \geq 2$. $C_1, \dots, C_{80}$ are positive constants depending only on $n$, $k$ and $m$.
		
		\subsubsection{Notation associated with $\R^n$}
			We define the following sets for $y \in \R^n$, $j \in \{1,\dots,n\} $ and any real numbers $r>0$ and $0<\sigma<1$ 
			\begin{align*}
			\UU^n_r(y)&=\{x \in \R^n: |x-y|<r \}, \\
			\BB^n_r(y)&=\{x \in \R^n: |x-y| \leq r \},\\
			\omm_n&=\mathcal{L}^n(\UU^n_1(0)),\\
			\BL&=\{x=(x_1, \dots, x_n) \in \UU^n_1(0): x_n=0\},\\
			\VV&=\{x=(x_1, \dots, x_n) \in \UU^n_1(0): x_n>0\},\\
			\WW&=\{x=(x_1, \dots, x_n) \in \UU^n_1(0): x_n<0\},\\
			\VV_{\sigma}&= \{x\in \textbf{V}: \text{dist}(x, \partial \VV)> \sigma\},\\
			\WW_{\sigma}&= \{x\in \textbf{W}: \text{dist}(x, \partial \WW)> \sigma\},\\
			Y_j&: \R^n \to \R, Y_j(y)=y_j.
			\end{align*}
		
		\subsubsection{Notation associated with $\R^{n+k}$}
			We define the following sets for $a \in \R^{n+k}$, $j \in \{1,\dots,n+k \} $ and any real numbers $\omega$ and $r>0$
			\begin{align*}
			\UU_r&=\{x \in \R^{n+k}: |x|<r \}, \\
			\BB_r&=\{x \in \R^{n+k}: |x|\leq r \}, \\
			\BC_r&=\{x \in \R^{n+k}: |\textbf{p}(x)|\leq r \} \text{ where } \textbf{p}:\R^{n+k} \to \R^n, \textbf{p}(x_1,\dots,x_{n+k})=(x_1,\dots,x_n) ,\\
			\textbf{e}_j &=(0,\dots,0,1,0,\dots,0) \text{ where the 1 is at the $j$-th component},\\
			X_j&: \R^{n+k} \to \R, X_j(x)=x_j,\\
			X&:=(X_1,\dots,X_{n+k}),
			\end{align*}
			For the following maps, we identify $\R^{n+k}$ with $\R^{n+1} \times \R^{k-1}$.
			\begin{align*}
			\boldsymbol{\tau}_a&:\R^{n+k} \to \R^{n+k}, \boldsymbol{\tau}_a(x,y)=(x,y) +a,\\
			\boldsymbol{\mu}_r&:\R^{n+k} \to \R^{n+k}, \boldsymbol{\mu}_r(x,y)=r(x, y),\\
			\boldsymbol{\gamma}_\omega&:\R^{n+k} \to \R^{n+k}, \boldsymbol{\gamma}_\omega(x,y)=\big(x_1,\dots,x_{n-1}, x_n\cos(\omega)-x_{n+1}\sin(\omega), x_n\sin(\omega)+x_{n+1}\cos(\omega),y \big).
			\end{align*}
		
		\subsubsection{Notation associated with the current $T \in \mathcal{R}_n(\R^{n+k})$}	
		
			For any real number $r>0$, we define the cylindrical excess as
			$$\EE_C(T,r)=r^{-n}\MM(T\LL\BC_r)-r^{-n}\MM(\pp_{\#} (T\LL\BC_r))$$ and the spherical excess as
			$$\EE_S(T,r)= r^{-n}\MM(T \LL \BB_r)-\omm_n\Theta^n(\lVert T \rVert,0),$$
			whenever $\displaystyle \Theta^n(\lVert T \rVert,0) = \lim_{r \downarrow 0} \frac{\mT(\UU_r)}{\omm_n r^n}$ exists.
			
			In Chapter \ref{lastproof}, we will see that it suffices to consider only currents with compact support and whose boundary lies on a $(n-1)$-dimensional $\CC^{2}$-graph going through the origin. Namely, we define $\TT$ to be the collection of pairs $(T, \M)$ where $\M$ is an embedded $(n+1)$-manifold and $T\in \mathcal{R}_n(\R^{n+k})$ is an absolutely area minimizing integer rectifiable current for which there exist a positive integer $m$, $\varphi_T, \psi_T \in \CC^{2}\big(\{z \in \R^{n-1}: |z|\leq 2\}\big)$ and a smooth map $\PhiM: \UU^{n+1}_4(0) \to \R^{k-1}$, such that
			\begin{itemize}
				\item $ \displaystyle \{ z \in \BC_3: z \in \M \} = \big\{ (x, \PhiM(x)): x \in \UU^{n+1}_3(0) \big\},$
				\item $ \PhiM(0)=0$ and $D \PhiM(0)= 0$,
				\item $ \Aa \leq 1$,
				\item $ \displaystyle  \spt(T)  \subset \BB_3 \cap \M,$
				\item $ \displaystyle  \MM(T)  \leq 3^n\big(1+m\omm_n\big),$
				\item $ \displaystyle  \Theta^n(\lVert T \rVert,0) =m-\nicefrac{1}{2},$
				\item $ \displaystyle  \pp_{\#} (T\LL \BC _2) =m\big(\EE^n \LL \{y\in \UU^n_2(0): y_n > \varphi_T(y_1,\dots,y_{n-1})\} \big)\\ 
				\hspace*{2.5cm} + (m-1)\big(\EE^n \LL \{y\in \UU^n_2(0): y_n < \varphi_T(y_1,\dots,y_{n-1})\} \big)$
				\item $\varphi_T(0) =0=\psi_T(0),$
				\item $ \displaystyle  \varphi_T(0) =0=\psi_T(0),$
				\item $ \displaystyle  D\varphi_T(0) =0=D\psi_T(0),$
				\item $ \displaystyle  (\partial T) \LL \big\{x\in \R^{n+1}:  |(x_1,\dots,x_{n-1})|<2, |x_n| <2\big\}\\
				\hspace*{1cm} = (-1)^{n+k}{F_T}_{\#}\big(\EE^{n-1} \LL \{z\in \R^{n-1}: |z|<2\}\big),$
				\item $ \displaystyle  \kappa_T  \leq 1,$
			\end{itemize}
			where 
			\begin{align*}
			\Aa &:= \lVert D^2 \Phii_\M \rVert_{\CC^1(\BB_2)},\\
			\EE^j &:= \llbracket  \R^j \times \{0\} \rrbracket  \in \mathcal{R}_j(\R^{n+k}) \textnormal{ for all } j \leq n,\\
			F_T(z)&:=\big(z, \varphi_T(z), \psi_T(z),  \PhiM(z, \varphi_T(z), \psi_T(z)) \big),\\
			\kappa_T&:= \lVert D^2(\varphi_T,\psi_T) \rVert_{\CC^0}
			\end{align*}

		\subsection{First variation and monotonicity}
			We start this paper with the following monotonicity estimates. The first two can be read in \cite[Theorem 3.2]{DeLellisBoundary} and the third one, we prove in Chapter \ref{chapter:proofs}.
			\begin{lem}[Monotonicity Formula]\label{lem:monFormula}
				For $(T, \M) \in \TT$ and $0<r<s<2$, the following holds
				\begin{align*}
				\frac{ \lVert T \rVert (\BB_s)}{s^n} -\frac{ \lVert T \rVert (\BB_r)}{r^n} &-\int_{\BB_s \backslash \BB_r} |X^\perp|^2 |X|^{-n-2} \dT\\
				&= \int_r^s \rho^{-n-1} \left( \int_{\UU_\rho} X^\perp \cdot \overset{\rightarrow}{H} \dT
				+ \int_{\spt(\partial T) \cap \UU_\rho} X \cdot \overset{\rightarrow}{n} \dd \HH^{n-1} \right) \dd \rho,
				\end{align*}
				where $X^\perp$ denotes the component orthogonal to the tangent plane of $T$ and $\overset{\rightarrow}{H}$ the curvature vector of $\M$.
			\end{lem}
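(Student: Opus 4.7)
My plan is to carry out the classical radial-test-field derivation of the monotonicity formula, adapted to the presence of a boundary and a curved ambient manifold. Let $\gamma\in \CC^\infty([0,\infty))$ be non-increasing with $\gamma\equiv 1$ on $[0,\tfrac12]$ and $\gamma\equiv 0$ on $[1,\infty)$, and test the first variation of $T$ against the radial cutoff vector field $X_\rho(x):=\gamma(|x|/\rho)\,x$. On any $n$-plane $S$ a direct computation gives
\[
\Div_S X_\rho \;=\; n\,\gamma(|x|/\rho) \;+\; \gamma'(|x|/\rho)\,\frac{|x|^2-|X^\perp|^2}{\rho\,|x|}.
\]
Since $T$ is area minimizing in $\M$, its generalized mean-curvature vector as an $n$-current in $\R^{n+k}$ is the tangential trace on $T_xT$ of the second fundamental form of $\M$, i.e.\ the vector $\overset{\rightarrow}{H}$ in the statement; in particular $\overset{\rightarrow}{H}\perp T_xT$, so $X_\rho\cdot\overset{\rightarrow}{H}=\gamma(|x|/\rho)\,X^\perp\cdot\overset{\rightarrow}{H}$. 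Substituting into the first-variation identity
\[
\int\Div_S X_\rho\,\dT \;=\; -\int X_\rho\cdot\overset{\rightarrow}{H}\,\dT \;+\! \int_{\spt(\partial T)} X_\rho\cdot\overset{\rightarrow}{n}\,\dd\HH^{n-1}
\]
then yields a closed identity in $\rho$.

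Setting $I_\gamma(\rho):=\int \gamma(|x|/\rho)\,\dT$, one has $\rho I_\gamma'(\rho)=-\int \gamma'(|x|/\rho)(|x|/\rho)\,\dT$, so the $|x|^2$-piece of the divergence integrand contributes $-\rho I_\gamma'(\rho)$. Dividing the full identity by $\rho^{n+1}$ and using that $n\rho^{-n-1}I_\gamma(\rho)-\rho^{-n}I_\gamma'(\rho)=-\frac{d}{d\rho}(\rho^{-n}I_\gamma(\rho))$, the left-hand side becomes
\[
-\frac{d}{d\rho}\!\left(\rho^{-n} I_\gamma(\rho)\right)\;-\;\rho^{-n-2}\!\int \gamma'(|x|/\rho)\,\frac{|X^\perp|^2}{|x|/\rho}\,\dT.
\]
Integrating from $r$ to $s$ and then taking $\gamma\nearrow \mathbf 1_{[0,1]}$ along a standard mollification, $I_\gamma(\rho)\to \mT(\BB_\rho)$ and the coarea formula applied to $|x|$ converts the $\gamma'$-integral on the left into $-\int_{\BB_s\setminus\BB_r}|X^\perp|^2|X|^{-n-2}\,\dT$, while the right-hand side limits to exactly the double integral appearing in the statement.

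The single delicate step is the passage to the limit in the $\gamma'$-integral, due to the $|x|^{-1}$ factor. The finite density $\Theta^n(\mT,0)=m-\tfrac12$ together with the global mass bound $\MM(T)\le 3^n(1+m\omm_n)$ built into the definition of $\TT$ supply the needed domination, and a standard approximation argument, carried out in detail in \cite[Theorem~3.2]{DeLellisBoundary}, closes the limit. The only genuinely new feature compared with the Euclidean-boundary derivation is that $\overset{\rightarrow}{H}$ is no longer zero; this simply leaves the $\overset{\rightarrow}{H}$-term explicit in the final identity. The boundary contribution carries over as in \cite{DeLellisBoundary}, since $\spt(\partial T)$ is a $\CC^2$ graph with $\kappa_T\le 1$, so the coarea formula applies cleanly there as well.
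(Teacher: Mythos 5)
The paper does not prove this lemma internally: the sentence just above it states that the first two monotonicity statements ``can be read in \cite[Theorem 3.2]{DeLellisBoundary}'', and only Corollary~\ref{cor:monotonicity} is proved in Chapter~\ref{chapter:proofs}. So there is no in-paper proof to compare against. Your radial-cutoff derivation is exactly the classical argument underlying the cited result, and it is correct in outline. Two small bookkeeping remarks worth fixing. First, after dividing the first-variation identity by $\rho^{n+1}$ the $|X^\perp|^2$-term should read $\rho^{-n-2}\int\gamma'(|x|/\rho)\,\tfrac{|X^\perp|^2}{|x|}\,\dT$; your displayed $\tfrac{|X^\perp|^2}{|x|/\rho}$ is off by one power of $\rho$ (though your stated limit $-\int_{\BB_s\setminus\BB_r}|X^\perp|^2|X|^{-n-2}\,\dT$ is nevertheless correct). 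Second, with the outward-conormal convention you adopt in $\int\Div_S X_\rho\,\dT=-\int X_\rho\cdot\overset{\rightarrow}{H}\,\dT+\int_{\spt(\partial T)}X_\rho\cdot\overset{\rightarrow}{n}\,\dd\HH^{n-1}$, rearranging to the form in the statement makes the boundary term appear with a minus sign; the $+$ sign in the lemma therefore presupposes $\overset{\rightarrow}{n}$ is the inner conormal (or an opposite sign convention in the first variation), which should be matched against \cite{DeLellisBoundary}. Neither point is a structural gap.
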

		
			\begin{rmk}
				There exists $\Cl{23}$ such that $|\overset{\rightarrow}{H}| \leq \Cr{23} \Aa_\M$.
			\end{rmk}
			
			\begin{lem}\label{lem:expMassMonoton}
				There is a dimensional constant $\Cl{1}>0$ such that for $(T, \M) \in \TT$ and $0<r<2$, the map
				$$ r \mapsto \exp \left( \Cr{1} \left( \Aa_\M + \kappa_T \right)r \right) \frac{\lVert T \rVert (\BB_r)}{r^n \omm_n}$$
				is monotonously increasing.
			\end{lem}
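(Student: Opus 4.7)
The plan is to apply the monotonicity formula from Lemma \ref{lem:monFormula}, explicitly bound the two error terms on the right-hand side, and convert the resulting differential inequality into the claimed exponential monotonicity. Write $f(r) = \mT(\BB_r)/r^n$ and $\Lambda = \Aa + \kappa_T$; the target statement is equivalent to $f'(r) \geq -C_1 \Lambda f(r)$ in the distributional sense. For the mean-curvature term, the remark after Lemma \ref{lem:monFormula} gives $|\overset{\rightarrow}{H}| \leq \Cr{23} \Aa$, and $|X^\perp| \leq |X| \leq \rho$ on $\UU_\rho$ yields $\left| \int_{\UU_\rho} X^\perp \cdot \overset{\rightarrow}{H} \dT \right| \leq \Cr{23} \Aa \rho \mT(\UU_\rho) \leq C \Aa \rho^{n+1} f(\rho)$, so $\rho^{-n-1}$ times this term is controlled by $C \Aa f(\rho)$.

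The boundary term is where the graph structure built into the definition of $\TT$ enters. The support $\spt(\partial T)$ is parametrised by the $\CC^2$ map $F_T$, with $F_T(0)=0$, tangent space at $0$ equal to $\R^{n-1} \times \{0\}^{k+1}$ (since $D\varphi_T(0)$, $D\psi_T(0)$ and $D\PhiM(0)$ all vanish), and $\lVert D^2 F_T \rVert_{\CC^0} \leq C \Lambda$. A Taylor expansion gives $F_T(z) = z + O(\Lambda|z|^2)$ and $\partial_{z_i} F_T(z) = \ee_i + O(\Lambda|z|)$. Since the unit conormal $\overset{\rightarrow}{n}$ must be perpendicular to these $n-1$ tangent vectors, the projection of $\overset{\rightarrow}{n}$ onto $\R^{n-1} \times \{0\}^{k+1}$ has magnitude at most $C\Lambda|z|$; combining this with the expansion of $F_T$ yields $|X \cdot \overset{\rightarrow}{n}| \leq C \Lambda |X|^2$ on $\spt(\partial T)$ near $0$. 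Together with the standard estimate $\HH^{n-1}(\spt(\partial T) \cap \UU_\rho) \leq C \rho^{n-1}$, this produces $\left| \int_{\spt(\partial T) \cap \UU_\rho} X \cdot \overset{\rightarrow}{n} \dd\HH^{n-1} \right| \leq C \Lambda \rho^{n+1}$, so $\rho^{-n-1}$ times this term is bounded by $C \Lambda$.

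Inserting both bounds into Lemma \ref{lem:monFormula} and discarding the non-negative $|X^\perp|^2$ integral gives the differential inequality $f'(\rho) \geq -C \Aa f(\rho) - C' \Lambda$. The last step is to absorb the additive term into the multiplicative one by means of a uniform lower density bound $f(\rho) \geq c_0 > 0$ for $\rho \in (0,2)$. This comes from the initial density $\Theta^n(\mT,0) = m-\nicefrac{1}{2} \geq \nicefrac{1}{2}$: integrating the above inequality from $0^+$ first gives $f \geq \omm_n/4$ on a small interval, and the a priori upper bound $\MM(T) \leq 3^n(1+m\omm_n)$ built into $\TT$ lets one propagate the lower bound to all of $(0,2)$ by iteration. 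Once $f \geq c_0$, the inequality becomes $f'(\rho) \geq -(C + C'/c_0)\Lambda f(\rho) =: -C_1 \Lambda f(\rho)$, which is exactly the statement that $\exp(C_1 \Lambda r)\, f(r)/\omm_n$ is non-decreasing.

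The main technical obstacle lies in this final step: making the lower density bound quantitative and uniform on $(0,2)$ using only the data fixed in $\TT$ and dimensional constants, without circularly invoking the exponential monotonicity one is trying to prove. The remaining ingredients -- the Taylor expansion of $F_T$ near $0$, the conormal estimate, and the Gronwall-type integration of the differential inequality -- are routine once the geometric setup is unpacked.
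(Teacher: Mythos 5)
The paper does not actually prove this lemma: it quotes it (together with Lemma \ref{lem:monFormula}) from \cite[Theorem 3.2]{DeLellisBoundary} and only proves Corollary \ref{cor:monotonicity} in Chapter \ref{chapter:proofs}. So there is no in-paper proof to compare against; I can only assess your argument on its own terms.

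Your overall structure is the natural one: differentiate the monotonicity identity, discard the nonnegative $|X^\perp|^2$ term, estimate the two error integrals, and Gronwall. The mean-curvature estimate is fine, and the boundary estimate $|X\cdot\overset{\rightarrow}{n}|\leq C\Lambda|X|^2$ at points $X=F_T(z)\in\spt(\partial T)$ is correct. To see it one checks $|F_T(z)-(z,0)|\leq C\Lambda|z|^2$ (Taylor, using $\varphi_T(0)=\psi_T(0)=0$, $D\varphi_T(0)=D\psi_T(0)=0$, $\PhiM(0)=0$, $D\PhiM(0)=0$ and $\|D^2\varphi_T\|,\|D^2\psi_T\|\leq\kappa_T$, $\|D^2\PhiM\|\leq\Aa$), and that the first $n-1$ components of any unit vector orthogonal to $T_X\Gamma$ are $O(\Lambda|z|)$. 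Together with $\HH^{n-1}(\Gamma\cap\UU_\rho)\leq C\rho^{n-1}$ this gives the additive bound $\rho^{-n-1}\bigl|\int_{\Gamma\cap\UU_\rho}X\cdot\overset{\rightarrow}{n}\,d\HH^{n-1}\bigr|\leq C'\Lambda$.

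The genuine gap is exactly the one you flag, and your proposed fix does not close it. You arrive at $f'(\rho)\geq -C\Aa f(\rho)-C'\Lambda$ and want $f'(\rho)\geq -C_1\Lambda f(\rho)$; this absorption requires $f(\rho)\geq C'/(C_1-C)=:c_0$ uniformly on $(0,2)$. Your suggested iteration is circular in a way that the a priori data of $\TT$ do not resolve: the constant $C_1$ you would ultimately produce depends on $c_0$, while the lower bound $c_0$ you can extract from the ODE inequality (starting from $f(0^+)=\omm_n(m-\tfrac12)$) degrades as $C'\Lambda/(C\Aa)$, which is \emph{not} small under the mere normalizations $\Aa\leq 1$, $\kappa_T\leq 1$. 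If, say, $\Aa$ is tiny and $\kappa_T$ is of order $1$, integrating $f'\geq -C\Aa f - C'\Lambda$ from $0^+$ can let $f$ decrease by an amount of order $C'$ before absorption is possible, and $C'$ is a fixed dimensional constant with no reason to be smaller than $\omm_n/2$. The upper bound $\MM(T)\leq 3^n(1+m\omm_n)$ also does not rescue this, since it controls $f$ from above rather than below. So as written the final step is not a proof; it needs either (i) an independent, monotonicity-free lower mass bound $\lVert T\rVert(\BB_\rho)\geq c_0\rho^n$ (which is itself nontrivial at the boundary and not in your toolkit here), or (ii) a reformulation in which the boundary error appears in a form that can be absorbed without a density lower bound — which is presumably how \cite{DeLellisBoundary} proceeds.

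Everything up to the differential inequality is sound; the last absorption step, which you correctly single out as the crux, is where the argument is incomplete.
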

		
			\begin{cor}\label{cor:monotonicity}
				For $(T, \M) \in \TT$ and $0<r<s<2$, the following holds
				\begin{align*}
				\left| \frac{ \lVert T \rVert (\BB_s)}{s^n} -\frac{ \lVert T \rVert (\BB_r)}{r^n} -\int_{\BB_s \backslash \BB_r} |X^\perp|^2 |X|^{-n-2} \dT \right|
				\leq \Cl{2}(\Aa_\M + \kappa_T)(s - r).
				\end{align*}
			\end{cor}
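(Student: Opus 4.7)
The plan is to apply the monotonicity formula of Lemma \ref{lem:monFormula}, which already identifies the bracketed expression on the left as a $\rho$-integral over $(r,s)$; the corollary will then follow once the two inner integrals are bounded, uniformly in $\rho$, by a constant multiple of $(\Aa_\M + \kappa_T)\,\rho^{n+1}$. Dividing by $\rho^{n+1}$ and integrating in $\rho$ from $r$ to $s$ produces the desired bound $\Cr{2}(\Aa_\M + \kappa_T)(s-r)$.

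For the interior term $\int_{\UU_\rho} X^\perp \cdot \overset{\rightarrow}{H}\,\dT$, I would combine $|X^\perp| \le |X| \le \rho$ with the pointwise bound $|\overset{\rightarrow}{H}| \le \Cr{23}\,\Aa_\M$ from the remark following Lemma \ref{lem:monFormula}. The definition of $\TT$ supplies the global mass bound $\MM(T) \le 3^n(1+m\omm_n)$, and Lemma \ref{lem:expMassMonoton} propagates this, with an exponential factor at most $e^{2\Cr{1}}$ since $\Aa_\M+\kappa_T \le 2$, to a dimensional estimate $\lVert T \rVert(\UU_\rho) \le C\,\rho^n$ valid for every $\rho \le 2$. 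These three bounds together control the interior integrand by $C\,\Aa_\M\,\rho^{n+1}$.

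For the boundary integral $\int_{\spt(\partial T)\cap \UU_\rho} X \cdot \overset{\rightarrow}{n}\,\dd\HH^{n-1}$, the key is a Taylor expansion exploiting the graph structure. Because $\partial T$ is parametrized by $F_T(z) = \bigl(z,\varphi_T(z),\psi_T(z),\PhiM(z,\varphi_T(z),\psi_T(z))\bigr)$ with $\varphi_T$, $\psi_T$, and $\PhiM$ all vanishing together with their first derivatives at the origin, the tangent plane of $\partial T$ at $F_T(z)$ deviates from $\R^{n-1}\times\{0\}$ by at most $C(\kappa_T+\Aa_\M)|z|$, while $F_T(z)$ itself differs from its first-order image $(z,0,\dots,0)$ by at most $C(\kappa_T+\Aa_\M)|z|^2$. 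The conormal $\overset{\rightarrow}{n}$ lies in the tangent plane of $T$ and is orthogonal to that of $\partial T$, so at the origin it is perpendicular to $\R^{n-1}\times\{0\}$; pairing it with $X(F_T(z)) = F_T(z)$ and using the two deviation bounds yields $|X \cdot \overset{\rightarrow}{n}| \le C(\kappa_T+\Aa_\M)\,\rho^2$ for points of $\spt(\partial T) \cap \UU_\rho$. Combined with the graph estimate $\HH^{n-1}(\spt(\partial T)\cap \UU_\rho) \le C\,\rho^{n-1}$, this gives a boundary contribution bounded by $C(\kappa_T+\Aa_\M)\,\rho^{n+1}$.

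The main technical obstacle will be this last pointwise estimate on $X\cdot\overset{\rightarrow}{n}$: one has to unwind the definition of the conormal, which depends on the tangent plane of $T$ as a whole rather than only of $\partial T$, and verify that the ambient contribution via $\PhiM$ and the intrinsic boundary contribution via $(\varphi_T,\psi_T)$ enter the final constant additively rather than multiplicatively. Once the two pointwise bounds are secured, summing them, dividing by $\rho^{n+1}$, and integrating over $\rho\in(r,s)$ close out the argument.
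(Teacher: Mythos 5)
Your argument is correct and follows the paper's proof: both use the monotonicity identity of Lemma \ref{lem:monFormula}, bound the interior integral using $|\overset{\rightarrow}{H}|\le\Cr{23}\Aa_\M$ together with a mass estimate $\mT(\BB_\rho)\le C\rho^n$ derived from Lemma \ref{lem:expMassMonoton}, bound the boundary integral by $C(\kappa_T+\Aa_\M)\rho^{n+1}$ via the $\CC^2$-graph structure of $\partial T$, and then integrate $\rho^{-n-1}$ times these against $\dd\rho$ over $(r,s)$. The obstacle you flag at the end is in fact harmless: the pointwise bound $|X\cdot\overset{\rightarrow}{n}|\le C(\kappa_T+\Aa_\M)|z|^2$ uses only that $\overset{\rightarrow}{n}$ is a unit vector orthogonal to the tangent space of $\partial T$ at $F_T(z)$, and does not require any control on the tangent plane of $T$ itself.
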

	\	\linebreak
		
			Letting $r \downarrow 0$, we deduce also
			
			\begin{cor}\label{cor:sphericalExc}
				For $(T, \M) \in \TT$ and $0<r<2$, the following holds
				\begin{align*}
				\left| \EE_S(T,r) -\int_{ \BB_r} |X^\perp|^2 |X|^{-n-2} \dT \right|
				\leq \Cl{6}(\Aa_\M  + \kappa_T).
				\end{align*}
			\end{cor}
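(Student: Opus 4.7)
The plan is to derive this as a straightforward limit of the previous Corollary \ref{cor:monotonicity}. Apply that corollary on the annulus between a small radius $\rho>0$ and the given $r\in(0,2)$: for every such $\rho<r$,
\begin{equation*}
\left| \frac{\|T\|(\BB_r)}{r^n} - \frac{\|T\|(\BB_\rho)}{\rho^n} - \int_{\BB_r \setminus \BB_\rho} |X^\perp|^2 |X|^{-n-2}\,\dT \right| \leq \Cr{2}(\Aa_\M + \kappa_T)(r-\rho).
\end{equation*}
Since $r<2$, the right-hand side is bounded by $2\Cr{2}(\Aa_\M + \kappa_T)$ uniformly in $\rho$, so it suffices to pass to the limit $\rho \downarrow 0$ in the left-hand side.

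For the density term, Lemma \ref{lem:expMassMonoton} guarantees that $\rho \mapsto e^{\Cr{1}(\Aa_\M + \kappa_T)\rho}\,\|T\|(\BB_\rho)/(\omm_n \rho^n)$ is monotone increasing, which ensures the existence of $\Theta^n(\|T\|,0)$ (in fact the definition of $\TT$ fixes its value as $m-\tfrac12$) and gives $\|T\|(\BB_\rho)/\rho^n \to \omm_n \Theta^n(\|T\|,0)$ as $\rho\downarrow 0$, using that $e^{\Cr{1}(\Aa_\M + \kappa_T)\rho}\to 1$ and that $\|T\|(\partial \UU_\rho)=0$ for all but countably many $\rho$.

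For the integral term, the integrand $|X^\perp|^2|X|^{-n-2}$ is non-negative, so monotone convergence yields
\begin{equation*}
\int_{\BB_r \setminus \BB_\rho} |X^\perp|^2|X|^{-n-2}\,\dT \;\longrightarrow\; \int_{\BB_r} |X^\perp|^2|X|^{-n-2}\,\dT
\end{equation*}
as $\rho\downarrow 0$ (whether the limit is finite or not is not an issue, because the inequality above bounds the partial integrals uniformly, forcing finiteness). Recalling that $\EE_S(T,r) = r^{-n}\|T\|(\BB_r) - \omm_n\Theta^n(\|T\|,0)$, the claim follows with $\Cr{6}:=2\Cr{2}$. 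There is no real obstacle here; the only mild care needed is in justifying that $\|T\|(\BB_\rho)/\rho^n$ (not just $\|T\|(\UU_\rho)/\rho^n$) tends to $\omm_n\Theta^n$, which is immediate from the monotonicity of Lemma \ref{lem:expMassMonoton}.
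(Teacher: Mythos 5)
Your proof is correct and is essentially the paper's own argument: the paper proves this corollary simply by letting the inner radius tend to zero in Corollary \ref{cor:monotonicity}, which is exactly what you do, with the density and integral limits handled in the obvious way.
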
			
	
	\newpage

\section{Interior sheeting and nonparametric estimates}\label{sec:interior}
	In this chapter we prove that the minimizing current is, away from the boundary, supported on graphs.

	\begin{minipage}[t]{\linewidth}
		\hspace{-1cm}
		\centering
		\begin{minipage}{0.68\linewidth}
			\vspace*{0.2cm} 
				\begin{defn}
			Let $u: U \subset \R^n \to \R$. Then we define
			$$\graph (u, \Phii):= \big\{(x, u(x), \Phii(x,u(x))): x \in U \big\}.$$			\end{defn}
			Away from the boundary, the interior regularity theory gives us functions whose graphs describe the current. Moreover they fulfill the Riemannian minimal surface equation (see Definition \ref{def:minEquation}) that is elliptic and therefore, we can deduce estimates on the gradient of these functions. These estimates are crucial as they guarantee the existence of the harmonic blow-ups introduced in section \ref{sec:blowup}.
		\end{minipage}
		\hspace{0.05\linewidth}
		\begin{minipage}[t]{0.25\linewidth}
			\vspace*{-2cm} 
			\includegraphics[scale=0.3]{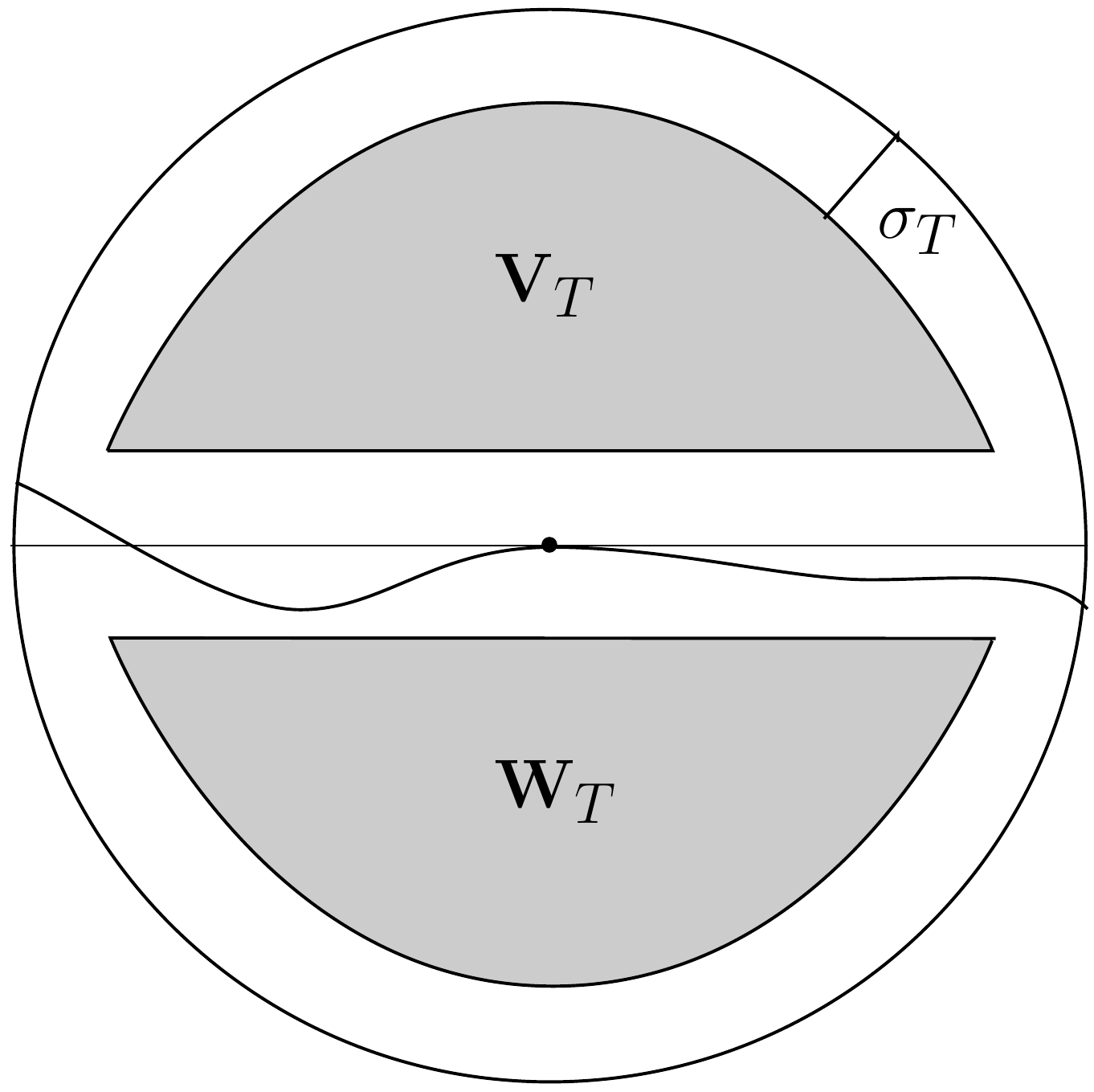}
		\end{minipage}
	\end{minipage}

	\begin{thm}\label{thm:graph}
		Let $(T, \M) \in \TT$ and assume $\Aa \leq 1/4$. Then there are constants $\Cl{30} \geq 12$, $\Cl{31} \geq 1$ such that if 
		$$ \EE_C(T,1) + \kappa_T + \Aa  \leq (4\Cr{30})^{-2n-3}$$
		and we denote $\sigma_T:=\Cr{30}\big(\EE_C(T,1) + \kappa_T + \Aa \big)^{1/(2n+3)}$, $\VV_T:= \VV_{\sigma_T}$ and $\WW_T:=\WW_{\sigma_T}$, then 
		for $i \in \{1,\dots,m\}$, $j \in \{1,\dots,m-1\}$ and $k \in \{1,2,3\}$ there are smooth functions $v_i^T:\VV_T \to \R$ and $w_j^T: \WW_T \to \R$ satisfying the Riemannian minimal surface equation and such that 
		\begin{enumerate}[(i.)]
			\item $ \displaystyle v_1^T \leq v_2^T \leq \cdots \leq v_m^T \qquad \textnormal{and} \qquad w_1^T \leq w_2^T \leq \cdots \leq w_{m-1}^T,$
			\item $ \displaystyle \pp^{-1}(\VV_T) \cap \spt(T) = \bigcup_{i=1}^m \textnormal{graph }( v_i^T, \Phii) \quad \textnormal{and}\quad \pp^{-1}(\WW_T) \cap \spt(T) = \bigcup_{i=1}^{m-1} \textnormal{graph }( w_i^T, \Phii),$
			\item $\displaystyle |D^k\vi(y)| \leq \Cl{33} \sqrt{\EE_C(T,1) + \kappa_T + \Aa} ~ \dist(y, \partial \VV)^{{{-k-n-1/2}}}$ for all $y \in \VV_T$,
			\item $\displaystyle |D^k\wj(y)| \leq \Cr{33} \sqrt{\EE_C(T,1) + \kappa_T +  \Aa} \dist(y, \partial \WW)^{{{-k-n-1/2}}}$ for all $y \in \WW_T$,
			\item $\displaystyle \int_{\VV_T} \left( \frac{\partial}{\partial r} \frac{\vi(y)}{|y|} \right)^2 |y|^{2-n} \dd \Leb^n(y)
			+  \int_{\WW_T} \left( \frac{\partial}{\partial r} \frac{\wj(y)}{|y|} \right)^2 |y|^{2-n} \dd \Leb^n(y) $
			\begin{flalign*}
&\qquad 	\leq 2^{n+7}\EE_S(T,1) + \Cl{34}(\Aa + \kappa_T)&\\
&\qquad \leq 2^{n+7}\EE_C(T,1) + \Cl{57}(\Aa+\kappa_T), \qquad \qquad \textnormal{ where } \frac{\partial}{\partial r} f(y):= \frac{y}{|y|} \cdot Df(y).& \end{flalign*}

		\end{enumerate}
		
	\end{thm}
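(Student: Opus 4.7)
The plan is to reduce to interior regularity in the cylindrical region $\pp^{-1}(\VV_T)$ (and symmetrically $\pp^{-1}(\WW_T)$), where $T$ has no boundary, and then to apply the codimension-one Riemannian sheeting theorem together with Schauder estimates for the minimal surface equation. Since $\varphi_T,\psi_T\in\CC^{2}$ vanish with their derivatives at $0$ and have $\CC^{0}$ norm of the Hessian bounded by $\kappa_T$, the support of $F_T$ lies in the slab $\{|x_n|+|(x_{n+1},\ldots,x_{n+k})|\lesssim \kappa_T\}$ within $\BC_2$. Choosing the constant $\Cr{30}$ large enough makes $\sigma_T=\Cr{30}(\EE_C(T,1)+\kappa_T+\Aa)^{1/(2n+3)}\geq 2\kappa_T$, so that $\pp^{-1}(\VV_T)\cap\spt(\partial T)=\emptyset$ and $T$ is a boundary-free area minimizer in $\M$ on this region, and symmetrically for $\WW_T$.

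For $y\in\VV_T$ set $\rho=\tfrac12\dist(y,\partial\VV)$. The cylindrical excess of $T$ over $\pp^{-1}(\UU^n_\rho(y))$ is controlled by $C\sigma_T^{-n}\EE_C(T,1)+C\Aa$, which by the choice of $\sigma_T$ is well below the threshold required by the interior sheeting theorem. Applying the codimension-one interior sheeting theorem adapted to $\M$ (the Riemannian version of the De Giorgi--Federer--Allard result) on each such ball gives that $\pp^{-1}(\UU^n_\rho(y))\cap\spt(T)$ decomposes as a disjoint union of $\CC^{1,\alpha}$ graphs, each satisfying the Riemannian minimal surface equation. The projection identity in the definition of $\TT$ together with the density $\Theta^n(\|T\|,0)=m-\nicefrac12$ forces exactly $m$ sheets over $\VV_T$ and $m-1$ over $\WW_T$; ordering them yields (i) and (ii), and elliptic bootstrap promotes each sheet to $\CC^\infty$. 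The pointwise bounds (iii) and (iv) follow by rescaling $\tilde v(z)=\rho^{-1}\vi(y+\rho z)$ to the unit ball, on which $\tilde v$ solves a quasilinear uniformly elliptic MSE with coefficients controlled by $\Aa$. A Caccioppoli-type estimate bounds $\|\tilde v\|_{L^{2}}$ by the local cylindrical excess $\lesssim \EE_C(T,1)+\kappa_T+\Aa$, Schauder theory gives $\CC^k$ control on $\tilde v$ for $k\in\{1,2,3\}$, and Sobolev embedding plus undoing the rescaling produce the claimed $\dist(y,\partial\VV)^{-k-n-1/2}$ factor.

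For (v), on each graph $\graph(\vi,\Phii)$ a direct geometric computation gives $|X^\perp|^2|X|^{-n-2}=|y|^{2-n}\big(\partial_r(\vi(y)/|y|)\big)^2$ up to multiplicative factors $1+O(|D\vi|^2+|D\Phii|^2)$. Summing over the $m$ and $m-1$ sheets and integrating, the left-hand side of (v) is bounded by $2^{n+6}\int_{\BB_1}|X^\perp|^2|X|^{-n-2}\dT + C(\Aa+\kappa_T)$; Corollary \ref{cor:sphericalExc} then dominates this by $2^{n+7}\EE_S(T,1)+C(\Aa+\kappa_T)$. The final conversion $\EE_S(T,1)\leq \EE_C(T,1)+C(\Aa+\kappa_T)$ comes from Corollary \ref{cor:monotonicity} combined with the bound on $|\MM(T\LL\BB_1)-\MM(T\LL\BC_1)|$ using the slab width of $\spt(\partial T)$. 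The main obstacle is the careful book-keeping of ambient-curvature contributions through interior sheeting and the rescaled MSE: one must verify that the classical Euclidean sheeting machinery carries over to $\M$ with only an additive $\Aa$ loss, and that the exponent $1/(2n+3)$ in $\sigma_T$ is the correct balance between the loss from rescaling the MSE, the Sobolev embedding in $n$ dimensions (which contributes the $-n-1/2$ power), and the requirement $\sigma_T\gg\kappa_T$.
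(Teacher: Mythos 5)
Your overall architecture—excluding the boundary from $\pp^{-1}(\VV_T)$, decomposing into sheets via interior regularity, applying Schauder estimates to the resulting MSE solutions, and deriving (v) from the pointwise identity relating $\partial_r(v/|y|)$ to $|X^\perp|^2|X|^{-n-2}$—matches the paper. However, the mechanism you propose for (iii) and (iv) has a genuine gap that would prevent the stated exponent $-k-n-1/2$ from coming out.

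You write that after rescaling to the unit ball, ``a Caccioppoli-type estimate bounds $\|\tilde v\|_{L^2}$ by the local cylindrical excess,'' and then Schauder together with ``Sobolev embedding in $n$ dimensions'' produces the $\dist(y,\partial\VV)^{-k-n-1/2}$ factor. Both halves of this are off. A Caccioppoli/energy estimate for the MSE bounds $\int|D\vi|^2$ by the excess, not $\int|\vi|^2$; since $\vi$ has no vanishing boundary data on $\partial\VV_T$ there is no Poincaré inequality to bridge the two. And an $L^2\to L^\infty$ gain in $n$ dimensions costs a factor $\rho^{-n/2}$, not $\rho^{-n-1/2}$, so even granting the $L^2$ bound you would land on the wrong exponent. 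The exponent $-n-1/2$ in the paper does not come from a rescaled Sobolev inequality at all: it is inherited directly from Lemma~\ref{lem:XLessExc}, which produces the height bound $\sup_{\BC_{1-\sigma}\cap\spt(T)}X_{n+1}^2\leq \Cr{21}\Cr{22}\sigma^{-2n-1}(\EE_C(T,1)+\kappa_T+\Aa)$ via subharmonicity of $X_{n+1}^2\pm X_{n+1}$ and the area-comparison Lemma~\ref{lem:areacomp}. This $\sigma^{-2n-1}$ factor is also what fixes the exponent $1/(2n+3)$ in the definition of $\sigma_T$ (so that $\sigma_T^{-2n-1}(\EE_C+\kappa+\Aa)\lesssim\sigma_T^2$), not a balance involving Sobolev embedding as you assert. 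Once that sup bound is in hand, Lemma~\ref{lem:minSurfEqu} gives $|D^k\vi(y)|\lesssim\delta^{-k}\bigl(\sup_{\BB_\delta(y)}|\vi|+\|D^2\Phii\|^*\bigr)$ with $\delta\sim\dist(y,\partial\VV)$, and combining the two yields $(iii)$ and $(iv)$. Without the height--excess comparison, your sketch does not close. Two minor points: the conversion $\EE_S(T,1)\le\EE_C(T,1)+m\kappa_T$ is the elementary estimate \eqref{eq:spherCylindExc}, not a consequence of Corollary~\ref{cor:monotonicity}; and the paper separates the current into the pieces $S_i$ via Lemma~\ref{lem:decomp} \emph{before} invoking Federer's sheeting result, rather than extracting the multiplicity afterward, which makes the globalization over $\VV_T$ cleaner.
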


	For the existence of these graphs, we need to split the current into pieces and show that each piece is supported on a graph. Then, once we have these graphs, we show the estimates by using the regularity theory of elliptic PDEs. This will be done more precisely in subsection \ref{ssection:proof of graph}.
	
	\subsection{Comparison between excess and height}
	
		To prove the estimate in Theorem \ref{thm:graph}$(iii.)$, $(iv.)$, we will deduce from the PDE theory an estimate on the values of the functions $v_i^T$, ($w_j^T$ respectively). This can be translated into the height of the current in the $(n+1)$-component. We wish to estimate the latter quantity with the excess of $T$ and hence, we need the following lemmata comparing the (cylindrical) excess with the height. The proofs are given in chapter \ref{chapter:proofs}.
		
		First notice that as in the original paper \cite{hardtSimon}, we infer that for $0 < r \leq s \leq 2$ the following holds
		\begin{align}\label{eq:excmon} 
		\EE_C(T,r) \leq \left( \frac{s}{r} \right)^n \EE_C(T,s)
		\end{align}
		and 
		\begin{align}\label{eq:spherCylindExc}
		\EE_S(T,r) \leq \EE_C(T,r) + mr \kappa_T.
		\end{align}

		\begin{lem}\label{lem:excLessX}
			There are positive constants $\Cl{18}$ and $\Cl{19}$ such that for all $0<\sigma<1$ and $(T, \M) \in \TT$, the following holds
			$$\frac{\sigma^2}{\Cr{18}} \EE_C(T,1) -\kappa_T - \Aa
			\leq \int_{\BC_{1+\sigma}} X_{n+1}^2 \dT
			\leq \Cr{19}\sup_{\BC_{1+\sigma} \cap \spt(T)} X_{n+1}^2.$$
		\end{lem}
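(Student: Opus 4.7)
The upper bound is immediate. Since $\MM(T) \leq 3^n(1+m\omm_n)$ by the definition of $\TT$,
\[ \int_{\BC_{1+\sigma}} X_{n+1}^2 \dT \leq \sup_{\BC_{1+\sigma} \cap \spt(T)} X_{n+1}^2 \cdot \MM(T\LL \BC_{1+\sigma}) \leq 3^n(1+m\omm_n) \sup_{\BC_{1+\sigma} \cap \spt(T)} X_{n+1}^2, \]
so one may take $\Cr{19} := 3^n(1+m\omm_n)$.

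For the lower bound, which is a current-theoretic Caccioppoli-type inequality, the plan is to argue in two steps. Let $\pi_T$ denote orthogonal projection onto the approximate tangent $n$-plane $T_x T$ at a regular point $x$.

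\emph{Step 1: the excess controls the integrated tilt $\int |\pi_T \ee_{n+1}|^2 \dT$.} By the area formula for rectifiable currents, and the fact that the pushforward $\pp_\#(T \LL \BC_2)$ has only the positive integer multiplicities $m$ and $m-1$ (from the definition of $\TT$), the projection is cancellation-free, so $\EE_C(T,1) = \int_{\BC_1}(1 - \jac_T \pp) \dT$. A pointwise computation for an $n$-plane $\pi \subset \R^{n+k}$ graphical over $\R^n \times \{0\}$ (combined with the elementary inequality $\sqrt{\prod(1-u_i)} \geq 1 - \sum u_i$ for $u_i \in [0,1)$) gives $1 - \jac \pp|_\pi \leq \sum_{j=1}^k |\pi_\pi \ee_{n+j}|^2$. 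Since $\spt(T) \subset \M$ with $\M$ parametrized by $\PhiM$ satisfying $\|\PhiM\|_{\CC^2(\BB_2)} \leq C\Aa$, the plane $T_x T$ lies in $T_x\M$, which deviates from $\R^{n+1}\times\{0\}$ by $O(\Aa)$; hence $|\pi_T \ee_{n+j}|^2 \leq C\Aa^2$ for $j \geq 2$. Integrating gives
\[ \EE_C(T,1) \leq \int_{\BC_1} |\pi_T \ee_{n+1}|^2 \dT + C\Aa. \]

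\emph{Step 2: the integrated tilt is controlled by $\int X_{n+1}^2 \dT$.} Choose a smooth radial cutoff $\eta(x) := \chi(|\pp(x)|)$ with $\chi \equiv 1$ on $[0,1]$, $\chi \equiv 0$ outside $[0,1+\sigma]$, and $|\chi'| \leq 2/\sigma$, and apply the first variation of mass to the vector field $\xi := \eta^2 X_{n+1} \ee_{n+1}$. A direct calculation gives
\[ \Div_T \xi = \eta^2 |\pi_T \ee_{n+1}|^2 + 2\eta X_{n+1} D\eta \cdot \pi_T \ee_{n+1}, \]
and the first variation formula reads
\[ \int \Div_T \xi \dT = -\int \xi \cdot \vec{H} \dT - \int_{\spt(\partial T)} \xi \cdot \vec{\nu} \dd \HH^{n-1}. \]
Applying Cauchy--Schwarz to absorb the cross term into the LHS, using $|\vec{H}| \leq \Cr{23}\Aa$ from the remark after Lemma \ref{lem:monFormula}, and using the pointwise bound $|X_{n+1}| = |\varphi_T(z)| \leq \tfrac{1}{2}\kappa_T|z|^2 \leq 2\kappa_T$ on $\spt(\partial T) \cap \BC_2$ (from $\varphi_T(0)=0$, $D\varphi_T(0)=0$, $|D^2 \varphi_T|\leq \kappa_T$) to estimate the two terms on the right, together with $|D\eta| \leq 2/\sigma$, one obtains
\[ \int_{\BC_1} |\pi_T \ee_{n+1}|^2 \dT \leq \frac{C}{\sigma^2} \int_{\BC_{1+\sigma}} X_{n+1}^2 \dT + C(\Aa + \kappa_T). \]

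Combining Steps 1 and 2 and multiplying by $\sigma^2$ gives $\sigma^2 \EE_C(T,1) \leq C \int_{\BC_{1+\sigma}} X_{n+1}^2 \dT + C(\Aa + \kappa_T)$, which, upon choosing $\Cr{18}$ suitably and rearranging, yields the claimed lower bound. The main obstacle is Step 1: justifying the identification $\EE_C(T,1) = \int (1 - \jac_T\pp) \dT$ (i.e.~controlling the ``cancellation'' term $\int \jac_T\pp \dT - \MM(\pp_\# T)$) without already having established graphical structure of $T$, and carefully propagating the ambient curvature correction $O(\Aa)$ coming from $T \subset \M$ rather than $T \subset \R^{n+1}$. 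Step 2 is a standard manipulation; the only subtleties are the mean-curvature and boundary contributions, which are both absorbed into $\Aa$ and $\kappa_T$.
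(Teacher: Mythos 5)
Your upper bound is correct and essentially identical to the paper's: $\Cr{19} = 3^n(1+m\omm_n) \geq \MM(T)$. Your Step~2 (first variation of the vector field $\eta^2 X_{n+1}\ee_{n+1}$, absorb the cross term, estimate the mean-curvature and boundary contributions) is also sound and is in fact the same mechanism as the paper's area comparison lemma~\ref{lem:areacomp}, whose proof tests stationarity against the vector field $\Xi = (x_{n+1}-\psi_T)\lambda^2 e_{n+1}$. The paper chooses a vector field that \emph{vanishes} on $\spt(\partial T)$ to kill the boundary term outright, whereas you keep it and estimate $|X_{n+1}|$ there; either works. (Minor slip: on $\spt(\partial T)$ one has $X_{n+1}=\psi_T(z)$, not $\varphi_T(z)$; $\varphi_T$ is the $n$-th coordinate.)

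The genuine gap is in your Step~1. The assertion that ``the projection is cancellation-free, so $\EE_C(T,1) = \int_{\BC_1}(1-\jac_T\pp)\dT$'' does not follow from the multiplicity structure of $\pp_\#(T\LL\BC_2)$ in the definition of~$\TT$. Those conditions fix only the \emph{net} multiplicity of the pushforward ($m$ on one side, $m-1$ on the other); they do not preclude folds, i.e.\ regions where $T$ has overlapping sheets of opposite projected orientation that cancel in $\pp_\#T$. In general one only has
$\MM(\pp_\#(T\LL\BC_1)) \leq \int_{\BC_1}|\jac_T\pp|\dT$, so
$\EE_C(T,1) = \int_{\BC_1}(1-|\jac_T\pp|)\dT + \bigl(\int_{\BC_1}|\jac_T\pp|\dT - \MM(\pp_\#(T\LL\BC_1))\bigr)$,
and your pointwise tilt bound controls only the first summand. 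The second (``overlap'') summand is nonnegative and is what your Step~1 fails to control. Note also that Lemma~\ref{lem:excLessX} is stated with no smallness hypothesis on $\EE_C(T,1)$, so one cannot invoke a small-excess regularity statement to dispose of folds, and the paper's Theorem~\ref{thm:graph} (which would give graphicality) is proved \emph{after}, and using, this lemma. The paper sidesteps the whole question by building the Lipschitz competitor $F$ that vertically collapses $T\LL\BC_1$ onto the $\Phii$-graph of $\{X_{n+1}=0\}$, glued to the identity across the collar $\BC_{1+\tau}\setminus\BC_1$, and then invoking area-minimality:
$\MM(T\LL\BC_{1+\tau}) \leq \MM(F_\#(T\LL\BC_{1+\tau})) + \MM(R_T)$. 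This single comparison controls tilt and potential cancellation at once, and only then is the first-variation estimate (Lemma~\ref{lem:areacomp}) brought in to bound the mass gain on the collar $A$. You flag the cancellation issue yourself as ``the main obstacle,'' but as stated it remains unresolved, so the proof as written has a real hole precisely there.
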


		Not only it is true, that the height bounds the excess, but also the other way around. The following estimates rely on an area comparison lemma (Lemma \ref{lem:areacomp}). Its proof will give us a constant $\Cl{10}$ which we will use to prove the following
		
		\begin{lem}\label{lem:XLessExc}
			If $0<\sigma<1$, $\Aa^2 \leq \sigma/8$ and	$ \Aa \leq (7\Cr{23} + \Cr{10}+1)^{-1}$
			then there are positive constants $\Cl{21}$ and $\Cl{22} \geq 2$ such that for $(T, \M) \in \TT$, the following holds
			\begin{enumerate}[(i.)]
				\item $ \displaystyle \frac{\sigma^n}{\Cr{21}} \sup_{\BC_{1-\sigma} \cap \spt(T)}X_{n+1}^2  \leq \int_{\BC_{1-\sigma/2}} X_{n+1}^2 \dT + \kappa_T.$
				\item $ \displaystyle \int_{\BC_{1-\sigma/2}} X_{n+1}^2 \dT \
				\leq \frac{\Cr{22}-1}{\sigma^{n+1}} \left(\EE_C(T,1)+\kappa_T +\Aa \right).$
			\end{enumerate}
			In particular, we have
			$$ \sup_{\BC_{1-\sigma} \cap \spt(T)}X_{n+1}^2  \leq \frac{\Cr{21}\Cr{22}}{\sigma^{2n+1}} \big(\EE_C(T,1)+\kappa_T +\Aa \big).$$
		\end{lem}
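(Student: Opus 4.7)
The plan is to prove the two bounds separately, then deduce the ``In particular'' clause by substituting (ii.) into (i.) and using that $\sigma^{n+1} \leq 1$:
\[
\sup_{\BC_{1-\sigma}\cap\spt(T)} X_{n+1}^2 \leq \frac{\Cr{21}}{\sigma^n}\Big(\frac{\Cr{22}-1}{\sigma^{n+1}}(\EE_C(T,1)+\kappa_T+\Aa)+\kappa_T\Big) \leq \frac{\Cr{21}\Cr{22}}{\sigma^{2n+1}}(\EE_C(T,1)+\kappa_T+\Aa).
\]
Inequality (i.) is a reverse H\"older-type bound derived from the monotonicity formula at the point realising the supremum; inequality (ii.) is a Caccioppoli-type bound derived from the first variation of $T$ combined with the area-minimizing property, and amounts morally to a Poincar\'e inequality on the current with boundary data of size $\kappa_T$.

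For (i.), I would choose $x_0\in \BC_{1-\sigma}\cap \spt(T)$ realising $h:=\sup_{\BC_{1-\sigma}\cap \spt(T)}|X_{n+1}|$. Since $\Theta^n(\|T\|,x_0)\geq 1/2$ and $\Aa+\kappa_T \ll 1$, Lemma \ref{lem:expMassMonoton} yields $\|T\|(\BB_r(x_0))\geq c_0\, r^n$ for $0<r\leq \sigma/4$. When $h\geq \sigma/2$, set $r:=\sigma/4$; since $X_{n+1}$ is $1$-Lipschitz, $|X_{n+1}(x)|\geq h - r\geq h/2$ on the whole of $\BB_r(x_0)\subset \BC_{1-\sigma/2}$, so
\[
\int_{\BC_{1-\sigma/2}} X_{n+1}^2 \dT \geq c_0 (h/2)^2 (\sigma/4)^n \geq c'\, \sigma^n h^2,
\]
which is (i.). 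When $h<\sigma/2$, the supremum must be traced back to boundary data: a maximum-principle / barrier argument for area-minimizing currents in $\M$ (comparing $X_{n+1}$ restricted to $\spt(T)$ with a solution of $\Delta w = -\Cr{23}\Aa$) bounds $|X_{n+1}|$ on $\spt(T)\cap \BC_{1-\sigma}$ by $\sup_{\spt(\partial T)}|X_{n+1}|$ plus an $O(\Aa)$ correction. Since $X_{n+1}|_{\spt(\partial T)}=\varphi_T$ satisfies $|\varphi_T|\leq \kappa_T/2$ thanks to $\varphi_T(0)=0$, $D\varphi_T(0)=0$ and $|D^2\varphi_T|\leq \kappa_T$, this gives $h\lesssim \kappa_T + \Aa$, and the hypothesis $\Aa^2\leq \sigma/8$ makes $\sigma^n h^2 \lesssim \kappa_T + \sigma^n\Aa^2$ absorbable into the right-hand side of (i.).

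For (ii.), I would test the first variation of $T$ against $V:=\eta^2\, X_{n+1}\, \ee_{n+1}$, where $\eta=\eta(|\pp X|)$ is a smooth radial cutoff with $\eta\equiv 1$ on $\{|\pp X|\leq 1-\sigma/2\}$, $\operatorname{spt}\eta\subset \{|\pp X|<1-\sigma/4\}$ and $|\nabla\eta|\lesssim \sigma^{-1}$, after subtracting its $N\M$-component at cost $O(\Aa |V|)$. A direct expansion gives
\[
\Div^T V = \eta^2 |\pi_x \ee_{n+1}|^2 + 2\eta X_{n+1}(\pi_x \nabla\eta)\cdot \ee_{n+1},
\]
where $\pi_x$ is orthogonal projection onto the approximate tangent plane $\vec T(x)$. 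Equating $\int \Div^T V\, \dT$ with $\int V\cdot \vec H_\M\, \dT + \int V\cdot \nu\, d\|\partial T\|$, the boundary term is $O(\kappa_T)$ (using $|X_{n+1}|\leq \kappa_T$ on $\spt(\partial T)$ and the uniform $\HH^{n-1}$-bound supplied by Lemma \ref{lem:areacomp}), while the mean-curvature term is $O(\Aa \int|V|\, \dT)$. Summing the tilt identity $\sum_{j=n+1}^{n+k}|\pi_x \ee_j|^2 = |\pi_x-\pp|^2$ over the $k$ vertical directions and invoking Corollary \ref{cor:sphericalExc} together with \eqref{eq:spherCylindExc} bounds the $L^1$-mass of this tilt by $2\EE_C(T,1)+O(\Aa+\kappa_T)$. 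Cauchy--Schwarz absorption of the cross term, combined with a ladder of cutoffs at scales $\sigma 2^{-j}$ to convert $\int \eta^2|\pi_x\ee_{n+1}|^2\, \dT$ into $\int X_{n+1}^2\, \dT$ via a Poincar\'e-type estimate on the current, then yields $\int_{\BC_{1-\sigma/2}} X_{n+1}^2\, \dT \leq C\sigma^{-(n+1)}(\EE_C+\kappa_T+\Aa)$, which is (ii.).

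The main obstacle is the bookkeeping in (ii.): the ambient curvature generates an $O(\Aa)$ error in every first-variation identity, and it must be kept subcritical with respect to the $\sigma^{-n-1}$ scaling --- this is precisely what the hypothesis $\Aa^2\leq \sigma/8$ secures. The uniform control of the $\HH^{n-1}$-measure of $\spt(\partial T)\cap\BC_1$ needed to bound the boundary contributions by $\kappa_T$ is the content of Lemma \ref{lem:areacomp}, which is where the constraint $\Aa\leq (7\Cr{23}+\Cr{10}+1)^{-1}$ enters. Finally the Cauchy--Schwarz absorption leaves a strict gap between absorbed and absorbing constants, which explains the asymmetric appearance of $\Cr{22}-1$ in place of $\Cr{22}$ on the right-hand side of (ii.).
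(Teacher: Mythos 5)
Your overall plan (prove (i.) and (ii.) separately, then substitute) is fine, but both halves diverge substantially from the paper's argument and each contains a genuine gap.

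\textbf{Part (i.).} Your density argument for the case $h:=\sup_{\BC_{1-\sigma}\cap\spt(T)}|X_{n+1}|\geq\sigma/2$ is correct and rather clean. The case $h<\sigma/2$ is the problem. A maximum principle for $X_{n+1}$ on $\spt(T)\cap\BC_{1-\sigma}$ compares the interior supremum with the supremum over the \emph{full} boundary, which includes the lateral piece $\partial\BC_{1-\sigma}\cap\spt(T)$; you have no a priori control of $|X_{n+1}|$ there, so the comparison with $\sup_{\spt(\partial T)}|X_{n+1}|\lesssim\kappa_T$ does not follow. And even granting $h\lesssim\kappa_T+\Aa$, the right-hand side of (i.) contains no $\Aa$; you cannot absorb $\sigma^n\Aa^2$ into $\int X_{n+1}^2\,\dT+\kappa_T$ without further information. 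The paper avoids both issues at once: it applies Allard's local sub--mean-value inequality for $T$-subharmonic functions to the truncated compositions $f\circ((-1)^i X_{n+1}+X_{n+1}^2)$, which are subharmonic on $\spt(T)$ up to errors controlled by the hypothesis $\Aa\leq(7\Cr{23}+\Cr{10}+1)^{-1}$ and, crucially, \emph{vanish on a neighborhood of $\spt(\partial T)$} because $|X_{n+1}|\leq 2\kappa_T$ there. This gives a genuinely local estimate $\sup_{\BB_\rho(a)}\lesssim\rho^{-n}\int_{\BB_{2\rho}(a)}$ which, taken for $a$ on the axis $\pp^{-1}(0)$ and $\rho\sim\sigma$, produces (i.) directly, with the $40\kappa_T$ term coming from undoing the truncation, and without ever seeing the lateral boundary of the cylinder.

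\textbf{Part (ii.).} Testing the first variation with $V=\eta^2X_{n+1}\ee_{n+1}$ (the same vector field, modulo details, that the paper uses to prove Lemma \ref{lem:areacomp}) yields an identity of the schematic form
$$\int\eta^2 P_{n+1,n+1}\,\dT \;+\; 2\int\eta X_{n+1}(\pi_x\nabla\eta)_{n+1}\,\dT \;=\; O(\Aa)+O(\kappa_T),$$
and Cauchy--Schwarz absorption of the cross term into $\int\eta^2 P_{n+1,n+1}$ produces a bound of the form \emph{tilt}$\,\lesssim\,\sigma^{-2}\int X_{n+1}^2\,\dT +$err --- i.e.\ an inequality in the wrong direction. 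There is no way to rearrange this identity into a bound on $\int X_{n+1}^2\,\dT$ by the excess; the ``ladder of cutoffs plus Poincar\'e'' you invoke to reverse it is precisely the nontrivial content of the lemma and cannot be taken for granted. The paper's route is entirely different: after disposing of the easy case $\EE_C(T,1)+\kappa_T+\Aa\gtrsim\sigma^{n+1}$ by the trivial bound $\spt(T)\subset\BB_3$, it shows in the remaining case that $\BC_{1-\sigma/2}\cap\spt(T)\subset\BB_1$ (using the height bound from (i.), the hypothesis $\Aa^2\leq\sigma/8$, and the convex-hull property of $\Psi$-minimizers via Hardt's Corollary 4.2, together with a continuity argument to locate $\spt(\partial T)$), then uses the elementary pointwise decomposition
$$X_{n+1}^2=\langle X,\ee_{n+1}\rangle^2\leq 2|X^\perp|^2|X|^{-n-2}+2\bigl\|\ee_{n+1}\otimes\ee_{n+1}-\nu\otimes\nu/|\nu|^2\bigr\|^2\quad\text{on }\BB_1$$
to bound $\int_{\BB_1}X_{n+1}^2\,\dT$ by the spherical excess (via Corollary \ref{cor:sphericalExc}) plus the tilt excess plus $O(\Aa)$. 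No Caccioppoli or Poincar\'e inequality is involved; the hypothesis $\Aa\leq(7\Cr{23}+\Cr{10}+1)^{-1}$ enters through the $T$-subharmonicity in (i.), not through Lemma \ref{lem:areacomp} as you suggest.
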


	\subsection{Splitting of the minimizing current $T$}
	Here we prove the fact, that if a current has no boundary, its excess is not too large and the projection has multiplicity $j$, then it consists of $j$ many layers whose projection are of multiplicity $1$.
		\begin{lem}\label{lem:decomp}
			Let $j \in \N_+$, $V \subset \R^n$ be open and consider the cylinder $\Gamma:= \{x \in \R^{n+1}: (x_1, \dots, x_n) \in V\}$  and the modified version $\tilde{\Gamma} := \{(x, \Phii(x)) \in \M: \pp(x) \in V \}$. If $S \in \RR_n(\tilde{\Gamma})$ satisfies
			\begin{itemize}
				\item $(\partial S)\LL \tilde{\Gamma}=0$
				\item $\pp_\#S=j(\EE^n \LL V)$
				\item $\MM(S)-\MM(\pp_\# S) < \HH^n(V),$
			\end{itemize}
			then for all $i \in \{1,\dots,j\}$ there exists $S_i \in \RR_n(\tilde{\Gamma})$ such that
			\begin{equation*}
			\begin{aligned}
			\tilde{\Gamma} \cap \spt(\partial S_i) = \emptyset,\\
			S= \sum_{i=1}^j S_i,
			\end{aligned}
			\qquad \qquad
			\begin{aligned}[c]
			\pp_\#S_i = \EE^n \LL V,\\
			\lVert S \rVert = \sum_{i=1}^j \lVert S_i \rVert.
			\end{aligned}
			\end{equation*}
		\end{lem}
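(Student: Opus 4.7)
The plan is to slice $S$ along the projection $\pp$, show that the mass hypothesis prevents cancellation on almost every fibre, and then peel off the $j$ layers by height ordering.

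First, by slicing theory, for $\HH^n$-a.e.\ $y\in V$ the slice $\langle S,\pp,y\rangle$ is a $0$-dimensional integer current on the fibre $\pp^{-1}(y)\cap\tilde{\Gamma}$ (this fibre is one-dimensional in $\M$, parametrised by the $(n+1)$-st coordinate). The identity $\partial\langle S,\pp,y\rangle=-\langle\partial S,\pp,y\rangle$ combined with $(\partial S)\LL\tilde{\Gamma}=0$ shows that these slices are cycles inside the fibre, and the hypothesis $\pp_{\#}S=j\EE^n\LL V$ forces their signed total multiplicity to equal $j$.

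Second, applying the coarea inequality $\MM(S)\geq\int_V\MM(\langle S,\pp,y\rangle)\,dy$ together with $\MM(\pp_{\#}S)=j\HH^n(V)$ yields
$$\int_V\bigl(\MM(\langle S,\pp,y\rangle)-j\bigr)\,dy\ \leq\ \MM(S)-\MM(\pp_{\#}S)\ <\ \HH^n(V).$$
The integrand is a.e.\ a nonnegative even integer, equal to twice the total mass of the negatively-oriented part of the slice (if multiplicities $m_i\in\Z$ satisfy $\sum m_i=j$, then $\sum|m_i|=j+2\sum_{m_i<0}|m_i|$). The strict inequality therefore confines cancellation to an exceptional set $E\subset V$ with $\HH^n(E)<\HH^n(V)/2$, and off $E$ all slice multiplicities are positive integers summing to $j$.

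Third, on $V\setminus E$ I order the points of $\spt S$ in each fibre, counted with multiplicity, by the $(n+1)$-st coordinate on $\tilde{\Gamma}$, and define $S_i$ to be the rectifiable current whose slice over $y$ is the $i$-th such point with multiplicity $+1$. To handle the cancelling components that live over $E$, I would decompose $S$ into its indecomposable integral cycles via Federer's structure theorem (each such component being automatically boundary-free in $\tilde{\Gamma}$) and absorb those pieces with zero signed projection into a single sheet, say $S_1$. The resulting $S_1,\dots,S_j$ are rectifiable with $\pp_{\#}S_i=\EE^n\LL V$, and the mass additivity $\lVert S\rVert=\sum_i\lVert S_i\rVert$ follows from the disjointness of their assigned supports.

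The main obstacle is verifying $\spt(\partial S_i)\cap\tilde{\Gamma}=\emptyset$ for each $i$ separately: the height ordering alone might introduce artificial boundaries where two sheets cross on $\partial E$, even though $\sum_i\partial S_i=\partial S$ vanishes in $\tilde{\Gamma}$. The remedy is to let the indecomposable decomposition dictate the grouping into the $S_i$'s, so that each $S_i$ is a union of whole indecomposable cycles of $S$ (which are individually boundary-free in $\tilde{\Gamma}$), while the strict mass bound guarantees that one can arrange this grouping so that each $S_i$ projects to exactly $\EE^n\LL V$ with multiplicity one.
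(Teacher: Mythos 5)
Your proposal attempts a proof from first principles via slicing, whereas the paper's proof is a reduction: it projects $S$ to $\R^{n+1}$ by $\tilde p$, verifies the three hypotheses are preserved (projection only decreases mass), applies the decomposition lemma of Hardt--Simon for the flat codimension-one cylinder, and lifts back via $(\id,\Phii)_\#$. Steps~1 and~2 of your argument (slicing by $\pp$, the coarea estimate, the parity observation $\sum|m_i| = j + 2\sum_{m_i<0}|m_i|$) are correct and in fact recapitulate the estimates that underlie the Hardt--Simon proof.

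The gap is in Step~3, and you have correctly identified it yourself but not actually closed it. The difficulty is that a ``height ordering'' over $V\setminus E$ produces measurable multifunctions whose associated currents may well pick up boundary where the sheets cross or touch, and the verification $\spt(\partial S_i)\cap\tilde\Gamma = \emptyset$ for each $i$ does not follow from slicing. Your proposed remedy via Federer's decomposition into indecomposable cycles has two problems. First, $S$ is not a cycle, only $(\partial S)\LL\tilde\Gamma = 0$, so Federer's decomposition theorem does not directly apply; the indecomposable decomposition of integral currents with boundary is a more delicate matter and the pieces need not be individually boundaryless inside $\tilde\Gamma$. Second, even for cycles, indecomposable components are not guaranteed to project onto $V$ with multiplicity $1$ or $0$, so the assertion that ``the strict mass bound guarantees one can arrange this grouping so that each $S_i$ projects to exactly $\EE^n\LL V$'' is unsupported.

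The deeper point your argument misses is that this lemma is a codimension-one phenomenon. Because $S$ is codimension one in $\M$ (equivalently, $\tilde p_\# S$ is codimension one in $\R^{n+1}$), the Constancy Theorem shows that its complement in the cylinder is partitioned into open regions carrying locally constant integer multiplicities; the nested level-set (BV) structure of these regions is what produces the ordered, mass-additive, boundaryless sheets. Your slicing argument never uses $k=1$ relative to the cylinder, and as such cannot produce the decomposition — the same slicing estimates would hold in higher codimension, where the conclusion fails. The paper circumvents all of this by projecting onto $\R^{n+1}$ (a bi-Lipschitz homeomorphism of $\tilde\Gamma$ onto the flat cylinder, since $\tilde\Gamma$ is a graph) and citing Hardt--Simon's codimension-one decomposition; you would need to replace your Step~3 with the BV/level-set argument, or simply perform the same reduction.
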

		
		\begin{proof}
			Denote by $\tilde{p}$ the projection to $\R^{n+1}$ and consider
			$ \tilde{S} := \tilde{p}_\# S.$ Then we have
			
			\begin{minipage}[t]{\linewidth}
				\centering
				\begin{minipage}{0.63\linewidth}
					\begin{itemize}
						\item $(\partial \tilde{S}) \LL \Gamma = \left( \tilde{p}_\#(\partial S) \right) \LL \Gamma = \tilde{p}_\#((\partial S) \LL \tilde{\Gamma})= 0$
						\item $\pp_\# \tilde{S} = \pp_\# S=j(\EE^n \LL V)$
						\item $\MM(\tilde{S}) - \MM(\pp_\# \tilde{S}) \leq \MM(S)-\MM(\pp_\#S) \leq \HH^n(V)$.
					\end{itemize}
					Therefore, we can argue as in the original paper \cite{hardtSimon} to deduce a decomposition for $\tilde{S}$: There are	$\tilde{S}_i \in \RR_n(\R^{n+1})$ such that
					\begin{equation*}
					\begin{aligned}
					{\Gamma} \cap \spt(\partial \tilde{S}_i) = \emptyset,\\
					\tilde{S}= \sum_{i=1}^j \tilde{S}_i,
					\end{aligned}
					\qquad \qquad
					\begin{aligned}[c]
					\pp_\#S_i = \EE^n \LL V,\\
					\lVert \tilde{S} \rVert = \sum_{i=1}^j \lVert \tilde{S}_i \rVert.
					\end{aligned}
					\end{equation*}
				\end{minipage}~
				\begin{minipage}[t]{0.35\linewidth}
					\vspace*{-3cm} 
					\includegraphics[scale=0.9]{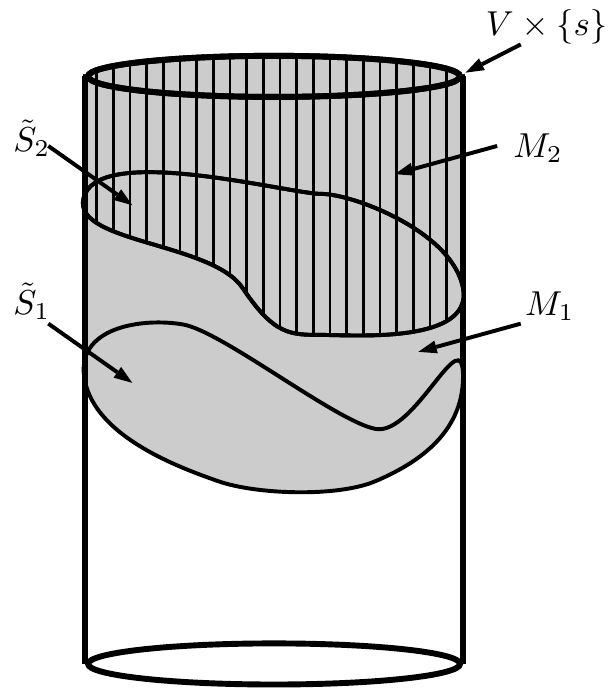}
				\end{minipage}
			\end{minipage}
			We conclude by putting $S_i:= (\id, \Phii)_\# \tilde{S}_i$.
		\end{proof}
	
		In the situation of Theorem \ref{thm:graph}, each of these $S_i$ is area minimizing in $\M$ and so the smallness of the excess implies that locally the  function, whose graph describe $\spt(S_i)$, fulfills an elliptic equation. Thus, we can deduce the following Schauder estimate:
		
		\begin{lem}\label{lem:minSurfEqu}
			Let $U$ be an open neighborhood of $0 \in \R^n$ and $u: U \to \R$ such that $u(0)=0$, $Du(0)=0$ and $\graph (u, \Phii) \subset \M$ is a minimal surface in $\M$. Then there is $r >0$ such that
			$$ r \lVert D u \rVert_{\CC^0(B_{r/2})} + r^2\lVert D^2 u \rVert_{\CC^0(B_{r/2})} + r^{2+ \alpha}\left[ D^2u \right]_{\CC^\alpha(B_{r/2})}
			\leq \Cl{32} \left( \lVert u \rVert_{\CC^0(B_r)} + \lVert D^2\Phii \rVert^*_{\CC^{\alpha}(B_r)} \right),$$
			where $$\lVert f\rVert^*_{\CC^{\alpha}(\Omega)} 
			:= \sup_{x \in \Omega} \dist(x, \partial \Omega)^2|f(x)|
			+ \sup_{\substack{x,y \in \Omega\\ x \neq y}} \max\big\{\dist(x, \partial \Omega), \dist(y, \partial \Omega)\big\}^{2+\alpha} \frac{|f(x)-f(y)|}{|x-y|^\alpha}.$$
		\end{lem}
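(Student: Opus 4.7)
The plan is to derive the quasilinear PDE that governs $u$ from the minimality of $\graph(u,\Phii)\subset\M$ and then invoke a standard weighted interior Schauder estimate on the ball $B_r$. First I would parametrize the graph by $F(y)=(y,u(y),\Phii(y,u(y)))$, compute the induced metric $g_{ij}=\partial_i F\cdot\partial_j F$ and its inverse $g^{ij}$, and write the minimality of the graph in $\M$ (equivalently, the mean curvature vanishes in $\M$) as the Euler--Lagrange equation
\begin{equation*}
    a^{ij}\bigl(y,u(y),Du(y)\bigr)\,D_{ij}u(y) \;=\; b\bigl(y,u(y),Du(y),\Phii,D\Phii,D^2\Phii\bigr),
\end{equation*}
where $a^{ij}$ is smooth in its arguments and satisfies $a^{ij}(y,0,0)=\delta^{ij}+O(|\Phii|+|D\Phii|)$, so it is uniformly elliptic once $|u|,|Du|$ are small; the right-hand side $b$ depends linearly on $D^2\Phii$ modulo smooth lower-order terms.

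Next, since $u(0)=0$ and $Du(0)=0$ and $u$ is smooth (a minimal surface in a smooth manifold is smooth by interior elliptic regularity), one can choose $r>0$ small enough that $|u|+|Du|$ is as small as we wish on $B_r$. This fixes the ellipticity constants and gives a Hölder norm $[a^{ij}]_{\CC^\alpha(B_r)}$ controlled by $\|u\|_{\CC^{1,\alpha}(B_r)}$, so the operator $L v := a^{ij}(y,u,Du)D_{ij}v$ has frozen coefficients with the required Hölder regularity. Consequently $u$ satisfies a linear equation $Lu=b$ with $b\in\CC^\alpha(B_r)$ and, because $b$ depends smoothly on its arguments with $(u,Du)$ small, one gets
\begin{equation*}
    \|b\|^*_{\CC^{\alpha}(B_r)} \;\leq\; \Cr{32}\bigl(\|u\|_{\CC^{0}(B_r)} + \|D^2\Phii\|^*_{\CC^{\alpha}(B_r)}\bigr),
\end{equation*}
after a bit of bookkeeping on the weighted norms (the factors of $\dist(\cdot,\partial B_r)^2$ and $\dist(\cdot,\partial B_r)^{2+\alpha}$ absorb exactly the derivatives of $u$ via the preceding weighted estimates on lower order terms in a small-$r$ regime).

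The final step is to apply the interior weighted Schauder estimate (cf.\ Gilbarg--Trudinger, Theorem 6.2) for linear uniformly elliptic operators with $\CC^\alpha$ coefficients to the equation $Lu=b$ on $B_r$; this yields a bound on $\|u\|^{**}_{\CC^{2,\alpha}(B_r)}$ in terms of $\|u\|_{\CC^0(B_r)}$ and $\|b\|^*_{\CC^\alpha(B_r)}$, where the weight is $\dist(\cdot,\partial B_r)^{k+\alpha}$ on the $k$-th derivative. Restricting this inequality to $B_{r/2}$ — on which $\dist(\cdot,\partial B_r)\geq r/2$ — converts the weighted norms into the unweighted norms appearing in the statement, with the correct scalings $r,r^2,r^{2+\alpha}$ in front of $\|Du\|_{\CC^0}$, $\|D^2u\|_{\CC^0}$, $[D^2u]_{\CC^\alpha}$ respectively.

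The main obstacle is keeping track of all lower-order terms in $b$ and in the coefficients $a^{ij}$: they involve $u$, $Du$, $\Phii$, $D\Phii$, $D^2\Phii$ nonlinearly, so one has to verify that each of them, when measured in the starred norm, is dominated by the two terms on the right-hand side of the conclusion. This is where the smallness of $r$ (hence of $\|u\|_{\CC^1(B_r)}$) must be used to absorb the self-referential contributions of $u$ and $Du$ back into the left-hand side; once this absorption is made, the quasilinear Schauder estimate becomes a clean linear Schauder estimate for a frozen-coefficient operator and the proof is essentially a direct application of classical elliptic theory.
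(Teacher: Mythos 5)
Your proposal follows the same strategy as the paper: write the minimality of $\graph(u,\Phii)$ in $\M$ as a quasilinear Euler--Lagrange equation of the form $a^{ij}D_{ij}u=b$ (the paper phrases it as $\Div(Du/\sqrt{1+|Du|^2})=\sum a_{ij}\partial_{ij}u+b$ using the Schoen--Simon parametric integrand), use the smallness of $|u|$, $|Du|$ near the origin to freeze the coefficients and secure uniform ellipticity, bound $\lVert b\rVert^*_{\CC^\alpha}$ by $\lVert D^2\Phii\rVert^*_{\CC^\alpha}$, and then apply the interior weighted Schauder estimate from Gilbarg--Trudinger Theorem~6.2. The rescaling step that converts the weighted norm into the unweighted norms on $B_{r/2}$ with prefactors $r$, $r^2$, $r^{2+\alpha}$ is also the same, so the two proofs coincide up to notational presentation of the parametric integrand.
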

		
		\begin{proof}
			We use the Euler-Lagrange equation in the form of Schoen-Simon in chapter 1 of \cite{SchoenSimon}. Then we use Gilbarg-Trudinger \cite[Theorem 6.2]{gilbarg}  to deduce the estimate. Indeed, we define
			\begin{align*}
			h&: \R^{n\times(2n+k)} \to \R,\ A \mapsto \sqrt{\det(A^t A)}\\
			\phi&: \graph(u) \times \R^n \to \R^{n\times(2n+k)},\ (z,v) \mapsto \big( \id, v^t, D\Phii(z)(\id, v)^t \big)\\
			g&: S^n \cap \left\{z_{n+1} \geq \frac{1}{\sqrt{1+R^2}} \right\} \to \BB_R, \ z \mapsto \frac{-1}{z_{n+1}}(z_1, \dots, z_n)
			\end{align*}
			and finally
			\begin{align*}
			F: \graph(u) &\times \big( \R^{n+1} \cap \{ p_{n+1} \sqrt{1+R^2} \geq |p|\} \big) \to \R \\ (z,p) &\longmapsto |p_{n+1}| \ h\big(\phi \big(z, g(p/|p|) \big) \big).
			\end{align*}
			Then $F$ is homogeneous in $p$ and moreover
			$$g \left( \frac{(-v,1)}{\sqrt{1+|v|^2}} \right) = v,$$
			$$\int_{\graph(u)} F(z, \nu(z)) \dd \mathcal{H}^n(z) = \int_{\UU_R} h\big(D \big(x, u(x), \phi(x, u(x)) \big) \big) \dd x = Vol(\graph(u,\Phii)).$$
			Then the Euler-Lagrange equation reads
			\begin{align}\label{eq:EulerLagrange}
			\Div \left( \frac{Du}{\sqrt{1+|Du|^2}} \right) = \sum_{i,j=1}^n a_{ij}(x) \partial_{ij} u(x) + b(x),
			\end{align}
			where 
			\begin{align*}
			a_{ij}(x) &= \int_0^1 \sum_{k=1}^{n+1} z_k\ \partial_{z_k p_i p_j} F(tz, -Du, 1) \dd t \qquad \textnormal{ evaluated in } \qquad z=(x, u(x)),\ p=(-Du, 1)\\
			b(x)&= \sum_{i=1}^{n+1} \partial_{z_i,p_i} F(z,p)  \qquad \textnormal{ evaluated in } \qquad z=(x, u(x)),\ p=(-Du, 1).
			\end{align*}
			In order to use elliptic estimates, we define
			$$A_{ij} := \frac{\delta_{ij} (1+ |Du|^2)-\partial_iu \partial_j u}{(1+ |Du|^2)^{3/2}} - a_{ij}$$
			and notice that for $r>0$ small enough, we have $|Du|+ \max_{ij}|a_{ij}| \leq 1/12$ in $\UU_r$ and therefore $\frac{1}{2} \id \leq A \leq 2 \id$ as a quadratic form. The only thing left to do is to notice that
			$$ \lVert b \rVert^*_{\CC^\alpha} \leq \Cl{5000} \lVert D^2\Phii \rVert^*_{\CC^{\alpha}}.$$ 
		\end{proof}
	
		\begin{defn}\label{def:minEquation}
			We define \eqref{eq:EulerLagrange} to be the \emph{Riemannian minimal surface equation}.
		\end{defn}

	\subsection{Proof of Theorem \ref{thm:graph}}\label{ssection:proof of graph}
		\begin{proof}[Proof of Theorem \ref{thm:graph}]
			~ \\
			\begin{minipage}[t]{\linewidth}
				\hspace{-0.2cm}
				\centering
				\begin{minipage}{0.68\linewidth}
					\vspace*{0cm} 
					Let $\varepsilon >0$ be as in \cite[Theorem 5.3.14]{federer} with $\lambda$, $\kappa$, $m$, $n$ replaced by $1$, $1$, $n$, $n+1$ respectively and we choose the parametric integrand to be the one associated to $\M$:
					\begin{align*}
					\Psi: \R^n \times \R \times \Lambda_n(\R^{n+k}) &\longrightarrow \R,\\
					( (x,y),\zeta) \longmapsto &|\zeta| \ h \left( \left(
					\begin{array}{c}
					\id\\
					D\Phii(x,y)\\
					\end{array}
					\right) \big( v_1 \cdots v_n \big) \right),
					\end{align*}
				\end{minipage}
				\hspace{0.05\linewidth}
				\begin{minipage}[t]{0.25\linewidth}
					\vspace*{-2.5cm} 
					\includegraphics[scale=0.3]{Pokeball}
				\end{minipage}
			\end{minipage}
			where $h$ is the map from Lemma \ref{lem:minSurfEqu} and $\{v_1, \dots, v_n\}$ are orthonormal and such that
			$$ v_1 \wedge \cdots \wedge v_n = \frac{\zeta}{|\zeta|}.$$
			We require $\Cr{30}$ to fulfil $(4\Cr{30})^{-2n-3} \leq \Leb^n(\VV_T)$ implying that $\spt(\partial T) \cap \pp^{-1}(\VV_{\sigma_T/3} \cup \WW_{\sigma_T/3}) = \emptyset$, because $\kappa_T < 9^{-n}\sigma_T$. Indeed,
			$$\kappa_T \leq \left( \frac{\sigma_T}{\Cr{30}} \right)^{2n+3}
			\leq\frac{\sigma_T}{9^n} \frac{9^n}{4^{2n+2} \Cr{30}^{2n+3}} \leq\frac{\sigma_T}{9^n}.$$
			Then, we have
			$$\pp_\#\big(T \LL \pp^{-1}(\VV_{\sigma_T/3})\big)=m (\EE^n \LL \VV_{\sigma_T/3}) \quad \textnormal{and} \quad \pp_\#\big(T \LL \pp^{-1}(\WW_{\sigma_T/3})\big)=(m-1) (\EE^n \LL \WW_{\sigma_T/3})$$ 
			and we can apply Lemma \ref{lem:decomp}. We obtain for $i \in \{1,\dots,m\}$, $j \in \{1,\dots,m-1\}$ on $\M$-area minimizing currents $S_i$ and $\tilde{S}_j$ satisfying 
			$$\sum_{i=1}^m S_i= T \LL \pp^{-1}(\VV_{\sigma_T/3}) \quad \textnormal{and} \quad \sum_{j=1}^{m-1} \tilde{S}_j= T \LL \pp^{-1}(\WW_{\sigma_T/3}),$$
			$$\pp_\#S_i= \EE^n \LL \VV_{\sigma_T/3} \quad \textnormal{and} \quad \pp_\#\tilde{S}_j = \EE^n \LL \WW_{\sigma_T/3}.$$
			Denote again by $\tilde p$ the projection to $\R^{n+1}$. Then $\tilde p_\# S_i$ and $\tilde p_\# \tilde{S}_j$ are absolutely $\Psi$-minimizing. Now, we cover $\pp^{-1}(\VV_{\sigma_T/3})$ with cylinders $\BC_{\sigma_T/3}(x)$ for all $x \in \VV_{2 \sigma_T/3} \cap \spt(\pp_\#(S_i))$. In each of these cylinders, we want to use \cite[Corollary 5.3.15]{federer} replacing $\lambda$, $\kappa$, $m$, $n$, $r$, $S$ by $1$, $1$, $n$, $n+1$, $\sigma_T/3$, $\tauu_{-x\#} \tilde p_\#S_i$ respectively. To do so, we must have $(4\Cr{30})^{-2n-3} \leq (\varepsilon/2)^{n+1}$. As a result, we get in each cylinder a solution $u$ of the Riemannian minimal surface equation whose graph forms $\spt(\tilde p_\#S_i) \cap \BB_{\sigma_T/3}(x)$ and hence, $\graph(u, \Phii) \cap \BB_{\sigma_T/3}(x)=\spt(S_i) \cap \BB_{\sigma_T/3}(x)$ . These solutions yield a unique function $\vi$ whose graph on $\M$ is $\spt(S_i) \cap \pp^{-1}(\VV_{2\sigma_T/3}).$ As the integrand is smooth in $(x,y)$, so are the solutions. In $\pp^{-1}(\WW_{\sigma_T/3})$ we argue analogously. By construction of the splitting $\{S_i\}_i$, there is a numbering such that $(i.)$ holds.

			Now, we prove $(iii.)$. We want to use Lemma \ref{lem:minSurfEqu} and Lemma \ref{lem:XLessExc} with $\sigma = 2\sigma_T/3$. To do so, we notice that as $\Cr{30} \geq 12$, we have
			$$\Aa^2 \leq \frac{\Cr{30}}{12} \big( \EE_C(T,1)+\kappa_T + \Aa \big)^{1/(2n+3)}= \frac{\sigma_T}{12}
			= \frac{1}{8} \left( \frac{2}{3} \sigma_T \right).$$
			Thus,
			\begin{align}\label{eq:heightBound}
			\sup_{\BC_{1-2 \sigma_T /3}\cap \spt(T)} X_{n+1}^2
			\leq  \frac{\Cr{21}\Cr{22}}{\left( \frac{2}{3} \right)^{2n+1} \sigma_T^{2n+1}}\big(\EE_C(T,1) + \kappa_T + \Aa \big)
			\leq  \left( \frac{3}{2} \right)^{2n+1} \frac{\Cr{21}\Cr{22}}{ \Cr{30}^{2n+3}}  \sigma_T^2. 
			\end{align}
			Let $y \in \VV_T$. We differ between two cases. Either $y$ is near the boundary having distance to $\partial \VV$ which is comparable with $\sigma_T$, or $y$ lies more in the inner of $\VV$, then $\sigma_T$ is much smaller than the distance, but on the other hand, we can choose larger balls. More formally:\\
			\textit{Case 1: } $ \sigma_T < \dist(y, \partial \VV) < 2\sigma_T$.\\
			We define $\delta:= \dist(y, \partial \VV_{2\sigma_T /3})$. Notice that
			\begin{align*}
			\BB^n_\delta(y) \subset \VV_{2\sigma_T /3} \qquad \text{ and } \qquad 
			\delta = \dist(y, \partial \VV)-\frac{2}{3} \sigma_T 
			\geq \frac{1}{3}\dist(y, \partial \VV)
			\geq \frac{1}{3} \sigma_T.
			\end{align*}
			Lemma \ref{lem:minSurfEqu}, \eqref{eq:heightBound} and Lemma \ref{lem:XLessExc} (with $\sigma$ replaced by $2\sigma_T/3$)  then yield for $k \in \{1,2,3\}$
			\begin{align*}
			\big|D^k\vi(y) \big| &\leq \frac{2^k\Cr{32}}{\delta^k} \big(\sup_{\BB^n_\delta(y)} |\vi| +\lVert D^2\Phii \rVert^*_{\CC^{1}(B_\delta)} \big)\\
			&\leq 24\Cr{32} \left( \frac{1}{\sigma_T^k} \sup_{\BC_{1-2 \sigma_T /3}\cap \spt(T)} |X_{n+1}| +\frac{1}{\delta^k}\lVert D^2 \Phii \rVert^*_{\CC^{1}(B_\delta)} \right)\\
			&\leq 24\Cr{32}\left( \sqrt{ \frac{\Cr{21}\Cr{22}}{\left( \frac{2}{3} \right)^{2n+2k+1} \sigma_T^{2n+1}}\big(\EE_C(T,1) + \kappa_T + \Aa \big)} + \frac{1}{\delta^3}\lVert D^2\Phii \rVert^*_{\CC^{1}(B_\delta)}  \right)\\
			&\leq \frac{\Cr{33}}{\dist(y, \partial \VV)^{n+k+1/2}} \sqrt{\EE_C(T,1) + \kappa_T + \Aa }.
			\end{align*}
			
			\textit{Case 2: } $ \dist(y, \partial \VV) \geq 2\sigma_T$.\\
			We choose $\delta := \dist(y, \partial \VV)/2 \geq \sigma_T $. Notice that also in this case $\BB^n_\delta(y) \subset \VV_{2\sigma_T /3}$. Indeed, the following holds
			$$ \dist(y, \partial \VV_{2\sigma_T /3})
			= \dist(y, \partial \VV) - \frac{2}{3}\sigma_T 
			\geq 2\delta -\frac{2}{3}\delta > \delta.$$
			also  $\BB^n_\delta(y) \subset \overline{\VV_{\delta}}$. Therefore, Lemma \ref{lem:minSurfEqu} and Lemma \ref{lem:XLessExc} (with $\sigma$ replaced by $\delta$) imply
			\begin{align*}
			\big|D^k\vi(y) \big| &\leq \frac{2^k\Cr{32}}{\delta^k} \big(\sup_{\BB^n_\delta(y)} |\vi| + \lVert D^2\Phii \rVert^*_{\CC^{1}(B_\delta)} \big)\\
			&\leq  16\Cr{32}\left( \frac{1}{\sigma_T^k} \sup_{\BC_{1-2 \sigma_T /3}\cap \spt(T)} |X_{n+1}| + \frac{1}{\delta^k}\lVert D^2\Phii \rVert^*_{\CC^{1}(B_\delta)} \right)\\
			&\leq 16\Cr{32} \left( \sqrt{ \frac{\Cr{21}\Cr{22}}{\left( \frac{2}{3} \right)^{2n+2k+1} \sigma_T^{2n+1}}\big(\EE_C(T,1) + \kappa_T + \Aa \big)} + \frac{1}{\delta^3} \lVert D^2\Phii \rVert^*_{\CC^{1}(B_\delta)}  \right)\\
			&\leq \frac{\Cr{33}}{\dist(y, \partial \VV)^{n+k+1/2}} \sqrt{\EE_C(T,1) + \kappa_T + \Aa}.
			\end{align*}
			This shows $(iii.)$.
			
			$(iv.)$ is done as $(iii.)$.
			
			For $(v)$, we fix $i \in \{1,\dots,m\}$, $j \in \{1,\dots,m-1\}$ and abbreviate $v:= \vi$, $w:=\wj$.
			Additionally to the conditions before, we now require for $\Cr{30}$ to fulfil $\Cr{30}^{2n+3} \geq \Cr{21}\Cr{22}(2^{2/(n+2)}-1)^{-1}$.
			Then Lemma \ref{lem:XLessExc} implies that 
			\begin{align*}
			\sup_{\BC_{1-\sigma_T} \cap \spt(T)}X_{n+1}^2 
			&\leq \sigma_T^2 \frac{\Cr{21}\Cr{22}}{ \Cr{30}^{2n+3}} 
			\leq \sigma_T^2 (2^{2/(n+2)}-1).
			\end{align*}
			In the following, let $y \in \VV_T$ and $\delta := \dist(y, \VV_{2\sigma_T/3})$. Then we have
			$$\big|(y,v(y), \Phii(y, v(y)))\big|^2 = |y|^2 +v(y)^2 + |\Phii(y, v(y))|^2 \leq (1-\sigma_T)^2 + \sigma_T^2 + |D\Phii|^2 \leq 1+ |D\Phii|^2.$$ 
			Denote by $K:=\lVert D\Phii \rVert_{\CC^0(\BB_1)}$. Then $\pp^{-1}(\VV_T) \cap \spt(T) \subset \UU_{1+K}$.\hfill \refstepcounter{equation}(\theequation)\label{subsetSptT}\\
			Last, we let $\Cr{30}$ also fulfil $\Cr{30}^{2n+3} \geq 144 \left( \frac{3}{2} \right)^{2n+1}\Cr{32}^2 \Cr{21}\Cr{22}$. By Lemma \ref{lem:minSurfEqu}, the following holds
			\begin{align}
			\big|(y,v(y), \Phii(y,v(y)))\big|^{n+2} &= \big(|y|^2 +v(y)^2 + |\Phii(y,v(y))|^2 \big) ^{(n+2)/2} \notag \\
			&\leq \big(|y|^2 + (2^{2/(n+2)}-1)|y|^2 |D\Phii|^2(1+ |Dv|^2)|y|^2\big)^{(n+2)/2} \notag \\
			&\leq 2^{2+n/2} |y|^{n+2}, \label{eq:scalingDimension}
			\end{align}
			\begin{align}
			|Dv(y)|^2 &\leq  \left(\frac{2\Cr{32}}{\delta}\right)^2 \big(\sup_{\BB^n_\delta(y)} |\vi| +\lVert D^2\Phii \rVert^*_{\CC^{1}(B_\delta)} \big)^2 \notag \\
			&\leq \frac{8\Cr{32}^2}{(\frac{\sigma_T}{3} )^2} \frac{\Cr{21}\Cr{22}}{\left( \frac{2}{3} \right)^{2n+1} \sigma_T^{2n+1}}\big(\EE_C(T,1) + \kappa_T + \Aa \big) +\frac{8\Cr{32}^2}{\delta^2} \left( \lVert D^2\Phii \rVert^*_{\CC^1(B_\delta)} \right)^2 \notag \\
			&\leq 72\Cr{32}^2 \left( \left( \frac{3}{2} \right)^{2n+1}\frac{\Cr{21}\Cr{22}}{\Cr{30}^{2n+3}} +  \lVert D^2\Phii \rVert_{\CC^{\alpha}} \right)
			\leq 1. \label{eq:normDv(y)}
			\end{align}
			
			Now, we compute
			\begin{align}\label{eq:radialDeriv}
			\frac{\partial}{\partial r} \frac{v(y)}{|y|} 
			= \frac{y}{|y|} \left( \frac{Dv(y)}{|y|}-v(y) \frac{y}{|y|^3} \right) = \frac{yDv(y)-v(y)}{|y|^2}.
			\end{align}
			We notice that this is similar to the projection on span$\{(Dv(y),-1,0)\}$. Let $\zeta(y):= \frac{1}{\sqrt{1+|Dv(y)|^2}}(Dv(y),-1,0) \in \R^{n+k}$. Then
			\begin{align}\label{eq:projNormalVector}
			\langle \big(y, v(y), \Phii(y, v(y)) \big) , \zeta(y) \rangle
			= \frac{\langle y, Dv(y) \rangle -v(y)}{\sqrt{1+|Dv(y)|^2}}.
			\end{align}
			Moreover, the approximate tangent space of $\spt(T)$ at $(y, v(y), \Phii(y, v(y)) )$ is spanned by the vectors $\partial_iG(y)$ for $i \leq n$ and $G(y)=\big(y, v(y), \Phii(y,v(y)) \big)$. As $(Dv(y),-1,0)$ is normal to all of the $\partial_iG(y)$, we have that $\zeta(y)$ is normal to the approximate tangent space of $\spt(T)$ at $(y, v(y), \Phii(y, v(y)) )$. In particular,
			\begin{align}\label{eq:perp}
			\big| \langle \big(y, v(y), \Phii(y, v(y)) \big) , \zeta(y) \rangle \big| \leq \big| \big(y, v(y), \Phii(y, v(y)) \big)^\perp \big|,
			\end{align}
			where $X^\perp$ denotes the component orthogonal to the approximate tangent space of $T$.
			Therefore, we deduce by using \eqref{eq:radialDeriv}, \eqref{eq:projNormalVector}, \eqref{eq:normDv(y)}, \eqref{eq:scalingDimension}, \eqref{eq:perp} and \eqref{subsetSptT}
			\begin{align*}
			\int_{\VV_T} &\left( \frac{\partial}{\partial r} \frac{v(y)}{|y|} \right)^2 |y|^{2-n} \dd \Leb^n(y)\\
			&= \int_{\VV_T} \langle \big(y, v(y), \Phii(y, v(y)) \big) , \zeta(y) \rangle^2 \ \frac{1+|Dv(y)|^2}{|y|^{n+2}} \dd \Leb^n(y)\\
			&\leq \int_{\VV_T} \big| \big(y, v(y), \Phii(y, v(y)) \big)^\perp \big|^2 \frac{2^{2+n/2} \sqrt{2}}{\big|(y,v(y), \Phii(y,v(y)) \big|^{n+2}}  \sqrt{1+|Dv(y)|^2}\ \dd \Leb^n(y)\\
			&\leq 2^{(n+5)/2} \int_{\BB_{1+K} \cap \pp^{-1}(\VV_T)} |X^\perp|^2 |X|^{-n-2} \dT.
			\end{align*}
			We argue in the same manner to extract
			$$\int_{\WW_T} \left( \frac{\partial}{\partial r} \frac{w(y)}{|y|} \right)^2 |y|^{2-n} \dd \Leb^n(y) \leq 2^{(n+5)/2}  \int_{\BB_{1+K} \cap \pp^{-1}(\WW_T)} |X^\perp|^2 |X|^{-n-2} \dT.$$
			By \eqref{eq:massOfBall}, we have
			\begin{align*}
			\int_{\BB_{1+K} \setminus\BB_1} |X \cdot \ze^T|^2 ~ |X|^{-n-2} \dT
			&\leq 4 \mT(\BB_{1+K} \setminus\BB_1)\\
			&\leq 4 \left(\Cr{3}(1+K)^n - \frac{1}{\Cr{3}} \right)
			\leq \Cl{35}K \leq \Cr{35} \Aa.
			\end{align*}
			In total, we conclude by Corollary \ref{cor:sphericalExc} and \eqref{eq:spherCylindExc} that
			\begin{align*}
			\int_{\VV_T} \left( \frac{\partial}{\partial r} \frac{\vi(y)}{|y|} \right)^2 |y|^{2-n} \dd \Leb^n(y)
			&+  \int_{\WW_T} \left( \frac{\partial}{\partial r} \frac{\wj(y)}{|y|} \right)^2 |y|^{2-n} \dd \Leb^n(y)\\
			&\leq 2^{(n+5)/2} \big(\EE_S(T,1) + \Cr{6}(\Aa+ \kappa_T ) +\Cr{35} \Aa \big) \\
			&\leq  2^{(n+5)/2} \big(\EE_C(T,1) + (\Cr{6} +m +\Cr{35})(\Aa+ \kappa_T ) \big).
			\end{align*}
		\end{proof}

	\newpage
\section{Blow-up sequence and statement of the excess decay}\label{sec:blowup}
	We now know that, away from the boundary, our minimizing current $T$ is supported on graphs. We would like to extend that fact up to the boundary. To do so, we use that the functions describing the current are bounded by the square root of the excess such that we can introduce a blow-up procedure by rescaling by the latter quantity. Notice that the domain of the functions converges to the half ball as the excess tends to zero.
	
	We aim to extend the graphs up to the boundary of $T$ and such that they are merging together smoothly. To do so, we will show that the harmonic blow-ups on $\VV$ (or $\WW$ respectively) are all identical (see Theorem \ref{thm:blowupsC2}), which will lead to an excess decay (Theorem \ref{thm:RotExcessDecay}) which will then lead to the extension of the graphs (Corollary \ref{cor:graphsOfGammT}).
	
	First we describe the blow-up procedure.
	\begin{defn}\label{def:harmblow}
		For $\nu \in \N$, $\nu \geq 1$, $i \in \{1,\dots,m\}$, $j \in \{1,\dots,m-1\}$ and $(T_\nu, \M_\nu) \in \TT$, we define $\Anu := \Aa_{\M_\nu}$, $\varepsilon_\nu :=  \sqrt{\EE_C(T_\nu,1)}$, $\kappa_\nu:= \kappa_{T_\nu}$,
		$v_i^{(\nu)} :=v_i^{T_\nu} \1_{\VV_{T_\nu}} : \VV \to \R$  and $w_j^{(\nu)} :=w_j^{T_\nu} \1_{\WW_{T_\nu}} : \WW \to \R$. We call $\{(T_\nu, \M_\nu)\}_{\nu \geq 1}$ a blowup sequence with associated harmonic blow-ups $f_i$, $g_j$ if the following holds as $\nu \to \infty$,
		\begin{enumerate}[(i.)]
			\item $\epnu \to 0$,
			\item $\epnu^{-2} \kappa_\nu \to 0$,
			\item $\Anu \to 0$,
			\item $ \displaystyle \frac{ \vin }{\max\{\epnu, \Anu^{1/4}\}} \longrightarrow f_i$ uniformly on compact subsets of $\VV$,
			\item $\displaystyle \frac{ \wjn}{\max\{\epnu, \Anu^{1/4} \}} \longrightarrow g_i$ uniformly on compact subsets of $\WW$.
		\end{enumerate}
	\end{defn}
	
	Notice that by the estimates of Theorem \ref{thm:graph} and the Riemannian minimal surface equation, it follows that $f_i, g_j$ are harmonic. Furthermore, by Lemma \ref{lem:XLessExc}, we have for $0< \rho <1$
	\begin{align}\label{eq:fPlusg}
	\sup_{\VV \cap \BB^n_\rho (0)} |f_i|^2 + \sup_{\WW \cap \BB^n_\rho (0)} |g_j|^2
	\leq \limsup_{\nu \to \infty} \left( \frac{2}{\max\{\epnu, \Anu^{1/4}\}^2} \sup_{\BC_\rho \cap \spt(T_\nu)} X_{n+1}^2 \right)
	\leq \frac{4\Cr{21}\Cr{22}}{(1-\rho)^{2n+1}}.
	\end{align}
	Notice that by the Arzel\`{a}-Ascoli Theorem and Theorem \ref{thm:graph}, every sequence $\{(S_\nu, \M_\nu) \}_{\nu \geq 1} \subset \TT$ satisfying
	\begin{align}\label{eq:existenceHarmBlowup}
	\lim_{\nu \to \infty} \left( \EE_C(S_\nu,1) + \frac{\kappa_{S_\nu}}{\EE_C(S_\nu,1)} + \Anu \right) =0
	\end{align}
	contains a blow-up subsequence.
	
	The main result of this section is the following excess decay: We define $\Cl{54}$, $\Cl{55}$ and $\theta$ later (in Remark \ref{rmk:DfBounds}, Remark \ref{thm:excessThetaMax} and Theorem \ref{thm:excessThetaMax}) and claim
	\begin{thm}\label{thm:RotExcessDecay}
		Let $(T, \M) \in \TT$ and assume that $ \displaystyle \max\{ \EE_C(T,1), \mathbf{A}, \Cr{55}\kappa_T \} \leq \frac{\theta}{\Cr{55}}$. Then there is a real number $\displaystyle  |\eta| \leq 2\Cr{54} \sqrt{\frac{\theta}{\Cr{55}}}$ such that for all $0<r< \theta/4$ the following holds
		$$ \EE_C(\gamm_{\eta \#}T, r)\leq  \frac{\theta^{-n-1}}{\Cr{55}} ~ r.$$
	\end{thm}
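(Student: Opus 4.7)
The plan is to argue by contradiction via a blow-up at the origin, using the machinery already set up in Section \ref{sec:blowup}. Suppose the claim fails; then for suitable constants one can produce a sequence $(T_\nu,\M_\nu)\in\TT$ with
\[
\max\{\EE_C(T_\nu,1),\Anu,\Cr{55}\kappa_\nu\}\le \theta/\Cr{55},\qquad \EE_C(T_\nu,1),\,\Anu,\,\kappa_\nu\to 0,
\]
together with radii $r_\nu\in(0,\theta/4)$ such that no $\eta$ with $|\eta|\le 2\Cr{54}\sqrt{\theta/\Cr{55}}$ achieves the conclusion at scale $r_\nu$. Condition \eqref{eq:existenceHarmBlowup} is satisfied, so by the Arzelà--Ascoli theorem applied to the graph estimates of Theorem \ref{thm:graph} we may pass to a subsequence that is a genuine blow-up sequence with harmonic limits $f_i:\VV\to\R$ and $g_j:\WW\to\R$.

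The crucial input is Theorem \ref{thm:blowupsC2}, which forces all $f_i$ to coincide with a single harmonic $f$ on $\VV$, all $g_j$ to coincide with a single harmonic $g$ on $\WW$, and $f,g$ to glue together as a $C^2$ function across $\BL$ at the origin. Consequently the common normal derivative $\xi := \partial_n f(0) = \partial_n g(0)$ is well defined (its tangential components vanish thanks to $D\varphi_{T_\nu}(0)=D\psi_{T_\nu}(0)=0$) and is bounded by $|\xi|\le \Cr{54}$ via Remark \ref{rmk:DfBounds}. I then set
\[
\eta_\nu := \arctan\bigl(\max\{\epnu,\Anu^{1/4}\}\,\xi\bigr),
\]
so that the rotation $\gamm_{\eta_\nu}$ in the $(x_n,x_{n+1})$-plane cancels precisely the linear tilt of the limiting profile; the factor $2\Cr{54}$ in the admissible range is calibrated so that $|\eta_\nu|\le 2\Cr{54}\sqrt{\theta/\Cr{55}}$ automatically holds for $\nu$ large.

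Finally, the $C^2$ matching at the origin yields $|f(y)-\xi y_n|+|g(y)-\xi y_n|\le C|y|^2$. Transporting this to the pre-limit via the uniform convergence $\vin/\max\{\epnu,\Anu^{1/4}\}\to f$, $\wjn/\max\{\epnu,\Anu^{1/4}\}\to g$, using the reverse of Lemma \ref{lem:excLessX} (bounding $\EE_C$ by the supremum of $X_{n+1}^2$), and absorbing the $\Anu$, $\kappa_\nu$ contributions through Corollary \ref{cor:monotonicity} together with \eqref{eq:excmon} and \eqref{eq:spherCylindExc}, one expects
\[
\EE_C(\gamm_{\eta_\nu\#}T_\nu,r_\nu) \le C\, r_\nu + o(1)\qquad\text{as }\nu\to\infty,
\]
for some $C=C(n,k,m)$. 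Choosing $\theta$ small and $\Cr{55}$ large so that $\theta^{-n-1}/\Cr{55}\ge 2C$ while preserving access to Theorem \ref{thm:blowupsC2} then contradicts the failure assumption for $\nu$ sufficiently large.

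The hard part is the last step: making the excess comparison through the rotation $\gamm_{\eta_\nu}$ fully quantitative. The rotation simultaneously tilts the current, the ambient $(n+1)$-manifold and the boundary curve, so one has to verify that the rotated triple remains, up to a harmless reparametrization, in $\TT$ with uniformly controlled constants, and that the height bound $C|y|^2$ on $f-\xi y_n$ actually descends to a genuine $Cr$-bound on the cylindrical excess of $\gamm_{\eta_\nu\#}T_\nu$ without picking up error terms large enough to spoil the linear decay. Once this bookkeeping is settled, the remainder of the argument assembles directly from Theorem \ref{thm:blowupsC2}, Lemma \ref{lem:excLessX} and Corollary \ref{cor:monotonicity}.
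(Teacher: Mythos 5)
Your sketch is not the proof of Theorem \ref{thm:RotExcessDecay}; it is, up to some imprecision, the proof of the \emph{single-scale} improvement, Theorem \ref{thm:excessThetaMax}. That theorem is indeed established by the blow-up contradiction argument you outline: assume a sequence with data going to zero, extract a blow-up sequence, invoke Theorem \ref{thm:blowupsC2} to get a common $C^2$ limit $f,g$ with $Df(0)=Dg(0)=\xi\ee_n$, rotate by $\arctan(\mm_\nu\xi)$ to kill the linear tilt, and use Lemma \ref{lem:excLessX} together with the quadratic estimate from Remark \ref{rmk:DfBounds} to contradict the failure of decay at the \emph{single} scale $\theta$. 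But Theorem \ref{thm:RotExcessDecay} asks for one fixed $\eta$ giving linear decay at \emph{every} scale $0<r<\theta/4$, and a single blow-up only sees one scale. Your $\eta_\nu=\arctan(\mm_\nu\xi)$ cancels the tilt of the limiting profile of the initial blow-up, but as you zoom in the optimal rotation angle drifts: it must be corrected at each successive scale, and these corrections do not vanish for a fixed finite current. The paper's actual proof of \ref{thm:RotExcessDecay} handles this by iterating \ref{thm:excessThetaMax}: it builds a sequence $T_{j+1}=(\muu_{1/\theta\#}\gamm_{\omega_{j+1}\#}T_j)\LL\UU_3$, obtains a geometric decay of $\max\{\EE_C,\Aa,\Cr{55}\kappa\}$, and sets $\eta=\sum_k\omega_k$ with $|\omega_k|\leq\Cr{54}\sqrt{\theta^k/\Cr{55}}$, a convergent geometric series. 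The fixed rotation is the cumulative sum, not a single blow-up angle; that is the missing idea.

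There is also a logical snag in the set-up of your contradiction. The negation of ``for all $(T,\M)$ with small data there exists $\eta$ such that for all $r$ the decay holds'' is ``there is \emph{one} $(T,\M)$ with small data such that for \emph{every} admissible $\eta$ there is \emph{some} $r$ where decay fails.'' This gives one bad current, not a sequence with data tending to zero, and the bad radius $r$ is allowed to depend on $\eta$; your phrasing fixes $r_\nu$ independently of $\eta$, which is a strictly stronger (unjustified) assumption. The sequence-with-vanishing-data framework is legitimate for \ref{thm:excessThetaMax} because there the constant $\Cr{55}$ is the free parameter being sent to infinity; for \ref{thm:RotExcessDecay} the constants are already fixed by \ref{thm:excessThetaMax}, so there is no parameter to vary. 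In short: your proposal correctly identifies the engine (blow-up compactness plus Theorem \ref{thm:blowupsC2}) but omits the iteration and telescoping of rotations, which is precisely where the single $\eta$ valid at all radii comes from and cannot be bypassed.
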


	A direct consequence of the Theorem \ref{thm:RotExcessDecay} is the following
	\begin{cor}\label{cor:graphsOfGammT}
		Let $T$, $\M$, $\eta$, $\Cr{30}$ and $\theta$ be as in Theorem \ref{thm:RotExcessDecay} and Theorem \ref{thm:graph}. If we define the real numbers $\beta:= \frac{1}{4n+10}$ and $\delta := \theta^{2(1+n)} (4\Cr{30})^{-(4n+6)}$ and the sets
		$$\tilde{V} := \big\{ y \in \UU^n_\delta(0): y_n > |y|^{1+\beta} \big\} 
		\qquad \text{ and } \qquad
		\tilde{W} := \big\{ y \in \UU^n_\delta(0): y_n < -|y|^{1+\beta} \big\},$$
		then there are functions $\tilde{v}_i \in \CC^{1,\frac{1}{4}}(\overline{\tilde{V}})$, $\tilde{w}_j \in \CC^{1,\frac{1}{4}}(\overline{\tilde{W}})$ such that
		\begin{enumerate}[(i.)]
			\item $ \displaystyle \pp^{-1}(\tilde{V}) \cap \spt(\gamm_{\eta \#} T) = \bigcup_{i=1}^m \graph (\tilde{v}_i, \gamm_{\eta} \circ \Phii)$ \quad and\\ $ \displaystyle \pp^{-1}(\tilde{W}) \cap \spt(\gamm_{\eta \#} T) = \bigcup_{j=1}^{m-1} \graph (\tilde{w}_j, \gamm_{\eta} \circ \Phii).$
			\item $\left. \tilde{v}_i \right|_{\tilde{V}}$, $\left. \tilde{w}_j \right|_{\tilde{W}}$ satisfy the Riemannian minimal surface equation.
			\item $D\tilde{v}_i(0)=0= D\tilde{w}_j(0)$.
			\item $\tilde{v}_1 \leq \tilde{v}_2 \leq \cdots \leq \tilde{v}_m$ \quad and \quad $\tilde{w}_1 \leq \tilde{w}_2 \leq \cdots \leq \tilde{w}_{m-1}$.
		\end{enumerate}
		
	\end{cor}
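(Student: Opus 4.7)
The plan is to combine the excess decay from Theorem~\ref{thm:RotExcessDecay} with the interior sheeting Theorem~\ref{thm:graph} applied to rescalings of $S:=\gamm_{\eta\#}T$ at every small scale. Because Theorem~\ref{thm:RotExcessDecay} yields $\EE_C(S,r)\le \theta^{-n-1}\Cr{55}^{-1}r$ for every $0<r<\theta/4$, for each $\rho\in(0,\delta)$ the rescaled pair $(S_\rho,\M_\rho):=(\muu_{1/\rho\#}S,\muu_{1/\rho}(\M))$ (suitably localised to $\BB_3$) satisfies
\[
\EE_C(S_\rho,1)=\EE_C(S,\rho)\le \frac{\rho\,\theta^{-n-1}}{\Cr{55}},\qquad \Aa_{\M_\rho}=\rho\,\Aa,\qquad \kappa_{S_\rho}=\rho\,\kappa_T
\]
by the natural scaling of each quantity. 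The choice $\delta=\theta^{2(n+1)}(4\Cr{30})^{-4n-6}$ is tailored so that $\EE_C(S_\rho,1)+\kappa_{S_\rho}+\Aa_{\M_\rho}\le(4\Cr{30})^{-2n-3}$ for all such $\rho$, which is exactly the smallness assumption of Theorem~\ref{thm:graph}.

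For each $\rho\in(0,\delta)$ I apply Theorem~\ref{thm:graph} to $(S_\rho,\M_\rho)$, obtaining smooth graphs $v_i^{(\rho)}\colon\VV_{\sigma_\rho}\to\R$ and $w_j^{(\rho)}\colon\WW_{\sigma_\rho}\to\R$ describing $\spt(S_\rho)$, where $\sigma_\rho=\Cr{30}\bigl(\EE_C(S_\rho,1)+\kappa_{S_\rho}+\Aa_{\M_\rho}\bigr)^{1/(2n+3)}\le C\rho^{1/(2n+3)}$. Undoing the dilation, these describe $\spt(S)\cap\pp^{-1}(\rho\,\VV_{\sigma_\rho})$. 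Since $\beta=\tfrac{1}{4n+10}<\tfrac{1}{2n+3}$, a point $y\in\tilde V$ with $|y|<\delta$ and $y_n>|y|^{1+\beta}$ satisfies $y_n>\rho\,\sigma_\rho$ whenever $\rho$ is chosen of order $|y|$ (say $\rho=2|y|$), so $y/\rho\in\VV_{\sigma_\rho}$ and $y$ is covered by one of the rescaled graphs. Varying $\rho$ dyadically and invoking the uniqueness in Lemma~\ref{lem:decomp} on overlapping regions, I glue these families into globally defined functions $\tilde v_i\colon\tilde V\to\R$ solving the Riemannian minimal surface equation; assertions (i), (ii) and (iv) follow immediately from the corresponding parts of Theorem~\ref{thm:graph}, and the analogous construction on the lower side produces $\tilde w_j$ on $\tilde W$.

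The main obstacle is establishing the $\CC^{1,\frac{1}{4}}$-regularity up to $\overline{\tilde V}$. Property (iii) is the easiest: Theorem~\ref{thm:graph}(iii) applied to $S_\rho$ with $\rho$ of order $|y|$ yields $|D\tilde v_i(y)|\le C\rho^{1/2}\to 0$ as $y\to 0$, so $D\tilde v_i(0)=0$. For the H\"older bound I compare derivatives at two points $y_1,y_2\in\tilde V$ by working at a common dyadic scale $\rho$; the interior Schauder estimates of Lemma~\ref{lem:minSurfEqu} applied to $v_i^{(\rho)}$, together with the size bound $\sqrt{\EE_C(S_\rho,1)+\kappa_{S_\rho}+\Aa_{\M_\rho}}\le C\rho^{1/2}$, control the oscillation of $Dv_i^{(\rho)}$ on subscales, and summing the dyadic contributions geometrically yields H\"older continuity of $D\tilde v_i$ up to $\overline{\tilde V}$. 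The exponent $\tfrac14$ rather than $\tfrac12$ arises because points near the artificial curve $\{y_n=|y|^{1+\beta}\}$ lie at distance of order $\rho^\beta$ from $\partial\VV_{\sigma_\rho}$ after rescaling, and the derivative estimate of Theorem~\ref{thm:graph}(iii) degenerates like $\dist(\,\cdot\,,\partial\VV)^{-n-5/2}$; interpolating this boundary blow-up against the $\rho^{1/2}$ amplitude produces the worst-case exponent $\tfrac14$, which is precisely why $\beta$ is chosen to be $\tfrac{1}{4n+10}$.
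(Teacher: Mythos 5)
Your approach is essentially the paper's: rescale $\gamm_{\eta\#}T$ to $S_\rho$ at scales $\rho$ tending to $0$, apply Theorem~\ref{thm:graph} to each $(S_\rho,\M_\rho)$ after verifying the smallness hypothesis via the excess decay, and glue the rescaled graph functions over dyadic annuli covering $\tilde{V}$. Two points deserve attention, however. First, the bound you state for $|D\tilde{v}_i(y)|$ at scale $\rho\sim|y|$, namely $C\rho^{1/2}$, is only the amplitude $\sqrt{\EE_C(S_\rho,1)+\kappa_{S_\rho}+\Aa_{\M_\rho}}$; Theorem~\ref{thm:graph}(iii) multiplies this by the weight $\dist(y/\rho,\partial\VV)^{-n-3/2}\lesssim\rho^{-\beta(n+3/2)}$, and with $\beta=\tfrac{1}{4n+10}$ the resulting exponent is $\tfrac{2n+7}{8n+20}\ge\tfrac14$, so the correct bound is $C\rho^{1/4}$ (and, at the level of $D^2$, exactly $\rho^{1/4}$), not $C\rho^{1/2}$.

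Second, the $\CC^{1,\frac14}$ estimate up to $\overline{\tilde{V}}$ — the real content of the corollary — is only sketched via ``summing dyadic contributions geometrically.'' The paper makes this concrete by first deriving from Theorem~\ref{thm:graph}(iii) with $k=1,2$ the pointwise bounds $|D\tilde{v}_i(y)|\lesssim|y|^{1/4}$ and $|D^2\tilde{v}_i(y)|\lesssim|y|^{-3/4}$, and then running a two-case argument on arbitrary $y,z\in\tilde{V}$: if $\max\{|y|,|z|\}\le 2|y-z|$, use the first bound and the triangle inequality; otherwise integrate the second bound along the segment joining $y$ and $z$, which then stays at distance at least $|y-z|$ from the origin. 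Your sketch is plausible and, if carried through, would presumably reproduce this, but as written it leaves unproved exactly the step where the exponent $\tfrac14$ is produced. Note also a small logical reordering: asserting $D\tilde{v}_i(0)=0$ presupposes the $\CC^1$ extension to $\overline{\tilde{V}}$, so the H\"older estimate must be established before, not after, part (iii).
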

	
		In order to handle the rotations and scalings of $T$, we state the following computations that we will prove in chapter \ref{chapter:proofs}.
	\begin{rmk}\label{rmk:scale3}
		For $\Cl{28}:= \Cr{2}+ 6^n(1+m\omm_n)$, $(T, \M) \in \TT$ and $r \geq 3$ the following holds: if $\EE_C(T,1) + \kappa_T +\Aa \leq \frac{1}{\Cr{2}}$, then
		$$ \big((\muu_{r\#}T) \LL \UU_3, \muu_r(\M) \big) \in \TT,
		\qquad \Aa_{\muu_r(\M)} \leq \frac{\Aa_\M}{r} 
		\qquad \textnormal{ and } \qquad
		\kappa_{(\muu_{r\#}T) \LL \UU_3} \leq \frac{\kappa_T}{r}.$$
	\end{rmk}
	
	Indeed, we apply Corollary \ref{cor:monotonicity} with $r$, $s$ replaced by $3/r$, $1$:
	\begin{align*}
	\left( \frac{r}{3} \right)^n \mT (\BB_{3/r}) + \int_{\BB_1 \setminus \BB_{3/r}} |X^\perp|^2 |X|^{-n-2} \dT
	\leq \mT (\BB_1) + \Cr{2} \left( \Aa + \kappa_T \right) \left( 1- \frac{3}{r} \right).
	\end{align*}
	Therefore, we have
	\begin{align*}
	\MM \left( \left( \muu_{r\#}T \right) \LL \BB_3 \right)
	&\leq r^n \MM(T \LL \BB_{3/r})\\
	&\leq 3^n \left( \mT (\BB_1) + \Cr{2} \left( \Aa + \kappa_T \right) \right) \\
	&\leq 3^n \big( \EE_C(T,1) + m \omm_n + \Cr{2} \left( \Aa+ \kappa_T \right) \big)\\
	&\leq 3^n(1+m \omm_n).
	\end{align*}
	For the bound of $\kappa_{(\muu_{r\#}T) \LL \UU_3}$, we refer to the original paper \cite{hardtSimon}.
	
	\begin{rmk}\label{rmk:mugammT}
		Let $(T, \M) \in \TT$ and $|\omega| \leq 1/8$ and assume that 
		$$ \Aa \leq \max\left\{ \frac18, (7\Cr{23} + \Cr{10}+1)^{-1} \right\} $$
		Then, we have
		\begin{enumerate}[(i.)]
			\item if $\EE_C(T,1) + \kappa_T + \Aa \leq \big(\Cr{21}\Cr{22}4^{2n+4}
			\big)^{-1}$, then $~\sup\limits_{\BC_{3/4} \cap \spt(T)} |X_{n+1}| \leq \frac{1}{8}$.
			\item if $\EE_C(T,1) + \kappa_T + \Aa \leq \min\big\{\Cr{28}^{-1},
			\big(\Cr{21}\Cr{22}4^{2n+4} \big)^{-1}\big\}$, then
			$$(\muu_{4\#} \gamm_{\omega\#} T) \LL \UU_3 =  (\gamm_{\omega\#} \muu_{4\#} T) \LL \UU_3 \in \TT.$$
			\item if $12 \leq r < \infty$ and $\omega^2 + \EE_C(T,1) +
			\kappa_T +|D\Phii|^2+ |D^2\Phii| \leq \Cl{29}^{-1}$, where $\Cr{29} = 4^{2n+4} \Cr{28} \Cr{18}(1+\Cr{19})(1+\Cr{21})\Cr{22}$, then $(\muu_{r\#} \gamm_{\omega\#} T) \LL \UU_3 \in \TT$ and 
			$$ \kappa_{(\muu_{r\#} \gamm_{\omega\#} T) \LL \UU_3 } \leq \frac{\kappa_T}{r}.$$
		\end{enumerate}
	\end{rmk}	
	
	\begin{proof}[Proof of Corollary \ref{cor:graphsOfGammT}]
		We only show it for $\tilde{v}_i$, the argument for $\tilde{w}_j$ goes analogously.
		
		Let $0< \rho <\delta$ and define $S_\rho := (\muu_{1/\rho\#} \gamm_{\eta \#}T) \LL \UU_3$, $\M_\rho:= \muu_{1/\rho}(\M)$. As in Theorem \ref{thm:RotExcessDecay}, $(S_\rho, \M_\rho) \in \TT$. Moreover, notice that by Theorem \ref{thm:graph}, Theorem \ref{thm:RotExcessDecay} and Remark \ref{rmk:mugammT} the following holds
		\begin{align*}
		\sigma_{S_\rho} 
		&=\Cr{30}\big(\EE_C(S_\rho,1) + \kappa_{S_\rho} + \Aa_{\M_\rho} \big)^{1/(2n+3)}\\
		&= \Cr{30}\big(\EE_C(\gamm_{\eta \#}T,\rho) + \rho( \kappa_T + \Aa )\big)^{1/(2n+3)}\\
		&\leq  \Cr{30}\Big( \theta^{-n-1} \frac{\rho}{\Cr{55}} + \rho \frac{2\theta}{\Cr{55}} \Big)^{1/(2n+3)}\\
		&= \Cr{30} \rho^{1/(4n+6)} \left( \rho^{1/2} \frac{3\theta^{-n-1}}{\Cr{55}} \right)^{1/(2n+3)}\\
		&\leq \Cr{30} \rho^\beta \left( \delta^{1/2} \frac{4}{\Cr{55}} \theta^{-n-1} \right)^{1/(2n+3)}\\
		&= \Cr{30} \rho^\beta \left( \frac{4^{2n+5}}{\Cr{55} \Cr{30}^{2n+3}} \right)^{1/(2n+3)}\\
		&\leq \frac{\rho^\beta}{4}.
		\end{align*}
		Now, we estimate using Theorem \ref{thm:graph}$(iii.)$ (with $T$, $\M$, $k$ replaced by $S_\rho$, $\M_\rho$, $1$ and $2$) for all $i \in \{1, \dots, m\}$
		\begin{equation}\label{eq:DvS}
		\begin{split}
		\sup_{\VV_{\frac{1}{4}\rho^\beta}} \big| D v_i^{S_\rho} \big|
		&\leq \Cr{33} \sqrt{\EE_C(S_\rho,1) + \kappa_{S_\rho} + \Aa_{\M_\rho} } \sup_{y \in \VV_{\frac{1}{4}\rho^\beta}} \dist(y, \partial \VV)^{-1-n-1/2}\\
		&\leq \Cr{33} \sqrt{3\rho\ \theta^{-n-1}} \left( \frac{4}{\rho^\beta} \right)^{n+3/2}\\
		&\leq \Cl{56} \rho^{1/4},\\
		\sup_{\VV_{\frac{1}{4}\rho^\beta}} \big| D^2 v_i^{S_\rho} \big|
		&\leq \Cr{33} \sqrt{\EE_C(S_\rho,1) + \kappa_{S_\rho} +  \Aa_{\M_\rho} } \sup_{y \in \VV_{\frac{1}{4}\rho^\beta}} \dist(y, \partial \VV)^{-2-n-1/2}\\
		&\leq \Cr{33} \sqrt{3\rho\ \theta^{-n-1}} \left( \frac{4}{\rho^\beta} \right)^{n+5/2}\\
		&\leq \Cr{56} \rho^{1/4}.
		\end{split}
		\end{equation}
		Now, we look for functions whose graph contain $\spt(\gamm_{\eta \#}T)$. For a fixed $\rho$, we apply Theorem \ref{thm:graph} to $(S_\rho,  \M_\rho)$ and get $v_1^{S_\rho} \leq v_2^{S_\rho} \leq \cdots \leq v_m^{S_\rho}$ whose $\Phii_{\M_\rho}$-graph from the $\spt({S_\rho})$. 
		Define $\rho_k := \left( \frac{7}{8} \right)^k$, $k \in \Z$, and look at the annuli 
		$$A_k := \left\{ y \in \tilde{V}: \frac{1}{2} p_k \leq |y| \leq \frac{2}{3} \rho_k \right\}.$$
		These annuli are overlapping as $\frac{1}{2} \rho_k < \frac{2}{3} \rho_{k+1}$ and moreover they cover all of $\tilde{V}$. Notice that for $y \in A_k$ the following holds
		$$ \frac{y_n}{\rho_k} > \frac{|y|^{1+\beta}}{\rho_k} \geq \left( \frac{\rho_k}{2} \right)^{1+\beta} \frac{1}{\rho_k} \geq \frac{\rho_k^{\beta}}{4} \geq \sigma_{S_{\rho_k}}.$$
		Therefore, $y_k/\rho_k \in \VV_{S_{\rho_k}}$ and we can define for $y \in A_k$ 
		$$ \tilde{v}_i(y) = \rho_k v_i^{S_{\rho_k}} \Big( \frac{y}{\rho_k} \Big).$$
		Then
		$$ \pp^{-1}(\tilde{V}) \cap \spt(\gamm_{\eta \#}T) = \bigcup_{i=1}^m \graph (\tilde{v}_i, \gamm_{\eta} \circ \Phii),$$
		because $S_\rho := (\muu_{1/\rho\#} \gamm_{\eta \#}T) \LL \UU_3$. Moreover, all $\tilde{v}_i$ fulfil the Riemannian minimal surface equation on $\tilde{V}$ and
		$\tilde{v}_1 \leq \tilde{v}_2 \leq \cdots \leq \tilde{v}_m$. The only thing we still have to prove is the $\CC^{1, \frac{1}{4}}$-regularity. By using the bounds in \eqref{eq:DvS}, we estimate for each $y \in \tilde{V}$
		\begin{align}
		|D \tilde{v}_i(y) | &\leq \Cr{56}\rho_k^{1/4}
		\leq 2 \Cr{56}  |y|^{1/4}, \label{eq:Dvtilde}\\
		|D^2 \tilde{v}_i(y) | &\leq \frac{1}{\rho_k} \Cr{56} \rho_k^{1/4}
		\leq  \Cr{56} |y|^{{-3/4}}. \label{eq:D2vtilde}
		\end{align}
		Let $y$, $z \in \tilde{V}$ be arbitrary. We want to deduce that $|D\tilde{v}_i(y) -D\tilde{v}_i(z)| \leq 4 \Cr{56}|y-z|$. We differ between the following cases:\\
		\textit{Case 1: } $\max \big\{ |y|, |z| \big\} \leq 2|y-z|$.\\
		Then the following holds by \eqref{eq:Dvtilde}
		\begin{align*}
		|D\tilde{v}_i(y) -D\tilde{v}_i(z)|
		&\leq |D\tilde{v}_i(y)| + |D\tilde{v}_i(z)|\\
		&\leq 2 \Cr{56} |y|^{1/4} + 2 \Cr{56} \theta^{-n/2} |z|^{1/4}\\
		&\leq 4 \Cr{56} |y-z|^{1/4}.
		\end{align*}
		\textit{Case 2: } $\max \big\{ |y|, |z| \big\} > 2|y-z|$.\\
		Wlog $\max \big\{ |y|, |z| \big\} = |y|$. We claim that also the path between these two points fulfils this inequality. Indeed, for every $t \in [0,1]$ we have
		$$|y +t(y-z)| \geq \big| |y| -t |z-y| \big|
		\geq 2|y-z| -t |y-z| \geq |y-z|$$
		and $$|y +t(y-z)|^{-3/4} \leq |y-z|^{-3/4}.$$
		We use this together with \eqref{eq:D2vtilde} to infer
		\begin{align*}
		|D\tilde{v}_i(y) -D\tilde{v}_i(z)| 
		\leq |y-z| \int_0^1 \big|D^2 \tilde{v}_i(y +t(y-z)) \big| \dd t
		\leq \Cr{56}|y-z|.
		\end{align*}
		Hence, we can extend each $\tilde{v}_i$ on $\overline{\tilde{V}}$ with the required regularity. Moreover, the following holds for all $y \in \tilde{V}$
		$$|D\tilde{v}_i(0)| \leq |D\tilde{v}_i(y)| + |D\tilde{v}_i(0)-D\tilde{v}_i(y)|
		\leq 4 \Cr{56}|y|^{1/4}.$$
		Letting $|y| \downarrow 0$ yields $(iii.)$.
		
	\end{proof}

	\section{Glueing of harmonic blow-ups and first collapsing lemma}
		We aim to prove that under certain conditions, the harmonic blow-ups agree in order to deduce later that the graphs are equal on $\VV$ and $\WW$ respectively. The first step in this direction is to show that if we glue them together, the result is weakly differentiable.
		\begin{lem}\label{lem:harmblowupgradient}
			Let $\{(T_\nu, \M_\nu)\}_{\nu \geq 1} \subset \TT$ be a blow-up sequence with associated harmonic blow-ups $f_i$, $g_j$. Define $h, \mu: \UU^n_1(0) \to \R$ by
			$$h(x)=\begin{cases}
			\sum_{i=1}^m f_i(x), &\text{ if } x \in \VV\\
			\sum_{j=1}^{m-1} g_j(x), &\text{ if } x \in \WW\\
			0, &\text{ if } x \in \BL
			\end{cases}$$
			and $$\mu(x)=\begin{cases}
			\min\big\{|f_1(x)|, \dots, |f_m(x)|\big\}, &\text{ if } x \in \VV\\
			0, &\text{ if } x \in \WW \cup \BL.
			\end{cases}$$
			Then $h$ and $\mu$ are in $W^{1,2}_{\mathrm{loc}}(\UU_1^n(0))$.
		\end{lem}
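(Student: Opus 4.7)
Both claims amount to showing that $h$ and $\mu$ lie in $L^2_{\mathrm{loc}}(\UU_1^n(0))$ with distributional gradients also in $L^2_{\mathrm{loc}}$. The plan is to establish three facts: (1) local $L^\infty$ bounds; (2) $L^2$ bounds on the classical gradient over $\VV \cup \WW$; and (3) matching of the one-sided traces on the common boundary $\BL$, so that no singular part of the distributional gradient concentrates on $\BL$. Fact (1) is immediate from \eqref{eq:fPlusg}: it gives a uniform $L^\infty$ bound for $f_i$ on $\VV \cap \BB^n_\rho(0)$ and $g_j$ on $\WW \cap \BB^n_\rho(0)$ for every $\rho<1$, so $h, \mu \in L^\infty_{\mathrm{loc}}(\UU_1^n(0)) \subset L^2_{\mathrm{loc}}$.

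For (2), I would use the graph decomposition of Theorem~\ref{thm:graph}(ii) to compute the mass of $T_\nu$ through the area formula, expanding $\sqrt{1+|D\vin|^2}\,J_\Phi$ (with $J_\Phi = 1 + O(\Anu)$) against the projection mass to obtain, via a standard Taylor expansion,
\begin{align*}
\sum_i \int_{\VV_{T_\nu}} |D\vin|^2 + \sum_j \int_{\WW_{T_\nu}} |D\wjn|^2 \leq C\big(\EE_C(T_\nu,1) + \Anu + \kappa_\nu\big).
\end{align*}
Dividing by $\max\{\epnu,\Anu^{1/4}\}^2$ and using Definition~\ref{def:harmblow}(ii)--(iii) to see that both $\kappa_\nu$ and $\Anu$ are $o(\max\{\epnu,\Anu^{1/4}\}^2)$, the normalized gradients are uniformly bounded in $L^2$. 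By Fatou together with the uniform-on-compacts convergence of the blow-up sequence, $\nabla f_i \in L^2(\VV)$ and $\nabla g_j \in L^2(\WW)$, so the classical gradients of $h$ and $\mu$ are in $L^2_{\mathrm{loc}}(\VV \cup \WW)$.

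The main obstacle is (3). Geometrically, among the $m$ ordered sheets $v_1^{(\nu)} \leq \cdots \leq v_m^{(\nu)}$ on $\VV$, exactly one --- the \emph{boundary sheet} $v_k^{(\nu)}$ --- terminates on the boundary curve $F_{T_\nu}(\R^{n-1})$ whose height above $\BL$ is $\psi_{T_\nu}$, while the remaining $m-1$ sheets continue across $\BL$ as the $w_j^{(\nu)}$. Since $|\psi_{T_\nu}| \leq \tfrac{1}{2}\kappa_\nu$ on the unit ball and $\kappa_\nu/\max\{\epnu,\Anu^{1/4}\} \leq \kappa_\nu/\epnu \to 0$ (from Definition~\ref{def:harmblow}(i)--(ii)), the normalized boundary sheet $f_k$ has trace $0$ on $\BL$ in the limit. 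Consequently $\mu \leq |f_k|$ has trace $0$ on $\BL$, and the traces of $\sum_i f_i$ (from $\VV$) and $\sum_j g_j$ (from $\WW$) on $\BL$ differ only by the trace of $f_k$, so they coincide; the distributional gradient therefore has no singular part supported on $\BL$, giving $h, \mu \in W^{1,2}_{\mathrm{loc}}(\UU_1^n(0))$. The delicate point is that Theorem~\ref{thm:graph} provides the graph decomposition only on the shrunken $\VV_{T_\nu} \subsetneq \VV$, which avoids $\BL$ by distance $\sigma_{T_\nu}$; to identify the boundary sheet and track it continuously up to $\BL$, one must combine the graph decomposition with the current-level identity $(\partial T_\nu)\LL\{|y_n|<2,\ldots\} = (-1)^{n+k} F_{T_\nu\#}(\EE^{n-1}\LL\{|z|<2\})$ and the $\CC^2$-smallness of $\varphi_{T_\nu},\psi_{T_\nu}$, and then pass to the limit carefully.
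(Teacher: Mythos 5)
Your core estimate matches the paper's exactly: the paper lower-bounds $\epnu^2 = \MM(T_\nu\LL\BC_1)-\MM(\pp_\#(T_\nu\LL\BC_1))$ by $\int_{\VV_{\sqrt{\sigma_\nu}}}(\sqrt{1+|D\vin|^2}-1)$, Taylor-expands, absorbs the quartic error using the pointwise gradient bound from Theorem~\ref{thm:graph}$(iii.)$, and obtains $\int_{\VV_\delta}|D\vin|^2/\max\{\epnu^2,\Anu^{1/2}\}\leq 2$ for all small $\delta$, then passes to the limit; your step (2) is the same computation phrased slightly differently. Your step (1), the $L^\infty$ bound from \eqref{eq:fPlusg}, is also implicit in the paper.

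Where you genuinely depart from the paper is step (3). The paper's written proof stops at ``$\int_\VV|Df_i|^2\leq 2$ and analogously for $g_j$, thus also of $h$ and $\mu$'' and does not explicitly address the matching of one-sided traces across $\BL$, which --- as you correctly observe --- is what separates $W^{1,2}_{\mathrm{loc}}(\VV)\cap W^{1,2}_{\mathrm{loc}}(\WW)$ from $W^{1,2}_{\mathrm{loc}}(\UU_1^n(0))$. This is a real observation: notice that Remark~\ref{rmk:traceVanish} \emph{deduces} the zero-trace property of $\mu$ \emph{from} Lemma~\ref{lem:harmblowupgradient}, so the two statements are essentially equivalent, and one must be careful not to argue in a circle. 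You deserve credit for flagging the issue.

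However, your proposed resolution is not yet a proof, and I would push back on the ``boundary sheet'' picture as stated. You assert that among the ordered sheets $v_1^{(\nu)}\leq\cdots\leq v_m^{(\nu)}$ exactly one terminates on $\spt(\partial T_\nu)$ while the remaining $m-1$ continue across $\BL$ as the $w_j^{(\nu)}$. At this stage of the development this structural fact is not available: Theorem~\ref{thm:graph} only produces the graphs on the shrunken domains $\VV_{T_\nu},\WW_{T_\nu}$, which keep a distance $\sigma_{T_\nu}>0$ from $\BL$, and nothing yet identifies \emph{which} sheet carries the boundary nor rules out more complicated pairings of sheets across the strip $\{|\dist(\cdot,\partial\VV)|\leq\sigma_{T_\nu}\}$. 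Establishing such sheet-tracking up to $\BL$ is precisely the type of information the paper only obtains afterwards (via Lemma~\ref{lem:harmblowbound}, the collapsing Lemma~\ref{lem:harmblowequal}, and ultimately Theorem~\ref{thm:blowupsC2}), so invoking it here risks circularity. The safer route for the trace matching is a current-level deformation/filling argument in the strip --- for instance a homotopy contracting the strip onto $\BL$, whose mass one controls by the excess $\epnu^2$, the boundary curvature $\kappa_\nu=o(\mm_\nu^2)$, and $\Anu$, then comparing with the minimality of $T_\nu$ --- rather than a pointwise identification of individual sheets.
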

		
		\begin{rmk}\label{rmk:traceVanish}
			Consider the notion of trace as in \cite[Chapter 26]{treves}. The previous lemma implies that $\left. \mu \right|_\VV$ has zero trace on $\BL$.
		\end{rmk}
		
		\begin{proof}[Proof of Leamm \ref{lem:harmblowupgradient}]
			Let $\{(T_\nu, \M_\nu)\}_{\nu \geq 1} \subset \TT$ be a blow-up sequence with associated harmonic blow-ups $f_i$, $g_j$ and denote $\Anu$, $\epnu$, $\kappa_\nu$ as in the Definition \ref{def:harmblow} and $\Phii_\nu := \Phii_{\M_\nu}$. Let $\Cl{36}>0$ be such that $\sqrt{1+t} \geq 1+ t -\Cr{36}t^2$ for all $0 \leq t \leq 1$. We use Theorem \ref{thm:graph}$(iii.)$ to estimate for any $i \in \{1, \dots, m\}$
			\begin{align*}
			\epnu^2&= \MM(T_\nu \LL \BC_1) - \MM\big( \pp_\#(T_\nu \LL \BC_1) \big)\\
			&\geq \MM \big(T_\nu \LL \pp^{-1}(\VV_{\sqrt{\sigma_\nu}}) \big) - \MM\big( \pp_\#(T_\nu \LL  \pp^{-1}(\VV_{\sqrt{\sigma_\nu}})) \big)\\
			&\geq \int_{\VV_{\sqrt{\sigma_\nu}}} \left( \sqrt{1+ |D\vin|^2} -1 \right) \dd \Leb^n\\
			&\geq \int_{\VV_{\sqrt{\sigma_\nu}}} \left( |D\vin|^2 - \Cr{36} |D\vin|^4 \right) \dd \Leb^n\\
			&\geq \int_{\VV_{\sqrt{\sigma_\nu}}} |D\vin|^2 \left( 1- \Cr{33}^2 \Cr{36}(\epnu^2 + \kappa_\nu + \Anu) {\sigma_\nu}^{-n-3/2} \right) \dd \Leb^n\\
			&=\int_{\VV_{\sqrt{\sigma_\nu}}} |D\vin|^2 \left( 1- \Cr{30}^{-n-3/2}\Cr{33}^2 \Cr{36} \sqrt{\epnu^2 + \kappa_\nu + \Anu}  \right) \dd \Leb^n.
			\end{align*}
			Hence, for $\nu$ large enough, it follows that
			$$\int_{\VV_{\sqrt{\sigma_\nu}}} |D\vin|^2 \dd \Leb^n \leq 2\epnu^2.$$
			Moreover, fix $\delta >0$. For all $\nu$ such that $\sigma_\nu \leq \delta^2$ the following holds 
			$$\int_{\VV_{\delta}} \frac{|D\vin|^2}{\max\{\epnu^2, \Anu^{1/2}\}} \dd \Leb^n \leq
			\int_{\VV_{\delta}} \frac{|D\vin|^2}{\epnu^2} \dd \Leb^n \leq 2$$
			and by locally uniform convergence, we deduce
			$$\int_{\VV_{\delta}} |Df_i|^2 \dd \Leb^n \leq 2.$$
			As $\delta$ was arbitrary, we can conclude the integrability of the weak derivative of $f_i$ in all of $\VV$ and analogously for $g_j$ in $\WW$, thus also of $h$ and $\mu$.
			
		\end{proof}
	
		As a next step, we see that also around boundary points, we have local uniform convergence. In fact, the proof of the original paper \cite{hardtSimon} carries over and thus, we omit the details here.
		
		\begin{lem}\label{lem:harmblowbound}
			Let $0< \sigma <1/2$, $a \in \BL \cap \UU^n_{1-2\sigma}(0)$, $U:= \UU^n_\sigma (a)$, $B:= \partial U$, $C \subset \pp^{-1}(U)$ compact and $\{(T_\nu, \M_\nu)\}_{\nu \geq 1} \subset \TT$ a blowup sequence with associated harmonic blowups $f_i$ and $g_j$. Denote $\epnu := \sqrt{\EE_C(T_\nu,1)}$ and $\mm_\nu := \max\{\epnu, \Anu^{1/4} \}$. Then, the following holds
			\begin{align*}
			\limsup_{\nu \to \infty} \sup_{C\cap \spt(T_\nu)} \frac{ X_{n+1}}{\mm_\nu}
			&\leq \max \Big\{ \sup_{B\cap \VV} f_m, \sup_{B\cap \WW} g_{m-1},0 \Big\},\\
			\liminf_{\nu \to \infty} \inf_{C\cap \spt(T_\nu)} \frac{ X_{n+1}}{\mm_\nu}
			&\geq \min \left\{ \inf_{B\cap \VV} f_1, \inf_{B\cap \WW} g_1,0 \right\}.
			\end{align*}
		\end{lem}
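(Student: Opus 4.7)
My plan is to deduce the limsup bound from a maximum principle for $\M$-area-minimizing codimension-one currents, after first verifying the bound on the cylindrical boundary $\pp^{-1}(B)$; the liminf bound is entirely analogous after reflecting $X_{n+1}\mapsto -X_{n+1}$, which sends $(T_\nu,\M_\nu)\in\TT$ to a pair in $\TT$ with associated harmonic blow-ups $-f_{m+1-i}$, $-g_{m-j}$. Set $M:=\max\{\sup_{B\cap\VV}f_m,\sup_{B\cap\WW}g_{m-1},0\}$ and fix $\epsilon>0$. If the limsup bound failed one could extract a subsequence with $p_\nu\in C\cap\spt(T_\nu)$ satisfying $X_{n+1}(p_\nu)>(M+2\epsilon)\mm_\nu$, and the sliced current
$$S_\nu:=T_\nu\LL\big(\pp^{-1}(U)\cap\{X_{n+1}>(M+\epsilon)\mm_\nu\}\big)$$
would be nontrivial with boundary supported on the almost-horizontal hypersurface $\M\cap\{X_{n+1}=(M+\epsilon)\mm_\nu\}$ (provided the cylindrical boundary $\pp^{-1}(B)$ and the intrinsic boundary $\spt(\partial T_\nu)$ contribute no mass). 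Projecting $S_\nu$ vertically onto that hypersurface produces a competitor of strictly smaller mass: the projection Jacobian is bounded by $\sqrt{1-\theta^2}+O(\Anu)$ where $\theta$ is the tangent-plane's deviation from horizontal, and $\theta$ stays uniformly positive on $S_\nu$ (since $S_\nu$ rises by at least $\epsilon\mm_\nu$ in $X_{n+1}$ across a horizontal disk of bounded radius), contradicting the $\M$-minimality of $T_\nu$.

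To discharge the boundary hypothesis, I split $\overline B$ into $B\cap\overline{\VV_\rho}$, $B\cap\overline{\WW_\rho}$, and the sliver $B\cap\{|x_n|\le\rho\}$ for a small $\rho>0$. On the two compact pieces, Theorem \ref{thm:graph}(ii) writes $\spt(T_\nu)$ as unions of the graphs $v_i^{(\nu)}$ and $w_j^{(\nu)}$ for all $\nu$ large, and Definition \ref{def:harmblow}(iv)--(v) immediately give $\sup X_{n+1}/\mm_\nu\le M+\epsilon/2$. On the sliver, points of $\spt(\partial T_\nu)$ satisfy $|X_{n+1}|\le|\psi_{T_\nu}|\le\tfrac12\kappa_\nu|x|^2\le\kappa_\nu$ by Taylor's theorem, and $\kappa_\nu/\mm_\nu\le(\kappa_\nu/\epnu^2)\,\epnu\to0$ by Definition \ref{def:harmblow}(ii), so the boundary contribution is $o(\mm_\nu)$. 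The remaining interior graph points in the sliver are controlled by a barrier comparison against a harmonic function on the half-disk $\VV\cap\UU^n_\sigma(a)$ whose boundary values come from the uniform convergence on $B\cap\VV_\rho$ and whose trace on $\BL$ is zero (enforced by $\kappa_\nu/\mm_\nu\to0$ on $\spt(\partial T_\nu)$); as in \cite{hardtSimon}, shrinking $\rho$ absorbs this contribution into $\epsilon$ as well.

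The main obstacle is precisely this sliver estimate: Definition \ref{def:harmblow} only guarantees uniform convergence on compacta of $\VV,\WW$, whereas the lemma requires an asymptotic upper bound up to the trace $B\cap\BL$. The remedy is the classical half-ball barrier construction of \cite{hardtSimon}: one compares $v_i^{(\nu)}/\mm_\nu$ on a small half-cylinder over $B\cap\VV\cap\{|x_n|<\rho\}$ with a harmonic function taking the (uniform-convergence-controlled) boundary values on $B\cap\VV\cap\{|x_n|=\rho\}$ and zero trace on $\BL$, exploiting harmonicity of $f_i$ and the smallness of $\kappa_\nu$ relative to $\mm_\nu$ to pass the bound to the limit. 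Once this sliver estimate is in hand, the maximum-principle step of the first paragraph applies verbatim, and letting $\epsilon\downarrow0$ delivers both halves of the claim.
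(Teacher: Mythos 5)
The paper deliberately omits a proof of Lemma \ref{lem:harmblowbound}, stating only that the Hardt--Simon argument carries over, so there is no in-paper proof to compare against; I will therefore evaluate the proposal on its own merits. Your overall plan — bound $X_{n+1}$ on $\pp^{-1}(B)$ away from $\BL$ via graph convergence, handle the sliver near $\BL$ separately, then propagate the boundary bound into the interior by a competitor argument, and get the liminf by reflection $X_{n+1}\mapsto -X_{n+1}$ — is reasonable in outline, and the reflection step and the estimate $|X_{n+1}|\le\kappa_\nu=o(\mm_\nu)$ on $\spt(\partial T_\nu)$ are correct. However, the two steps that carry the mathematical weight both contain genuine gaps.

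The competitor argument in the first paragraph does not work as stated. The claim that ``$\theta$ stays uniformly positive on $S_\nu$'' because ``$S_\nu$ rises by at least $\epsilon\mm_\nu$ across a horizontal disk of bounded radius'' is false: a vertical rise only forces the tangent plane to deviate from horizontal \emph{somewhere}, not everywhere, and a priori the tangent plane of $S_\nu$ could be horizontal on all but a tiny fraction of its mass (think of a shallow plateau with a thin spike). What minimality actually gives, after comparing $T_\nu$ with $T_\nu-S_\nu+\tilde\pi_\#S_\nu$ (where $\tilde\pi$ is the vertical projection followed by the lift $(\id,\Phii_\nu)$), is the \emph{integral} bound $\int_{S_\nu}(1-\sqrt{1-\theta^2})\,d\|T_\nu\|\lesssim\Anu\,\MM(S_\nu)$, and to derive a contradiction one must show that the existence of a point $p_\nu$ at height $>(M+2\epsilon)\mm_\nu$ forces this integral to exceed $C\Anu$ — which requires a quantitative Poincar\'e or density argument (using the interior lower density bound near $p_\nu$, or the graph structure of Theorem \ref{thm:graph}) that you do not supply. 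Without that lower bound, nothing prevents $\MM(\tilde\pi_\#S_\nu)$ from matching $\MM(S_\nu)$ to within the $O(\Anu)$ Jacobian error, and the competitor is not strictly shorter.

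The sliver estimate, which you yourself identify as the main obstacle, is not actually addressed. You propose comparing $v_i^{(\nu)}/\mm_\nu$ with a harmonic function ``with zero trace on $\BL$,'' but $v_i^{(\nu)}$ is only defined on $\VV_{\sigma_{T_\nu}}$ (Theorem \ref{thm:graph}), i.e.\ at distance $\ge\sigma_{T_\nu}$ from $\BL$; it has no trace on $\BL$, and the only height information available on the inner boundary $\partial\VV_{\sigma_{T_\nu}}$ is the crude bound $|v_i^{(\nu)}|\lesssim\mm_\nu$ from Lemma \ref{lem:XLessExc}, which is the same order of magnitude as $M\mm_\nu$ and so is useless as a barrier datum. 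The statement ``zero trace on $\BL$ (enforced by $\kappa_\nu/\mm_\nu\to0$ on $\spt(\partial T_\nu)$)'' conflates the height of $\spt(\partial T_\nu)$ (which is indeed $o(\mm_\nu)$) with the height of the \emph{interior} sheets $v_i^{(\nu)}$ just inside $\VV_{\sigma_{T_\nu}}$, which is the quantity actually at stake. This is precisely the part of the argument where the real work of Hardt--Simon lies, and citing it without a precise statement or adaptation leaves the proof incomplete.
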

		
		As a first step to the fact, that the harmonic blow-ups coincide, we prove it under the strong assumptions that they are linear. This will be useful, as for the excess decay we will use a blow-up argument in which the inequality of Theorem \ref{thm:graph}$(v.)$ forces them to be linear. The argument for the equality of the blow-ups relies on the fact, that in case they are not equal, we find a better competitor for the minimization problem.
		
		\begin{lem}[Collapsing lemma]\label{lem:harmblowequal}
			Let $\{(T_\nu, \M_\nu)\}_{\nu \geq 1} \subset \TT$ be a blowup sequence and\\
			\begin{minipage}[t]{\linewidth}
				\hspace{-0.2cm}
				\centering
				\begin{minipage}{0.58\linewidth}
					\vspace*{0cm} 
					 denote $\epnu := \sqrt{\EE_C(T_\nu,1)}$ and $\mm_\nu := \max\{\epnu, \Anu^{1/4} \}$. Assume the
					harmonic blowups are of the form
					$$ f_i = \beta_i \left. Y_n \right|_\VV,  \qquad
					g_j = \gamma_j \left. Y_n \right|_\WW,$$
					for some real numbers $\beta_1 \leq \cdots \leq \beta_m$, $\gamma_1 \geq \cdots \geq \gamma_{m-1}$. Then the following holds $$\beta_1 = \cdots = \beta_m = \gamma_1 = \cdots = \gamma_{m-1}$$
					and for every $0< \rho <1$
					$$ \lim_{\nu \to \infty} \sup_{\BC_\rho \cap \spt(T_\nu)} \left| \frac{X_{n+1}}{\mm_\nu} - \beta_1X_n \right|=0.$$
				\end{minipage}
				\hspace{0.05\linewidth}
				\begin{minipage}[t]{0.35\linewidth}
					\vspace*{-2.8cm} 
					\includegraphics[scale=0.4]{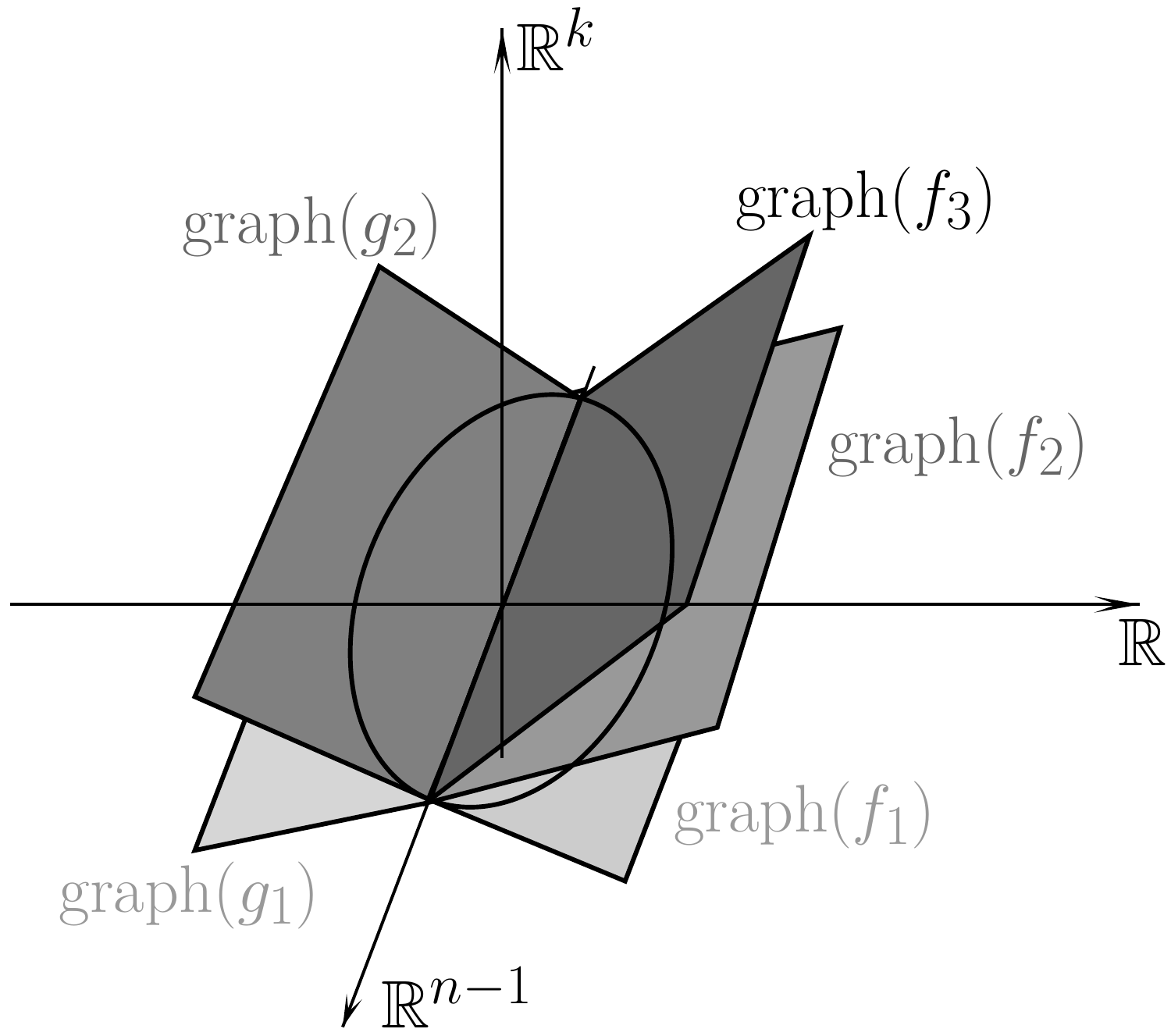}
				\end{minipage}
			\end{minipage}
			
		\end{lem}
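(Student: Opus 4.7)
The strategy is a direct competitor argument exploiting the $\M_\nu$-area-minimality of $T_\nu$. If the slopes $\{\beta_i\}_{i=1}^m\cup\{\gamma_j\}_{j=1}^{m-1}$ are not all equal to a common value, one constructs for large $\nu$ a rectifiable current $\tilde T_\nu\subset\M_\nu$ with $\partial\tilde T_\nu=\partial T_\nu$ and $\MM(\tilde T_\nu\LL\BC_\rho)<\MM(T_\nu\LL\BC_\rho)$, contradicting minimality. Once the equality of all slopes is established, the uniform convergence of $X_{n+1}/\mm_\nu$ to $\beta_1 X_n$ follows from Lemma \ref{lem:harmblowbound} via a finite covering of $\BL\cap\overline{\UU^n_\rho(0)}$.

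\medskip
\noindent\textbf{Competitor and mass accounting.} Fix $0<\rho<1$ and $0<\tau\ll\rho$. Let $N:=2m-1$, enumerate the slopes as $\alpha_1,\dots,\alpha_N$, and set $\bar\alpha:=N^{-1}\sum_k\alpha_k$, $u_\nu(y):=\mm_\nu\bar\alpha\,y_n$. The competitor $\tilde T_\nu$ agrees with $T_\nu$ outside $\BC_\rho$; inside $\pp^{-1}(\VV_{\sigma_\nu}\cap\UU^n_{\rho-\tau}(0))$ it consists of $m$ copies of the $\M_\nu$-graph $\{(y,u_\nu(y),\Phii_{\M_\nu}(y,u_\nu(y)))\}$, with the analogous $(m-1)$ copies over $\pp^{-1}(\WW_{\sigma_\nu}\cap\UU^n_{\rho-\tau}(0))$; on the radial shell between radii $\rho-\tau$ and $\rho$ each sheet is homotoped from $u_\nu$ to the original $v_i^{(\nu)}$ (respectively $w_j^{(\nu)}$) by a smooth radial cutoff $\chi(|y|)$. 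Since $u_\nu$ vanishes on $\BL$, the boundary at $\BL$ is preserved; on $\partial\BC_\rho$ the competitor coincides with $T_\nu$ by construction. Expanding $\sqrt{1+|Du|^2}=1+\tfrac12|Du|^2+O(|Du|^4)$ and using $Dv_i^{(\nu)}/\mm_\nu\to\beta_i\ee_n$, $Dw_j^{(\nu)}/\mm_\nu\to\gamma_j\ee_n$ locally uniformly together with the gradient estimate of Theorem \ref{thm:graph}(iii), a direct computation gives
\begin{equation*}
\MM\bigl(T_\nu\LL\pp^{-1}(\UU^n_{\rho-\tau}(0))\bigr)-\MM\bigl(\tilde T_\nu\LL\pp^{-1}(\UU^n_{\rho-\tau}(0))\bigr)=\tfrac{\mm_\nu^2\,\omm_n(\rho-\tau)^n}{4}\sum_{k=1}^N(\alpha_k-\bar\alpha)^2+o(\mm_\nu^2),
\end{equation*}
while the shell contribution is $O(\tau\mm_\nu^2)$ and the ambient-curvature corrections from $\Phii_{\M_\nu}$ are $O(\Anu)=o(\mm_\nu^2)$ since $\mm_\nu\ge\Anu^{1/4}$. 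By Cauchy--Schwarz, $\sum_k(\alpha_k-\bar\alpha)^2\ge 0$ with equality iff all $\alpha_k$ coincide. If that sum were strictly positive, fixing $\tau$ small (depending only on $\rho$ and the sum) and $\nu$ large enough yields $\MM(\tilde T_\nu\LL\BC_\rho)<\MM(T_\nu\LL\BC_\rho)$, contradicting minimality. Hence $\beta_1=\cdots=\beta_m=\gamma_1=\cdots=\gamma_{m-1}=:\beta$.

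\medskip
\noindent\textbf{Uniform convergence.} Given $\varepsilon>0$, pick $\sigma>0$ with $2|\beta|\sigma<\varepsilon/2$. On the interior set $\pp^{-1}(\VV_\sigma\cup\WW_\sigma)\cap\BC_\rho\cap\spt(T_\nu)$, Theorem \ref{thm:graph}(ii) together with the uniform convergence $v_i^{(\nu)}/\mm_\nu\to\beta Y_n$, $w_j^{(\nu)}/\mm_\nu\to\beta Y_n$ on compact subsets yields $|X_{n+1}/\mm_\nu-\beta X_n|<\varepsilon$ for $\nu$ large. Cover the remaining $\sigma$-neighborhood of $\BL\cap\overline{\UU^n_\rho(0)}$ by finitely many balls $\UU^n_\sigma(a_\ell)$ with $a_\ell\in\BL$; since $f_i=g_j=\beta Y_n$, Lemma \ref{lem:harmblowbound} applied at each $a_\ell$ yields $\limsup_\nu\sup_{\spt(T_\nu)\cap\pp^{-1}(\UU^n_\sigma(a_\ell))}|X_{n+1}/\mm_\nu|\le|\beta|\sigma$, and since $|\beta X_n|\le|\beta|\sigma$ on the same set, $|X_{n+1}/\mm_\nu-\beta X_n|\le 2|\beta|\sigma<\varepsilon$ for $\nu$ large. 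Combining the two estimates proves the claimed uniform convergence on $\BC_\rho\cap\spt(T_\nu)$.

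\medskip
\noindent\textbf{Main obstacle.} The technical heart is the construction of $\tilde T_\nu$ as a valid rectifiable current in $\M_\nu$ with the correct boundary, including the patching near $\BL$ where the graph representation of $T_\nu$ from Theorem \ref{thm:graph} is unavailable (requiring an additional ``inner'' interpolation between $y_n=\sigma_\nu$ and $y_n=0$ whose cost is $o(\mm_\nu^2)$ because $\sigma_\nu\to 0$). One must quantify all error contributions (ambient curvature, radial shell, inner interpolation) so that they remain strictly below the $O(\mm_\nu^2)$ Jensen savings; this rests on the gradient decay of Theorem \ref{thm:graph}(iii) and on the relation $\Anu\le\mm_\nu^4$ built into the definition of a blow-up sequence.
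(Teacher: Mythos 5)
Your proposal attempts a direct Jensen competitor: replace every sheet by the plane of mean slope $\bar\alpha$ inside $\UU^n_{\rho-\tau}$, interpolate on the radial shell $\rho-\tau<|y|<\rho$, and claim the interior savings $\tfrac{\mm_\nu^2\omm_n(\rho-\tau)^n}{4}\sum_k(\alpha_k-\bar\alpha)^2$ beats the shell cost. The interior Jensen identity is computed correctly, but the assertion that the shell contribution is $O(\tau\mm_\nu^2)$ is wrong, and this sinks the argument. When you homotope the $k$-th sheet from $u_\nu=\mm_\nu\bar\alpha\,Y_n$ at radius $\rho-\tau$ to $v_k^{(\nu)}\approx\mm_\nu\alpha_k Y_n$ at radius $\rho$ via a radial cutoff $\chi$ with $|\chi'|\sim 1/\tau$, the gradient of the interpolant picks up the term $\chi'(|y|)\,(v_k^{(\nu)}-u_\nu)\,\hat r$, whose magnitude is $\sim\mm_\nu|\alpha_k-\bar\alpha|\,|y_n|/\tau$. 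Since $|y_n|$ ranges up to $\rho$ on the shell, the resulting excess area is
$$
\int_{\text{shell}}\tfrac12(\chi')^2(v_k^{(\nu)}-u_\nu)^2\,\dd\Leb^n
\;\sim\;\frac{\mm_\nu^2(\alpha_k-\bar\alpha)^2}{2\tau^2}\int_{\text{shell}}y_n^2\,\dd\Leb^n
\;\sim\;\frac{\mm_\nu^2(\alpha_k-\bar\alpha)^2\,\omm_n\rho^{n+1}}{2\tau},
$$
which is $\Theta(\mm_\nu^2/\tau)$, not $O(\tau\mm_\nu^2)$. Summing over the sheets, the ratio of savings to shell cost is $\tfrac{(\rho-\tau)^n\tau}{2\rho^{n+1}}$, which is strictly less than $1$ for every $0<\tau<\rho$ (it is at most $\tfrac{n^n}{2(n+1)^{n+1}}$). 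So the competitor never has smaller mass, and the contradiction never materialises. The difficulty is intrinsic: the competitor must match the slice of $T_\nu$ on $\partial\BC_\rho$ sheet by sheet, and each slope change $\alpha_k\to\bar\alpha$ displaces that sheet by $\sim\mm_\nu|\alpha_k-\bar\alpha|\rho$ on the outer boundary, an amount of the same order as the quantity you are trying to measure.

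The paper sidesteps this entirely. Instead of replacing all sheets by a mean plane, it first applies a \emph{compression} map $F_\nu^\sigma$ that only translates each sheet downwards by the tiny amount $\sigma\mm_\nu\beta_i$ and flattens the thin strip $|Y_n|\le\sigma$; the cost of that deformation is genuinely $O(\sigma\mm_\nu^2)$ because the displacement, not the slope, is small, and $\sigma\to0$. After compression, the \emph{top} sheet coincides with the graph of the piecewise-linear function $\eta_k$, and one can swap it for the graph of an arbitrary Lipschitz competitor $\theta_k$ with the same boundary data on $\partial\BB^n_{1/2}$ at no extra shell cost (the sheets already agree on that boundary). Minimality of $T_\nu$ then forces $\eta$ to minimise the Dirichlet energy, hence be harmonic, hence smooth across $\BL$, which gives $\beta_m=\gamma_{m-1}$; a symmetric argument gives $\beta_1=\gamma_1$, and the monotonicity $\beta_1\le\cdots\le\beta_m$, $\gamma_1\ge\cdots\ge\gamma_{m-1}$ then pins down equality of all the slopes. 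Your uniform-convergence paragraph, once equality of the slopes is granted, is sound and essentially matches the paper's Step~4.
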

		
		\begin{proof}
			Let $\vin$ and $\wjn$ be as in Definition \ref{def:harmblow}, define $\zeta := \max \big\{|\beta1|, |\beta_m|, |\gamma_1|, |\gamma_{m-1}| \big\}$,
			$\delta:=\min\big\{\{1\} \cup \{\beta_{i+1}-\beta_i: \beta_{i+1} \neq \beta_i\} \cup \{\gamma_i - \gamma_{i+1}: \gamma_i \neq \gamma_{i+1} \} \big\}$ and let $0<\sigma < \min\{\delta/2, 1/16\}$.
			By Theorem \ref{thm:graph}$(iii.)$, $(iv.)$, Definition \ref{def:harmblow}$(i.)$-$(v.)$ and the previous Lemma \ref{lem:harmblowbound}, we can choose $N_\sigma >0$ such that for all $\nu \geq N_\sigma$ the following holds
			
			\begin{minipage}[t]{\linewidth}
				\hspace{-0.8cm}
				\centering
				\begin{minipage}{0.12\linewidth}
					\vspace{0.5cm} 
					\includegraphics[scale=0.4]{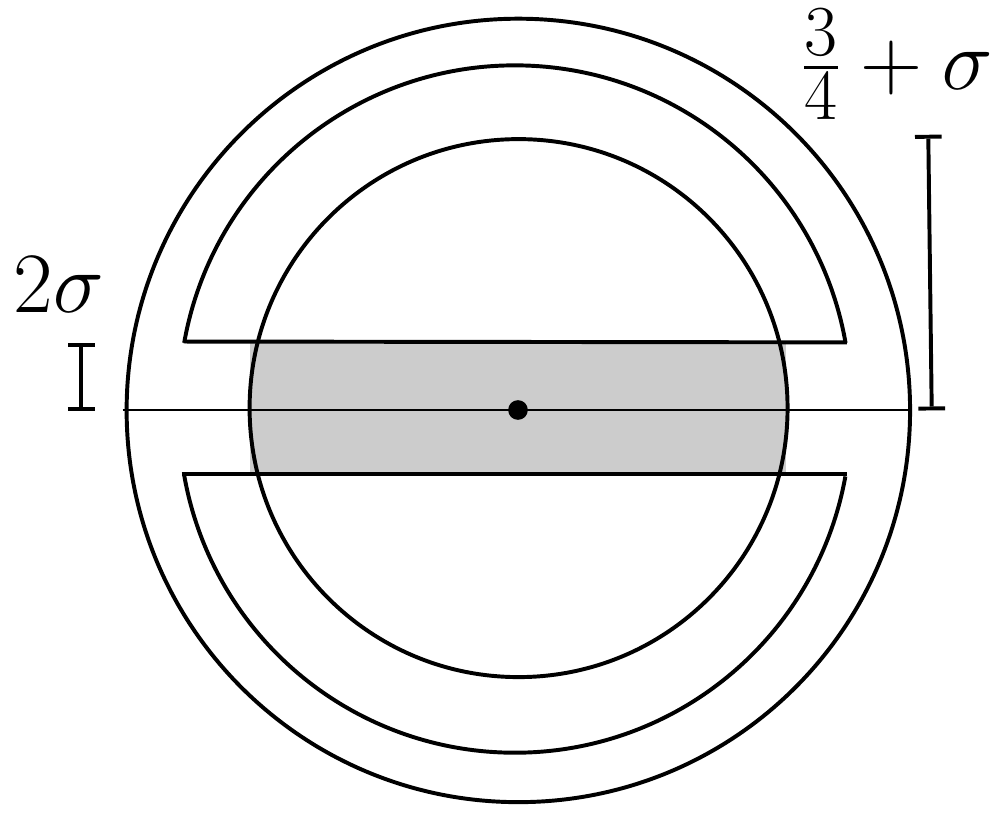}
				\end{minipage}
				\hspace{0.15\linewidth}
				\begin{minipage}[t]{0.7\linewidth}
					\vspace*{-2cm} 
					\begin{align}
					\sigma_{T_\nu} < \frac{\sigma}{4}, \quad \mm_\nu^2 < \sigma, \quad \kappa_{T_\nu} &< \sigma^3 \mm_\nu^2 \label{eq:epsigmkap}\\
					\sup_{\VV_{\sigma/2}} \big| \vin - \mm_\nu \beta_i Y_n \big|^2 \leq \sigma^{n+4}\mm_\nu^2 \quad &\text{ for all } 0 \leq i \leq m \label{eq:vminuseby}\\
					\sup_{\WW_{\sigma/2}} \big| \wjn - \mm_\nu \gamma_j Y_n \big|^2 \leq \sigma^{n+4}\mm_\nu^2 \quad &\text{ for all } 0 \leq j \leq m-1 \\
					\sup_{\BC_{3/4+\sigma} \cap \spt(T_\nu) \setminus \pp^{-1}(\VV_{2\sigma} \cap \WW_{2\sigma})} |X_{n+1}| 
					&\leq 2 \zeta \sigma \mm_\nu + \sigma\mm_\nu. \label{eq:supx}
					\end{align}
				\end{minipage}
			\end{minipage}
			\hfill \\
			The grey area in the sketch stands for the set where the supremum in \eqref{eq:supx} is taken.
			We divide the proof into several steps.\\
			\textit{Step 1:} For all $i \in \{1,\dots,m\}$, $j \in \{1, \dots, m-1\}$ the following holds
			\begin{align}\label{eq:firstStep}
			\sup_{\VV_\sigma} \big| D(\vin- \mm_\nu\beta_i Y_n) \big|^2, \
			\sup_{\WW_\sigma} \big| D(\wjn- \mm_\nu\gamma_j Y_n) \big|^2
			\leq \Cl{38} \sigma^2 \mm_\nu^2.
			\end{align}
			\textit{Step 2:} There is a Lipschitzian map $F_\nu^\sigma$ such that 
			$$\MM({F_\nu^\sigma}_\# T_\nu)- \MM(T_\nu ) \leq \Cl{41}(1+\zeta)^2\sigma \mm_\nu^2.$$
			\begin{addmargin}[1.5cm]{0pt}
				The maps  $F_\nu^\sigma$  are constructed by performing the blowup process backwards: we multiply the harmonic blowups with $\epnu$ and move it by $\sigma$ to the origin. These compressed sheets then almost recreate the original currents.
			\end{addmargin}
			\textit{Step 3:} With the help of $F_\nu^\sigma$, we show that $$\eta: \BB^n_{1/2}(0) \to \R,\ \eta(y)=
			\begin{cases}
			\beta_m Y_n(y), &\text{ if } y \in \BB^n_{1/2}(0) \cap \overline{\VV}\\
			\gamma_{m-1}Y_n(y), &\text{ if } y \in \BB^n_{1/2}(0) \cap \overline{\WW}
			\end{cases}$$
			is harmonic in $\UU^n_{1/2}(0)$. In particular, $\eta$ is differentiable in $0$ and hence, $\beta_m=\gamma_{m-1}$. We argue similarly to deduce that also $\beta_1 = \gamma_1$.\\
			\textit{Step 4:} 
			$ \displaystyle \lim_{\nu \to \infty} \sup_{\BC_\rho \cap \spt(T_\nu)} \left| \frac{X_{n+1}}{\mm_\nu} - \beta_1X_n \right|=0.$\\
			\hfill\\
			\textit{Proof of step 1:} 
			
			Away from the boundary, we want to use \cite[Corollary 6.3]{gilbarg} on the function $u:= \vin -\mm_\nu \beta_i Y_n$. Recall the coefficients $a_{ij}$ and $b$ of \eqref{eq:EulerLagrange} and define $a_{ij}^{(\nu)}$, $b^{(\nu)}$ accordingly. Then for
			$$A_{kl}:= \frac{ \delta_{k,l}}{\sqrt{1+|D\vin|^2}} -\frac{D_k\vin D_l \vin}{(1+|D\vin|^2)^{3/2}}-a_{kl}.$$
			we have
			$\displaystyle \sum_{k,l=1}^n A_{kl} \partial_{kl} u = \sum_{k,l=1}^n A_{kl} \partial_{kl}\vin =b^{(\nu)}$ and for $\nu$ large enough, $A_{kl}$ are elliptic in $\VV_{\sigma/3}$. Hence, we have
			\begin{align*}
				\sup_{\VV_\sigma} \big| D\big( \vin - \mm_\nu \beta_i Y_n\big) \big|^2 
				&\leq \frac{\C}{\sigma^2} \left( \sup_{\VV_{\sigma/3}} \big| \vin - \mm_\nu \beta_i Y_n \big|^2  + \lVert b^{(\nu)} \rVert^2_{\CC^1(\VV_{\sigma/3})} \right)
				\leq \frac{\Cr{38}}{2} \left( \sigma^2 \mm_\nu^2 + \mm_\nu^8 \right)\\
				&\leq \Cr{38} \sigma^2 \mm_\nu^2.
			\end{align*}
			In the same manner we show that
			$$\sup_{\WW_\sigma} \big| D(\wjn- \mm_\nu\gamma_j Y_n) \big|^2
			\leq \Cr{38} \sigma^2 \mm_\nu^2.$$
			
			\textit{Proof of step 2:}  Fix $i \in \{1,\dots,m\}$, $j \in \{1, \dots, m-1\}$ and define the following subsets of $\R^{n+1}$:
			\begin{align*}
			H^{\sigma} &:=  \{x \in \R^{n+1}: |x_n| \leq \sigma\},\\
			I_i^{\sigma} &:=  \left\{ x\in \R^{n+1}: (x_1, \dots, x_n) \in \VV_\sigma \text{ and } |x_{n+1} - \beta_ix_n| < \frac{\delta\sigma}{2} \right\},\\
			J_j^{\sigma} &:= \left\{ x\in \R^{n+1}: (x_1, \dots, x_n) \in \WW_\sigma \text{ and } |x_{n+1} - \gamma_jx_n| < \frac{\delta\sigma}{2} \right\}.
			\end{align*}
			\begin{minipage}[t]{\linewidth}
				\hspace{-0.4cm}
				\centering
				\begin{minipage}{0.66\linewidth}
					Notice that $ I_i^\sigma \cap I_k^\sigma = \emptyset \text{ for all } \beta_i \neq \beta_k$ and $ J_j^\sigma \cap J_l^\sigma = \emptyset \text{ for all } \gamma_j \neq \gamma_l$ by the definition of $\delta$.	Additionally,
					define the maps $\bet_r:\R^{n+1} \to \R^{n+1}, (x,y) \mapsto \left(x_1,\dots,x_n,rx_{n+1} \right)$ for $r>0$. We define
					\begin{align*}
					G_\nu^\sigma &:= H^{\sigma} \cup \bet_{\mm_\nu} \Big( \bigcup_{i=1}^m I_i^{\sigma} \Big) \cup \bet_{\mm_\nu} \Big( \bigcup_{j=1}^{m-1} J_j^{\sigma} \Big),\\
					\lambda_\nu^\sigma&: G_\nu^\sigma \to \R^{n+1}\\
					&\quad x \mapsto \begin{cases}
					(x_1,\dots,x_n,0), &\text{ if } x \in H^\sigma\\
					\big(x_1,\dots,x_n,\mm_\nu \beta_i(x_n-\sigma) \big), &\text{ if } x \in \bet_{\mm_\nu} ( I_i^\sigma )\\
					\big(x_1,\dots,x_n,\mm_\nu \gamma_j(x_n-\sigma) \big), &\text{ if } x \in \bet_{\mm_\nu} ( J_j^\sigma ),
					\end{cases}
					\end{align*}
				\end{minipage}~
				\begin{minipage}{0.33\linewidth}
					\hspace*{-1cm}
					\includegraphics[scale=0.5]{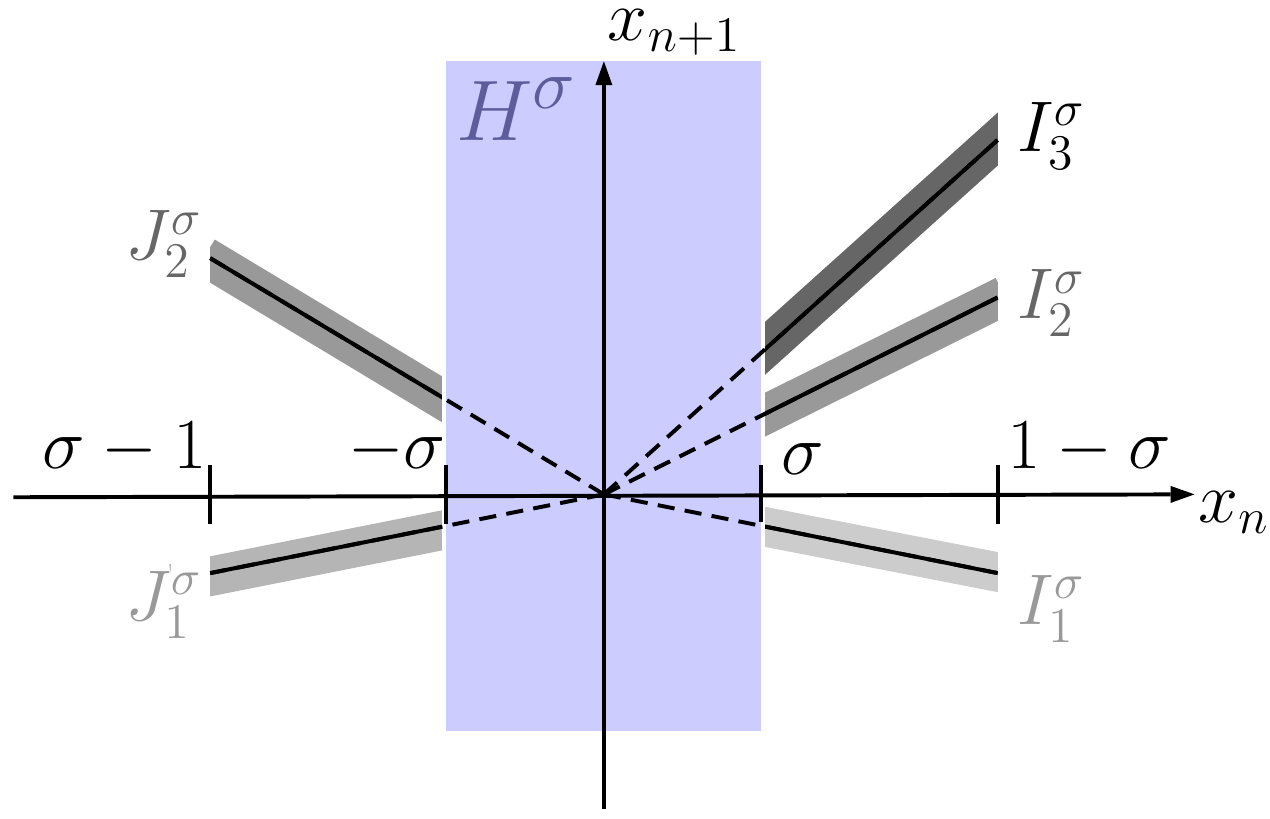}
				\end{minipage}
			\end{minipage}
			\begin{minipage}[t]{\linewidth}
				\vspace{0.4cm}
				\centering
				\begin{minipage}{0.49\linewidth}
					\includegraphics[scale=0.5]{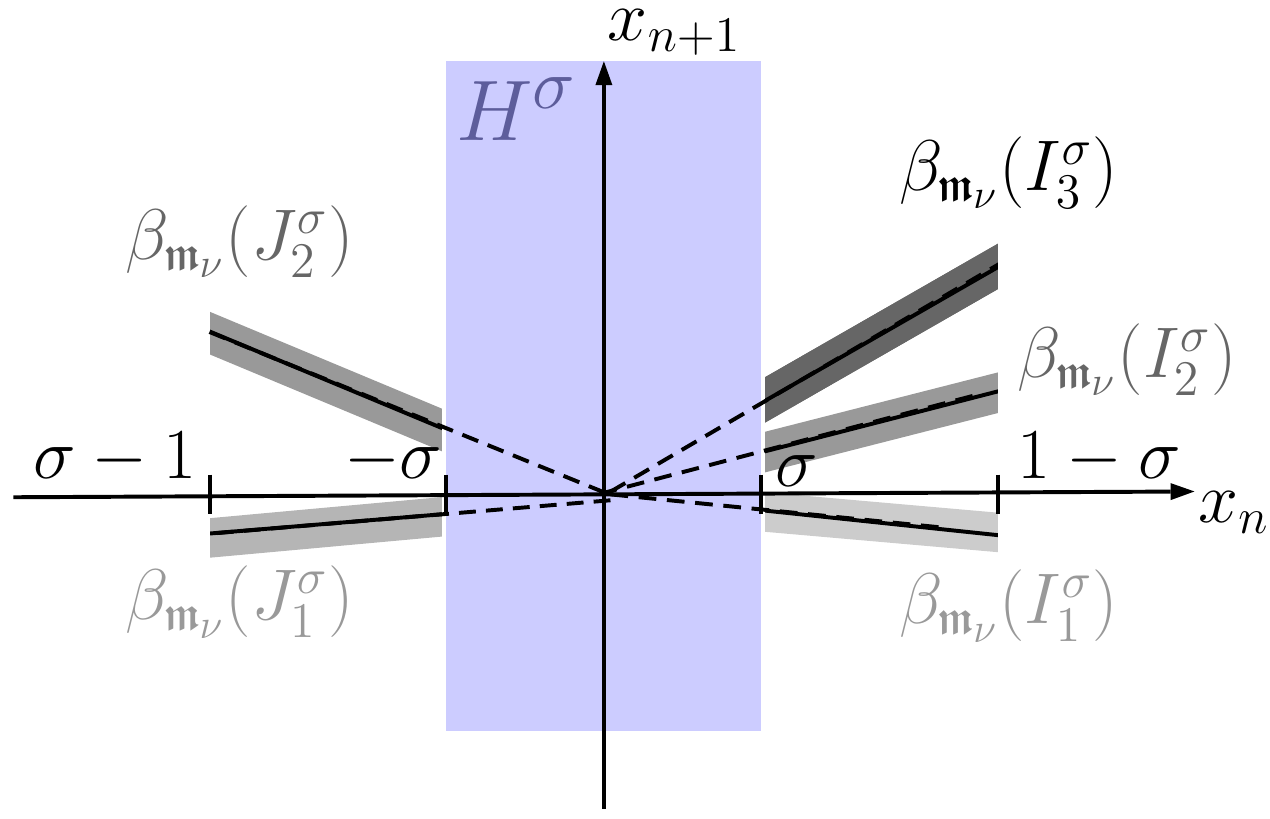}
				\end{minipage}~
				\begin{minipage}{0.49\linewidth}
					\includegraphics[scale=0.5]{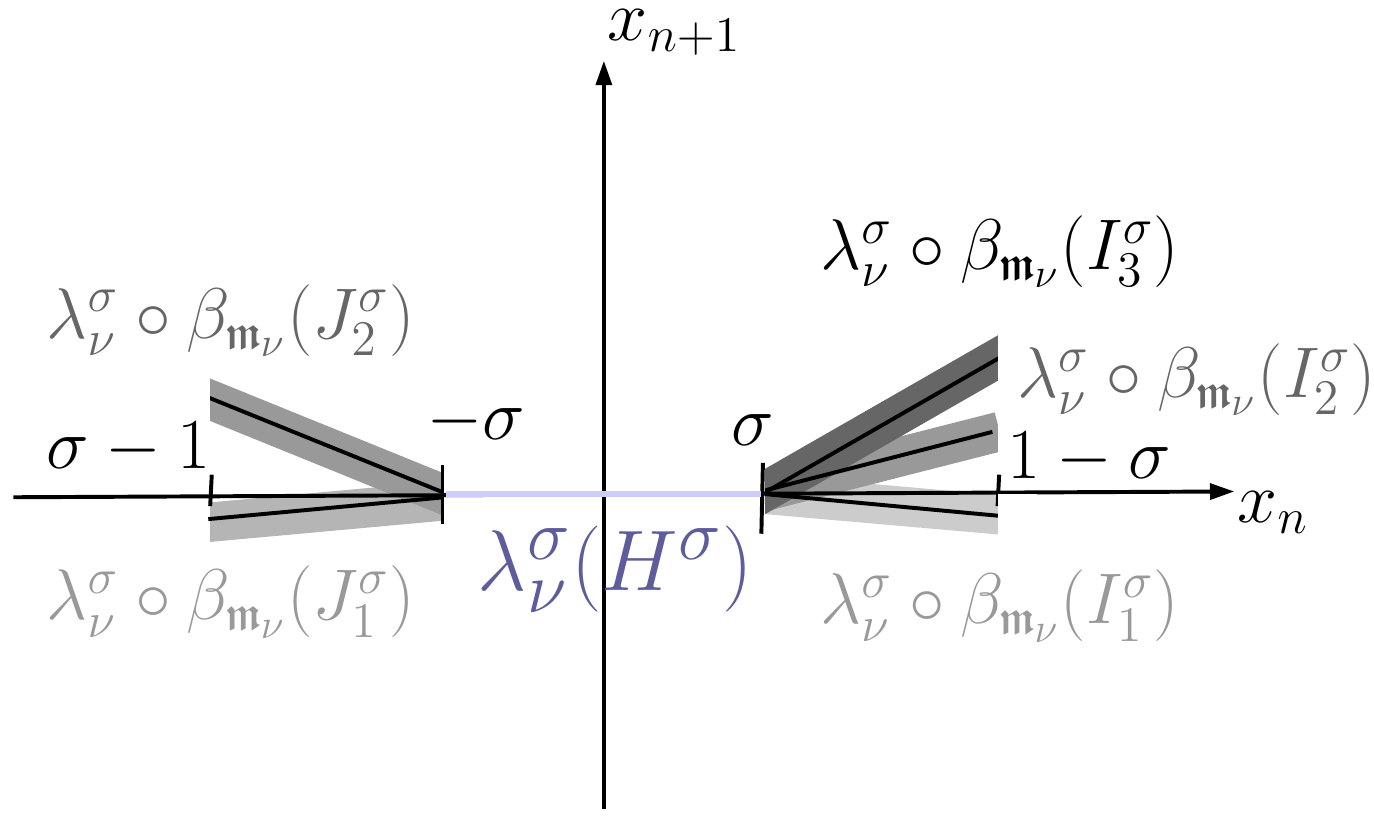}
				\end{minipage}
			\end{minipage}
			
			Now, we want to construct a homotopy between $\lambda_\nu^\sigma$ and the identity map. For this we take a $\CC^1$ function $\mu: \BB^n_1(0) \to [0,1]$ satisfying $\left. \mu \right|_{\BB^n_{1/2}(0)} \equiv 0$, $\left. \mu \right|_{\BB^n_{1}(0) \setminus \UU^n_{3/4}(0)} \equiv 1$ and $\sup\limits_{\BB^n_1(0)} |D\mu| \leq 5$. Then, we define
			\begin{align*}
			\Lambda_\nu^\sigma& := G_\nu^\sigma \cup (\R^{n+1} \setminus \BC_{3/4}) \longrightarrow \R^{n+1}\\
			&\quad x \mapsto \begin{cases}
			x, &\text{ if } x \in \R^{n+1} \setminus \BC_{3/4}\\
			\big(1-\mu\circ\pp(x)\big)\lambda_\nu^\sigma(x) + \big( \mu\circ \pp(x) \big) x, &\text{ if } x \in G_\nu^\sigma
			\end{cases}
			\end{align*}
			and finally map everything to $\M_\nu$ with
			\begin{align*}
			F_\nu^\sigma:  (G_\nu^\sigma \times \R^{k-1} ) \cup (\R^{n+k} \setminus \BC_{3/4}) &\longrightarrow \M_\nu \subset \R^{n+k}	\\
			(x,y) &\mapsto \big( \Lambda_\nu^\sigma (x), \Phii_\nu(\Lambda_\nu^\sigma (x)) \big).
			\end{align*}
			We know that in $\pp^{-1}(\VV_\sigma)$, $\spt(T_\nu)$ lives on the $\Phii_\nu$-graphs of $\vin$. As $\vin \mm_\nu^{-1}$ converges to $\beta_iY_n$, for $\nu$ big enough, graph$(\vin, \Phii_\nu) \subset (\id, \Phii_\nu) \circ \bet_{\mm_\nu}(I_i^\sigma)$. Therefore 
			$$\pp^{-1}(\VV_\sigma) \cap \spt(T_\nu) = \bigcup_{i=1}^m \graph(\vin, \Phii_\nu) \subset (\id, \Phii_\nu)(G_\nu^\sigma).$$
			Now, we compute the functions whose $\Phii_\nu$-graph describes $\spt({F_\nu^\sigma}_\#T_\nu) \cap \pp^{-1}(\VV_\sigma)$:
			\begin{align*}
			u_i^{(\nu)}&= (1-\mu)\mm_\nu \beta_i(Y_n-\sigma) + \mu\vin\\
			&= (1-\mu)\mm_\nu \beta_i Y_n + \mu \vin -(1-\mu)\mm_\nu\beta_i\sigma\\
			&= \mu(\vin-\mm_\nu \beta_iY_n)+ \mm_\nu \beta_iY_n - (1-\mu)\mm_\nu \beta_i\sigma.
			\end{align*}
			Then the following holds
			$$ u_i^{(\nu)}-\vin = \mu(\vin-\mm_\nu \beta_iY_n)-(\vin-\mm_\nu \beta_iY_n) - (1-\mu)\mm_\nu \beta_i\sigma.$$
			Recall $\zeta := \max \big\{|\beta_1|, |\beta_m|, |\gamma_1|, |\gamma_{m-1}| \big\}$. We bound by step 1 and \eqref{eq:vminuseby}
			\begin{align*}
			\sup_{\VV_\sigma} |Du_i^{(\nu)}|
			&\leq \sup_{\VV_\sigma} \left( |D\mu||v-\mm_\nu \beta_i Y_n| + |D(\vin-\mm_\nu \beta_i Y_n)| + \mm_\nu|\beta_i| + \mm_\nu\sigma |\beta_i D\mu| \right)\\
			&\leq 5 \sigma\mm_\nu + \sqrt{\Cr{38}}\sigma \mm_\nu  +\mm_\nu \zeta +5\mm_\nu\zeta\sigma\\
			&\leq \Cl{37}\mm_\nu(1+\zeta),\\
			\sup_{\VV_\sigma} |D\vin| 
			&\leq \sup_{\VV_\sigma} \left( |D(\vin-\mm_\nu \beta_i Y_n)| + |D(\mm_\nu \beta_i Y_n)| \right) \\
			&\leq \Cr{37}\mm_\nu(1+\zeta),\\
			\sup_{\VV_\sigma} |Du_i^{(\nu)}-D\vin| 
			&\leq \sup_{\VV_\sigma}\left( |D\mu||v-\mm_\nu \beta_i Y_n| + |1+\mu||D(\vin-\mm_\nu \beta_i Y_n)| + \mm_\nu\sigma |\beta_i D\mu| \right)\\
			&\leq 5 \sigma + 2\sqrt{\Cr{38}}\sigma \mm_\nu +5\mm_\nu\zeta\sigma\\
			&\leq \Cr{37}\sigma \mm_\nu(1+\zeta).
			\end{align*}
			With this we can estimate
			\begin{equation}\label{eq:massFTV}
			\begin{split}
			\MM \big( {F_\nu^\sigma}_\# &\big(T_\nu \LL \pp^{-1}(\VV_\sigma) \big) \big) - \MM\big(T_\nu \LL \pp^{-1}(\VV_\sigma) \big)\\
			&\leq \sum_{i=1}^m \sqrt{ 1+ |D\Phii_\nu|^2}\int_{\VV_\sigma} \left( \sqrt{1+|Du_i^{(\nu)}|^2} - \sqrt{1+|D\vin|^2} \right) ~ \dd \Leb^n\\
			&\leq  2\sum_{i=1}^m \int_{\VV_\sigma} \left( 1+|Du_i^{(\nu)}|^2 - 1-|D\vin|^2 \right)  ~ \dd \Leb^n \\
			&\leq  2\sum_{i=1}^m \int_{\VV_\sigma} | Du_i^{(\nu)}-D\vin| \big(|Du_i^{(\nu)}|+|D\vin| \big)~ \dd \Leb^n\\
			&\leq \Cl{40}(1+\zeta)^2\mm_\nu^2 \sigma.
			\end{split}
			\end{equation}
			In the same manner, we deduce
			\begin{align}\label{eq:massFTW}
			\MM \big( {F_\nu^\sigma}_\# \big(T_\nu \LL \pp^{-1}(\WW_\sigma) \big) \big) - \MM\big(T_\nu \LL \pp^{-1}(\WW_\sigma) \big)
			\leq \Cr{40}(1+\zeta)^2\mm_\nu^2 \sigma.
			\end{align}
			
			Outside of $\pp^{-1}(\VV_\sigma \cup \WW_\sigma)$ we notice that $F_\nu^\sigma$ is the identity in
			$\M_\nu \cap \big( (H^{\sigma} \times \R^{k-1}) \setminus \BC_{3/4} \big)$ and hence
			\begin{align}\label{eq:FisIdentity}
			\MM \big( {F_\nu^\sigma}_\# \big(T_\nu \LL ((H^{\sigma} \times \R^{k-1}) \setminus \BC_{3/4}) \big) \big) = \MM\big(T_\nu \LL ((H^{\sigma} \times \R^{k-1}) \setminus \BC_{3/4}) \big).
			\end{align}
			In $(H^{\sigma} \times \R^{k-1})\cap \BC_{3/4}$, the following holds
			$$F_\nu^\sigma (x,y)=\big(x_1, \dots, x_n, \mu(x_1,\dots,x_n)x_{n+1}, \Phii_\nu(x_1, \dots, x_n, \mu(x_1,\dots,x_n)x_{n+1})\big).$$
			Hence, we can use Lemma \ref{lem:areacomp} (with $A=(H^\sigma \times \R^{k-1})\cap \BC_{3/4}$, $\tau=\sigma$, $\rho=5\sigma$) to bound
			\begin{align*}
			\MM \big( {F_\nu^\sigma}_\# &(T \LL (H^\sigma \times \R^{k-1})) \big) - \MM (T \LL (H^\sigma \times \R^{k-1}))\\
			&\overset{\eqref{eq:FisIdentity}}{=} \MM \big( {F_\nu^\sigma}_\# \big(T \LL ((H^\sigma \times \R^{k-1})\cap \BC_{3/4} )\big) \big) - \MM \big(T \LL ((H^\sigma \times \R^{k-1})\cap \BC_{3/4}) \big)\\
			&\ \ \leq \frac{\Cr{7}}{\sigma^2} \left( \kappa_{T_\nu}^2 +2\int_{(H^{2\sigma} \times \R^{k-1}) \cap \BC_{3/4 +\sigma}} X_{n+1}^2 \dd \lVert T_\nu \rVert + 27\Anu \right)\\
			&\ \overset{\eqref{eq:supx}}{\leq} \frac{\Cr{7}}{\sigma^2} \left( \kappa_{T_\nu}^2 + 27\Anu + 2\lVert T_\nu \rVert \big( (H^{2\sigma} \times \R^{k-1})\cap \BC_{3/4 + \sigma} \big)  (2\zeta \sigma \mm_\nu + \sigma\mm_\nu)^2 \right).
			\end{align*}
				Further, we see that by the monotonicity property \eqref{eq:excmon} and the projection property of currents in $\TT$, the following holds
			\begin{align*}
			\lVert T_\nu \rVert &\big( (H^{2\sigma} \times \R^{k-1})\cap \BC_{3/4 + \sigma} \big)\\
			&= \left( \frac{3}{4}+\sigma \right)^n \EE_C \big(T_\nu, \frac{3}{4}+\sigma \big) + \MM \left( \pp_\# \big(T_\nu \LL ( (H^{2\sigma} \times \R^{k-1})\cap \BC_{3/4 + \sigma})\big) \right)\\
			&\leq \epnu^2 + m\sigma\left( \frac{3}{4} + 
			\sigma \right)^{n-1} \\
			&\leq \Cl{39} \sigma,
			\end{align*}
			where we used \eqref{eq:epsigmkap} in the last inequality.
			
			\begin{minipage}[t]{\linewidth}
				\centering
				\begin{minipage}{0.7\linewidth}
					Therefore,
					\begin{align*}
					\MM &\big( {F_\nu^\sigma}_\# (T \LL (H^{2\sigma} \times \R^{k-1})) \big) - \MM (T \LL (H^{2\sigma} \times \R^{k-1}))\\
					&\leq \frac{\Cr{7}}{\sigma^2} \left( \kappa_{T_\nu}^2 + 27\Anu + 2\Cr{39}\sigma (2\zeta \sigma \mm_\nu + \sigma\mm_\nu)^2 \right) \\
					&\overset{\eqref{eq:epsigmkap}}{\leq} \C(1+\zeta)^2\mm_\nu^2 \sigma.
					\end{align*}
					
					Putting this toghether with \eqref{eq:massFTV} and  \eqref{eq:massFTW} yields
					$$\MM({F_\nu^\sigma}_\# T_\nu)- \MM(T_\nu ) \leq \Cr{41}(1+\zeta)^2\mm_\nu^2 \sigma $$
					for all $\nu \geq N_\sigma$.
				\end{minipage}~
				\begin{minipage}[t]{0.25\linewidth}
					\vspace*{-2.5cm} 
					\hspace*{-0cm}
					\includegraphics[scale=0.35]{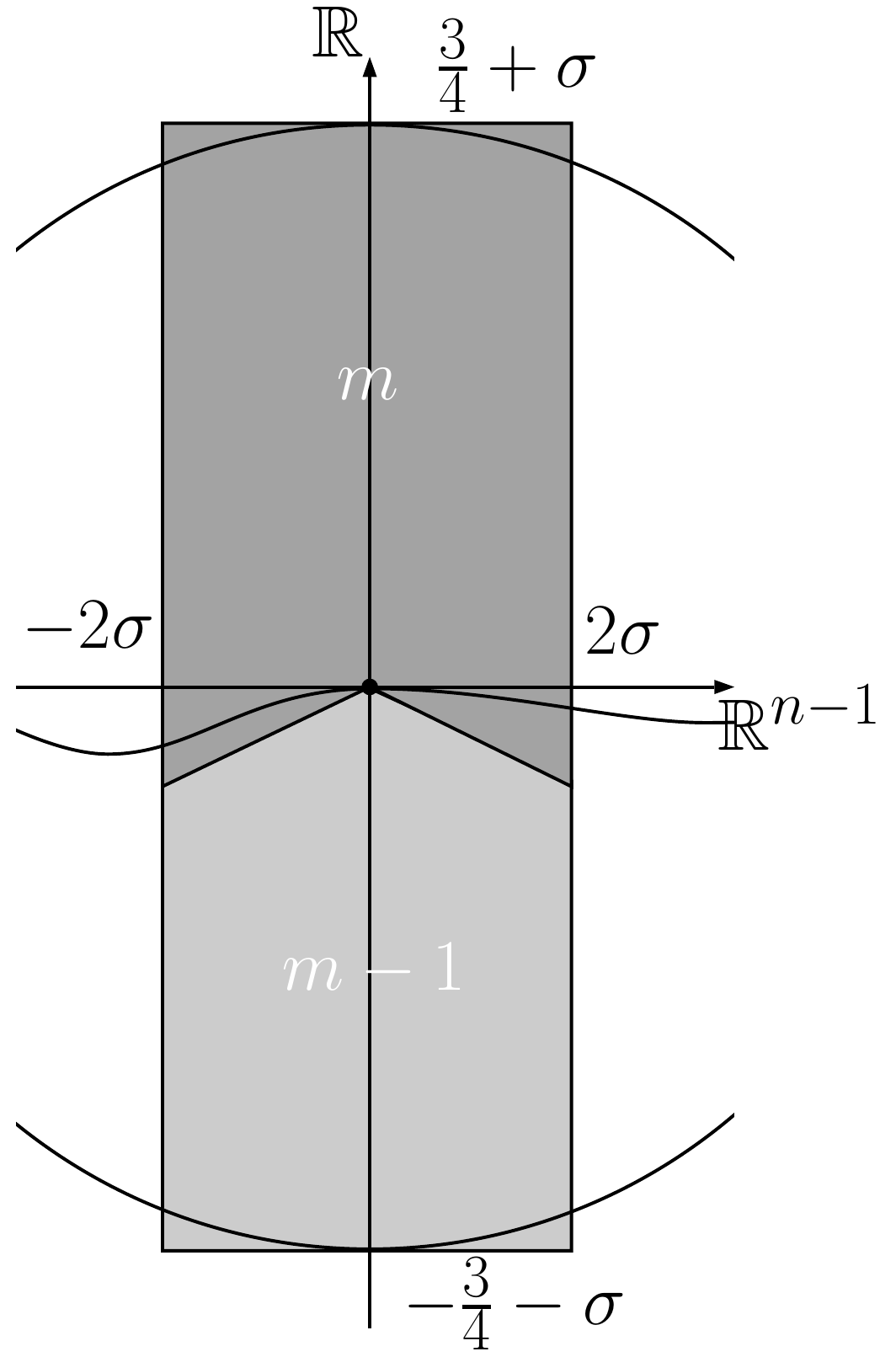}
				\end{minipage}
			\end{minipage}
			
			\textit{Proof of step 3:} We define
			$$\eta: \BB^n_{1/2}(0) \to \R, \eta(y)=
			\begin{cases}
			\beta_m Y_n(y), &\text{ if } y \in \BB^n_{1/2}(0) \cap \overline{\VV}\\
			\gamma_{m-1}Y_n(y), &\text{ if } y \in \BB^n_{1/2}(0) \cap \overline{\WW}.
			\end{cases}$$
			To show that $\eta$ is harmonic, we prove that it minimizes the Dirichlet integral. To do so, we take some arbitrary Lipschitz function $\theta: \BB^n_{1/2}(0) \to \R$ satisfying $\left. \theta \right|_{\partial \BB^n_{1/2}(0)} = \left. \eta \right|_{\partial \BB^n_{1/2}(0)}$. Then we notice that $\int |D\eta|^2 - \int |D\theta|^2$ is comparable to the difference of the Hausdorff measure of the graphs of $\eta$ and $\theta$. These graphs, we express as currents and use the minimality of $T_\nu$ to deduce that $\int |D\eta|^2 - \int |D\theta|^2 \leq 0$. To make this precise,  we approximate both of these functions. Indeed, let $\{\sigma_k\}_{k\geq 1}$ be a monotonously decreasing null sequence with $\sigma_1 < \min\{\delta/2, 1/16\}$. For each $k \geq 1$, let $\nu_k =N_{\sigma_k}$, 
			$$\eta_k: \BB^n_{1/2}(0) \to \R, \eta_k(y)=
			\begin{cases}
			\beta_m (Y_n(y)-\sigma_k), &\text{ if } y \in \BB^n_{1/2}(0) \cap \VV_{\sigma_k}\\
			\gamma_{m-1}(Y_n(y)+\sigma_k), &\text{ if } y \in \BB^n_{1/2}(0) \cap \WW_{\sigma_k}\\
			0, &\text{ if } y \in \BB^n_{1/2}(0) \setminus (\VV_{\sigma_k} \cup \WW_{\sigma_k}),
			\end{cases}$$
			and choose some $\CC^1$ function $\theta_k:\BB^n_{1/2}(0) \to \R$ with $\left. \theta_k \right|_{\partial \BB^n_{1/2}(0)} = \left. \eta_k \right|_{\partial \BB^n_{1/2}(0)}$, \newline
			$\limsup\limits_{k \to \infty} \sup\limits_{\BB^n_{1/2}(0)} |D\theta_k| < \infty$ and 
			$ \displaystyle \lim_{k\to \infty} \int_{\BB^n_{1/2}(0)} |D\theta_k -D\theta|^2 \dd \Leb^n =0.$\\
			With this, we define two auxiliary currents associated to the $\Phii_\nu$-graphs of $\mm_\nu \eta_k$ and $\mm_\nu\theta_k$ respectively:
			\begin{align*}
			R_k &:=  \left( \big(\id_n, \mm_{\nu_k} \theta_k, \Phii_{\nu_k}(\id_n, \mm_{\nu_k} \eta_k)\big) _\# (\EE^n \LL \BB^n_{1/2}) \right) \LL \overset{\circ}{\BC}_{1/2},\\
			S_k &:= \left( \big(\id_n, \mm_{\nu_k} \theta_k, \Phii_{\nu_k} (\id_n, \mm_{\nu_k} \theta_k)\big) _\# (\EE^n \LL \BB^n_{1/2}) \right) \LL \overset{\circ}{\BC}_{1/2}.
			\end{align*}
			Notice that $R_k$, $S_k$ are supported in $\M_{\nu_k}$ and moreover, in $\BC_{1/2} \cap G_\nu^\sigma$ the following holds $F_{\nu_k}^{\sigma_k} =(\id, \Phii_\nu)\circ \Lambda_{\nu_k}^{\sigma_k}=(\id, \Phii_\nu)\circ \lambda_{\nu_k}^{\sigma_k}$ and hence,
			\begin{align}\label{eq:massFTR}
			\MM \big( {F_{\nu_k}^{\sigma_k}}_\# (T_{\nu_k} \LL \BC_1) \big)
			=\MM \big( {F_{\nu_k}^{\sigma_k}}_\# (T_{\nu_k} \LL \BC_1)-R_k \big) +\MM(R_k).
			\end{align}
			In addition, we define $q(t,x)= (\id, \Phii)(x_1, \dots, x_{n-1},tx_n, tx_{n+1})$ and  $Q_{\nu_k}:= q_\#\big( [0,1] \times ((\partial T_{\nu_k})\LL\BC_2)\big)\LL \BC_1$. This is the filling between $\BB_1^{n-1}\times\{0\}$ and $\spt(\partial T_\nu) \cap \BC_1$ mapped onto $\M_{\nu_k}$.
			Then we consider $P_k := Q_{\nu_k}-({F_{\nu_k}^{\sigma_k}})_\#Q_{\nu_k}$. Because $ \left. F_{\nu_k}^{\sigma_k} \right|_{\partial \BC_1}= \left. (\id, \Phii) \right|_{\partial \BC_1}$, $\left. \theta_k \right|_{\partial \BB^n_{1/2}(0)} = \left. \eta_k \right|_{\partial \BB^n_{1/2}(0)}$ and the homotopy formula \cite[4.1.9]{federer}, the following holds
			\begin{align*}
			\partial R_k
			&= \partial S_k,\\
			\partial P_k &= \partial Q_{\nu_k}-\partial ({F_{\nu_k}^{\sigma_k}}_\#Q_{\nu_k})\\
			&= (\partial T_{\nu_k})\LL\BC_1-(\id, \Phii_{\nu_k})_\# \big((\EE^{n-1}\times \{0\})\LL \BC_1\big)\\
			& \qquad -{F_{\nu_k}^{\sigma_k}}_\#\big((\partial T_{\nu_k}) \LL \BC_1 \big)+ (\id, \Phii_{\nu_k})_\# \big((\EE^{n-1}\times \{0\})\LL \BC_1\big)\\
			&=(\partial T_{\nu_k})\LL\BC_1 - {F_{\nu_k}^{\sigma_k}}_\#\big((\partial T_{\nu_k}) \LL \BC_1\big)\\
			&= \partial (T_{\nu_k} \LL \BC_1) -
			\partial \big({F_{\nu_k}^{\sigma_k}}_\#(T_{\nu_k} \LL \BC_1) \big).
			\end{align*}
			Moreover, the area minimality of $T_{\nu_k}$ in $\M_{\nu_k}$ implies
			\begin{align*}
			\MM(T_{\nu_k} \LL \BC_1) 
			&\leq \MM \big({F_{\nu_k}^{\sigma_k}}_\#(T_{\nu_k} \LL \BC_1) +P_k -R_k+S_k \big)\\
			&\leq  \MM \big({F_{\nu_k}^{\sigma_k}}_\#(T_{\nu_k} \LL \BC_1)  -R_k \big)  +\MM(P_k) +\MM(S_k).
			\end{align*}
			Together with step 2 and \eqref{eq:massFTR}, we deduce
			\begin{align*}
			\MM(R_k)-\MM(S_k) 
			&=\MM \big( {F_{\nu_k}^{\sigma_k}}_\# (T_{\nu_k} \LL \BC_1) \big) - \MM \big( {F_{\nu_k}^{\sigma_k}}_\# (T_{\nu_k} \LL \BC_1)-R_k \big)-\MM(S_k)\\
			&\leq \MM \big( {F_{\nu_k}^{\sigma_k}}_\# (T_{\nu_k} \LL \BC_1) \big) -\MM(T_{\nu_k} \LL \BC_1)+\MM(P_k)\\
			&\leq \MM(P_k) + \Cr{41}(1+\zeta)^2\mm_{\nu_k}^2 \sigma_{\nu_k}.
			\end{align*}
			Notice that again by the homotopy formula \cite[4.1.9]{federer}, $\MM(Q_{\nu_k})\leq \C( \kappa_{T_{\nu_k}}+ \mm_{\nu_k}^4)$. Then the condition $(ii.)$ in Definition \ref{def:harmblow} yields
			$$ \limsup_{k \to \infty} \frac{\MM(P_k)}{\mm_{\nu_k}^2}
			\leq  \limsup_{k \to \infty} \big(1+\text{Lip}(F_{\nu_k}^{\sigma_k})^n \big) \frac{\MM(Q_{\nu_k})}{\mm_{\nu_k}^2} =0.$$
			Thus,
			\begin{align*}
			0 &\geq \limsup_{k \to \infty} \frac{\MM(R_k)-\MM(S_k)}{\mm_{\nu_k}^2}\\
			&= \limsup_{k \to \infty} \left( \int_{\BB^n_{1/2}(0)} \frac{\sqrt{1+\mm_{\nu_k}^2 |D\eta_k|^2}}{\mm_{\nu_k}^2} \dd \Leb^n 
			-\int_{\BB^n_{1/2}(0)} \frac{\sqrt{1+\mm_{\nu_k}^2 |D\theta_k|^2}}{\mm_{\nu_k}^2} \dd \Leb^n -\C\frac{|D\Phii_{\nu_k}|}{\mm_{\nu_k}^2}\right)\\
			&=\limsup_{k \to \infty} \int_{\BB^n_{1/2}(0)} \frac{\big(1+\mm_{\nu_k}^2 |D\eta_k|^2 \big)-(1+\mm_{\nu_k}^2 |D\theta_k|^2)}{\mm_{\nu_k}^2 \big(\sqrt{1+\mm_{\nu_k}^2 |D\eta_k|^2} +\sqrt{1+\mm_{\nu_k}^2 |D\theta_k|^2} \big)} \dd \Leb^n \\
			&= \frac{1}{2}  \int_{\BB^n_{1/2}(0)} |D\eta|^2-|D\theta|^2\dd \Leb^n.
			\end{align*}
			As $\theta$ was arbitrary, $\eta$ minimizes the Dirichlet integral and hence, is a harmonic function. In particular, $\eta$ is differentiable in $0$ and thus, $\beta_m=\gamma_{m-1}$. We argue similarly to deduce that also $\beta_1 = \gamma_1$. 
			
			\textit{Step 4:} Let $0< \rho <1$ and assume $0<\sigma < (1-\rho)/2.$ Then by Definition \ref{def:harmblow}$(iii.)$,$(iv.)$, it follows that
			$$\limsup_{\nu \to \infty} \sup_{\spt(T_\nu) \setminus H^{\sigma/2}} \left| \frac{X_{n+1}}{\mm_\nu}- \beta_1 X_n \right| =0$$
			and by Lemma \ref{lem:harmblowbound}
			\begin{align*}
			\limsup_{\nu \to \infty} \sup_{\spt(T_\nu) \cap H^{\sigma/2} \cap \BC_\rho} \left| \frac{X_{n+1}}{\mm_\nu}- \beta_1 X_n \right| 
			\leq \limsup_{\nu \to \infty} \sup_{\spt(T_\nu) \cap H^{\sigma/2} \cap \BC_\rho} \left| \frac{X_{n+1}}{\mm_\nu} \right| + |\beta_1| \frac{\sigma}{2}
			~\leq ~ |\beta_1| \sigma.
			\end{align*} 
			Letting $\sigma \downarrow 0$ concludes the proof.
			
		\end{proof}
	
	\section{Comparison between spherical and cylindrical excess}
		In some situations it is more convenient to work with the spherical excess rather than with the cylindrical one. However, in the context of blow-ups, we see that they are in fact comparable.
		\begin{lem}\label{lem:sphercylindr}
			There exist positive constants $\Cl{43}$, $\Cl{46}$, $\Cl{47}$ such that if $(T, \M) \in \TT$ satisfies
			$$ \EE_C(T,1) + \kappa_T + \Aa \leq \frac{1}{\Cr{43}} 
			\qquad \text{ and } \qquad 
			\sup_{\BC_{1/4} \cap \spt(T)} X_{n+1}^2 \leq \frac{\EE_C(T,\frac{1}{3})}{\Cr{46}},$$
			then $$ \EE_C(T, \frac{1}{3}) \leq \Cr{47} \big(\EE_S(T,1) + \kappa_T + \Aa \big).$$
		\end{lem}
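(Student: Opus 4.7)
\textbf{Proof proposal for Lemma \ref{lem:sphercylindr}.}

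The plan is to compare $\MM(T\LL \BC_{1/3})$ with $\MM(T \LL \BB_{r_1})$ for an appropriate $r_1$ slightly larger than $1/3$ (using the height bound), relate the latter to the spherical excess at scale $1$, and compute the projection mass $\MM(\pp_\#(T\LL\BC_{1/3}))$ explicitly from the structure of $(T,\M)\in\TT$.

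First, let $r_0 = 1/3$. Since $\spt(T)\subset\M = \graph(\PhiM)$ and $\PhiM(0)=0$, $D\PhiM(0)=0$, every point $x\in\spt(T)$ satisfies $|x|^2 = |\pp(x)|^2 + X_{n+1}(x)^2 + |\PhiM(\pp(x),X_{n+1}(x))|^2 \leq |\pp(x)|^2 + X_{n+1}(x)^2 + C\Aa^2(|\pp(x)|^2+X_{n+1}(x)^2)^2$. Combined with the height hypothesis, this gives $\spt(T)\cap\BC_{r_0}\subset \BB_{r_1}$ with
\[
r_1^2 \;\leq\; r_0^2 + \frac{\EE_C(T,r_0)}{\Cr{46}} + C\Aa^2,
\qquad\text{so}\qquad \Bigl(\tfrac{r_1}{r_0}\Bigr)^{\!n}-1 \;\leq\; \Cl{80}\Bigl(\tfrac{\EE_C(T,r_0)}{\Cr{46}}+\Aa^2\Bigr),
\]
provided $\Cr{43}$ is chosen so that $\EE_C(T,1)+\Aa\leq 1/\Cr{43}$ forces $r_1 < 1$.

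Second, by Corollary \ref{cor:sphericalExc} applied at radius $r_1$ together with the monotonicity of $\int |X^\perp|^2|X|^{-n-2}\,\dT$ on balls and Corollary \ref{cor:sphericalExc} at radius $1$,
\[
\EE_S(T,r_1) \;\leq\; \int_{\BB_{r_1}}|X^\perp|^2|X|^{-n-2}\,\dT + \Cr{6}(\Aa+\kappa_T)
\;\leq\; \EE_S(T,1) + 2\Cr{6}(\Aa+\kappa_T).
\]
Hence
\[
r_0^{-n}\MM(T\LL\BC_{r_0}) \;\leq\; \bigl(\tfrac{r_1}{r_0}\bigr)^{\!n}\Bigl(\EE_S(T,1) + \omm_n\bigl(m-\tfrac12\bigr) + 2\Cr{6}(\Aa+\kappa_T)\Bigr).
\]

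Third, using the explicit description of $\pp_\#(T\LL\BC_2)$ in the definition of $\TT$ and the estimate $|\varphi_T(y)|\leq\tfrac12\kappa_T|y|^2$ (since $\varphi_T(0)=0$, $D\varphi_T(0)=0$), the projection mass satisfies
\[
\Bigl|\,r_0^{-n}\MM(\pp_\#(T\LL\BC_{r_0}))-\omm_n\bigl(m-\tfrac12\bigr)\,\Bigr| \;\leq\; \Cl{81}\,\kappa_T.
\]
Subtracting and plugging in the bound on $(r_1/r_0)^n-1$ gives
\[
\EE_C(T,r_0) \;\leq\; \Bigl(\tfrac{r_1}{r_0}\Bigr)^{\!n}\!\bigl(\EE_S(T,1)+2\Cr{6}(\Aa+\kappa_T)\bigr) + \Cr{80}\Bigl(\tfrac{\EE_C(T,r_0)}{\Cr{46}}+\Aa^2\Bigr)\omm_n\bigl(m-\tfrac12\bigr) + \Cr{81}\kappa_T.
\]
Choosing $\Cr{46}\geq 4\Cr{80}\omm_n(m-\tfrac12)$ allows the $\EE_C(T,r_0)$ term on the right to be absorbed into the left, and choosing $\Cr{43}$ large enough that $\Aa^2\leq\Aa$ and $(r_1/r_0)^n\leq 2$, we obtain the claimed bound with $\Cr{47}$ depending only on $n,m,k,\Cr{6},\Cr{80},\Cr{81}$.

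The main obstacle is the bookkeeping of the absorption step: the quantity $\EE_C(T,r_0)$ appears on the right through $(r_1/r_0)^n - 1$, and it must be genuinely small compared to $1$ for the geometric argument to close. This is exactly why the hypothesis is stated with a free large constant $\Cr{46}$ in the height bound, which must be fixed after the dimensional constants $\Cr{80},\omm_n,m$ are known; the smallness assumption controlled by $\Cr{43}$ then ensures $r_1<1$ so that monotonicity of $\int|X^\perp|^2|X|^{-n-2}\,\dT$ applies on $\BB_{r_1}\subset\BB_1$.
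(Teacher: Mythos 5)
Your approach is genuinely different from the paper's and much more elementary: you bound $\MM(T\LL\BC_{1/3})$ by enclosing $\spt(T)\cap\BC_{1/3}$ in a ball $\BB_{r_1}$ with $r_1$ close to $1/3$, and then invoke the monotonicity formula (via Corollary \ref{cor:sphericalExc}) and the explicit description of the projection mass. The paper instead constructs a vectorfield $h_k(x)=\lambda^2(x)\mu_k(|\tilde x|)(\tilde x,0)$ with the threshold cutoff $\lambda(x)=\max\{0,\,x_{n+1}/|\tilde x|-\beta\varepsilon-\kappa\}$ and computes its first variation to control $\int_A X_{n+1}^2\,\dT$ on the annular region $A=\UU_1\setminus B_{1/4}$, and only uses the height hypothesis on the ball $B_{1/4}$.

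However, your proof has a genuine gap at the very first step. You need $\spt(T)\cap\BC_{r_0}\subset\BB_{r_1}$ with $r_0=1/3$ and $r_1^2\leq r_0^2+\EE_C(T,1/3)/\Cr{46}+C\Aa^2$, which requires the bound $X_{n+1}^2\leq\EE_C(T,1/3)/\Cr{46}$ on all of $\BC_{1/3}\cap\spt(T)$. But the hypothesis of the lemma only provides this bound on $\BC_{1/4}\cap\spt(T)$. On the annular region $\{1/4<|\pp(x)|\leq 1/3\}\cap\spt(T)$ you have no such estimate. The natural substitute, Lemma \ref{lem:XLessExc}, gives $\sup_{\BC_{1/3}\cap\spt(T)}X_{n+1}^2\leq C\big(\EE_C(T,1)+\kappa_T+\Aa\big)$, but then $(r_1/r_0)^n-1$ is controlled by $\EE_C(T,1)+\kappa_T+\Aa$ rather than by $\EE_C(T,1/3)/\Cr{46}$, and $\EE_C(T,1)$ cannot be absorbed into the left-hand side $\EE_C(T,1/3)$ (the excess monotonicity \eqref{eq:excmon} only gives $\EE_C(T,1/3)\leq 3^n\EE_C(T,1)$, which is the wrong direction). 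So the absorption step you rely on breaks down precisely where the hypothesis is weak. This is exactly the difficulty the paper's vectorfield argument is designed to circumvent: thresholding by $\beta\varepsilon+\kappa$ with $\varepsilon^2=\EE_C(T,1/3)$ makes the error term on $A$ equal to $2(\beta^2\varepsilon^2+\kappa^2)\MM(T)$, which \emph{is} absorbable, and the pointwise hypothesis is then only invoked on the tiny set $B_{1/4}$. If the lemma were stated with the height bound on $\BC_{1/3}$ instead of $\BC_{1/4}$, your argument would close; as stated, it does not.
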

		We will give the very technical proof for this in chapter \ref{chapter:proofs}. It follows by computing the first variation of a  suitable vectorfield.
		
		Instead of asking $X_{n+1}^2$ to be small, we now only assume that $T$ is optimal with respect to rotations. We will argue by contradiction, finding a suitable blow-up sequence and then we will reduce it to the case when the harmonic blow-ups are linear (in order to use Lemma \ref{lem:harmblowequal}). Here, we give a sufficient condition for this to happen.

		\begin{rmk}\label{rmk:traceEVlaplace}
				Let $h: \VV \to \R$ be a harmonic function such that for all $y \in \VV$ and $0< \rho <1$ the following holds $h(\rho y) = \rho h(y)$. Then it follows
				\begin{enumerate}[(i.)]
					\item If $h \geq 0$, then $h$ has zero trace on $\BL$.
					\item If $h$ has zero trace on $\BL$, then there is some $\beta \in \R$ satisfying $h= \beta \left. Y_n \right|_\VV$.
				\end{enumerate}
		\end{rmk}
		The proof of this fact can be read in the original paper \cite{hardtSimon}.
		
		\begin{thm}\label{thm:CylSpherExc}
			Let $(T, \M) \in \TT$ and recall $\Cr{43}$ and $\Cr{47}$ from Lemma \ref{lem:sphercylindr}. 
			Then there is a positive constant $\Cl{48}$ such that if for all real numbers $|\eta | < 1/8$ the following holds
			\begin{itemize}
				\item $\displaystyle \EE_C(T,1) + \kappa_T + \Aa \leq \frac{1}{2\Cr{43}}$,
				\item $\displaystyle \EE_C(T,\frac{1}{3}) + \frac{\kappa_T}{\EE_C(T, \frac{1}{3})} \leq \frac{1}{\Cr{48}}$,
				\item $\displaystyle \EE_C(T, \frac{1}{4}) \leq 2 \EE_C(\gamm_{\eta \#}T, \frac{1}{4}),$
			\end{itemize} 	
			then 
			$$ \EE_C(T, \frac{1}{4}) \leq \Cr{48}\big( \EE_S(T,1) + \kappa_T + \Aa \big).$$
		\end{thm}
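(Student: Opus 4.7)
The plan is a contradiction argument combining a blow-up, a linearisation step based on Theorem~\ref{thm:graph}$(v)$ and Lemma~\ref{lem:harmblowupgradient}, a collapsing step (Lemma~\ref{lem:harmblowequal}), and a final appeal to Lemma~\ref{lem:sphercylindr}. If no such $\Cr{48}$ exists, then for each $k \in \N$ one obtains $(T_k, \M_k) \in \TT$ satisfying the three hypotheses (with $\Cr{48}$ replaced by $k$ in the second bullet) but with $\EE_C(T_k, 1/4) > k\bigl(\EE_S(T_k, 1) + \kappa_k + \Aa_k\bigr)$. From these inequalities one reads off $\EE_C(T_k, 1/3) \to 0$ and that $\kappa_k, \Aa_k, \EE_S(T_k, 1)$ are $o(\EE_C(T_k, 1/3))$. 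Rescaling so that $\BC_{1/3}$ becomes $\BC_1$, set $S_k := (\muu_{3\#}T_k)\LL\UU_3$ and $\mathcal{N}_k := \muu_3(\M_k)$; by Remark~\ref{rmk:scale3} the pair $(S_k, \mathcal{N}_k)\in\TT$ and satisfies \eqref{eq:existenceHarmBlowup}, so after extracting a subsequence one has a blow-up sequence with harmonic blow-ups $f_i$ on $\VV$ and $g_j$ on $\WW$.

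Applying Theorem~\ref{thm:graph}$(v)$ in its spherical form to $(S_k, \mathcal{N}_k)$ and using Corollary~\ref{cor:sphericalExc}, the right-hand side is controlled by $C\EE_C(T_k, 1/4)/k$; dividing by $\mm_k^2 \geq \EE_C(S_k, 1) = \EE_C(T_k, 1/3)$ and letting $k\to\infty$ forces $\partial_r(f_i/|y|)\equiv 0$ and $\partial_r(g_j/|y|)\equiv 0$, so $f_i$ and $g_j$ are $1$-homogeneous and hence, being harmonic on open cones in $\R^n$, linear: $f_i(y) = a_i\cdot y$, $g_j(y) = b_j\cdot y$. The ordering $f_1 \leq \cdots \leq f_m$ on $\VV$ forces $a_{i+1}-a_i$ to be a nonnegative multiple of $e_n$, so all $a_i$ share the same components transverse to $e_n$; Lemma~\ref{lem:harmblowupgradient} together with Remark~\ref{rmk:traceVanish} applied to $\mu=\min_i|f_i|$ then show this shared transverse linear functional vanishes on $\BL$ and therefore identically, so $f_i = \beta_i Y_n|_\VV$. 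An analogous argument gives $g_j = \gamma_j Y_n|_\WW$, and the collapsing lemma (Lemma~\ref{lem:harmblowequal}) produces a single common slope $\beta$ together with the uniform convergence $X_{n+1}/\mm_k \to \beta X_n$ on $\BC_\rho\cap\spt(S_k)$ for every $\rho<1$.

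The rotation optimality now forces $\beta = 0$: take $\eta_k := -\mm_k\beta$ so that $|\eta_k|<1/8$ for $k$ large; the rescaled version of the third hypothesis reads $\EE_C(S_k, 3/4) \leq 2\,\EE_C(\gamm_{\eta_k\#}S_k, 3/4)$, and dividing by $\mm_k^2$, the left-hand side tends to a positive multiple of $\beta^2$ (the Dirichlet energy of the linear blow-ups on $\VV\cap\BC_{3/4}$ and $\WW\cap\BC_{3/4}$) while the right-hand side tends to $0$ because the rotated sequence has blow-ups $f_i - \beta Y_n = 0$. Hence $\beta=0$, so by Lemma~\ref{lem:harmblowbound} combined with the uniform convergence away from $\BL$ one obtains $\sup_{\BC_{1/4}\cap\spt(T_k)} X_{n+1}^2 = o(\mm_k^2) \leq \EE_C(T_k, 1/3)/\Cr{46}$ for $k$ large. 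Lemma~\ref{lem:sphercylindr} applied to $T_k$ then yields $\EE_C(T_k, 1/3) \leq \Cr{47}\bigl(\EE_S(T_k, 1)+\kappa_k+\Aa_k\bigr)$, and by \eqref{eq:excmon} $\EE_C(T_k, 1/4) \leq (4/3)^n\,\Cr{47}\bigl(\EE_S(T_k, 1)+\kappa_k+\Aa_k\bigr)$, contradicting the violation once $k > (4/3)^n\Cr{47}$.

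The main obstacle is organising the chain ``linear blow-ups $\Rightarrow$ slopes along $e_n$ (via the ordering and trace vanishing of $\min_i|f_i|$) $\Rightarrow$ all slopes equal (via the collapsing lemma) $\Rightarrow$ slope zero (via rotation optimality)'' so that the rescaled rotation amplitude $\eta_k=-\mm_k\beta$ is simultaneously admissible ($|\eta_k|<1/8$) and large enough at the blow-up scale to annihilate the limit profile. A secondary delicate point is the final comparison $o(\mm_k^2)\leq \EE_C(T_k, 1/3)/\Cr{46}$, where the contradiction hypothesis $\Aa_k\leq (4/3)^n \EE_C(T_k, 1/3)/k$ must be carefully tracked in the regime where $\Aa_k^{1/2}$ dominates $\EE_C(T_k, 1/3)$ in the definition of $\mm_k$.
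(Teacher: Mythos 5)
Your overall strategy mirrors the paper's: argue by contradiction, rescale to $S_k := (\muu_{3\#}T_k)\LL\UU_3$, extract a blow-up subsequence, use Theorem~\ref{thm:graph}$(v.)$ with the vanishing-spherical-excess estimate to force $1$-homogeneity of the harmonic blow-ups, reduce them to the form $\beta Y_n$, invoke the collapsing lemma, and finally combine rotation optimality with Lemma~\ref{lem:sphercylindr}. Reordering the endgame so that rotation optimality first yields $\beta = 0$ and Lemma~\ref{lem:sphercylindr} is invoked afterwards (rather than proving $\beta_1 \neq 0$ via Lemma~\ref{lem:sphercylindr} and then contradicting rotation optimality as the paper does) is a legitimate and arguably cleaner variant of the same argument.

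However, there is a genuine gap at the linearisation step. You assert that once $\partial_r(f_i/|y|)\equiv 0$ the $f_i$, ``being harmonic on open cones in $\R^n$,'' must be linear. This is false for $n \geq 3$: the open half-space carries smooth $1$-homogeneous harmonic functions which are not linear. In spherical coordinates centred at the pole of $\VV$, take $h = r\,\Theta(\theta)\,Y(\omega)$ with $Y$ a degree-$m$ spherical harmonic on $S^{n-2}$ for some $m \geq 2$ and $\Theta$ the Frobenius solution of the separated ODE with regular indicial root $s = m$; the parity of the ODE forces the series to contain only even powers of $\theta$, so $\Theta Y$ extends smoothly across the pole, and $h$ is $1$-homogeneous, harmonic, smooth on $\{y_n>0\}$, yet depends on the transverse variables with degree $m\geq 2$. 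This is precisely why the paper routes through Remark~\ref{rmk:traceEVlaplace}: it first establishes that each $f_i$ has zero trace on $\BL$ (Remark~\ref{rmk:traceEVlaplace}$(i.)$ applied to $f_m-f_1 \geq 0$, together with the zero trace of $\min_l|f_l|$ from Lemma~\ref{lem:harmblowupgradient} and the bound $|f_i|\leq(f_m-f_1)+\min_l|f_l|$), and only then invokes Remark~\ref{rmk:traceEVlaplace}$(ii.)$, which is the nontrivial spectral input converting $1$-homogeneity plus zero trace into $f_i=\beta_i Y_n$. Your subsequent steps --- the observation that $a_{i+1}-a_i$ is a nonnegative multiple of $e_n$ and the killing of the ``shared transverse linear functional'' via the trace of $\min_i|f_i|$ --- all presuppose the $f_i$ are linear, so the chain breaks at this point. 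A smaller related issue: the ``analogous argument'' for the $g_j$ cannot actually run via $\min_j|g_j|$, since the auxiliary function $\mu$ of Lemma~\ref{lem:harmblowupgradient} is built only from the $f_i$; one must instead use the zero trace of $\sum_l g_l$ (from the function $h$ in that lemma) together with $(m-1)|g_j|\leq (m-1)(g_{m-1}-g_1)+\bigl|\sum_l g_l\bigr|$, as the paper does.
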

	
		\begin{proof}
			We argue by contradiction. Assume that no matter how large $\Cr{48}$ is, there is a current satisfying the four conditions but not the fifth one. This means, there is a sequence $\{(T_\nu, \M_\nu)\}_{\nu \geq 1} \subset \TT$ such that for every $\nu \geq 1$ and $|\eta | < 1/8$ the following holds
			\begin{align}
			\EE_C(T_{\nu},1) + \kappa_{T_{\nu}} + \Anu &\leq \frac{1}{2\Cr{43}},\notag \\
			\EE_C(T_{\nu}, \frac{1}{4}) &\leq 2 \EE_C(\gamm_{\eta \#}T_{\nu}, \frac{1}{4}), \label{eq:gammexc} \\
			\lim_{\nu \to \infty} \left( \EE_C(T_{\nu},\frac{1}{3}) + \frac{\kappa_{T_{\nu}}}{\EE_C(T_{\nu}, \frac{1}{3})} \right) &=0,\label{eq:kdividedE} \\
			\lim_{\nu \to \infty} \left( \frac{ \EE_S(T_{\nu},1) + \kappa_{T_{\nu}} + \Anu}{\EE_C(T_{\nu}, \frac{1}{4})} \right) &=0. \label{eq:EsKappaDividedEc}
			\end{align}
			We define $S_\nu := (\muu_{3\#}T_\nu) \LL \UU_3 $. By Remark \ref{rmk:scale3} $(S_\nu, \muu_3(\M_\nu)) \in \TT$ and moreover
			$$\epnu := \sqrt{\EE_C(S_\nu,1)} =  \sqrt{\EE_C(T_\nu,\frac{1}{3})},
			\quad \kappa_\nu := \kappa_{S_\nu} 
			{\leq \kappa_{T_\nu}}
			\quad \text{and} \quad
			\mm_\nu := \max\left\{\epnu, \left(\frac{1}{3} \Anu \right)^{1/4}\right\} .$$
			Up to subsequence (which we do not relabel) is $\{(S_\nu, \muu_3(\M_\nu)) \}_{\nu \geq 1}$ a blowup sequence (see \eqref{eq:existenceHarmBlowup}) with harmonic blowups $f_i$ and $g_j$. We want to show that they are of the form $\beta Y_n$. Then we will be able to deduce that $\beta \neq 0$ which will make it impossible for $\EE_C(T_\nu, \frac{1}{4})\epnu^{-2}$ to converge to zero. This then leads to a contradiction to \eqref{eq:gammexc}.
			Notice that by Lemma \ref{lem:expMassMonoton}, the following holds
			$$e^{\frac{\Cr{1}}{3} ( \Anu + \kappa_{T_\nu} )}3^{n} \lVert T_\nu \rVert (\BB_{1/3})
			\leq e^{ \Cr{1} ( \Anu + \kappa_{T_\nu} ) }\lVert T_\nu \rVert (\BB_1).$$ 
			From this, it follows
			\begin{align*}
			\EE_S(S_\nu,1)
			&= \EE_S \left(T_\nu,\frac{1}{3} \right)\\
			&\leq 3^n \mTnu (\BB_{1/3})-\omm_\nu \left(m-\frac{1}{2} \right)\\
			&\leq e^{\frac{2}{3} \Cr{1} ( \Anu + \kappa_{T_\nu} ) } \lVert T_\nu \rVert (\BB_1)-\omm_\nu\left(m-\frac{1}{2} \right)\\
			&\leq e^{ \frac{2}{3} \Cr{1} ( \Anu + \kappa_{T_\nu} ) } \EE_S(T_\nu,1)+ \left(e^{ \frac{2}{3} \Cr{1} ( \Anu + \kappa_{T_\nu} )}-1 +\kappa_{T_\nu} \right)\omm_\nu \left(m-\frac{1}{2} \right)\\
			&\leq \left(e^{\Cr{1}/\Cr{43}} +2\frac{\Cr{1}}{\Cr{43}}\right) \big(\EE_S(T_{\nu},1) + \kappa_{T_\nu} \big)
			\end{align*}
			and hence,
			\begin{align}\label{eq:sphericalExcessVanishes}
			\limsup_{\nu \to \infty} \frac{ \EE_S(S_\nu,1)}{\epnu^2}
			\le \left(\frac{4}{3} \right)^n \left(e^{\Cr{1}/\Cr{43}} +2\frac{\Cr{1}}{\Cr{43}}\right) \limsup_{\nu \to \infty} \frac{ \EE_S(T_{\nu},1) + \kappa_{T_{\nu}}}{\EE_C(T_{\nu}, \frac{1}{4})}
			=0,
			\end{align}
			where we used \eqref{eq:EsKappaDividedEc}.
			
			We can apply Theorem \ref{thm:graph}$(v.)$ (with $T$ replaced by $S_\nu$) combined with Definition \ref{def:harmblow}$(iv.)$,$(v.)$  (with $T_\nu$ replaced by $S_\nu$), \eqref{eq:kdividedE} and \eqref{eq:sphericalExcessVanishes} to infer
			\begin{align*}
			\int_{\VV_T} \left( \frac{\partial}{\partial r} \frac{f_i(y)}{|y|} \right)^2 &|y|^{2-n} \dd \Leb^n(y)
			+  \int_{\WW_T} \left( \frac{\partial}{\partial r} \frac{g_j(y)}{|y|} \right)^2 |y|^{2-n} \dd \Leb^n(y)\\
			&\leq 2^{n+7}\limsup_{\nu \to \infty}\frac{ \EE_S(S_\nu,1) + \Cr{34}(\Anu + \kappa_{T_\nu})}{\mm_\nu^2}\\
			&=0.
			\end{align*} 
			Hence, both terms must vanish and therefore the following holds for all $0< \rho <1$
			$$f_i(\rho y)=\rho f_i(y) \quad \text{ for } y \in \VV \qquad \textnormal{ and }
			\qquad g_j(\rho y)=\rho g_j(y) \quad \text{ for } y \in \WW.$$
			This allows us to use Remark \ref{rmk:traceEVlaplace}$(i.)$ to the nonnegative functions $f_m-f_1$, $g_{m-1}-g_1$ having vanishing trace on $\BL$. We notice that
			\begin{align*}
			|f_i| &= \big( |f_i| - \min \{|f_1|, \cdots, |f_m|\} \big) + \min \{|f_1|, \cdots, |f_m|\}\\
			&\leq (f_m-f_1)+ \min \{|f_1|, \cdots, |f_m|\}
			\end{align*}
			and so, also each $f_i$ has zero trace on $\BL$ by Lemma \ref{lem:harmblowupgradient}. Remark \ref{rmk:traceEVlaplace}$(ii.)$ gives that $f_i = \beta_i \left. Y_n \right|_\VV$ for some $\beta_i \in \R$. The analogues statement holds for $g_j$ because Lemma \ref{lem:harmblowupgradient} implies that also $\sum_{l=1}^{m-1} g_l$ has zero trace on $\BL$ and we can bound
			\begin{align*}
			(m-1)|g_j| &= \left| \sum_{l=1}^{m-1} (g_j-g_l) + \sum_{l=1}^{m-1} g_l \right|
			\leq (m-1) (g_{m-1}-g_1) + \left|\sum_{l=1}^{m-1} g_l \right|.
			\end{align*}
			Then we can apply Lemma \ref{lem:harmblowequal} to deduce 
			\begin{align}
			\beta_1 = \cdots = \beta_m = \gamma_1 = \cdots &= \gamma_{m-1}, \notag \\
			\lim_{\nu \to \infty} \sup_{\BC_{7/8} \cap \spt(S_\nu)} \left| \frac{X_{n+1}}{\mm_\nu} - \beta_1X_n \right| &=0.
			\end{align}
			Next, we infer $\beta_1 \neq 0$. Indeed, if this were not the case, then Lemma \ref{lem:sphercylindr} would imply that
			\begin{align*}
			0&= \limsup_{\nu \to \infty} \left( \frac{ \EE_S(T_{\nu},1) + \kappa_{T_{\nu}}}{\EE_C(T_{\nu}, \frac{1}{4})} \right)\\
			&\geq \limsup_{\nu \to \infty} \left( \frac{ \frac{1}{\Cr{47}}\EE_C(T_\nu, \frac{1}{3})-\Anu }{\EE_C(T_{\nu}, \frac{1}{4})} \right)\\
			&\geq \frac{3^n}{4^n \Cr{47}} > 0,
			\end{align*}
			where we used \eqref{eq:EsKappaDividedEc} for the last inequality.
			
			Now, we rotate $T_\nu$ such that the new blowup sequence has a vanishing harmonic blowups. To do so, let $\eta_\nu := \arctan(\beta_1 \mm_\nu)$ and consider $R_\nu := (\muu_{4\#} \gamm_{\eta_\nu \#} T_\nu) \LL \UU_3$. From Remark \ref{rmk:mugammT}$(ii.)$, we know that $(R_\nu, (\muu_{4} \circ \gamm_{\eta_\nu})(\M_\nu)) \in \TT$ for $\nu$ large enough. We use again Lemma \ref{lem:excLessX} (with $T$, $\sigma$ replaced by $R_\nu$, $1/6$) and Lemma \ref{lem:harmblowbound} to obtain
			\begin{equation}\label{eq:excessGammT}
			\begin{split}
			\limsup_{\nu \to \infty} \frac{\EE_C(\gamm_{\eta_\nu \#} T_\nu, \frac{1}{4})}{\mm_\nu^2}
			&= \limsup_{\nu \to \infty} \frac{\EE_C(R_\nu, 1)}{\mm_\nu^2}\\
			&\leq \limsup_{\nu \to \infty}  36 \Cr{18} \left(\Cr{19} \sup_{\BC_{7/6} \cap \spt(R_\nu)} \frac{X_{n+1}^2}{\mm_\nu^2} + \frac{\kappa_{T_\nu}+ \Anu}{\mm_\nu^2} \right)\\ 
			&=0.
			\end{split}
			\end{equation} 
			But by Lemma \ref{lem:XLessExc} (with $T$, $\sigma$ replaced by $R_\nu$, $7/8$)
			\begin{align*}
			\liminf_{\nu \to \infty} \frac{\EE_C(T_\nu, \frac{1}{4})}{\mm_\nu^2}
			&= \liminf_{\nu \to \infty} \frac{\EE_C(\muu_{4\#}T_\nu, 1)}{\mm_\nu^2}\\
			&\geq \liminf_{\nu \to \infty} \Big( \frac{7}{8} \Big)^{2n+1} \frac{1}{\Cr{21}\Cr{22}} \left( \sup_{\BC_{1/8} \cap \spt(\muu_{4\#}T_\nu)} \frac{X_{n+1}^2}{\mm_\nu^2} - \frac{\kappa_{T_\nu}+\Anu}{\mm_\nu^2} \right)\\
			&= \frac{7^{2n+1}}{8^{2n+1}\Cr{21}\Cr{22}} \left( \frac{\beta_1}{8} \right)^2 >0.
			\end{align*}
			For $\nu$ large enough, together with \eqref{eq:excessGammT}, this contradicts \eqref{eq:gammexc}.
			
		\end{proof}
	
		\section{Coincidence of the harmonic sheets}
	
		As mentioned before, the excess decay will follow from the fact, that the harmonic blow-ups coincide on $\VV$ and $\WW$ respectively. To see this, we want to blow-up the harmonic blow-ups in a homogeneous way. Thus, we need to make sure that the limit exists, i.e. we prove that the harmonic blow-ups are $\CC^{0,1}$ up to the boundary. The proof uses suitable rotations of $T_\nu$ and the uniform convergence of the blow-up sequence at the boundary.
		\begin{lem}\label{lem:fDividedY}
			Let $\{(T_\nu, \M_\nu)\}_{\nu \geq 1} \in \TT$ be a blow-up sequence with harmonic blow-ups $f_i$ and $g_j$. Then for all $0< \rho <1$, $i \in \{1, \dots, m\}$ and $j \in \{1,\dots, m-1\}$ the following holds
			$$ \sup_{\VV \cap \BB^n_\rho(0)} \frac{|f_i(y)|}{|y|} < \infty
			\qquad \text{ and } \qquad
			\sup_{\WW \cap \BB^n_\rho(0)} \frac{|g_j(y)|}{|y|} < \infty.$$
		\end{lem}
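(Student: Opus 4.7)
The plan is a contradiction argument based on a secondary blow-up. Suppose the conclusion fails: there exist $0<\rho<1$, an index $i$, and a sequence $y_k\in\VV\cap\BB^n_\rho(0)$ with $M_k:=|f_i(y_k)|/|y_k|\to\infty$. Since the bound \eqref{eq:fPlusg} gives $f_i$ uniformly bounded on compact subsets of $\VV$ that stay a positive distance from $\BL$, one must have $t_k:=|y_k|\to 0$; after extracting a subsequence, $\hat y_k:=y_k/t_k\to\hat y_\infty\in S^{n-1}\cap\overline\VV$. For each $k$ I would zoom in at scale $t_k$ and rotate: for each large $\nu$, choose $|\eta_{\nu,k}|\leq 1/8$ minimizing $\EE_C(\gamm_{\eta\#}T_\nu,t_k/4)$, and set $\hat T_{\nu,k}:=(\muu_{1/t_k\#}\gamm_{\eta_{\nu,k}\#}T_\nu)\LL\UU_3$. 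By Remark \ref{rmk:mugammT}, $\hat T_{\nu,k}\in\TT$ for $\nu$ large, and the choice of $\eta_{\nu,k}$ makes $\hat T_{\nu,k}$ almost rotationally optimal at scale $1/4$, so Theorem \ref{thm:CylSpherExc} applies. Combined with the almost-monotonicity of spherical excess (Corollary \ref{cor:monotonicity}), this yields
\[
\EE_C(\hat T_{\nu,k},1/4)\leq \Cr{48}\bigl(\EE_S(T_\nu,1)+\kappa_{T_\nu}+\Anu\bigr) \xrightarrow[\nu\to\infty]{} 0
\]
for each fixed $k$. A diagonal extraction $\nu_k\to\infty$ then promotes $\hat T_k:=\hat T_{\nu_k,k}$ to a genuine blow-up sequence with associated harmonic blow-ups $\hat f_i,\hat g_j$ and normalization $\hat\mm_k$.

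Next, these secondary blow-ups are shown to be linear and all equal. Dividing the estimate of Theorem \ref{thm:graph}(v) applied to $\hat T_k$ by $\hat\mm_k^2$ and exploiting the vanishings $\EE_C(\hat T_k,1)/\hat\mm_k^2\to 0$ and $\Aa_{\hat\M_k}/\hat\mm_k^2\to 0$ built into the construction, one obtains in the limit
\[
\int_\VV\left(\frac{\partial}{\partial r}\frac{\hat f_i(y)}{|y|}\right)^2|y|^{2-n}\dd\Leb^n(y)=0
\]
together with the analogous identity on $\WW$. Hence each $\hat f_i$ and $\hat g_j$ is positively $1$-homogeneous. Applying Lemma \ref{lem:harmblowupgradient} to the nonnegative differences $\hat f_m-\hat f_1$ and $\hat g_{m-1}-\hat g_1$ yields vanishing traces on $\BL$ as in Remark \ref{rmk:traceVanish}, so Remark \ref{rmk:traceEVlaplace} gives $\hat f_i=\beta_iY_n|_\VV$ and $\hat g_j=\gamma_j Y_n|_\WW$. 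The collapsing Lemma \ref{lem:harmblowequal} then forces $\beta_1=\cdots=\beta_m=\gamma_1=\cdots=\gamma_{m-1}=:\beta\in\R$, so $\sup_{\overline\VV\cap\BB^n_1(0)}|\hat f_i|\leq|\beta|<\infty$.

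The contradiction comes from comparing $\hat f_i(\hat y_\infty)$ with $M_k$. Uniform convergence of $v_i^{\hat T_k}/\hat\mm_k$ to $\hat f_i$ together with the identity $v_i^{T_{\nu_k}}(y_k)/\mm_{\nu_k}\to f_i(y_k)=\pm M_k t_k$ gives, after unwinding the rescaling and the bounded additive contribution of the rotation $\eta_{\nu_k,k}$, that $|\hat f_i(\hat y_\infty)|=\lim_k M_k\mm_{\nu_k}/\hat\mm_k+O(1)$. A suitable choice of diagonal indices makes this limit infinite, contradicting $|\beta|<\infty$. The principal obstacle is precisely this normalization bookkeeping: one must verify that the secondary normalization $\hat\mm_k=\max\{\sqrt{\EE_C(\hat T_k,1)},\Aa_{\hat\M_k}^{1/4}\}$ does not outgrow $\mm_{\nu_k}$, which requires controlling $\EE_C(\hat T_k,1)=\EE_C(\gamm_{\eta_{\nu_k,k}\#}T_{\nu_k},t_k)$ by $\EE_C(T_{\nu_k},1)=\mm_{\nu_k}^2$ via the almost-optimality of $\eta_{\nu_k,k}$, Theorem \ref{thm:CylSpherExc}, and monotonicity of spherical excess.
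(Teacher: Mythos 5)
Your proposal shares the paper's main ingredients (optimize over rotations $\gamm_\eta$ at a scale comparable to $|y|$, invoke Theorem \ref{thm:CylSpherExc} to control the rotated cylindrical excess by the spherical one, then use Lemma \ref{lem:XLessExc} for a height bound), but it packages them into a contradiction via a secondary blow-up rather than a direct pointwise estimate, and in doing so it runs into a genuine gap.

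The decisive step in your argument is the assertion that, after rescaling by $1/t_k$ and normalizing by $\hat\mm_k$, the rotation enters only as a \emph{bounded additive} term, so that
\[
\frac{v_i^{\hat T_k}(\hat y_k)}{\hat\mm_k}
\;\approx\;
\frac{(\hat y_k)_n \sin\eta_{\nu_k,k}}{\hat\mm_k}
\;\pm\;
M_k\,\frac{\mm_{\nu_k}}{\hat\mm_k}
\]
has a divergent right-most term and a bounded left term. This is exactly what fails. The height bound you invoke from Lemma \ref{lem:XLessExc} applies to $\hat T_k$, i.e.\ to the \emph{rotated} current, and forces the full left-hand side to be $O(1)$. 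Nothing prevents the two terms on the right from diverging in opposite directions: the choice of $\eta_{\nu_k,k}$ that minimizes the excess will precisely tilt away a large value of $v_i^{T_{\nu_k}}(y_k)$, so $\sin\eta_{\nu_k,k}/\hat\mm_k$ can (and, consistently with your own height bound, must) blow up to compensate $M_k\mm_{\nu_k}/\hat\mm_k$. To rule this out you would need a priori control on $|\eta_{\nu_k,k}|/\hat\mm_k$, which at this stage of the theory is not available — it effectively requires knowing $|Df_i(0)|<\infty$ (Remark \ref{rmk:DfBounds}), which in turn relies on Theorem \ref{thm:blowupsC2} and hence on Lemma \ref{lem:fDividedY} itself. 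So the argument is circular at its core.

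There are two smaller but real gaps as well. First, Theorem \ref{thm:CylSpherExc} requires $\EE_C(S,\tfrac13)+\kappa_S/\EE_C(S,\tfrac13)\le 1/\Cr{48}$, and the ratio $\kappa_S/\EE_C(S,\tfrac13)$ does not go to zero automatically — the rotated excess at scale $t_k/3$ could collapse faster than $\kappa$. The paper handles this with a case split (Case~1: $\EE_C(\gamm_{\omega\#}T_\nu,\sigma/3)<\epnu^2$; Case~2: the reverse, which tames the ratio), which your proposal omits. Second, if $\hat y_\infty\in\BL$ the locally uniform convergence $v_i^{\hat T_k}/\hat\mm_k\to\hat f_i$ on compact subsets of $\VV$ does not reach $\hat y_k$, since $\hat y_k$ eventually leaves every $\VV_\sigma$.

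The paper avoids all three issues by remaining direct: it uses the same optimal rotation $\omega(\nu,\sigma)$ and the same excess and height estimates, but then observes the key cancellation — after rotation, the shift $y_n\sin\omega$ appears with opposite signs at $y\in\VV$ and $\bar y\in\WW$, so adding the two height bounds kills the rotation entirely and gives $|v_i^{(\nu)}(y)+w_j^{(\nu)}(\bar y)|\le 2\Cr{52}\mm_\nu\sigma$, hence $|f_i(y)+g_j(\bar y)|\le C|y|$. Combined with the odd reflection $H(y)=h(y)-h(\bar y)$ (harmonic on the full ball by Schwarz reflection, so $|H(y)|/|y|$ is bounded) and a short algebraic decomposition of $f_i$ in terms of $H$ and the pairings $f_k+g_l$, this gives the Lipschitz bound at $0$ without ever needing a secondary blow-up or a control on $\eta_{\nu,k}$ relative to $\hat\mm_k$.
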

		\begin{proof}
			For $\nu \in \N$ with $\nu \geq1$, we define $\epnu := \sqrt{\EE_C(T_\nu,1)}$ and $\kappa_\nu := \kappa_{T_\nu}$. Let $0 < \sigma \leq 1/12$ and $\omega(\nu, \sigma) \in \R$ such that for all $|\eta | \leq 1/8$
			\begin{align}\label{eq:gamexcess2excess}
			\EE_C(\gamm_{\omega(\nu, \sigma) \#} T_\nu, \frac{\sigma}{4}) \leq 2 \EE_C(\gamm_{\eta \#} T_\nu, \frac{\sigma}{4}).
			\end{align}
			Notice that by the monotonicity of the excess \eqref{eq:excmon} and Definition \ref{def:harmblow}$(i.)$, it follows $\lim_{\nu \to \infty} \EE_C(T_\nu, \sigma)=0.$ As \eqref{eq:gamexcess2excess} also must hold for $\eta=0$, it follows by Lemma \ref{lem:excLessX} that also
			\begin{align}\label{eq:omegatozero}
			\lim_{\nu \to \infty} \omega(\nu, \sigma)=0.
			\end{align}
			This implies that
			\begin{align}\label{eq:excgammT}
			\lim_{\nu \to \infty} \EE_C(\gamm_{\omega(\nu, \sigma) \#} T_\nu, \sigma)=0.
			\end{align}
			In a first step, we show that there is a constant $\Cl{50}$ such that for infinitely many $\nu$ the following holds
			\begin{align*}
			\sup_{\BC_{\sigma/5} \cap \spt(\gamm_{\omega(\nu, \sigma) \#}T_\nu)} |X_{n+1}| \leq \Cr{50}\mm_\nu \sigma.
			\end{align*}
			To do so, we first bound $\EE_C(\gamm_{\omega(\nu, \sigma) \#} T_\nu, \frac{\sigma}{4})$ by looking at two different cases:\\
			\textit{Case 1: } $\EE_C(\gamm_{\omega(\nu, \sigma) \#} T_\nu, \frac{\sigma}{3}) < \epnu^2$ for infintely many $\nu$.\\
			We use the monotonicity of the excess \eqref{eq:excmon} to deduce
			$$\EE_C(\gamm_{\omega(\nu, \sigma) \#} T_\nu, \frac{\sigma}{4}) 
			\leq \left( \frac{4}{3} \right)^n \EE_C(\gamm_{\omega(\nu, \sigma) \#} T_\nu, \frac{\sigma}{3})
			\leq \left( \frac{4}{3} \right)^n \epnu^2$$
			for infinitely many $\nu$.
			
			\textit{Case 2: } $\EE_C(\gamm_{\omega(\nu, \sigma) \#} T_\nu, \frac{\sigma}{3}) \geq \epnu^2$ for all $\nu \geq N$ for some $N$ large enough.
			
			We define $S_\nu := ( \gamm_{\omega(\nu, \sigma) \#} \muu_{\frac{1}{\sigma} \#} T_\nu ) \LL \UU_3$ and $\tilde{M}_\nu:= \gamm_{\omega(\nu, \sigma)}  \circ \muu_{\frac{1}{\sigma}} (\M_\nu) $. By Remark \ref{rmk:mugammT}$(iii.)$ is $(S_\nu, \tilde{M}_\nu) \in \TT$. Recall the constants $\Cr{43}$ and $\Cr{48}$ of Theorem \ref{thm:CylSpherExc}. By \eqref{eq:excmon}, \eqref{eq:excRotSmall}, \eqref{eq:excgammT}, \eqref{eq:gamexcess2excess} and Definition \ref{def:harmblow}, there is an integer $N_\sigma$ such that for all $\nu \geq N_\sigma$ the following holds
			\begin{itemize}
				\item $\displaystyle \kappa_\nu \leq \epnu^2,$
				\item $\displaystyle \EE_C(S_\nu,1) + \kappa_{S_\nu} + \Aa_{\tilde{M}_\nu}
				\leq \EE_C \big((\gamm_{\omega(\nu, \sigma) \#} T_\nu) \LL \UU_3, \sigma \big) +  \sigma (\kappa_\nu + \Anu)
				\leq \frac{1}{2\Cr{43}},$
				\item $\displaystyle \EE_C(S_\nu, \frac{1}{3}) + \frac{\kappa_{S_\nu}}{\EE_C(S_\nu, \frac{1}{3})}
				\leq 3^n \EE_C \big( ( \gamm_{\omega(\nu, \sigma) \#} T_\nu) \LL \UU_3, \sigma \big) + \sigma \frac{\kappa_\nu}{\epnu^2}
				\leq \frac{1}{\Cr{48}},$
				\item $\displaystyle \EE_C(S_\nu, \frac{1}{4}) \leq 2 \EE_C(\gamm_{\eta \#}S_\nu, \frac{1}{4}) \qquad \text{ for all } |\eta| \leq \frac{1}{8}.$	
			\end{itemize}
			Therefore, we can apply Theorem \ref{thm:CylSpherExc} (with $T$ replaced by $S_\nu$ for $\nu \geq N_\sigma$) to deduce
			\begin{equation}\label{eq:excessgammT}
			\begin{split}
			\EE_C(\gamm_{\omega(\nu, \sigma) \#}T_\nu, \frac{\sigma}{4})
			&= \EE_C(S_\nu, \frac{1}{4})
			\leq \Cr{48} \big(\EE_S(S_\nu, 1) + \kappa_{S_\nu} + \Aa_{\tilde{M}_\nu} \big)\\
			&\leq  \Cr{48} \big(\EE_S(T_\nu, \sigma) + \sigma (\kappa_{\nu} + \Anu) \big).
			\end{split}
			\end{equation}
			Notice that by Lemma \ref{lem:expMassMonoton}, the following holds
			$$e^{\Cr{1} \left( \Anu + \kappa_\nu \right)\sigma }\sigma^{-n} \mTnu (\BB_\sigma)
			\leq e^{\Cr{1} \left( \Anu + \kappa_\nu \right) } \mTnu (\BB_1).$$
			Therefore, 
			\begin{align*}
			\EE_S(T_\nu, \sigma) &= \sigma^{-n} \mTnu (\BB_\sigma) - (m-\frac{1}{2})\al(n)\\
			&\leq e^{\Cr{1} \left( \Anu + \kappa_\nu \right) } \big( \mTnu (\BB_1) - (m-\frac{1}{2})\omm_n \big) +\left( e^{\Cr{1} \left( \Anu + \kappa_\nu \right) }-1 \right)(m-\frac{1}{2})\omm_n.
			\end{align*}
			With this and \eqref{eq:spherCylindExc}, we can continue to estimate \eqref{eq:excessgammT} with
			\begin{align*}
			\Cr{48} \big( \EE_S(T_\nu, \sigma) + \sigma (\kappa_\nu + \Anu) \big)
			\leq \Cr{48} \big(\EE_C(T_\nu,1) + \kappa_\nu + \Anu \big) 
			\leq \C \mm_\nu^2.
			\end{align*}
			Hence, in both cases we have infinitely many $\nu$ satisfying 
			$$\EE_C(\gamm_{\omega(\nu, \sigma) \#}T_\nu, \frac{\sigma}{4})
			\leq  \C \mm_\nu^2.$$
			For these $\nu$ we apply Lemma \ref{lem:XLessExc} (with $\sigma$, $T$ replaced by $1/5$, $(\gamm_{\omega(\nu, \sigma) \#}\muu_{4/\sigma \#}T_\nu) \LL \UU_3$) and infer
			\begin{align*}
			\sup_{\BC_{\sigma/5} \cap \spt(\gamm_{\omega(\nu, \sigma) \#}T_\nu)} &|X_{n+1}|
			= \sup_{\BC_{4/5} \cap \spt(\gamm_{\omega(\nu, \sigma) \#} \muu_{4/\sigma \#} T_\nu)} \frac{\sigma}{4} |X_{n+1}|\\
			&\leq \frac{\sigma}{4} \sqrt{\Cr{21}\Cr{22} 5^{2n+1} \big( \EE_C(\gamm_{\omega(\nu, \sigma) \#} T_\nu, \frac{\sigma}{4}) + \frac{\sigma}{4}\left( \kappa_\nu + \Anu \right) \big) }\\
			&\leq \Cl{52} \mm_\nu \sigma.
			\end{align*}
			\begin{minipage}[t]{\linewidth}
				\begin{minipage}{0.55 \linewidth}
					With this, we now prove the bound on $f_i$ and $g_j$.
					
					To be able to jump between $\VV$ and $\WW$, we define for $y \in \R^n$ the map $y \mapsto \bar{y}:=(y_1, \dots, y_{n-1}, -y_n)$. Denote by $\vin$ and $\wjn$ the maps whose $\Phii_\nu$-graphs form the $\spt(T_\nu)$ as in Definition \ref{def:harmblow}. By the previous inequality and \eqref{eq:omegatozero}, we can bound for infintely many $\nu$, arbitrary $0<\tau<1$, $i \in \{1, \dots, m\}$ and  $j \in \{1, \dots, m-1\}$ 
				\end{minipage}
				\hspace{0.02\linewidth}
				\begin{minipage}[t]{0.35 \linewidth}
					\vspace*{-2cm}
					\includegraphics[scale=0.5]{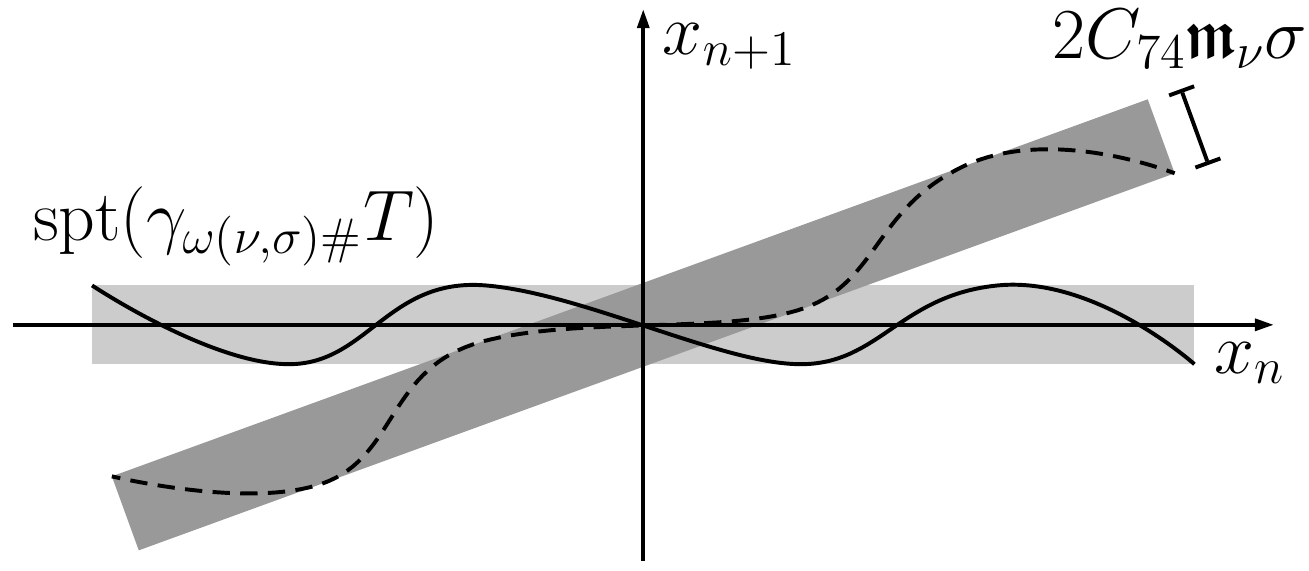}
				\end{minipage}
			\end{minipage}
			\begin{align*}
			\big|\vin(y)+\wjn(\bar{y}) \big| \leq 2\Cr{52}\mm_\nu \sigma \qquad \text{ for } y \in \VV_\tau \cap \BB^n_{\sigma/5}(0).
			\end{align*}
			Consider now any $0 \neq y \in \VV \cap \BB^n_{1/60}(0)$. Then let $\sigma := 5|y| \leq 1/12$. The previous bounds imply that
			$$\left| \frac{\vin(y)}{\mm_\nu} + \frac{ \wjn(\bar{y})}{\mm_\nu} \right| \leq 2 \Cr{52}\sigma = 10\Cr{52}|y|$$
			for infintely many $\nu$. Hence, by local uniform convergence,
			\begin{align}\label{eq:fiMinusfk1}
			|f_i(y) + g_j(\bar{y})| \leq 10\Cr{52}|y| \qquad \text{ for } y \in \VV \cap \BB^n_{1/60}(0).
			\end{align}
			
			Moreover, by \eqref{eq:fPlusg}, for $y \in \VV \cap \big( \BB^n_\rho(0) \setminus \BB^n_{1/60}(0) \big)$, $i \in \{1, \dots, m\}$ and $j \in \{1, \dots, m-1\}$, the following holds
			\begin{align}\label{eq:fiMinusfk4}
			|f_i(y)|^2 + |g_j(\bar{y})|^2
			\leq \frac{4\Cr{21}\Cr{22}}{(1-\rho)^{2n+1}} (60|y|)^2.
			\end{align}
			Now, we define the following auxiliary functions
			\begin{align*}
			h: \UU^n_1(0) \to \R,\ h(y)&= 
			\begin{cases}
			\sum_{i=1}^m f_i(y), &\text{ for } y \in \VV\\
			\sum_{j=1}^{m-1} g_j(y), &\text{ for } y \in \WW\\
			0, &\text{ for } y \in \BL
			\end{cases},\\
			H: \UU^n_1(0) \to \R,\ H(y)&=h(y) - h(\bar{y}).
			\end{align*}
			By Lemma \ref{lem:harmblowupgradient}, these two functions have locally square integrable weak gradients. Moreover, $H$ is odd in the $n$-th variable and $\left. H \right|_{\VV\cup\WW}$ is harmonic. The weak version of the Schwarz reflection principle implies that $H$ is harmonic on all $\UU^n_1(0)$. Therefore, the following holds for all $0<\rho<1$
			\begin{align}\label{eq:Hnormy}
			\sup_{\BB^n_\rho(0)} \frac{|H(y)|}{|y|} < \infty.
			\end{align}
			Notice that for $y \in \VV$, we can write
			\begin{align*}
			f_i(y)&= H(y) - \sum_{k=1}^{i-1} \big(f_k(y) + g_k(\bar{y})\big) - \sum_{k=i+1}^m \big(f_k(y) + g_{k-1}(\bar{y}) \big),\\
			g_j(\bar{y})&= \big( f_1(y) +g_j(\bar{y}) \big) -f_1(y).
			\end{align*}
			\eqref{eq:fiMinusfk1}, \eqref{eq:fiMinusfk4} and \eqref{eq:Hnormy} then imply the lemma.
			
		\end{proof}
	
		Now, we are ready to prove that all harmonic blowups coincide even if they are not linear. The definition of the homogeneous blow-up of the harmonic blow-ups and the estimate in Theorem \ref{thm:graph}$(v.)$ will imply that they are linear, and hence, coincide with each other. Then we will use the E.Hopf boundary point Lemma for harmonic functions to deduce that also the harmonic blow-ups need to coincide themselves.
		
		\begin{thm}\label{thm:blowupsC2}
			Let $\{(T_\nu, \M_\nu)\}_{\nu \geq 1} \subset \TT$ be a blowup sequence with harmonic blowups $f_i$, $g_j$. Then
			\begin{enumerate}[(i.)]
				\item $f_1 = \cdots = f_m$ and $g_1 = \cdots = g_{m-1}.$
				\item The functions 
					\begin{align*}
					f: \VV \cup \BL \to \R,~ y \mapsto &\begin{cases}
					f_1(y), &\textnormal{ for } y \in \VV\\
					0, &\textnormal{ for } y \in \BL
					\end{cases}
					\end{align*}
					\begin{align*}
					g: \WW \cup \BL \to \R,\ y \mapsto &\begin{cases}
					g_1(y), &\textnormal{ for } y \in \WW\\
					0, &\textnormal{ for } y \in \BL
					\end{cases}
					\end{align*}
%
%
				are $\CC^2$.
				\item $Df(0)=Dg(0)$.
			\end{enumerate}
		\end{thm}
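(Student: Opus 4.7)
The plan is a \emph{second blow-up} argument: I rescale the harmonic blow-ups $f_i, g_j$ at the origin to produce harmonic limit functions $\tilde f_i, \tilde g_j$ on the respective half-spaces, show these are all linear and in fact equal to a common $\beta\, Y_n$, and then upgrade this coincidence of the second blow-ups to coincidence of the $f_i$'s (respectively $g_j$'s) themselves via a Hopf boundary-point argument on their pairwise differences. Boundary regularity of harmonic functions with vanishing Dirichlet data will then deliver (ii.), and (iii.) will read off the common normal derivative at the origin.

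First, to produce the second blow-ups: by Lemma \ref{lem:fDividedY}, the rescalings $f_i^r(y) := f_i(ry)/r$ and $g_j^r(y) := g_j(ry)/r$ are uniformly bounded on compact subsets of $\{y_n > 0\}$ and $\{y_n < 0\}$ as $r \to 0$. Since they are harmonic, Cauchy estimates combined with Arzel\`a--Ascoli provide subsequential locally uniform limits $\tilde f_i, \tilde g_j$ along some $r_\ell \to 0$. In parallel, via Remark \ref{rmk:scale3} applied with the scaling $\muu_{1/r_\ell}$ (with $1/r_\ell \geq 3$), a diagonal choice $(\nu_\ell, r_\ell)$ yields currents $S_\ell := (\muu_{1/r_\ell \, \#} T_{\nu_\ell}) \LL \UU_3$ together with the rescaled manifold $\muu_{1/r_\ell}(\M_{\nu_\ell})$ which lie in $\TT$ and form a blow-up sequence in the sense of Definition \ref{def:harmblow}, whose associated harmonic blow-ups are precisely the limits $\tilde f_i, \tilde g_j$ (after an appropriate identification of the normalizing factors).

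The second step establishes linearity. Theorem \ref{thm:graph}(v.) applied along $(T_\nu, \M_\nu)$ and passed to the limit yields the finite radial-energy bound
$$\int_{\VV} \Big(\frac{\partial}{\partial r}\frac{f_i(y)}{|y|}\Big)^2 |y|^{2-n} \dd \Leb^n(y) + \int_{\WW} \Big(\frac{\partial}{\partial r}\frac{g_j(y)}{|y|}\Big)^2 |y|^{2-n} \dd \Leb^n(y) < \infty.$$
A direct change of variables $z = ry$ shows that for every fixed $K > 0$,
$$\int_{\{y_n > 0, \, |y| < K\}} \Big(\frac{\partial}{\partial r}\frac{f_i^r(y)}{|y|}\Big)^2 |y|^{2-n} \dd \Leb^n(y) = \int_{\{z_n > 0, \, |z| < rK\}} \Big(\frac{\partial}{\partial r}\frac{f_i(z)}{|z|}\Big)^2 |z|^{2-n} \dd \Leb^n(z),$$
which tends to $0$ as $r \to 0$ by absolute continuity of the integral on shrinking sets. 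Passing to the limit along $r_\ell$ forces $\partial_r(\tilde f_i/|y|) \equiv 0$, and similarly for $\tilde g_j$; both are thus $1$-homogeneous. Since $\{(S_\ell, \muu_{1/r_\ell}(\M_{\nu_\ell}))\}$ is a blow-up sequence, Lemma \ref{lem:harmblowupgradient} together with Remark \ref{rmk:traceVanish} gives zero trace on $\BL$ for $\min_i|\tilde f_i|$, and the pointwise bound $|\tilde f_i| \leq (\tilde f_m - \tilde f_1) + \min_i|\tilde f_i|$ combined with Remark \ref{rmk:traceEVlaplace}(i.) applied to the nonnegative, $1$-homogeneous $\tilde f_m - \tilde f_1$ transfers zero trace to each $\tilde f_i$ individually. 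Remark \ref{rmk:traceEVlaplace}(ii.) then forces $\tilde f_i = \alpha_i\, Y_n|_{\{y_n > 0\}}$ and similarly $\tilde g_j = \gamma_j\, Y_n|_{\{y_n < 0\}}$. Finally, Lemma \ref{lem:harmblowequal} (the collapsing lemma) applied to $\{(S_\ell, \muu_{1/r_\ell}(\M_{\nu_\ell}))\}$ collapses these to a common value $\alpha_1 = \cdots = \alpha_m = \gamma_1 = \cdots = \gamma_{m-1} =: \beta$.

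For (i.), fix $k > i$ and set $u_{ik} := f_k - f_i \geq 0$, harmonic on $\VV$. The coincidence of second blow-ups gives $u_{ik}(y) = o(|y|)$ as $y \to 0$, so $u_{ik}$ extends continuously to $0 \in \BL$ with value $0$. Either $u_{ik} \equiv 0$, or by the strong maximum principle $u_{ik} > 0$ throughout $\VV$, in which case Hopf's boundary-point lemma at $0$ (applicable because $\VV$ satisfies the interior-ball condition at $0$) forces $\liminf_{t \downarrow 0} u_{ik}(t\ee_n)/t > 0$, contradicting $\lim_{t \downarrow 0} u_{ik}(t\ee_n)/t = \tilde f_k(\ee_n) - \tilde f_i(\ee_n) = 0$. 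Hence $f_1 = \cdots = f_m$, and symmetrically $g_1 = \cdots = g_{m-1}$, which is (i.). For (ii.), the common function $f := f_1$ is harmonic on $\VV$ and, by (i.) and Lemma \ref{lem:harmblowupgradient}, has zero trace on $\BL$ locally near $0$; the Schwarz reflection principle then extends $f$ by its odd reflection to a harmonic function on a neighborhood of $0$ in $\R^n$, giving $C^\infty$ and in particular $C^2$-regularity, and analogously for $g$. Then (iii.) is immediate: $Df(0) = \beta\, \ee_n = Dg(0)$, the tangential components vanishing because $f,g$ vanish on $\BL$ and the normal components being both identified with the common $\beta$ produced by the collapsing lemma. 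The main technical obstacle is the diagonal construction in the first step: one must calibrate $(\nu_\ell, r_\ell)$ so that all four conditions of Definition \ref{def:harmblow} hold simultaneously for $\{S_\ell\}$, i.e., $\EE_C(S_\ell,1) \to 0$, $\kappa_{S_\ell}/\EE_C(S_\ell,1) \to 0$ and $\Aa_{\muu_{1/r_\ell}(\M_{\nu_\ell})} \to 0$ at compatible rates, while the rescaled sheets $v_i^{S_\ell}$, normalized by $\mm_\ell$, still converge to $\tilde f_i$.
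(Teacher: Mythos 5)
Your strategy is exactly the paper's: form the second blow-ups $\tilde f_i, \tilde g_j$ by rescaling, show they are $1$-homogeneous via Theorem~\ref{thm:graph}(v.) and a change of variables, deduce linearity via the trace remarks, collapse them to a common linear function via Lemma~\ref{lem:harmblowequal}, and then upgrade to coincidence of the $f_i$'s themselves through a Hopf boundary-point argument at the origin. All of this is correct, and the homogeneity computation and Hopf argument are fine. (The claim $u_{ik}(y)=o(|y|)$ is only known along the scales $r_\ell$, but for Hopf one only needs $\liminf_{t\downarrow 0} u_{ik}(t\ee_n)/t=0$, which the subsequential convergence plus $u_{ik}\geq 0$ does deliver, so that imprecision is harmless.)

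The genuine gap is the one you flag yourself at the end but never close: showing that the diagonal sequence $S_\ell := (\muu_{1/r_\ell\#}T_{\nu_\ell})\LL\UU_3$ is a blow-up sequence in the sense of Definition~\ref{def:harmblow} \emph{and} that its harmonic blow-ups are (a nonzero multiple of) $\tilde f_i, \tilde g_j$. This is load-bearing: you invoke it twice, once to get zero trace of $\tilde f_i$ through Lemma~\ref{lem:harmblowupgradient} and Remark~\ref{rmk:traceVanish}, and once to run the collapsing Lemma~\ref{lem:harmblowequal}. The difficulty is that the normalizing factor $\mm_{S_\ell}=\max\{\sqrt{\EE_C(S_\ell,1)},\Aa_{S_\ell}^{1/4}\}$ used in Definition~\ref{def:harmblow} for $\{S_\ell\}$ need not coincide with $\mm_{\nu_\ell}$, which is what normalizes the $v_i^{S_\ell}$ when you identify them with $f_i^{(\rho_\ell)}$. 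The paper resolves this by proving two-sided bounds
\[
0<\liminf_{k}\frac{\max\{\EE_C(S_k^*,1),\Aa_{S_k^*}^{1/2}\}}{\mm_{\nu_k}^2}\le\limsup_{k}\frac{\max\{\EE_C(S_k^*,1),\Aa_{S_k^*}^{1/2}\}}{\mm_{\nu_k}^2}<\infty,
\]
which requires Lemmas~\ref{lem:excLessX}, \ref{lem:XLessExc}, a quantitative choice of $\nu_k$ using Lemma~\ref{lem:harmblowbound} (the paper's conditions 1.\ and 2.), and a separate treatment of the degenerate case in which all $\tilde f_i,\tilde g_j$ vanish, where $\mm_{S_\ell}$ need not be comparable to $\mm_{\nu_\ell}$ at all but the conclusion $\beta=0$ is immediate. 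Without these estimates, $\{S_\ell\}$ being a blow-up sequence with the right limits is an assertion, not a proved fact, and the subsequent zero-trace and collapsing steps are ungrounded. I would not accept the proof without this step filled in.
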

		
		\begin{proof}
			We first blow $f_i$, $g_j$ up and show the equality of these limiting functions. Then we deduce that also the $f_i$, $g_j$ coincide.
			
			Let  $i \in \{1, \dots, m\}$, $j \in \{1, \dots, m-1\}$, $4 \leq \rho < \infty$ and define the functions $f_i^{(\rho)}:= \rho f_i ( \frac{ \cdot}{\rho})$ and $g_j^{(\rho)}:= \rho g_j ( \frac{ \cdot}{\rho})$. Then $f_i^{(\rho)}$ and $g_j^{(\rho)}$ are harmonic and by Lemma \ref{lem:fDividedY} uniformly bounded. 
			
			Indeed, for all $4 \leq \rho < \infty$
			$$\sup_\VV |f_i^{(\rho)} |= \rho \sup_\VV \left|f_i \Big( \frac{ y}{\rho} \Big) \right|
			= \rho \sup_{\VV \cap \BB^n_{1/\rho}(0)} |f_i|
			\leq \sup_{\VV \cap \BB^n_{1/\rho}(0)} \frac{ |f_i(y)|}{|y|} 
			\leq \sup_{\VV \cap \BB^n_{1/4}(0)} \frac{ |f_i(y)|}{|y|}< \infty .$$
			Then \cite[Theorem 2.11]{gilbarg} implies that, up to subsequence, they converge pointwise to a harmonic function. This means, there exist a strictly increasing sequence $\rho_k \to \infty$ as $k \to \infty$ and harmonic functions $f_1^*, \dots, f_m^*$ on $\VV$, $g_1^*, \dots, g_{m-1}^*$ on $\WW$ such that for all $i \in \{1, \dots, m\}$, $j \in \{1, \dots, m-1\}$
			\begin{align*}
			\lim_{k \to \infty} f_i^{(\rho_k)}(y) = f_i^*(y) \qquad &\text{ and } \qquad  \lim_{k \to \infty} Df_i^{(\rho_k)}(y) = Df_i^*(y) \qquad \text{ for } y \in \VV,\\
			\lim_{k \to \infty} g_j^{(\rho_k)}(y) = g_j^*(y) \qquad &\text{ and } \qquad  \lim_{k \to \infty} Dg_j^{(\rho_k)}(y) = Dg_j^*(y)
			\qquad \text{ for } y \in \WW.
			\end{align*}
			We want to deduce their equality by using Lemma \ref{lem:harmblowequal}. To do so, we first must show that $f_i^*$, $g_j^*$ are of the form $\beta Y_n$ for some $\beta \in \R$. A sufficient condition for this is the following identity $ \displaystyle \frac{\partial}{\partial r} \frac{f_i^*(y)}{|y|} = 0 = \frac{\partial}{\partial r} \frac{g_i^*(\bar{y})}{|\bar{y}|} $, as we have seen in the proof of Theorem \ref{thm:CylSpherExc}. By Theorem \ref{thm:graph}$(v.)$, we have
			$$\int_{\VV} \left( \frac{\partial}{\partial r} \frac{f_i(y)}{|y|} \right)^2 |y|^{2-n} \dd \Leb^n(y)
			+  \int_{\WW} \left( \frac{\partial}{\partial r} \frac{g_j(y)}{|y|} \right)^2 |y|^{2-n} \dd \Leb^n(y) \leq 2^{n+5}\C < \infty,$$
			and hence, Fatou's Lemma implies that
			\begin{align*}
			\int_{\VV} &\left( \frac{\partial}{\partial r} \frac{f_i^*(y)}{|y|} \right)^2 |y|^{2-n} \dd \Leb^n(y)
			+  \int_{\WW} \left( \frac{\partial}{\partial r} \frac{g_j^*(y)}{|y|} \right)^2 |y|^{2-n} \dd \Leb^n(y)\\
			&\leq \liminf_{k \to \infty} \left( \int_{\VV} \left( \frac{\partial}{\partial r} \frac{f_i^{(\rho_k)}(y)}{|y|} \right)^2 |y|^{2-n} \dd \Leb^n(y)
			+  \int_{\WW} \left( \frac{\partial}{\partial r} \frac{g_j^{(\rho_k)}(y)}{|y|} \right)^2 |y|^{2-n} \dd \Leb^n(y) \right) \\
			&\leq \liminf_{k \to \infty} \left( \int_{\VV \cap \BB^n_{1/\rho_k}(0)} \left( \frac{\partial}{\partial r} \frac{f_i(y)}{|y|} \right)^2 |y|^{2-n} \dd \Leb^n(y)
			+  \int_{\WW \cap \BB^n_{1/\rho_k}(0)} \left( \frac{\partial}{\partial r} \frac{g_j(y)}{|y|} \right)^2 |y|^{2-n} \dd \Leb^n(y) \right) \\
			&=0.
			\end{align*}
			Therefore, there exist real numbers $\beta_1 \leq \cdots \leq \beta_m$, $\gamma_1 \geq \cdots \geq \gamma_{m-1}$ such that 
			$ f_i^* = \beta_i \left. Y_n \right|_\VV$, $
			g_j^* = \gamma_j \left. Y_n \right|_\WW.$ Now, we show that all these numbers coincide.\\
			This must hold by Lemma \ref{lem:harmblowequal}, if we find a blowup sequence whose associated harmonic blowups are exactly $ f_i^*$, $g_j^* $.
			For $k \in \N$, $k \geq 1$, we define $$S_\nu^k := (\muu_{\rho_k} T_\nu) \LL \UU_3.$$ Then there is an $N>0$ such that for $\nu \geq N$ the following holds $\EE_C(T_\nu, 1) + \kappa_\nu + \Anu \leq \frac{1}{\Cr{28}}$ and hence, by Remark \ref{rmk:scale3}, $(S_\nu^k, \muu_{\rho_k}(\M_\nu)) \in \TT$. Moreover, by Definition \ref{def:harmblow}$(iv.)$, $(v.)$ for all $i \in \{1, \dots, m\}$, $j \in \{1, \dots, m-1\}$ we have
			\begin{align*}
			\lim_{\nu \to \infty} \frac{v_i^{S_\nu^k}}{\mm_\nu} &= f_i^{(\rho_k)} \qquad \text{ on compact subsets of } \VV,\\
			\lim_{\nu \to \infty} \frac{w_j^{S_\nu^k}}{\mm_\nu} &= g_j^{(\rho_k)} \qquad \text{ on compact subsets of } \WW.
			\end{align*}
			We choose now for every $k$ an $\nu_k \geq \max\{N, k\}$ satisfying the following three properties:
			\begin{enumerate}
				\item $$
				\max \Big\{ \sup_{\VV \cap \BB_{1/2}} \big|f_1^{(\rho_k)}\big|,
				\sup_{\VV \cap \BB_{1/2}} \big|f_m^{(\rho_k)}\big|,
				\sup_{\WW \cap \BB_{1/2}} \big|g_1^{(\rho_k)}\big|,
				\sup_{\WW \cap \BB_{1/2}} \big|g_{m-1}^{(\rho_k)}\big| \Big\}
				\leq \sup_{\BC_{1/2} \cap \spt{S_{\nu_k}^k}} \frac{|X_{n+1}|}{\mm_{\nu_k}} + \frac{1}{k}.
				$$
				\item $$ \sup_{\BC_{3/2} \cap \spt(S_{\nu_k}^k)} \frac{|X_{n+1}|}{\mm_{\nu_k}}
				\leq 3  \max \Big\{ \sup_{\VV} \big|f_1^{(\rho_k/3)}\big|,
				\sup_{\VV } \big|f_m^{(\rho_k/3)}\big|,
				\sup_{\WW } \big|g_1^{(\rho_k/3)}\big|,
				\sup_{\WW } \big|g_{m-1}^{(\rho_k/3)}\big| \Big\} + \frac{1}{k}.$$
				This is possible by Lemma \ref{lem:harmblowbound}, where $\{(T_\nu, \M_\nu)\}_{\nu \geq 1}$, $a$, $\sigma$ are replaced by\\ $\{(\muu_{\rho_k/3 \#}T_{\nu_k}, \muu_{\rho_k/3}(\M_{\nu_k}) \}_{k \geq 1}$,  $0$, $ 1/2$) and because
				\begin{equation*}
				\begin{split}
				\sup_{\BC_{3/2} \cap \spt(S_{\nu_k}^k)} \frac{|X_{n+1}|}{\mm_{\nu_k}}
				&= \sup_{\BC_{1/2} \cap \spt(\muu_{1/3 \#} S_{\nu_k}^k)} 3 \frac{|X_{n+1}|}{\mm_{\nu_k}}
				= 3\sup_{\BC_{1/2} \cap \spt(\muu_{\rho_k/3 \#} T_{\nu_k}^k)}  \frac{|X_{n+1}|}{\mm_{\nu_k}}. 
				\end{split}
				\end{equation*}
				\item We define the (blowup) sequence $\{(S^*_k, \M^*_k)\}_{k \geq 1}$ by $S^*_k := S^k_{\nu_k}$ and $\M^*_k:= \muu_{\rho_k} (\M_{\nu_k})$ and notice
				\begin{align}\label{eq:vDividedEps}
				\lim_{k \to \infty} \frac{v_i^{S_k^*}}{\mm_{\nu_k}} = f_i^* \qquad \text{ and } \qquad
				\lim_{k \to \infty} \frac{w_j^{S_k^*}}{\mm_{\nu_k}} = g_j^* .
				\end{align}
			\end{enumerate}
			
			If all $f_i^*$, $g_j^*$ vanish, then also $0=\beta_1 = \cdots = \beta_m= \gamma_1 = \cdots = \gamma_{m-1}$.
			If not, we want to see whether $\{S_k^*\}_{k\geq1}$ is a blowup sequence to $f_i^*$, $g_j^*$. Hence, we aim for \eqref{eq:vDividedEps} with $\mm_{\nu_k}$ replaced by $\mm_{S^*_k}$. Therefore, we shall compare these two quantities. First, we notice that by Remark \ref{rmk:scale3},
			$$0 \leq \frac{\kappa_{S_k^*}+\Aa_{\M^*_k}}{\mm_{\nu_k}^2}
			\leq \frac{\kappa_{\nu_k} +\Aa_{\nu_k}}{\rho_k \mm_{\nu_k}^2}
			\rightarrow 0 \qquad \text{as } k \to \infty.$$
			Then by Lemmas \ref{lem:excLessX} and \ref{lem:XLessExc} (with $T$, $\M$, $\sigma$ replaced by $S_k^*$, $\M^*_k$ $1/2$) and the conditions 1. and 2., it follows that
			\begin{align*}
			\limsup_{k \to \infty} \frac{\EE_C(S_k^*, 1)}{ \mm_{\nu_k}^2}
			&\leq \limsup_{k \to \infty} 4\Cr{18} \left( \frac{\Cr{19}}{\mm_{\nu_k}^2} \sup_{\BC_{3/2} \cap \spt(S_k^*)} X_{n+1}^2 + \frac{\kappa_{S_k^*}+\Aa_{\M^*_k}} {\mm_{\nu_k}^2} \right) \\
			&\leq  36\Cr{18}\Cr{19} \max\big\{\sup_\VV (f_i^*)^2, \sup_\WW(g_j^*)^2: i,j \big\}, \\
			\liminf_{k\to \infty} \frac{\EE_C(S_k^*, 1)}{ \mm_{\nu_k}^2} 
			&\geq \liminf_{k\to \infty} \left( \frac{1}{2^{2n+1}\Cr{21}\Cr{22}}  \sup_{\BC_{1/2} \cap \spt(S_k^*)}\frac{X_{n+1}^2}{\mm_{\nu_k}^2} - \frac{\kappa_{S_k^*}+\Aa_{\M^*_k}}{\mm_{\nu_k}^2} \right) \\
			&\geq \frac{1}{2^{2n+1}\Cr{21}\Cr{22}} \max\big\{\sup_\VV (f_i^*)^2, \sup_\WW(g_j^*)^2: i,j \big\}.
			\end{align*}
			Hence, 
			$$0 < \liminf_{k\to \infty} \frac{\max\big\{\EE_C(S_k^*, 1), \textbf{A}_{S_k^*}^{1/2} \big\}}{ \mm_{\nu_k}^2}  
			\leq \limsup_{k \to \infty} \frac{\max\big\{\EE_C(S_k^*, 1), \textbf{A}_{S_k^*}^{1/2} \big\}}{ \mm_{\nu_k}^2} < \infty,$$
			and we can find a subsequence $\{(S_{k_l}^*, \M^*_{k_l})\}_{l \geq 1}$ which is a blowup sequence and whose associated harmonic blowups are $\gamma f_i^*$, $\gamma g_j^*$ for some fixed $\gamma \in \R$ by \eqref{eq:vDividedEps}. As they are of the form as in Lemma \ref{lem:harmblowequal} it follows that there is a $\beta \in \R$ satisfying
			$$f_1^*= \cdots = f_m^* = \beta \left. Y_n \right|_\VV 
			\qquad \text{ and } \qquad
			g_1^* = \cdots = g_{m-1}^* = \beta \left. Y_n \right|_\WW.$$
			From this, we want to deduce that also $f_1= \cdots = f_m$ and $g_1 = \cdots = g_{m-1}$. Notice that $f_1 -f_m$ and $g_1-g_{m-1}$ are nonpositive and harmonic functions. By Lemma \ref{lem:harmblowupgradient}, $f_i$ and $g_j$ have zero trace on $\BL$. Hence,
			$$ \sup_\VV (f_1 -f_m)= 0 = \sup_\WW (g_1-g_{m-1}).$$
			Moreover, the E. Hopf boundary point Lemma \cite[Lemma 3.4]{gilbarg} implies that
			if $y_0 \in \BL$ is a strict maximum point, then the outer normal derivative at $y_0$ (if it exists) must be positive. But at zero, the following holds
			\begin{align*}
			\frac{ \partial (f_1-f_m) }{\partial \nu}(0)
			= \lim_{t \downarrow 0} \frac{ (f_1-f_m)(0, \dots, 0,t)}{t}
			&= (f_1^*-f_m^*)(0, \dots, 0,1) =0,\\
			\frac{ \partial (g_1-g_{m-1}) }{\partial \nu}(0)
			= \lim_{t \downarrow 0} \frac{ (g_1-g_{m-1})(0, \dots, 0,-t)}{t}
			&= (g_1^*-g_{m-1}^*)(0, \dots, 0,-1) =0.
			\end{align*}
			Hence, $0$ is not a strict maximum point and there must be a point in $\VV$ ($\WW$ respectively) reaching $0$ (i.e. the maximum) as well.
			Then \cite[Theorem 3.5]{gilbarg} implies that $f_1-f_m$, and $g_1-g_{m-1}$ must be constant. In fact, by the vanishing trace, $f_1 -f_m= 0= g_1-g_{m-1}$. Therefore, $(i.)$ must hold.
			Also by the vanishing trace and weak version of the Schwarz reflection principle, there are harmonic functions $f \in \CC^2(\VV \cap \BL)$, $g \in \CC^2(\WW \cup \BL)$ satisfying $(ii.)$ and $(iii.)$.
			
		\end{proof}
	
		\begin{rmk}\label{rmk:DfBounds}
			Let $f$, $g$ denote harmonic blow-ups as in Theorem \ref{thm:blowupsC2}$(ii.)$. Then there are constants $\Cl{53}$, $\Cr{54}$ such that
			\begin{enumerate}[(i.)]
				\item $ \displaystyle |Df(0)|= |Dg(0)| 
				\leq \Cr{53} \min \left\{ \sqrt{ \int_{\VV \cap \UU^n_{1/2}(0)} |f|^2 \dd \Leb^n },  \sqrt{ \int_{\WW \cap \UU^n_{1/2}(0)} |g|^2 \dd \Leb^n }  \right\}
				\leq \Cr{54}$.
				\item For all $y \in \BB^n_{1/4}(0)$ the following holds
				$$ |f(y) - y \cdot Df(0)|
				\leq  \Cr{53} |y|^2 \sqrt{ \int_{\VV \cap \UU^n_{1/2}(0)} |f|^2 \dd \Leb^n }
				\leq \Cr{54}|y|^2.$$
				\item For all $y \in \BB^n_{1/4}(0)$ the following holds
				$$ |g(y) - y \cdot Dg(0)|
				\leq  \Cr{53}|y|^2 \sqrt{ \int_{\WW \cap \UU^n_{1/2}(0)} |g|^2 \dd \Leb^n }
				\leq \Cr{54}|y|^2.$$
			\end{enumerate}
		\end{rmk}
	
		\begin{proof}
			$(i.)$
			By the Schwarz reflection principle, we can extend $f$ to an harmonic function $\tilde{f}$ defined on $\BB^n_{1/2}(0)$. Then by the interior estimates for harmonic functions \cite[Theorem 2.10]{gilbarg}, the mean value property and Hölder's inequality, it follows that
			$$|Df(0)| \leq 8n \sup_{ \BB^n_{1/4}} |\tilde{f}|
			\leq 8n \frac{2^n}{\omm_n} \int_{\BB^n_{1/2}}|\tilde{f}| \dd \Leb^n
			\leq 8n \left(\frac{ 2^n}{\omm_n}\right)^2 \sqrt{ \int_{\BB^n_{1/2}}|f|^2 \dd \Leb^n}.$$
			Moreover, by Lemma \ref{lem:XLessExc}$(ii.)$ (with $\sigma$ replaced by $1/2$) and Definition \ref{def:harmblow}$(iii.)$, this integral is bounded by $2^{n+1}\Cr{22}$.
			The same holds for $g$.
			
			$(ii.)$ By the Taylor formula, $ |f(y) - y \cdot Df(0)| \leq C |D^2f(0)||y|^2$. Also by \cite[Theorem 2.10]{gilbarg}, the following holds
			$$|D^2f(0)| \leq \frac{ n^2}{16} \sup_{ \BB^n_{1/4}} |\tilde{f}|.$$
			The inequalities follow then as in $(i.)$.
			
			$(iii.)$ Similar to $(ii.)$.
		\end{proof}
	
		\section{Excess decay}
		With the $\CC^2$ functions from Theorem \ref{thm:blowupsC2}, we prove the following inequalities of the excess. We will use them to prove Theorem \ref{thm:RotExcessDecay} by constructing inductively a sequence of currents which will show that the excess of the (slightly rotated) original current decays at most proportional to the radius.
		
		\begin{thm}\label{thm:excessThetaMax}
			Let $(T, \M) \in \TT$ and define $\theta := \big(\Cr{29}(1+\Cr{54}) \big)^{-2}$ (see Remarks \ref{rmk:mugammT}$(iii.)$ and \ref{rmk:DfBounds}). There is a constant $\Cr{55} \geq 1$ such that if $T$ fulfils $\max\{\EE_C(T,1), \Cr{55}\kappa_T, \sqrt{\Aa}\} \leq \frac{1}{\Cr{55}}$, then there is a real number $\omega$ satisfying
			$$|\omega|^2 \leq \Cr{54}^2 \max\left\{\EE_C(T,1), \sqrt{\Aa} \right\}
			\quad \text{ and } \quad
			\EE_C(\gamm_{\omega \#}T, \theta) \leq \theta \max\left\{\EE_C(T,1), \Cr{55}\kappa_T, \sqrt{\Aa} \right\} .$$
		\end{thm}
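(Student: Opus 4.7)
The plan is to argue by contradiction: I will suppose the theorem fails and extract a blow-up sequence to read off the right rotation from the common boundary slope produced by Theorem \ref{thm:blowupsC2}. Assuming the conclusion fails for every $\Cr{55}$, for each integer $\nu \geq 1$ I obtain $(T_\nu, \M_\nu) \in \TT$ with $M_\nu := \max\{\EE_C(T_\nu,1), \nu\kappa_{T_\nu}, \sqrt{\Anu}\} \leq 1/\nu$ and such that no admissible $\omega$ realizes the decay. Set $\epnu, \kappa_\nu, \Anu, \mm_\nu$ as in Definition \ref{def:harmblow}. I split into two cases: if $\epnu^2 \leq \theta^{n+1} M_\nu$, then by monotonicity \eqref{eq:excmon} the choice $\omega = 0$ already gives $\EE_C(T_\nu, \theta) \leq \theta^{-n}\epnu^2 \leq \theta M_\nu$, contradicting the failure; otherwise $\epnu^2 > \theta^{n+1} M_\nu$, which forces $\kappa_\nu/\epnu^2 < 1/(\theta^{n+1}\nu) \to 0$, so \eqref{eq:existenceHarmBlowup} holds and, up to subsequence, $\{(T_\nu, \M_\nu)\}$ is a blow-up sequence.

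By Theorem \ref{thm:blowupsC2} the harmonic blow-ups collapse to a single pair $f \in \CC^2(\VV\cup\BL)$, $g \in \CC^2(\WW\cup\BL)$ with $f|_\BL = g|_\BL = 0$ and $Df(0) = Dg(0)$; since $f$ vanishes on $\BL$ the tangential components of $Df(0)$ vanish, so $Df(0) = \beta\,\ee_n$ for some real $\beta$ with $|\beta| \leq \Cr{54}$ by Remark \ref{rmk:DfBounds}(i.). I will take $\omega_\nu := -\arctan(\beta\mm_\nu)$, which satisfies $|\omega_\nu|^2 \leq \Cr{54}^2\mm_\nu^2 = \Cr{54}^2 \max\{\EE_C(T_\nu,1), \sqrt{\Anu}\}$ and is therefore admissible, and show that this $\omega_\nu$ realizes the decay for $\nu$ large, yielding the contradiction.

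The key estimate will be the height bound
\begin{equation*}
\sup_{\BC_{2\theta}\cap\,\spt(\gamm_{\omega_\nu\#}T_\nu)} |X_{n+1}| \leq 4\,\Cr{54}\,\mm_\nu\,\theta^2 + o(\mm_\nu).
\end{equation*}
Away from $\pp^{-1}(\BL)$ the current is the union of graphs of $\vin$, $\wjn$, and a direct computation of $\gamm_{\omega_\nu}$ (analogous to Step 2 of the proof of Lemma \ref{lem:harmblowequal}) expresses its $X_{n+1}$-coordinate on these graphs as $(\vin(y) - \beta\mm_\nu y_n)/\sqrt{1+\beta^2\mm_\nu^2}$. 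Combining $\vin/\mm_\nu \to f$ (Definition \ref{def:harmblow}(iv.)) with $|f(y) - \beta y_n| \leq \Cr{54}|y|^2$ (Remark \ref{rmk:DfBounds}(ii.)) yields the bound there. The hard part will be covering the thin boundary region near $\pp^{-1}(\BL)$, where the graph description breaks down; there I apply Lemma \ref{lem:harmblowbound} to the rotated and rescaled sequence $\{(\muu_{4\#}\gamm_{\omega_\nu\#}T_\nu)\LL\UU_3\}$ (admissible by Remark \ref{rmk:mugammT}(ii.)), whose harmonic blow-ups are the slope-corrected functions $f - \beta Y_n$ and $g - \beta Y_n$, themselves controlled by $\Cr{54}|y|^2$ near the origin via Remark \ref{rmk:DfBounds}(ii.)--(iii.).

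Setting $R_\nu := (\muu_{1/\theta\,\#}\gamm_{\omega_\nu\#}T_\nu)\LL\UU_3$ and noting that $(R_\nu, \muu_{1/\theta}\gamm_{\omega_\nu}(\M_\nu)) \in \TT$ for $\nu$ large with $\kappa_{R_\nu} \leq \theta\kappa_\nu$ and the corresponding $\Aa \leq \theta\Anu$ by Remark \ref{rmk:mugammT}(iii.), the rescaled height bound becomes $\sup_{\BC_{3/2}\cap\,\spt R_\nu} X_{n+1}^2 \leq C\mm_\nu^2\theta^2 + o(\mm_\nu^2)$. Lemma \ref{lem:excLessX} (with $\sigma = 1/2$) then gives $\EE_C(R_\nu, 1) \leq C\mm_\nu^2\theta^2 + C\theta(\kappa_\nu + \Anu) + o(\mm_\nu^2)$, whence $\EE_C(\gamm_{\omega_\nu\#}T_\nu, \theta) = \theta^n \EE_C(R_\nu, 1) \leq C\theta^{n+2}\mm_\nu^2 + C\theta^{n+1}(\kappa_\nu + \Anu) + o(\theta^n\mm_\nu^2)$. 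Using $\mm_\nu^2 \leq M_\nu$, $\kappa_\nu \leq M_\nu/\nu$, and $\Anu \leq \sqrt{\Anu}/\nu \leq M_\nu/\nu$, and choosing $\Cr{55}$ large enough that $C\theta^{n+1} \leq 1/4$ (compatible with $\theta = (\Cr{29}(1+\Cr{54}))^{-2}$), the right-hand side is at most $\theta M_\nu$ for $\nu$ large, producing the desired contradiction.
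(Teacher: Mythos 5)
Your proposal is correct and follows essentially the same route as the paper's proof: argue by contradiction, extract a blow-up sequence (checking $\kappa_\nu/\epnu^2 \to 0$ via the failure at $\omega=0$), use Theorem \ref{thm:blowupsC2} and Remark \ref{rmk:DfBounds} to identify a common boundary slope $\beta$ with $|\beta|\leq \Cr{54}$, rotate by $\arctan(\beta\mm_\nu)$, rescale by $\muu_{1/\theta}$, bound the resulting height via the uniform boundary convergence of Lemma \ref{lem:harmblowbound} together with the quadratic estimate $|f-\beta Y_n| \leq \Cr{54}|y|^2$, and finally close with Lemma \ref{lem:excLessX} and the smallness of $\theta$. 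Minor note: your sign $\omega_\nu = -\arctan(\beta\mm_\nu)$ is the geometrically correct one that kills the slope (so the rotated height is $O(\mm_\nu|y|^2)$ rather than $O(\mm_\nu|y|)$); the paper writes $+\arctan(\beta\mm_\nu)$, which is evidently a sign typo since the paper's own next step bounds $\sup|X_{n+1}/\mm_\nu|$ on $S_\nu$ by $\sup|X_{n+1}/\mm_\nu - \beta X_n|$ on $\muu_{1/\theta\#}T_\nu$, an inequality that only holds with the negative sign.
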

		\begin{proof}
			We argue by contradiction. If the theorem did not hold, then there would be a sequence $\{(T_\nu, \M_\nu)\}_{\nu \geq1} \subset \TT$ such that for all $|\omega| \leq \Cr{54}\mm_\nu$ the following holds
			\begin{align}
			\max\{\epnu^2, \sqrt{\Anu}, \nu \kappa_\nu\} &\leq \frac{1}{\nu},\label{eq:epnuKappaSmall} \\
			\EE_C(\gamm_{\omega \#}T_\nu, \theta) &> \theta \max\{\epnu^2, \sqrt{\Anu}, \nu\kappa_\nu \} \label{eq:ExcGammTheta},
			\end{align}
			where $\epnu:= \sqrt{\EE_C(T_\nu,1)}$, $\kappa_\nu := \kappa_{T_\nu}$ and $\Anu := \Aa_{\M_\nu}$.
			Notice that by the monotonicity of the excess \eqref{eq:excmon}, the condition \eqref{eq:ExcGammTheta} (with $\omega=0$) implies
			$$\theta \nu \kappa_\nu
			\leq \theta \max\{\EE_C(T_\nu,1), \sqrt{\Anu}, \nu\kappa_\nu \} 
			<\EE_C(T_\nu, \theta)
			\leq \frac{\epnu^2}{\theta^n}.$$
			Hence, by \eqref{eq:epnuKappaSmall}, we can assume that 
			\begin{align*}
			\epnu^2 + \frac{\kappa_\nu}{\epnu^2} + \Anu < \frac{2}{\nu} + \frac{1}{\nu \theta^{n+1}}.
			\end{align*}
			Therefore, we notice that as in \eqref{eq:existenceHarmBlowup}, $\{(T_\nu, \M_\nu)\}_{\nu \geq1}$ is, up to subsequence, a blowup sequence with associated harmonic blowups $f_i$, $g_j$. Let $f$, $g$ denote the $\CC^2$-functions as in Theorem \ref{thm:blowupsC2}$(ii.)$.
			As they vanish on $\BL$, for every $0< \sigma <1$ the functions $\epnu^{-1}\vin$, $\epnu^{-1}\wjn$ converge uniformly on $\VV_\sigma$, $\WW_\sigma$. Thus, we derive from Lemma \ref{lem:harmblowbound} that
			\begin{equation}\label{eq:XepnuConvF}
			\begin{split}
			\limsup_{\nu \to \infty} \sup_{\BC_{1/2} \cap \pp^{-1}(\VV) \cap \spt(T_\nu)} \Big| \frac{X_{n+1}}{\mm_\nu} - f \circ \pp \Big| &=0,\\
			\limsup_{\nu \to \infty} \sup_{\BC_{1/2} \cap \pp^{-1}(\WW) \cap \spt(T_\nu)} \Big| \frac{X_{n+1}}{\mm_\nu} - g \circ \pp \Big| &=0.
			\end{split}
			\end{equation}
			From Remark \ref{rmk:DfBounds} and the proof of Theorem \ref{thm:blowupsC2}, we deduce the existence of some $\beta \in [-\Cr{54},\Cr{54}]$ satisfying
			\begin{align*}
			Df(0)=(0, \dots, 0,\beta)=Dg(0).
			\end{align*}
			Therefore, by applying Remark \ref{rmk:DfBounds}$(ii.)$, $(iii.)$, it follows
			\begin{equation}\label{eq:fMinusBetaX}
			\begin{split}
			\big|f(x) - \beta x_n\big| = \big|f(x) - x Df(0)\big| &\leq \Cr{54}|x|^2
			\qquad \text{ for } x \in \VV \cap \BB^n_{1/4}(0),\\
			\big|g(x) - \beta x_n\big| = \big|g(x) - x Dg(0)\big| &\leq \Cr{54}|x|^2
			\qquad \text{ for } x \in \WW \cap \BB^n_{1/4}(0).
			\end{split}
			\end{equation}
			Then we rotate the currents such that the new differential vanishes. Indeed, let $\omega_\nu := \arctan(\beta \mm_\nu)$. Then
			\begin{align}\label{eq:omegaBounded}
			|\omega_\nu| \leq |\beta| \mm_\nu \leq \Cr{54} \mm_\nu.
			\end{align}
			Consider now $S_\nu := (\muu_{1/\theta \#} \gamm_{\omega_\nu \#} T_\nu ) \LL \UU_3$ and $\tilde{\M}_\nu:= \muu_{1/\theta }/\M_\nu)$. By \eqref{eq:epnuKappaSmall}, the assumptions of Remark \ref{rmk:mugammT}$(iii.)$ are fulfilled  for $\nu$ large enough, and hence, $(S_\nu, \tilde{\M}_\nu) \in \TT$ and
			\begin{align}\label{eq:kappaSnu}
			\kappa_{S_\nu} \leq \theta \kappa_\nu, \qquad \Aa_{\tilde{\M}_\nu} \leq \theta \Anu.
			\end{align}
			By \eqref{eq:XepnuConvF}, \eqref{eq:fMinusBetaX} 
			and the Remark \ref{rmk:DfBounds}$(ii.)$, $(iii.)$, it follows
			\begin{align*}
			\limsup_{\nu \to \infty} \sup_{\BC_2 \cap \spt(S_\nu) }\left| \frac{X_{n+1}}{ \mm_\nu} \right|
			&\leq \limsup_{\nu \to \infty}  \sup_{\BC_{3} \cap \spt(\muu_{1/\theta \#} T_\nu)} \Big| \frac{X_{n+1}} { \mm_\nu} - \beta X_n \Big|\\
			&\leq  \frac{1}{\theta} \limsup_{\nu \to \infty}  \sup_{\BC_{3 \theta} \cap \spt(T_\nu)} \Big| \frac{X_{n+1}}{\mm_\nu} - \beta X_n \Big|\\
			&\leq \frac{1}{\theta} \limsup_{\nu \to \infty} \left(  \sup_{\substack{\BC_{3 \theta} \cap \VV\\ \cap \spt(T_\nu)}}
			|f\circ \pp - \beta Y_n| + \sup_{\substack{\BC_{3 \theta} \cap \WW\\ \cap \spt(T_\nu)}} |g \circ \pp - \beta Y_n| \right) \\
			&\leq  \frac{1}{\theta} \Cr{54} \big((3\theta)^2 + (3\theta)^2 \big)\\
			&= 18\Cr{54} \theta.
			\end{align*}
			Together with Lemma \ref{lem:excLessX} (with $\sigma \uparrow 1$ and $T$ replaced by $S_\nu$), \eqref{eq:kappaSnu} and Definition \ref{def:harmblow}$(iii.)$, we yield
			\begin{align*}
			\limsup_{\nu \to \infty} \frac{\EE_C(\gamm_{\omega_\nu \#} T_\nu, \theta)}{\mm_\nu^2}
			&=\limsup_{\nu \to \infty} \frac{\EE_C(S_\nu, 1)}{\mm_\nu^2}\\
			&\leq \limsup_{\nu \to \infty} \Cr{18}\left(\frac{ \Cr{19} \sup_{\BC_2 \cap \spt(S_\nu)} X_{n+1}^2}{\mm_\nu^2} + \frac{\kappa_{S_\nu}+\mathbf{A}_{\tilde{\M}_\nu}}{\mm_\nu^2} \right)\\
			&\leq  \Cr{18} \left( \Cr{19}  \limsup_{\nu \to \infty} \sup_{\BC_2 \cap \spt(S_\nu)} \frac{X_{n+1}^2 }{ \mm_\nu^2} + \theta \limsup_{\nu \to \infty} \frac{\kappa_\nu + \Anu}{\mm_\nu^2} \right)\\
			&\leq (18)^2  \Cr{18} \Cr{19} \Cr{54}^2 \theta^2\\
			&< \theta.
			\end{align*}
			As $\omega_\nu$ is bounded (see \eqref{eq:omegaBounded}), the latter inequality contradicts \eqref{eq:ExcGammTheta} for $\nu$ large enough.
			
		\end{proof}

	\subsection{Proof of Theorem \ref{thm:RotExcessDecay}}
		\begin{proof}
			We construct a sequence of currents $\{(T_\nu, \M_\nu)\}_{\nu \in \N} \subset \TT$ and real numbers $\{\omega_\nu\}_{\nu \geq 1}$ inductively. We start with $(T_0, \M_0):=(T, \M)$. Assume that for some fixed $j \in \N$, we already have $(T_j, \M_j) \in \TT$ and denote by $\Aa_j ;=\Aa_{\M_j}$ and $\mm_j := \max\{\sqrt{\EE_C(T_j,1)},  \Aa_j^{1/4}\}$. By Theorem \ref{thm:excessThetaMax}, there is a real number $ |\omega_{j+1}| \leq \Cr{54} \mm_j$ such that if we define
			$$ T_{j+1} := (\muu_{1/\theta \#} \gamm_{\omega_{j+1} \#} T_j) \LL \UU_3 
			\qquad \textnormal{ and } \qquad
			\M_{j+1}:= \muu_{1/\theta}(\M_j) $$
			then $(T_{j+1}, \M_{j+1}) \in \TT$ and by Remark \ref{rmk:mugammT}$(iii.)$
			$$ \max\big\{ \EE_C(T_{j+1},1), \Aa_{j+1}, \Cr{55}\kappa_{T_{j+1}} \big\} 
			\leq \theta \max\big\{ \EE_C(T_j,1), \Aa_j, \Cr{55}\kappa_{T_j} \big\}.$$
			Using this inequality $j$ times, we deduce
			$$\max\big\{ \EE_C(T_{j+1},1), \Aa_j, \Cr{55}\kappa_{T_{j+1}} \big\} 
			\leq \theta^{j+1} \max\big\{ \EE_C(T,1), \Aa, \Cr{55}\kappa_{T} \big\}
			\leq \frac{ \theta^{j+2}}{\Cr{55}}. $$
			Moreover, the following holds
			\begin{align}
			|\omega_{j+1}| &\leq \Cr{54} \sqrt{ \frac{ \theta^{j+1}}{\Cr{55}}}, \label{eq:omegaJ}\\
			\EE_C(T_j, 1) + \kappa_{T_j} + \Aa_j \leq 3 \max&\big\{\EE_C(T_j, 1),\Aa_j , \kappa_{T_j} \big\} \leq 3\frac{ \theta^{j+1}}{\Cr{55}}. \label{eq:ExcPlusKappa}
			\end{align}
			Then we define $\displaystyle \eta_j := \sum_{k=1}^j \omega_k$ and $\eta := \lim\limits_{j \to \infty} \eta_j$. This is a valid choice for $\eta$ as \eqref{eq:omegaJ} and the fact that $\theta^{1/2} \leq 1/2$ implies
			$$ |\eta| \leq \Cr{54} \sum_{k=1}^\infty  \sqrt{ \frac{ \theta^{k}}{\Cr{55}}}
			= \frac{\Cr{54}}{\sqrt{\Cr{55}}}  \sum_{k=1}^\infty (\theta^{1/2})^k
			= \frac{\Cr{54}}{\sqrt{\Cr{55}}} \frac{\theta^{1/2}}{1-\theta^{1/2}}
			\leq 2 \frac{\Cr{54}}{\sqrt{\Cr{55}}} \theta^{1/2}. $$
			Fix $0<r<\theta/4$ and choose an appropriate $j \in \N$ such that $\theta^{j+1} \leq 4r < \theta^j$. Then we use the inequalities \eqref{eq:omegaJ}, \eqref{eq:ExcPlusKappa} together with \eqref{eq:excRotSmall} from the proof of Remark \ref{rmk:mugammT}$(iii.)$ (with $T$, $\M$, $\omega$ replaced by $T_j$, $\M_j$, $\eta - \eta_j$) and the excess monotonicity \eqref{eq:excmon} to derive	
			\begin{align*}
			\EE_C(\gamm_{\eta \#}T, r)
			&\leq \left( \frac{\theta^j}{4r} \right)^n \EE_C\big(\gamm_{\eta \#}T, \frac{\theta^j}{4} \big)
			\leq \theta^{-n} \EE_C\big(\gamm_{\eta \#}T, \frac{\theta^j}{4} \big)\\
			&= \theta^{-n} \EE_C\big(\muu_{4\#}\gamm_{\eta \#}T, \theta^j \big)\\
			&=  \theta^{-n} \EE_C\big(\gamm_{\eta_j \#}\muu_{4\#}\gamm_{\eta-\eta_j \#}T, \theta^j \big)\\
			&= \theta^{-n} \EE_C\big(\muu_{(1/\theta)^j\#} \gamm_{\eta_j \#}\muu_{4\#}\gamm_{\eta-\eta_j \#}T, 1 \big)\\
			&=\theta^{-n} \EE_C\big(\muu_{4\#} \gamm_{\eta-\eta_j \#}T_j, 1 \big)\\
			&\leq \theta^{-n} \frac{\Cr{29}}{\Cr{28}} \left( \Big( \sum_{k=j+1}^\infty \omega_k \Big)^2 + \EE_C(T_j,1) + \kappa_{T_j} +\Aa_j \right)\\
			&\leq \theta^{-n} \frac{\Cr{29}}{\Cr{28}} \left( \sum_{k=j+1}^\infty \omega_k^2 + 3 \frac{\theta^{j+1}}{\Cr{55}} \right)\\
			&\leq \theta^{-n} \frac{\Cr{29}}{\Cr{28}} \left( \frac{\Cr{54}^2}{\Cr{55}}  \frac{\theta^{j+1}}{1-\theta} + 3 \frac{\theta^{j+1}}{\Cr{55}} \right)\\
			&\leq \theta^{-n} \frac{\Cr{29}}{\Cr{28}} \frac{3(\Cr{54}^2+1)}{\Cr{55}} \theta^{j+1}\\
			&\leq \theta^{-n} \frac{\Cr{29}}{\Cr{28}} \frac{3(\Cr{54}^2+1)}{\Cr{55}} (4r)\\
			& \leq \frac{ \theta^{-n-1}}{\Cr{55}} ~ r.
			\end{align*}
		\end{proof}
	\newpage
\section{The boundary regularity Theorem}\label{lastproof}\label{sec:final}
	\begin{thm}\label{thm:lastTheorem}
		Let $U \subset \R^{n+k}$ be open and $T$ an $n$-dimensional locally rectifiable current in $U$ that is  area minimizing in some smooth $(n+1)$-manifold $\M$ and such that $\partial T$ is an oriented $\CC^{2}$ submanifold of $U$. Then for any point $a \in \spt(\partial T)$, there is a neighborhood $V$ of $a$ in $U$ satisfying that $V \cap \spt(T)$ is an embedded $\CC^{1,\frac14}$ submanifold with boundary.
	\end{thm}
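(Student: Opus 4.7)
The plan is to localize near any boundary point $a$, reduce to the class $\TT$ by rescaling, apply Theorem \ref{thm:RotExcessDecay} and Corollary \ref{cor:graphsOfGammT} to obtain $\CC^{1,\frac14}$ graphs, and glue them into an embedded manifold with boundary.

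Fix $a \in \spt(\partial T)$. After translating by $-a$ and rotating $\R^{n+k}$ so that the tangent hyperplane of $\partial T$ at $a$ is $\R^{n-1}\times\{0\}$ and the tangent $(n+1)$-plane of $\M$ at $a$ is $\R^{n+1}\times\{0\}$, I represent $\M$ and $\partial T$ locally as $\CC^2$ graphs matching the structural requirements of $\TT$ (so that $\Phii_\M(0)=0$, $D\Phii_\M(0)=0$, and $\varphi_T(0)=\psi_T(0)=0$, $D\varphi_T(0)=D\psi_T(0)=0$). Corollary \ref{cor:monotonicity} shows that $\Theta^n(\lVert T \rVert, 0)$ exists and is finite; by the constancy theorem applied to any tangent cone at $a$---which must be an oriented codimension-$1$ area-minimizing cone with hyperplane boundary---this density is necessarily a half-integer $m-\tfrac12$ with $m\in\N_+$. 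Rescaling by $\muu_{1/r_0}$ for $r_0>0$ sufficiently small makes $\EE_C(T,1)$, $\Aa$, and $\kappa_T$ as small as desired, and the rescaled current $T' := (\muu_{1/r_0\#}T)\LL\UU_3$ then lies in $\TT$ with this integer $m$.

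If $m=1$, Theorem \ref{thm:graph} provides a single graph over $\VV_{T'}$ and no graph over $\WW_{T'}$, so $T$ is one-sided at $a$ and Allard's boundary regularity theorem \cite{allard3} yields the $\CC^{1,\frac14}$ manifold-with-boundary structure directly. For $m\geq 2$, I check the hypotheses of Theorem \ref{thm:RotExcessDecay} (after a further mild rescaling if needed), pick up the small rotation angle $\eta$ it supplies, and invoke Corollary \ref{cor:graphsOfGammT} to produce functions $\tilde v_1\leq\cdots\leq\tilde v_m\in\CC^{1,\frac14}(\overline{\tilde V})$ and $\tilde w_1\leq\cdots\leq\tilde w_{m-1}\in\CC^{1,\frac14}(\overline{\tilde W})$ with $D\tilde v_i(0)=0=D\tilde w_j(0)$, whose $\gamm_\eta\circ\Phii$-graphs describe $\spt(\gamm_{\eta\#}T)$ outside the cuspidal strip $\{|y_n|\leq|y|^{1+\beta}\}$. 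The strong maximum principle for the Riemannian minimal surface equation forces any two of these graphs that touch in the interior to coincide, so the distinct graphs stratify $\spt(T)$ near $a$ into disjoint smooth leaves. The chain identity $(\partial T)\LL \BC_2 = (-1)^{n+k} (F_T)_\#\bigl(\EE^{n-1}\LL\{|z|<2\}\bigr)$ dictates a multiplicity deficit of exactly one across $\partial T$: exactly one pair $(\tilde v_{i_0},\tilde w_{j_0})$ terminates at $\partial T$ and produces the manifold-with-boundary piece, while the remaining $m-1$ pairs merge smoothly into embedded $\CC^{1,\frac14}$ manifolds passing through $\partial T$.

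The main obstacle is producing the $\CC^{1,\frac14}$ regularity across $\partial T$ inside the cuspidal region, where the graphs of Corollary \ref{cor:graphsOfGammT} are not a priori defined. The vanishing of $D\tilde v_i$ and $D\tilde w_j$ at the origin matches first-order data at $a$ itself, but for boundary points $a'$ close to but distinct from $a$ one needs a separate argument; I iterate the whole reduction at each such $a'$, using the $\CC^2$-regularity of $\partial T$ and the smoothness of $\M$ to guarantee that the smallness hypotheses of Theorem \ref{thm:RotExcessDecay} hold uniformly in a neighborhood of $a$. This yields local $\CC^{1,\frac14}$ parametrizations at every nearby boundary point, and the compatibility forced by the chain identity and the disjointness from the strong maximum principle assembles them---together with the interior sheets coming from Theorem \ref{thm:graph}---into a single embedded $\CC^{1,\frac14}$ submanifold with boundary in a neighborhood of $a$. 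Undoing $\gamm_\eta$ and the rescaling gives the neighborhood $V$ claimed in the theorem.
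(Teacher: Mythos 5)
Your high-level outline matches the paper's: blow up at $a$, reduce to the class $\TT$, apply Theorem \ref{thm:RotExcessDecay} and Corollary \ref{cor:graphsOfGammT}, then split into $m=1$ and $m\geq2$. The crucial missing step is the planarity of the tangent cone, which the paper isolates as Lemma \ref{lem:ConesAtCones} (the Hardt--Simon result that an area-minimizing codimension-one cone with a hyperplane boundary is itself contained in a hyperplane). When you assert that the constancy theorem gives $\Theta^n(\lVert T\rVert,0)=m-\tfrac12$, that conclusion is only valid because the tangent cone $Q$ at $a$ is planar: the constancy theorem applied to $\pp_\#Q$ yields constant multiplicities $m$ and $m-1$ on the two sides and density $m-\tfrac12$ for the \emph{projection}, but the density of $Q$ itself dominates that of $\pp_\#Q$ with equality only when $Q$ lies in the hyperplane. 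Rotating coordinates normalizes $T_a\M$ and $T_a(\partial T)$ but says nothing about $Q$. Without planarity of $Q$, the projection identity $\pp_\#(T\LL\BC_2)=m\,(\EE^n\LL\{y_n>\varphi_T\})+(m-1)\,(\EE^n\LL\{y_n<\varphi_T\})$ required for membership in $\TT$ fails, and $\EE_C(\muu_{1/r\#}T,1)$ does \emph{not} tend to zero as $r\downarrow0$ (it converges to the nonzero excess of the non-planar cone). You must invoke Lemma \ref{lem:ConesAtCones} explicitly before the reduction to $\TT$.

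Second, the sheet-collapsing argument for $m\geq2$ is imprecise. The functions $\tilde v_i$ (and $\tilde w_j$) from Corollary \ref{cor:graphsOfGammT} are not known a priori to touch at any interior point; they are ordered and only meet at the boundary point $0$, where the first derivatives all vanish. The interior strong maximum principle alone therefore gives nothing. The paper instead applies the E.\ Hopf \emph{boundary point} lemma for quasilinear equations: because $D\tilde v_1(0)=D\tilde v_m(0)=0$, the normal derivative of the nonpositive function $\tilde v_1-\tilde v_m$ at $0$ vanishes, so $0$ cannot be a strict maximum; only then does the strong maximum principle force $\tilde v_1\equiv\tilde v_m$ (and likewise $\tilde w_1\equiv\tilde w_{m-1}$). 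Finally, your description of the assembly (``exactly one pair terminates at $\partial T$\ldots the remaining merge smoothly'') elides the mechanism the paper uses: it splits $T=S+(T-S)$ with $S=\tfrac1m\,T\LL G$, verifies via the constancy theorem that $(\partial(T-S))\LL W=0$, and then applies interior regularity to $T-S$. This splitting is what actually shows the interior sheets cross $\partial T$ with no boundary; it is not a consequence of the chain identity for $\partial T$ alone.
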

	
Hardt and Simon found out, that it is enough to consider currents whose tangent cones at boundary are in fact a tangent planes. Once we have this tangent plane, we can parametrize the support of the current with graphs over the plane.
	
	\begin{lem}\label{lem:ConesAtCones}
		Let $Q \in \RR_n^{loc}(\R^{n+1})$ be an absolutely area minimizing cone 			with $\partial Q = \EE^{n-1} \times \del_0 \times \del_0$. Then, the support 			of $Q$ is contained in a hyperplane.
	\end{lem}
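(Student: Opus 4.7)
The plan is to reduce the question to classifying $1$-dimensional length-minimizing cones in $\R^2$ with a single boundary point at the origin. Write $\R^{n+1}=\R^{n-1}\times\R^2$ with coordinates $(y,x)$, so that $\spt(\partial Q)=\R^{n-1}\times\{0\}$, and let $P:=\R^{n-1}\times\{0\}$.

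First I would establish that $Q$ is invariant under every translation $\tauu_{(v,0)}$ with $v\in\R^{n-1}$. For such a $v$, the translate $Q_v:=(\tauu_{(v,0)})_{\#}Q$ is again absolutely area minimizing, has the same boundary as $Q$ (since $P$ is translation invariant in the $y$-direction), and the cone property of $Q$ gives
$$(\muu_r)_{\#}Q_v \;=\; (\tauu_{(rv,0)})_{\#}(\muu_r)_{\#}Q \;=\; (\tauu_{(rv,0)})_{\#}Q \;\xrightarrow[r\downarrow 0]{}\; Q,$$
so $Q_v$ has $Q$ as its tangent cone at the origin. Combining this with the equality case of the monotonicity formula and with the strong maximum principle for codimension-one absolutely area minimizing currents, one concludes $Q_v=Q$ for every $v\in\R^{n-1}$.

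With this translation invariance in hand, the slicing theory for rectifiable currents together with the constancy theorem yields a product decomposition $Q=\EE^{n-1}\times\gamma$ for a unique $1$-rectifiable current $\gamma\in\RR_1^{loc}(\R^2)$; matching the product structure of $\partial Q$ gives $\partial\gamma=\del_0$, and the cone structure of $Q$ transfers to $\gamma$. Tensoring any compactly supported variation of $\gamma$ with a Lipschitz cutoff in the $y$-direction shows that $\gamma$ is itself absolutely length minimizing in $\R^2$. Any $1$-dimensional integer-rectifiable cone from the origin in $\R^2$ is then a locally finite sum $\gamma=\sum_k m_k\llbracket[0,\infty)v_k\rrbracket$ with $m_k\in\Z$ and distinct unit vectors $v_k$; the condition $\partial\gamma=\del_0$ forces $\sum_k m_k=-1$, and $\MM(\gamma\LL\UU_R)=R\sum_k|m_k|$ for every $R>0$. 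Length minimization forces $\gamma$ to be a single ray of multiplicity one: any other configuration is strictly shortened inside $\UU_R$ by replacing $\gamma\LL\UU_R$ with the straight segment from the origin to the (necessarily unique) exit point on $\partial\UU_R$. Consequently $\spt(\gamma)=[0,\infty)v$ for some unit $v\in\R^2$, and $\spt(Q)=\R^{n-1}\times[0,\infty)v\subset\R^{n-1}\times\R v$ lies in an $n$-dimensional hyperplane of $\R^{n+1}$.

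The hard part is the first step, the translation invariance. The sketched argument hides a genuine uniqueness claim for codimension-one absolutely area minimizers with prescribed boundary $P$: in particular one must rule out the possibility that $Q$ carries closed cyclic summands (such as Simons-type minimizing cones) that could coexist with the boundary-carrying sheet without being translation invariant themselves. The cleanest remedy is to decompose $Q$ into its indecomposable pieces, observe that the cyclic components must be absolutely minimizing cycles on their own, and apply the strong maximum principle both to compare each cyclic summand with the trivial competitor and to compare the boundary-carrying sheet of $Q$ with that of $Q_v$.
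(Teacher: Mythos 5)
Your approach---prove translation invariance of $Q$ in the boundary directions, split off an $\EE^{n-1}$ factor, then classify one-dimensional minimizing cones in $\R^2$---is genuinely different from the route the paper points to in Hardt--Simon, which proceeds via dimension reduction, interior regularity and Allard boundary regularity to show $\spt Q\setminus\{0\}$ is a smooth manifold with boundary, and then a rigidity argument on its link. Your plan is attractive but Step~1 has a real gap, roughly where you flag one. The computation $(\muu_r)_{\#}Q_v=(\tauu_{(rv,0)})_{\#}Q\to Q$ as $r\downarrow 0$ identifies the \emph{blowdown} of $Q_v$, not its tangent cone at the origin; the latter is $\lim_{r\to\infty}(\muu_r)_{\#}Q_v$, which equals the tangent cone of $Q$ at $-(v,0)$, a cylinder. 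Consequently the monotone density ratio $\rho\mapsto\|Q_v\|(\BB_\rho)/(\omm_n\rho^n)$ tends to $\Theta^n(\|Q\|,-(v,0))$ as $\rho\downarrow 0$ and, by a ball-inclusion sandwich, to $\Theta^n(\|Q\|,0)$ as $\rho\to\infty$. To invoke the equality case of monotonicity and conclude $Q_v$ is a cone---hence equal to its blowdown $Q$---one needs these two limits to coincide, i.e.\ the density to be constant along the boundary plane. That is precisely what a nontrivial cyclic summand of $Q$ concentrated at the vertex would violate, and nothing in the proposal establishes it: comparing such a summand with the zero competitor does not rule out a Simons-type minimizing cone, so the decomposition-plus-maximum-principle remedy you sketch at the end is not precise enough. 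Establishing constancy of the boundary density, or translation invariance directly, appears to require the very dimension-reduction and regularity machinery the approach was hoping to sidestep.

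Separately, Step~3 is overclaimed even granting translation invariance. A minimizing one-cone $\gamma$ in $\R^2$ with $\partial\gamma=\del_0$ need not be a single ray of multiplicity one: for every integer $m\geq 1$, $\gamma=-m\llbracket[0,\infty)v\rrbracket+(m-1)\llbracket[0,\infty)(-v)\rrbracket$ has $\partial\gamma=\del_0$, is calibrated by the constant $1$-form $-\langle v,\cdot\,\rangle$ and hence length-minimizing, with $\MM(\gamma\LL\BB_R)=(2m-1)R$ and $\spt\gamma=\R v$. The exit point on $\partial\UU_R$ is therefore not unique, and your competitor argument fails as stated. The correct and sufficient conclusion is that $\spt\gamma$ lies in a single line, which follows from a cut-the-corner comparison near $0$: if rays in directions $v_j\neq\pm v_k$ carried multiplicities of opposite sign, rerouting the ray of smaller absolute multiplicity through the other strictly shortens $\gamma\LL\BB_\epsilon$; since $\sum_km_k=-1$, at most one ray of each sign can occur, and those two rays must be antipodal.
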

	
	\begin{proof}
		This can be read in the original paper \cite{hardtSimon}.
	\end{proof}

	\begin{lem}\label{lem:IfConeHyperplane}
		Let $U$, $T$ and $\M$ be as in Theorem \ref{thm:lastTheorem} and assume further that for every $a \in \spt(\partial T)$, there is a tangent cone $C$ at $a$ such that $\spt(C)$ is contained in a hyperplane. Then for any point $a \in \spt(\partial T)$, there is a neighborhood $V$ of $a$ in $U$ satisfying that $V \cap \spt(T)$ is an embedded $\CC^{1,\frac14}$ submanifold with boundary.
	\end{lem}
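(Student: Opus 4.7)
The plan is to normalize $T$ near $a$ so that it falls into the class $\TT$, then invoke Corollary \ref{cor:graphsOfGammT} for a $\CC^{1,\frac14}$ parametrization of $\spt(T)$ away from a thin stripe around $\partial T$, and finally handle this stripe separately.

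First, fix $a \in \spt(\partial T)$ and, by hypothesis, choose a tangent cone $C$ at $a$ with $\spt(C) \subset P$ for some hyperplane $P$. Since $a$ is a boundary point, $P$ must contain the tangent line $T_a(\partial T)$ and lie inside $T_a\M$. After a rigid motion translating $a$ to $0$ and rotating so that $T_0(\partial T) = \R^{n-1}\times\{0\}$, $P = \R^n\times\{0\}$ and $T_0\M = \R^{n+1}\times\{0\}$, $\M$ is locally the graph of a smooth $\Phii_\M$ and $\partial T$ has the form required by $\TT$. For the rescaled current $T_r := (\muu_{1/r\#} T)\LL \UU_3$ with $\M_r := \muu_{1/r}(\M)$, the curvatures scale as $\Aa_{\M_r} \leq r\Aa_\M$ and $\kappa_{T_r} \leq r\kappa_T$, while $\EE_C(T_r,1) \to 0$ as $r \to 0$, since the tangent cone is a flat hyperplane (via the monotonicity formula Lemma \ref{lem:monFormula} and weak convergence of the rescalings to the tangent cone). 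Thus for $r$ sufficiently small, $(T_r, \M_r) \in \TT$ with all smallness hypotheses of Theorem \ref{thm:RotExcessDecay} and Corollary \ref{cor:graphsOfGammT} satisfied.

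Applying Corollary \ref{cor:graphsOfGammT}, I obtain a small rotation parameter $\eta$ together with $\CC^{1,\frac14}$ functions $\tilde{v}_1 \leq \cdots \leq \tilde{v}_m$ on $\overline{\tilde{V}}$ and $\tilde{w}_1 \leq \cdots \leq \tilde{w}_{m-1}$ on $\overline{\tilde{W}}$, all vanishing at $0$ with zero gradient, whose $\Phii$-graphs form $\spt(\gamm_{\eta\#} T_r) \cap \pp^{-1}(\tilde{V} \cup \tilde{W})$. The complement $\pp^{-1}(\{|y_n| \leq |y|^{1+\beta}\})$ is a thin stripe around $\partial T_r$ that remains to be described. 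In the case $m = 1$, the support has a single sheet $\tilde{v}_1$ on one side of $\partial T_r$ whose boundary is $\partial T_r$ itself; Allard's boundary regularity theorem \cite{allard3} then upgrades the partial parametrization to a $\CC^{1,\frac14}$ embedded submanifold with boundary in a full neighborhood of $0$. In the case $m \geq 2$, the vanishing gradients at $0$ together with the strict ordering pair each $\tilde{v}_i$ with an appropriate $\tilde{w}_j$ across the stripe, yielding $m-1$ continuations of class $\CC^{1,\frac14}$, while one extra sheet terminates at $\partial T_r$; the combined parametrization gives an embedded $\CC^{1,\frac14}$ submanifold with boundary. Undoing the rescaling and rotation produces the claimed neighborhood $V$.

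The main obstacle is the sheet-pairing step when $m \geq 2$: one must show that the graphs on $\tilde{V}$ and $\tilde{W}$ glue continuously in the correct order across the boundary stripe and that exactly one sheet terminates at $\partial T_r$. This uses the codimension-one minimality of $T_r$ in $\M_r$ together with the uniform vanishing of gradients at $0$ (Theorem \ref{thm:blowupsC2}(iii.)) and the density identity $\Theta^n(\lVert T_r \rVert, 0) = m - \tfrac{1}{2}$: the maximum principle prevents two distinct sheets from coinciding away from $\partial T_r$, and the half-integer density singles out the unique sheet that terminates at the boundary.
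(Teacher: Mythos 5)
The setup (normalizing to $\TT$, applying Theorem \ref{thm:RotExcessDecay} and Corollary \ref{cor:graphsOfGammT}) and the $m=1$ case agree with the paper. But the $m \geq 2$ case contains a genuine gap. Your proposal to ``pair each $\tilde{v}_i$ with an appropriate $\tilde{w}_j$ across the stripe'' so that ``one extra sheet terminates at $\partial T_r$'' is not what happens, and your invocation of the maximum principle points the wrong way: you say it ``prevents two distinct sheets from coinciding away from $\partial T_r$,'' whereas the paper's key step is to prove that all sheets on each side \emph{do} coincide. Since $\tilde{v}_m - \tilde{v}_1 \geq 0$ is a nonnegative solution of an elliptic equation vanishing at $0$ with zero normal derivative (Corollary \ref{cor:graphsOfGammT}(iii.)), the E.\ Hopf boundary point Lemma \cite[Theorem 2.7.1]{pucciSerrin} forces $\tilde{v}_m - \tilde{v}_1 \equiv 0$, and likewise $\tilde{w}_{m-1} - \tilde{w}_1 \equiv 0$. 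So there is a single graph over $\tilde{V}$ (with multiplicity $m$) and a single graph over $\tilde{W}$ (with multiplicity $m-1$); no combinatorial pairing of sheets exists or is needed.

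The second missing piece is how regularity is obtained \emph{inside} the stripe $|y_n| \leq |y|^{1+\beta}$, where Corollary \ref{cor:graphsOfGammT} says nothing. The paper does not glue graphs directly. It lets $G$ be the connected component of regular points containing $\muu_{r_k}\bigl(\graph(\tilde{v}_1)\bigr)$, sets $S := \tfrac{1}{m}(T \LL G)$, and uses the Constancy Theorem together with the projection and boundary data built into $\TT$ to show $\partial(T-S) \LL W = 0$ in a cylindrical neighborhood $W$ of $0$. Since $T-S$ has a flat tangent cone at $0$, the interior regularity theorem \cite[Theorem 5.3.18]{federer} applied to $\pp_\#(T-S)$ yields that $\spt(T-S) = \spt(T)$ is a regular embedded submanifold near $0$, which covers the stripe. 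Your appeals to the density $m - \tfrac{1}{2}$ and ``the unique sheet that terminates'' gesture at this picture but do not substitute for the decomposition $T = S + (T-S)$; without it you have no argument that the current is regular across the boundary stripe.
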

	\begin{proof}
		After some translation, reflection and rotation, we can assume wlog that $a=0$ and the hyperplane is $\{(y,0): y \in \R^n \} \subset \R^{n+k}$. Hence, for $m = \Theta^n( \mT, 0) +\frac{1}{2} \in \N$,
		$$ \Big( m \big( \EE^n \LL \{y \in \R^n: y_n>0 \} \big) + (m-1) \big( \EE^n \LL \{y \in \R^n: y_n<0 \} \big) \Big) \times \del _0$$
		is an oriented tangent cone of $T$ at $0$ by \cite[4.1.31(2)]{federer}. Therefore, we find a nullsequence $\{r_k\}_{k\geq 1} \subset \R_+$ such that $\muu_{1/r_k \#} T$ converges in $\RR^{loc}_n(\R^{n+k})$ to this cone as $k \to \infty$. Moreover, we assume that for every $k$ we have $3r_k < \dist(0, \partial U)$. Then it follows that
		\begin{align}\label{eq:XDividedRZero}
		\lim_{k \to \infty} \sup_{\BB_{r_k} \cap \spt(T)} \frac{X_{n+1}}{r_k}
		= \lim_{k \to \infty} \sup_{\BB_1 \cap \spt(\muu_{1/r_k \#}T)} X_{n+1}=0.
		\end{align}
		By \cite[5.4.2]{federer}, also the associated measures converge weakly and hence,
		\begin{align*}
		\lim_{k \to \infty} r_k^{-n} \MM \big( T \LL(\UU_{3r_k} \cap \BC_{r_k}) \big)
		&= \lim_{k \to \infty} \MM \big( (\muu_{1/r_k \#} T) \LL(\UU_{3} \cap \BC_{1}) \big)\\
		&= m\Leb^n(\VV) + (m-1)\Leb^n(\WW)
		= (m-\frac{1}{2})\omm_n,
		\end{align*}
		which implies that
		\begin{align*}
		\lim_{k \to \infty} &\left| r_k^{-n} \MM \big( \pp_\# \big( T \LL(\UU_{3r_k} \cap \BC_{r_k}) \big) \big) -(m-\frac{1}{2})\omm_n \right| \\
		&\qquad \leq \lim_{k \to \infty} \left|  \MM \big( \pp_\# \big( (\muu_{1/r_k \#} T) \LL(\UU_{3} \cap \BC_{1}) \big) \big)- \MM \big( (\muu_{1/r_k \#} T) \LL(\UU_{3} \cap \BC_{1}) \big) \right|\\
		&\qquad =0,
		\end{align*}
		where we also have used \eqref{eq:XDividedRZero}.\\
		Thus, if we define $T_k := (\muu_{1/r_k \#} T) \LL \UU_{3}$ and $\M_k:=\muu_{1/r_k}(\M)$, then for $k$ large enough, we have $(T_k, \M_k) \in \TT$ and
		$$\max\big\{ \EE_C(T_k,1), \Cr{55}\kappa_{T_k}, \Aa_k \big\} \leq \frac{\theta}{\Cr{55}}.$$
		Then we can apply Theorem \ref{thm:RotExcessDecay} (with $T$ replaced by $T_k$) and notice that we can choose $\eta$ to be zero, to find the decay 
		$$ \EE_C(T_k, r)\leq  \frac{\theta^{-n-1}}{\Cr{55}} ~ r.$$
		Now, we differ between two cases.\\
		\textit{Case 1: } $m=1$. 
		This is a corollary of Allard's interior regularity theorem. However,
		a self-contained proof could be given from the results of the previous sections.
		Observe first that, by Corollary \ref{cor:graphsOfGammT}, in a sufficiently small neighborhood of
		$x$, the current $T$ is supported in the $\Phii$-graph of $\tilde{v}_1$ and so we can
		assume, wlog, that $\spt (T)\setminus \spt (\partial T)$ is 
		connected. By the Constancy Lemma, it follows that the density $\Theta$ is an 
		an integer constant $k$ at every interior point of such neighborhood. So the
		current is actually $k$ times the one induced by the $\Phii$-graph of $\tilde{v}_1$.
		However, since the boundary of $T$ is a current with multiplicity $1$ we easily
		conclude that $k$ is actually $1$. The current
		$T$ is thus the current induced by the $\Phii$-graph of the $C^{1,\frac14}$ function $\tilde{v}_1$.
		Notice that there is a neighborhood $U$ of $0$ such that $\Theta^n( \mT, y) = \frac{1}{2}$ for all $y \in U \cap \spt(\partial T)$. 
		
		\textit{Case 2: } $m >1$. We fix $k$ and use Corollary \ref{cor:graphsOfGammT} with $\gamm_{\eta \#} T$ replaced by $T_k$. Hence, we get functions $\tilde{v}_i$, $\tilde{w}_j$ whose $\Phii$-graphs around zero form $\spt(T_k)$. Moreover, we know that $D\tilde{v}_i(0)=0= D\tilde{w}_j$. Hence, similar to the proof of Theorem \ref{thm:blowupsC2}, by the E. Hopf boundary point Lemma for quasilinear equations \cite[Theorem 2.7.1]{pucciSerrin}, we deduce that $\tilde{v}_m-\tilde{v}_1 \equiv 0 \equiv \tilde{w}_{m-1} - \tilde{w}_1$. Therefore, they all coincide. 
		
		Notice that the regular points of
		$$ \UU_{r_k} \cap ( \spt(T) \setminus \spt(\partial T))
		= \muu_{r_k} \big(\UU_1 \cap ( \spt(T_k) \setminus \spt(\partial T_k)) \big)
		\supseteq  \muu_{r_k} \big( \graph(\tilde{v}_1 ) \cup ( \graph(\tilde{w}_1 ) \big)$$
		consist of at least two connected components. Let $G$ denote that component of the regular points containing $ \muu_{r_k} \big( \graph(\tilde{v}_1 ) \big)$ and consider 
		$$S:= \frac{1}{m} (T \LL G).$$
		Notice that by \cite[4.1.31(2)]{federer}, the density $\Theta(\mT, x)$ is constantly $m$ for all $x \in G$.
		We will show later that on some open neighborhood $V$ of $0$ in $U$, we have that $\spt(T) = \spt(T-S)$, $T-S$ has no boundary in $W$ and then, we apply interior regularity theory.\\
		First notice that as $T$, $S$ are area minimizing in $\M$ and
		$ \mT = \lVert S \rVert + \lVert T-S \rVert $ holds,
		is follows that $T-S$ is also area minimizing $\M$.\\
		Then, we denote $W:= \UU_{r_k} \cap \BC_{\delta r_k}$, where $\delta$ is as in Corollary \ref{cor:graphsOfGammT}, and aim to show that 
		\begin{align}\label{eq:BoundSInBoundT}
		(\partial S) \LL W = (\partial T) \LL W.
		\end{align} 
		Notice that 
		$$\spt(\partial S) \subset \spt\big((\partial T) \LL G \big)  \cup \spt\big(T \LL (\partial G) \big)$$
		and hence,
		$$\spt\big((\partial S)\LL W \big) 
		\subset \spt\big((\partial T) \LL W \big)  \cup \spt\big(T \LL (\partial G \cap W )\big)
		= \spt\big((\partial T) \LL W \big) .$$
		Moreover, we can use the Constancy Theorem \cite[4.1.7]{federer} to derive
		\begin{align*}
		\pp_\# \big((\partial S) \LL W \big)
		&= \Big( \partial \big( \frac{1}{m} \pp_\#\big(T \LL (G\cap W)\big) \big) \Big) \LL \pp(W)\\
		&= \Big( \partial \big( \EE^n \LL \{r_k y \in \pp(W): y_n > \varphi_{T_k}(y_1, \dots, y_{n-1}) \} 
		\Big) \LL \pp(W)\\
		&=  \big( \partial \big(\pp_\#(T \LL W) \big) \big)  \LL \pp(W)\\
		&= \pp_\# \big((\partial T) \LL W \big).
		\end{align*}
		As the map $\left. \pp \right|_{\spt((\partial T)\LL W)}$ is a $\CC^2$-diffeomorphism, \eqref{eq:BoundSInBoundT} must hold. Then $T-S$ has in $W$ no boundary and by \eqref{eq:XDividedRZero}, a tangent cone of $T-S$ at $0$ is contained in $X_{n+1}^{-1}(0)$. Therefore, we can apply \cite[Theorem 5.3.18]{federer} to $p_\#(T-S)$ and deduce that there is an open neighborhood $V$ of $0$ in $U$ such that
		$$V \cap \spt(T) = V \cap \spt(T-S)$$
		is a smooth embedded submanifold of $\M$.
	\end{proof}
	
Putting the previous two lemmas together, we deduce the boundary regularity theorem:
	
	\begin{proof}[Proof of Theorem \ref{thm:lastTheorem}]
		Let $a \in \spt(\partial T)$. Then by \cite[Theorem 3.6]{brothers}, $T$  has an absolutely area minimizing tangent cone $Q \in \RR^{loc}_n(T_a\M)$ at $a$.  After some rotation, we can assume that $\partial Q = (-1)^n \EE^{n-1} \times \del_0 \times \del_0$. By Lemma \ref{lem:ConesAtCones}, the cone is contained in some hyperplane and by Lemma \ref{lem:IfConeHyperplane}, we conclude that $T$ is regular at $a$.
	\end{proof}	
	
	\newpage
	
\section{Proof of the technical statements}\label{chapter:proofs}
	\subsection{Proof of Corollary \ref{cor:monotonicity}}
		\begin{proof}
			By Lemma \ref{lem:expMassMonoton}, we have for $0<r<2$
			\begin{align*}
			\mT( \BB_r) \leq r^n \omm_n \exp \left( \Cl{0} \left( \Aa_\M + \kappa_T \right)(2-r) \right)  \frac{\mT(\BB_2)}{2^n \omm_n} 
			\leq 2^{-n} e^{4C} \MM(T) r^n
			\end{align*}
			and
			\begin{align*}
			\mT( \BB_r) \geq r^n \omm_n \lim_{s \downarrow 0} \left( \exp \left( \Cr{0} \left( \Aa_\M + \kappa_T \right)(s-r) \right)  \frac{\mT(\BB_s)}{s^n \omm_n} \right)
			\geq \omm_n e^{-4\Cr{0}} m r^n.
			\end{align*}
			Hence, there is a constant $\Cl{3}>0$ such that 
			\begin{equation}\label{eq:massOfBall}
			\frac{1}{\Cr{3}} r^n \leq \mT (\BB_r) \leq \Cr{3} r^n.
			\end{equation}
			Recall that $\Cr{23}$ is such that $|\overset{\rightarrow}{H}| \leq \Cr{23} \Aa_\M$. Then we use Lemma \ref{lem:monFormula} to estimate
			\begin{align*}
			&\left| \frac{ \lVert T \rVert (\BB_s)}{s^n} -\frac{ \lVert T \rVert (\BB_r)}{r^n} -\int_{\BB_s \backslash \BB_r} |X^\perp|^2 |X|^{-n-2} \dT \right|\\
			&\leq \int_r^s \rho^{-n-1} \left(\Cr{23} \rho \Aa_\M \mT(\BB_r) + \rho \omm_{n-1} \alpha \kappa_T \rho^{n} \right) \dd \rho\\
			&\leq \Cr{2} (\Aa_\M + \kappa_T)\left( s - r \right).
			\end{align*}	
		\end{proof}
	
	\subsection{Proof of Lemma \ref{lem:excLessX}}
		The proof of Lemma \ref{lem:excLessX} is based on the rather technical area comparison lemma: if we change slightly the $(n+1)$-component of a current, then its new mass stays close to its original mass.
		
		In the following, we will denote points in $\R^{n+k}$ by $(x,y)$, where $x \in \R^{n+1}$ and $y \in \R^{k-1}$. 
		
		\begin{lem} \label{lem:areacomp}
			Let $ 0< \tau <1$, $\rho >0$ and $A \subset \BC_1$ be a Borel set which is a cylinder (i.e. $A=\pp^{-1}(\pp(A))$). Let $\mu:\R^n \to [0,1]$ be a $\CC^1$-function satisfying $\sup_{\pp(A)} |D\mu| \leq \rho/\tau$ and consider the map
			$$F:\R^{n+k} \to \R^{n+k}, F(x,y)=\big(x_1, \dots, x_n, \mu(x_1,\dots,x_n)x_{n+1}, \Phii(x_1, \dots, x_n, \mu(x_1,\dots,x_n)x_{n+1})\big).$$ Then there is a constant $\Cl{7}>0$ only depending on $n$, $k$ and $m$ such that for any current $T$ with $(T, \M) \in \TT$ the following holds
			\begin{align*}
			\MM \big( F_\# (T \LL A) \big) - \MM (T \LL A) \leq \Cr{7} \left( \frac{1+\rho^2}{\tau^2}\int_A X_{n+1}^2 \dT 
			+\frac{\kappa_T^2}{\tau^2}
			+ \left(2 +\frac{\rho^2}{\tau^2} \right)\Aa \right),
			\end{align*}
			where $A_\tau:=\{x\in \R^{n+1}: \dist(x,A) < \tau\}$ is an enlargement of $A$ by $\tau$.
		\end{lem}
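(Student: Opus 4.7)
The approach is via the area formula for Lipschitz pushforwards of rectifiable currents. Since $\spt T\subset \M$ and $F$ is Lipschitz,
\begin{equation*}
\MM(F_\#(T\LL A)) - \MM(T\LL A) \;\le\; \int_A \bigl(\lvert\Lambda_n DF(p)\cdot \vec T(p)\rvert - 1\bigr)\, \dT(p),
\end{equation*}
with $\vec T(p)$ the orienting unit simple $n$-vector of the approximate tangent plane. This reduces the statement to a pointwise upper bound on the Jacobian of $F$ restricted to the tangent plane of $T$.

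To compute this Jacobian, I would exploit the graph structure of $\M$ and factor $F\big|_\M = \Psi\circ \tilde F\circ \tilde p$, where $\tilde p:\R^{n+k}\to\R^{n+1}$ is the projection onto the first $n{+}1$ coordinates, $\tilde F(x)=(x_1,\dots,x_n,\mu(x_1,\dots,x_n)\,x_{n+1})$, and $\Psi(x)=(x,\Phii(x))$. The Jacobian splits multiplicatively into a $\tilde F$-factor on a pulled-back simple $n$-vector $\vec{\tilde T}$ in $\R^{n+1}$ and a $\Psi$-factor equal to $1+O(\lvert D\Phii\rvert^2)$. Using $\Phii(0)=D\Phii(0)=0$ together with $\lVert D^2\Phii\rVert_{\CC^1}\le \Aa$ and Taylor expansion, I have $\lvert D\Phii\rvert^2\le C\Aa$ on $\BC_1$, so after recombination with the $\tilde F$-factor the graph contribution generates the $(2+\rho^2/\tau^2)\Aa$ term of the bound.

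The main calculation is then the Jacobian of $\tilde F$: with $D\tilde F = \left(\begin{smallmatrix} I_n & 0 \\ x_{n+1}(D\mu)^t & \mu \end{smallmatrix}\right)$, a direct cofactor expansion in the standard basis of $\Lambda_n\R^{n+1}$ yields
\begin{equation*}
\lvert\Lambda_n D\tilde F\cdot\vec{\tilde T}\rvert^2 \;=\; 1 - (1-\mu^2)(1-c_{n+1}^2) \pm 2\mu\, x_{n+1}c_{n+1}\sum_{j\le n}c_j\,\partial_j\mu + x_{n+1}^2 c_{n+1}^2\,|D\mu|^2,
\end{equation*}
where $c_1,\dots,c_{n+1}$ are the coordinates of $\vec{\tilde T}$ in $\Lambda_n\R^{n+1}$ with $\sum_j c_j^2=1$. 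Using $\mu\in[0,1]$, $\lvert D\mu\rvert\le\rho/\tau$, and Cauchy--Schwarz $\lvert\sum_{j\le n} c_j\partial_j\mu\rvert\le \lvert D\mu\rvert\sqrt{1-c_{n+1}^2}$, I would apply Young's inequality to absorb the cross term into the non-positive leading term $-(1-\mu^2)(1-c_{n+1}^2)$, yielding the pointwise bound $\lvert\Lambda_n D\tilde F\cdot\vec{\tilde T}\rvert -1 \le C(1+\rho^2)\,X_{n+1}^2/\tau^2$. Integrating against $\dT$ over $A$ produces the first term of the stated estimate.

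The most delicate step, and the source of the $\kappa_T^2/\tau^2$ term, is the contribution from a neighbourhood of $\spt\partial T$. Since $\psi_T(0)=D\psi_T(0)=0$ and $\lvert D^2\psi_T\rvert\le\kappa_T$, Taylor expansion gives $\lvert X_{n+1}\rvert\le C\kappa_T$ on $\spt\partial T\cap\BC_1$. The enlargement $A_\tau$ in the statement reflects the fact that $F$ redistributes mass within distance $\tau$ of $A$; the portion of the mass lying in the $\tau$-neighbourhood of $\spt\partial T$ contributes at most $C\tau^{-2}\HH^{n-1}(\spt\partial T\cap A_\tau)\cdot\kappa_T^2 \le C\kappa_T^2/\tau^2$ after accounting for the Lipschitz constant $\lvert D\mu\rvert\le\rho/\tau$ of the deformation. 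The hard part is making the Young-inequality absorption of the slope-dependent cross term uniform in $\mu\in[0,1]$---which requires a case distinction between regions where $1-\mu^2$ dominates $|D\mu|^2\tau^2$ and regions where it does not---and simultaneously controlling the boundary neighbourhood to extract the correct $\kappa_T^2/\tau^2$ coefficient without introducing spurious $\rho$-dependence.
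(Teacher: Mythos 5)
The first part of your computation (the cofactor expansion of $\Lambda_n D\tilde F$, the graph correction via $\Phii$, and the Cauchy--Schwarz estimate on the cross term) is correct and parallels the paper's derivation of
\begin{equation*}
\MM \big( F_\# (T \LL A) \big) - \MM (T \LL A) \;\le\; 2\,\frac{\rho^2}{\tau^2}\int_A X_{n+1}^2 \dT \;+\; \int_A \big( 1-T_{n+1}^2 \big) \dT \;+\; C\,\Aa^2\Big(1 + \frac{\rho^2}{\tau^2}\Big)\MM(T),
\end{equation*}
where $T_{n+1}$ is the same coefficient you call $c_{n+1}$. But your next step --- absorbing the cross term into $-(1-\mu^2)(1-c_{n+1}^2)$ to obtain the pointwise bound $|\Lambda_n D\tilde F\cdot\vec{\tilde T}|-1\le C(1+\rho^2)X_{n+1}^2/\tau^2$ --- is false. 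Young's inequality gives, for any $a>0$,
\begin{equation*}
2\mu\,|x_{n+1}|\,|c_{n+1}|\,|D\mu|\sqrt{1-c_{n+1}^2} \;\le\; a\,(1-\mu^2)(1-c_{n+1}^2) + \frac{\mu^2}{a\,(1-\mu^2)}\,x_{n+1}^2 c_{n+1}^2|D\mu|^2,
\end{equation*}
and to absorb you need $a\le 1$, which makes the right-hand coefficient $\frac{\mu^2}{a(1-\mu^2)}$ blow up as $\mu\uparrow 1$. In fact at $\mu=1$ the negative term vanishes and the remaining cross term $2x_{n+1}c_{n+1}\sum_j c_j\partial_j\mu$ is only \emph{linear} in $|X_{n+1}|$ (not quadratic), so no pointwise estimate of the claimed form can hold. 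Your proposed case distinction cannot repair this: the quantity $1-c_{n+1}^2$ (the tangent-plane tilt, i.e.\ the pointwise excess) is not controlled by $X_{n+1}^2$ pointwise at all.

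This is exactly the obstruction that the paper resolves by keeping $\int_A(1-T_{n+1}^2)\dT$ as a separate term and bounding it by a Caccioppoli-type estimate: it computes the first variation of $T$ in the direction of the vector field $\Xi(x,y)=\big(x_{n+1}-\psi_T(x_1,\dots,x_{n-1})\big)\lambda^2(x)\,\ee_{n+1}$, where $\lambda$ is a cut-off supported in $A_\tau$. This uses the \emph{area-minimality of $T$ in $\M$}, which your proposal never invokes. The subtraction of $\psi_T$ makes $\Xi$ vanish on $\spt(\partial T)$, and it is precisely the $\partial_j\psi_T$ terms in the divergence computation (bounded by $\kappa_T$) together with the $\Xi\cdot\vec H$ integral that produce the $\kappa_T^2/\tau^2$ contribution. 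Your suggested source of that term --- mass in a tubular neighbourhood of $\spt\partial T$, combined with $|X_{n+1}|\lesssim\kappa_T$ there --- does not match the structure of the estimate: the $\kappa_T^2/\tau^2$ term is not a piece of the $\int_A X_{n+1}^2$ integral, and a purely measure-theoretic count of boundary-adjacent mass gives no $\tau^{-2}$ factor of the right shape. To close the proof you need to realize that a global integral inequality coming from stationarity, not a pointwise Jacobian estimate, is what controls the tilt.
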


		\begin{proof}
			By \cite[Section 4.1.30]{federer}, we infer that for any $\omega \in \DD^n(\R^{n+1})$
			$$\big(F_\#(T\LL A) \big)(\omega)= \int_A \langle F_\# \overset{\rightarrow}{T}(x), \omega(F(x)) \rangle \dT.$$
			We expand the tangent vector in the following basis for $T_{(x, \Phii(x))}\M$
			\begin{align}\label{eq:basis}
			v_j(x) := (e_j, \partial_j \Phii(x)) \qquad \textnormal{ for } j \in \{1, \dots, n+1\},
			\end{align}
			where $e_j$ denotes the $j$-th standard basis vector in $\R^{n+1}$. Then there are real numbers $T_j$ such that
			\begin{align}\label{eq:expanT}
			\overset{\rightarrow}{T} = \sum_{j=1}^{n+1} T_j ~ v_1 \wedge \cdots \wedge \widehat{v_j} \wedge \cdots \wedge v_{n+1}.
			\end{align}
			We compute
			\begin{align*}
			F_\# \overset{\rightarrow}{T} (x,y)
			&= T_{n+1} ~ v_1(F(x)) \wedge \cdots \wedge v_n(F(x))\\
			&+ \sum_{j=1}^n \big( T_j \mu - T_{n+1} x_{n+1} \partial_j \mu \big) v_1(F(x)) \wedge \cdots \wedge \widehat{v_j(F(x))} \wedge \cdots \wedge v_{n+1}(F(x))
			\end{align*}
			and therefore, we have
			\begin{align*}
			|F_\# \overset{\rightarrow}{T}|^2
			&\leq \left( T_{n+1}^2 + \sum_{j=1}^n \big( T_j \mu - T_{n+1} X_{n+1} \partial_j \mu \big)^2 \right)
			\left( \sum_{j=1}^{n+1} \lvert v_1 \wedge \cdots \wedge \widehat{v_j} \wedge \cdots \wedge v_{n+1} \rvert^2 \right)\\
			&\leq \left( T_{n+1}^2 + \sum_{j=1}^n \big( T_j \mu - T_{n+1} X_{n+1} \partial_j \mu \big)^2 \right) \left(1+\C|D\Phii|^2 \right)\\
			&\leq T_{n+1}^2 + \sum_{j=1}^n \big( T_j \mu - T_{n+1} X_{n+1} \partial_j \mu \big)^2 + \Cl{8}|D\Phii|^2 \left(1 + \frac{\rho^2}{\tau^2} \right).
			\end{align*}
			We argue as in the original paper \cite[Lemma 3.1.1]{hardtSimon} to deduce
			\begin{equation}\label{eq:MassDifference}
			\begin{split}
			\MM \big( F_\# &(T \LL A) \big) - \MM (T \LL A)\\
			&\leq 2 \frac{\rho^2}{\tau^2}\int_A X_{n+1}^2 \dT + \int_A \big( 1-T_{n+1}^2 \big) \dT +  \Cr{8}\Aa^2 \left(1 + \frac{\rho^2}{\tau^2} \right) \MM(T).
			\end{split}
			\end{equation}

			In order to bound the second integral, we compute the first variation of $T$ with respect the following vectorfield
			$$ \Xi: \R^{n+k} \to \R^{n+k}, \quad (x,y) \mapsto \big(x_{n+1}-\psi_T(x_1, \dots, x_{n-1}) \big)\lambda^2(x)e_{n+1},$$
			where $e_{n+1}$ denotes the $(n+1)$-th basis vector of $\R^{n+k}$ and $\lambda: \R^{n+1} \to [0,1]$ is a $\CC^1$ cut-off function with $\spt(\lambda) \subset A_\tau$, $\lambda |_{{}_A} =1$ and $\sup|D\lambda| \leq \Cl{15}/\tau$. Notice that $\Xi$ vanishes on $\spt(\partial T)$ and therefore, by \cite[Theorem 3.2]{DeLellisBoundary}
			\begin{align}\label{eq:firstVariation}
			\int \Div_{\overset{\rightarrow}{T}} \Xi ~\dT = - \int \Xi \cdot \overset{\rightarrow}{H}_T(x) ~\dT (x),
			\end{align}
			where $\overset{\rightarrow}{H}_T$ is the mean curvature vector.
			
			As $\spt(T) \subset \M$, we have $\Div_{\overset{\rightarrow}{T}} \Xi = \Div_\M \Xi - \Div_\nu \Xi$ where $\nu \in T_{(x, \Phii(x))} \M$ is the outer (?) normal vector to $\overset{\rightarrow}{T}$. We  compute $\nu$ by expanding everything in the basis in \eqref{eq:basis}:
			\begin{align*}
			\nu &= \sum_{j=1}^{n+1} \nu_j v_j\\
			\overset{\rightarrow}{T}&= \tau_1 \wedge \cdots \wedge \tau_n \qquad \textnormal{with} \qquad \tau_i= \sum_{j=1}^{n+1} t_{i,j} v_j.
			\end{align*}
			As $\nu$ is normal to $\overset{\rightarrow}{T}$, we can use the expansion \eqref{eq:expanT} to find the following equalities for all $j \in \{1, \dots, n+1\}$ and $t_i:=(t_{i,1}, \dots t_{i, n+1})^\intercal$ with $i \in \{1,\dots, n\}$:
			\begin{align}
			T_j &= {\det}^{1, \dots, \hat{j}, \dots, n+1} \begin{pmatrix}
			t_1 & \cdots & t_n 
			\end{pmatrix}, \\
			0 &= \langle \nu, \tau_i \rangle = \langle \begin{pmatrix} \nu_1\\ \colon \\ \nu_{n+1}	\end{pmatrix} , g \cdot t_i \rangle, \label{eq:SPnuTau}
			\end{align}
			where $g = (\langle v_i, v_j \rangle_{i,j}) =\id_{n+1} + (\langle \partial_i \Phii, \partial_j \Phii \rangle_{i,j}) =: \id_{n+1} +B$ is the metric.\\
			From \eqref{eq:SPnuTau}, we deduce that
			\begin{align*}
			\nu_j= \star \big( (g \cdot t_i) \wedge \cdots \wedge (g \cdot t_n) \big)
			= (-1)^j {\det}^{1, \dots, \hat{j}, \dots, n+1} \begin{pmatrix}
			g \cdot t_1 & \cdots & g \cdot t_n 
			\end{pmatrix}.
			\end{align*} 
			We compute
			\begin{equation}\label{eq:divergenceNu}
			\begin{split}
			\Div_\nu \Xi &= \sum_{j=1}^{n+k} (D_\nu \Xi_j)_j = (D_\nu \Xi_{n+1})_{n+1}\\
			&= \left( \langle D \left( \big(x_{n+1}-\psi_T(x_1, \dots, x_{n-1}) \big)\lambda^2(x) \right), \frac{\nu}{|\nu|} \rangle \frac{\nu}{|\nu|} \right)_{n+1}\\
			&= \frac{1}{|\nu|^2} \left( \lambda^2 \nu_{n+1}^2- \lambda^2\sum_{j=1}^{n-1} \nu_{n+1}\nu_j \partial_j \psi_T +2\lambda(X_{n+1}- \psi_T)\sum_{j=1}^{n+1} \nu_{n+1}\nu_j \partial_j \lambda \right).
			\end{split}
			\end{equation}
			
			On the other hand, we need to compute the divergence with respect to $\M$. To do so, we compute the projection on $\M$: Let $M$ be the matrix with column vectors $v_1, \dots v_{n+1} \in \R^{n+k}$. Then we have
			\begin{align*}
			\Div_\M \Xi &= \sum_{j=1}^{n+k} (D_\M \Xi_j)_j = (D_\M \Xi_{n+1})_{n+1}\\
			&= \left( M \cdot g^{-1} \cdot M^T \cdot D \left( \big(x_{n+1}-\psi_T(x_1, \dots, x_{n-1}) \big)\lambda^2(x) \right) \right)_{n+1}\\
			&= \left( \begin{pmatrix}
			g^{1,1}	& \cdots  & g^{1,n+1}  \\ 
			\colon	&  & \colon \\ 
			g^{1,n+1}& \cdots &  g^{n+1,n+1}\\ 
			\star	& \star & \star
			\end{pmatrix} 
			\begin{pmatrix}
			1	&  &0  \\ 
			& \ddots &  \\ 
			0	&  & 1 \\ 
			\partial_1 \Phii	&  \cdots & \partial_{n+1} \Phii
			\end{pmatrix}^T
			\begin{pmatrix}
			\colon	\\ 
			2\lambda(X_{n+1}-\psi_T) \partial_i \lambda - \lambda^2 \partial_i \psi_T	\\ 
			\colon	\\ 
			2\lambda(X_{n+1}-\psi_T) \partial_n \lambda	\\ 
			2\lambda(X_{n+1}-\psi_T) \partial_{n+1} \lambda - \lambda^2	\\ 
			0
			\end{pmatrix} 
			\right)_{n+1}\\
			&= \left( \begin{pmatrix}
			g^{1,1}	& \cdots  & g^{1,n+1}  \\ 
			\colon	&  & \colon \\ 
			g^{1,n+1}& \cdots &  g^{n+1,n+1}\\ 
			\star	& \star & \star
			\end{pmatrix} 
			\begin{pmatrix}
			\colon	\\ 
			2\lambda(X_{n+1}-\psi_T) \partial_i \lambda - \lambda^2 \partial_i \psi_T	\\ 
			\colon	\\ 
			2\lambda(X_{n+1}-\psi_T) \partial_n \lambda	\\ 
			2\lambda(X_{n+1}-\psi_T) \partial_{n+1} \lambda - \lambda^2
			\end{pmatrix} 
			\right)_{n+1}\\
			&= \lambda^2 g^{n+1,n+1} - \lambda^2 \sum_{j=1}^{n-1} g^{n+1,j} \partial_j \psi_T +2\lambda(X_{n+1}-\psi_T) \sum_{j=1}^{n+1} g^{n+1,j} \partial_j \lambda.
			\end{align*}
			This together with \eqref{eq:divergenceNu} yields
			\begin{equation}\label{eq:divComputed}
			\begin{split}
			\Div_{\overset{\rightarrow}{T}} \Xi
			&= \lambda^2 \left(g^{n+1,n+1} -\frac{\nu_{n+1}^2}{|\nu|^2} \right)- \lambda^2 \sum_{j=1}^{n-1} \left( g^{n+1,j} -\frac{\nu_{n+1}\nu_j}{|\nu|^2} \right) \partial_j \psi_T\\
			&\quad +2\lambda(X_{n+1}-\psi_T) \sum_{j=1}^{n+1} \left( g^{n+1,j} -\frac{\nu_{n+1}\nu_j}{|\nu|^2} \right) \partial_j \lambda.
			\end{split}
			\end{equation}
			Together with \eqref{eq:firstVariation}, we have
			\begin{equation}\label{eq:equaFirstVar}
			\begin{split}
			- \int \Xi \cdot \overset{\rightarrow}{H}_T ~\dT 
			&= \int \lambda^2 \left( \left(g^{n+1,n+1} -\frac{\nu_{n+1}^2}{|\nu|^2} \right)- \sum_{j=1}^{n-1} \left( g^{n+1,j} -\frac{\nu_{n+1}\nu_j}{|\nu|^2} \right) \partial_j \psi_T \right) ~\dT \\
			&\quad +2 \int \lambda(X_{n+1}-\psi_T) \sum_{j=1}^{n+1} \left( g^{n+1,j} -\frac{\nu_{n+1}\nu_j}{|\nu|^2} \right) \partial_j \lambda ~\dT.
			\end{split}
			\end{equation}
			In order to regain the term $1-T_{n+1}^2$, we first estimate $\nu_{n+1}$:
			\begin{align*}
			(-1)^{n+1} \nu_{n+1}&=  {\det}^{1, \dots, n} \begin{pmatrix}	g \cdot t_1& \cdots & g \cdot t_n \end{pmatrix}\\
			&= {\det}^{1, \dots, n} \left( \big(\id +(\langle \partial_i \Phii, \partial_j \Phii \rangle_{i,j}) \big) \cdot \begin{pmatrix} t_1& \cdots &  t_n \end{pmatrix} \right)\\
			&= \sum_{\sigma \in S_n} \sgn(\sigma) \left( t_{1, \sigma(1)} + \sum_{j=1}^{n+1} t_{1,j} \langle \partial_{\sigma(1)} \Phii, \partial_j \Phii \rangle \right)
			\cdots  \left( t_{n, \sigma(n)} + \sum_{j=1}^{n+1} t_{n,j} \langle \partial_{\sigma(n)} \Phii, \partial_j \Phii \rangle \right)\\
			&= \sum_{\sigma \in S_n} \sgn(\sigma)  t_{1, \sigma(1)} 
			\cdots t_{n, \sigma(n)}  + O(|D\Phii|)\\
			&=T_{n+1}^2+ O(|D\Phii|).
			\end{align*}
			Hence,
			\begin{align}\label{eq:normalVector}
			\nu_{n+1}^2 \leq T_{n+1}^2 + \C|D\Phii|^2.
			\end{align}
			
			Now, we compute the norm of $\nu$. We use that the Hodge star is norm-preserving and therefore, we have for $\tilde{\nu}:= (\nu_1, \dots, \nu_{n+1})$
			\begin{align*}
			|\tilde{\nu}|^2 = \det \big(\langle g\cdot t_i, g \cdot t_j \rangle_{i,j}\big)
			= \det \big(\langle t_i, g^2 t_j \rangle_{i,j} \big)
			= \det \Big( \big( \langle t_i, t_j \rangle +2 \langle t_i, Bt_j \rangle + \langle t_i, B^2 t_j \rangle\big)_{i,j} \Big).
			\end{align*}
			Notice that
			\begin{equation}
			\begin{split}
			\langle t_i, t_j \rangle +2 \langle t_i, Bt_j \rangle + \langle t_i, B^2 t_j \rangle
			&\geq \langle t_i, t_j \rangle -2 \|B \|_{op} |t_i||t_j| - \| B \|^2_{op}|t_i||t_j|\\
			&\geq \langle t_i, t_j \rangle -\left(2 \|B \| + \| B \|^2 \right) |t_i||t_j|\\
			&\geq \langle t_i, t_j \rangle -\left(2 \sqrt{n+1}|D\Phii|^2 + (n+1)|D\Phii|^4 \right) |t_i||t_j|\\
			&\geq \langle t_i, t_j \rangle -2(n+1)|D\Phii|^2,
			\end{split}
			\end{equation}
			where we used in the last inequality the fact
			$$ |t_i|^2 = \Bigl\lvert \sum_{j=1}^{n+1} t_{i,j} (e_j, \partial_j \Phii) \Bigr\rvert^2- 
			\Bigl\lvert \sum_{j=1}^{n+1} t_{i,j} \partial_j \Phii \Bigr\rvert^2
			\leq |\tau_i|^2=1.$$
			Therefore, we estimate
			\begin{equation}\label{eq:detExpansion}
			\begin{split}
			|\tilde{\nu}|^2 
			&= \sum_{\sigma \in P_n} \prod_{i=1}^n \sgn( \sigma)\langle g\cdot t_i, g \cdot t_{\sigma(i)} \rangle\\
			&\geq \sum_{\sigma \in P_n} \left( \prod_{i=1}^n \sgn( \sigma)\langle t_i, t_{\sigma(i)} \rangle - 2^n(2(n+1))^n|D\Phii|^2 \right)\\
			&\geq \det\big(\langle t_i, t_j \rangle_{i,j} \big) -2^{2n} n! (n+1)^n |D\Phii|^2.
			\end{split}
			\end{equation}
			Now, we use that $\tau_1, \dots \tau_n$ are orthonormal to deduce that
			\begin{align*}
			\delta_{i,j} &= \langle \tau_i, \tau_j \rangle = \langle \sum_{k=1}^{n+1} t_{i,k} (e_k, \partial_k \Phii), \sum_{l=1}^{n+1} t_{i,l} (e_l, \partial_l \Phii) \rangle\\
			&= \langle \sum_{k=1}^{n+1} t_{i,k} e_k, \sum_{l=1}^{n+1} t_{i,l} e_l \rangle 
			+ \langle \sum_{k=1}^{n+1} t_{i,k} \partial_k \Phii, \sum_{l=1}^{n+1} t_{i,l} \partial_l \Phii \rangle\\
			&= \langle t_i, t_j \rangle + \sum_{k,l=1}^{n+1} t_{i,k}t_{j,l} \langle \partial_k \Phii, \partial_l \Phii \rangle
			\end{align*}
			and hence,
			$$ | \delta_{i,j} - \langle t_i, t_j \rangle | \leq 2(n+1) |D \Phii|^2.$$
			By a similar argument as in \eqref{eq:detExpansion}, it follows that
			$$ \det\big(\langle t_i, t_j \rangle_{i,j} \big) \geq 1-2^n n! (n+1)^n|D \Phii|^2.$$
			Putting this into \eqref{eq:detExpansion}, we yield
			\begin{align*}
			|\nu|^2 = \Bigl\lvert \sum_{j=1}^{n+1} \nu_j v_j \Bigr\rvert^2
			= \nu_1^2 + \cdots +\nu_{n+1}^2 + \Bigl\lvert \sum_{j=1}^{n+1} \nu_j \partial_j \Phii \Bigr\rvert^2
			\geq |\tilde{\nu}|^2 \geq 1- 2^{2n+1}n! (n+1)^n |D\Phii|^2.
			\end{align*}
			Therefore,
			\begin{align}\label{eq:normNu}
			\frac{1}{|\nu|^2} \leq \frac{1}{1- 2^{2n+1}n! (n+1)^n |D\Phii|^2}
			\leq 1 + \Cl{9}|D\Phii|^2.
			\end{align}
			Now, we take care of $g^{-1}$.
			By the geometric series and the fact $g= \id + (\langle \partial_i \Phii, \partial_j \Phii \rangle_{i,j})$, we have
			\begin{align}\label{eq:inverseMetric}
			g^{-1} = \id - (\langle \partial_i \Phii, \partial_j \Phii \rangle_{i,j}) + \sum_{l \geq 2} (-1)^l (\langle \partial_i \Phii, \partial_j \Phii \rangle_{i,j})^l
			\end{align}
			and hence,
			\begin{align}\label{eq:normG}
			|g^{i,j} | \leq \delta_{i,j}- \langle \partial_i \Phii, \partial_j \Phii \rangle + \Cr{10}|D\Phii|^4.
			\end{align}
			Now, we are ready to estimate piece by piece the right hand side of \eqref{eq:equaFirstVar}
			\begin{itemize}
				\item We use \eqref{eq:normalVector}, \eqref{eq:normNu} and \eqref{eq:normG} to deduce
				\begin{align*}
				\int \lambda^2 \left( g^{n+^, n+1}- \frac{\nu_{n+1}^2}{|\nu|^2} \right) \dT
				&\geq \int \lambda^2 \left( 1-|\partial_{n+1} \Phii|^2 - \Cr{10}|D\Phii|^4- T_{n+1}^2 - \C|D\Phii|^2 \right) \dT\\
				&\geq \int \lambda^2 \left( 1- T_{n+1}^2  \right) \dT - \Cl{11}\MM(T)\Aa^2.
				\end{align*}
				\item We use \eqref{eq:normalVector}, \eqref{eq:normNu} and \eqref{eq:normG} to deduce
				\begin{align*}
				\int \lambda^2 \sum_{j=1}^{n-1} &\left( g^{n+1,j} - \frac{\nu_{n+1}\nu_j}{|\nu|^2} \right) \partial_j \psi_T ~\dT\\
				&\leq \int \lambda^2 \left( \C|D\Phii|^2 + \frac{|(\nu_1, \dots, \nu_n)|}{|\nu|} \right) \kappa_T ~ \dT\\
				&\leq \kappa_T \int \lambda^2 \frac{\sqrt{|\tilde{\nu}|^2-\nu_{n+1}^2}}{|\nu|^2}  \dT +\Cl{12}\MM(T)|D\Phii|^2\\
				&\leq \kappa_T \int \lambda^2 \sqrt{ 1-T_{n+1}^2 + \C|D\Phii|^2 }\left(1+ \Cr{9}|D\Phii|^2 \right)  \dT +\Cr{12}\MM(T)\Aa^2\\
				&\leq \kappa_T \int \lambda^2 \sqrt{ 1-T_{n+1}^2 + \Cl{14}|D\Phii|^2 } \dT +\Cr{12}\MM(T)\Aa^2.
				\end{align*}
				\item We use \eqref{eq:normalVector}, \eqref{eq:normNu}, \eqref{eq:normG} and a similar argument as in \eqref{eq:detExpansion} to deduce
				\begin{align*}
				&\int 2\lambda (X_{n+1}-\psi_T) \sum_{j=1}^{n+1} \left( g^{n+1,j} - \frac{\nu_{n+1}\nu_j}{|\nu|^2} \right) \partial_j \lambda ~\dT\\
				&= \int 2\lambda(|X_{n+1}| +\kappa_T) \left( \left( g^{n+1, n+1} - \frac{\nu_{n+1}^2}{|\nu|^2} \right)\partial_{n+1} \lambda + \sum_{j=1}^{n} \left(g^{n+1,j} - \frac{\nu_{n+1}\nu_j}{|\nu|^2} \right) \partial_j \lambda \right) \dT\\
				&\leq 2 \int \lambda |D\lambda| (|X_{n+1}| +\kappa_T) \left( 1-|\partial_{n+1} \Phii|^2 -T_{n+1}^2 
				+ \C|D\Phii|^2 + \frac{|(\nu_1, \dots, \nu_n)|}{|\nu|} \right) \dT\\
				&\leq 2 \frac{\Cr{15}}{\tau} \left( \int \lambda (|X_{n+1}| +\kappa_T) \left( 1-T_{n+1}^2 + \frac{\sqrt{|\tilde{\nu}|^2-\nu_{n+1}^2}}{|\nu|^2} \right) \dT
				+ \Cl{13}\MM(T)\Aa^2 \right)\\
				&\leq 2 \frac{\Cr{15}}{\tau} \left( \int \lambda (|X_{n+1}| +\kappa_T) \left( 1-T_{n+1}^2 +\sqrt{ 1-T_{n+1}^2 + \C|D\Phii|^2 }\left(1+ \Cr{9}|D\Phii|^2 \right) \right) \dT \right.\\
				&\ \hspace*{13cm}+ \Cr{13}\MM(T)\Aa^2 \Big)\\
				&\leq 2 \frac{\Cr{15}}{\tau} \left( \int \lambda  (|X_{n+1}| +\kappa_T) 2 \sqrt{ 1-T_{n+1}^2 + \Cl{16}|D\Phii|^2 } \dT
				+ \C\MM(T)\Aa^2 \right).
				\end{align*}
			\end{itemize}
			Putting all this into \eqref{eq:firstVariation} yields
			\begin{equation}\label{eq:firstVariation2}
			\begin{split}
			\int &\lambda^2 (1-T_{n+1}^2) \dT \\
			&\leq \int \kappa_T \lambda^2 \sqrt{ 1-T_{n+1}^2 + \Cr{14}|D\Phii|^2 } \dT 
			+ \frac{\Cr{15}}{\tau} \int \lambda|X_{n+1}| \sqrt{ 1-T_{n+1}^2 + \Cr{16}|D\Phii|^2 } \dT\\
			&\quad + \frac{\Cr{15}}{\tau} \int \kappa_T\lambda \sqrt{ 1-T_{n+1}^2  + \Cr{16}|D\Phii|^2} \dT + \int \Xi \cdot \overset{\rightarrow}{H} \dT + \Cl{17}\MM(T)\Aa^2.
			\end{split}
			\end{equation}
			Using three times the Cauchy inequality ($2ab \leq a^2 +b^2$), we estimate
			\begin{itemize}
				\item $ \displaystyle \int \kappa_T \lambda^2 \sqrt{ 1-T_{n+1}^2 + \Cr{14}|D\Phii|^2 } \dT $
				$$ \ \leq  \int_{A_\tau} \frac{\lambda^2}{4} \left(1-T_{n+1}^2 + \Cr{14}|D\Phii|^2 \right) \dT + \int_{A_\tau} \kappa_T^2\lambda^2 \ \dT, $$
				\item $ \displaystyle \frac{\Cr{15}}{\tau} \int \lambda|X_{n+1}| \sqrt{ 1-T_{n+1}^2 + \Cr{16}|D\Phii|^2 } \dT$
				$$\ \leq \frac{1}{16} \int_{A_\tau} \lambda^2 \left(1-T_{n+1}^2 + \Cr{16}|D\Phii|^2 \right) \dT + \frac{\Cr{15}}{\tau^2}\int_{A_\tau} X_{n+1}^2 ~\dT, $$
				\item $ \displaystyle \frac{\Cr{15}}{\tau} \int \kappa_T\lambda \sqrt{ 1-T_{n+1}^2  + \Cr{16}|D\Phii|^2} \dT $
				$$\ \leq \frac{1}{16} \int_{A_\tau} \lambda^2 \left(1-T_{n+1}^2 + \Cr{16}|D\Phii|^2 \right) \dT + \frac{\Cr{15}}{\tau^2} \int_{A_\tau} \kappa_T^2 \dT.$$
			\end{itemize}
			Again putting this into \eqref{eq:firstVariation2} yields
			\begin{align*}
			\int_{A_\tau} \lambda^2 \big(1-T_{n+1}^2\big) \dT 
			&\leq \frac{1}{2} \int_{A_\tau} \lambda^2 \left(1-T_{n+1}^2 \right) \dT  +\frac{\Cr{15}}{\tau^2}\int_{A_\tau} X_{n+1}^2 ~\dT\\
			&\quad +\int_{A_\tau} \Xi \cdot \overset{\rightarrow}{H} \dT
			+ \C\MM(T)\left( \Aa^2 +\kappa_T^2 + \frac{\kappa_T^2}{\tau^2} \right)
			\end{align*}
			and hence,
			\begin{align*}
			\int_{A} \big(1-T_{n+1}^2 \big) \dT &\leq \int_{A_\tau} \lambda^2 \big(1-T_{n+1}^2 \big) \dT\\
			&\leq 2\frac{\Cr{15}}{\tau^2}\int_{A_\tau} X_{n+1}^2 ~\dT + \Cl{17} \MM(T) \left(\kappa_T^2+ \frac{\kappa_T^2}{\tau^2} + 2\Aa \right).
			\end{align*}
			Using \eqref{eq:MassDifference}, we deduce the desired inequality
			\begin{equation*}
			\begin{split}
			\MM \big( F_\# &(T \LL A) \big) - \MM (T \LL A)\\
			&\leq \C \frac{1+\rho^2}{\tau^2}\int_A X_{n+1}^2 \dT 
			+\Cr{17} \MM(T) \frac{\kappa_T^2}{\tau^2}
			+ \C\MM(T) \Aa \left(2 + \frac{\rho^2}{\tau^2} \right).
			\end{split}
			\end{equation*}

		\end{proof}
			
		Now we have all the tools to estimate the excess of $T$ with its height.
		
		\begin{proof}[Proof of Lemma \ref{lem:excLessX}]
			The second inequality holds true with $ \Cr{19}\geq 3^n(1+ m\omm_n) \geq \MM(T)$.\\
			For the first inequality, we want to use Lemma \ref{lem:areacomp} for $A:=\BC_{1+\tau} \backslash \BC_1$, $\rho=3$ and $\tau=\sigma/2$. Consider $F$ as in the lemma for some $\CC^1$-function $\mu:\R^n \to [0,1]$ satisfying $\sup\limits_{\pp(A)} |D\mu| \leq \rho/\tau$ and
			\begin{align*}
			\begin{cases}
			\mu(z)=0 &\text{if } |z|\leq 1\\
			\mu(z)>0 &\text{if } 1 < |z| < 1+\tau\\
			\mu(z)=1 &\text{if } |z| \geq1 + \tau.
			\end{cases}
			\end{align*}
			Moreover, we define for $t\in \R$ and $(x,y)\in \R^{n+k}$ the homotopy
			$$H_t(x,y):=\left(\pp(x), ((1-t)\mu\circ \pp(x) +t)x_{n+1}, \Phii \big(\pp(x), ((1-t)\mu\circ \pp(x) +t)x_{n+1} \big) \right).$$
			Notice that $F$ is the identity on $\M \setminus \BC_{1+ \tau}$ and $F=(\pp,0, \Phii(\pp, 0))$ on $\BC_1$.\\
			Then for $R_T:= H_\#([0,1] \times \partial T)$ we have $\spt(R_T) \subset \M $ and
			$$ \hspace*{-2.3cm} \partial \big( T\LL \BC_{1+\tau} - F_\#(T\LL \BC_{1+\tau}) -R_T \big) = \partial (T-F_\#T -R_T)=0. $$
			Hence, by the area minimality of $T$ in $\M$, we have
			$$\MM(T \LL \BC_{1+\tau}) \leq \MM \big( F_\#(T \LL \BC_{1+\tau}) \big) + \MM(R_T).$$
			Moreover, by \cite[Remark 26.21(2)]{simon}, the following holds
			$$\MM(R_T) \leq \sup_{\spt(\partial T)}|\partial_t H| \sup_{\spt(\partial T)} |\partial_x H| \MM\big((\partial T)\LL \BC_2 \big).$$
			Therefore, we compute
			\begin{align*}
			|\partial_t H|² &\leq \left(X_{n+1}- X_{n+1}\mu \circ \pp \right)² + |D\Phii|²\left(X_{n+1}- X_{n+1}\mu \circ \pp \right)²\\
			&\leq \left(1+|D\Phii|^2 \right)|X_{n+1}|²\left(1-\mu\circ \pp \right)²\\
			&\leq \kappa_T²\left(1+|D\Phii|^2 \right),
			\end{align*}
			\begin{align*}
			|\partial_x H|² &\leq n + |D\mu|²X_{n+1}² + |D\Phii|²\left(n+|D\mu|X_{n+1} \right)² + (|\mu|+1)² + |D\Phii|²(|\mu|+1)²\\
			&\leq n +\left( \frac{6}{\sigma} \right)² \kappa_T² + |D\Phii|²\left(4+\left(n+\frac{6\kappa_T}{\sigma} \right)² \right) +4\\
			&\leq \C \left( 1+\frac{\kappa_T}{\sigma} \right)²,
			\end{align*}
			\begin{align*}
			\MM\big((\partial T)\LL \BC_2 \big) \leq \omm_{n-1} 2^{n-1} \sqrt{n+\kappa_T^2 +\Aa^2(1+ \kappa_T²)}
			\leq \C (1+\kappa_T).
			\end{align*}
			Thus, we have
			$$\MM(R_T) \leq \Cl{20} \frac{\kappa_T}{\sigma}(1+\Aa).$$
			Now, we argue as originally in \cite{hardtSimon} and use Lemma \ref{lem:areacomp} to deduce
			\begin{align*}
			\EE_C(T,1) 
			&\leq \MM\big( F_\#(T\LL A) \big) - \MM(T\LL A) + \Cr{20}  \frac{\kappa_T}{\sigma}(1+\Aa)\\
			&\ \leq \frac{\Cr{18}}{\sigma^2} \left( \kappa_T + \int_{\BC_{1+\sigma}} X_{n+1}^2 \dT 
			+\Aa\right).
			\end{align*}	
		\end{proof}
	
	\subsection{Proof of Lemma \ref{lem:XLessExc}}
		\begin{proof}
			We call a function $f$ $T$-subharmonic if
			$$\int \langle D_{\overset{\rightarrow}{T}}f ,  D_{\overset{\rightarrow}{T}} \zeta \rangle \dT \leq 0
			\qquad \textnormal{ for all } \zeta \in \CC^1(\R^{n+k}; \R_{\geq0}) \textnormal{ with } \spt(\zeta) \cap \spt(\partial T) = \emptyset.$$
			The functions 
			$$ h_i: \R^{n+k} \to \R, \quad (x,y) \mapsto (-1)^i x_{n+1} + x_{n+1}^2, \qquad \textnormal{ for } i \in \{1,2\}$$
			are $T$-subharmonic as
			\begin{align*}
			\int \langle D_{\overset{\rightarrow}{T}}h_i ,  D_{\overset{\rightarrow}{T}} \zeta \rangle \dT
			&= \int \langle \pi \cdot Dh_i, \pi \cdot D\zeta \rangle \dT
			= \int \langle Dh_i, \pi \cdot D\zeta \rangle \dT\\
			&=\int \langle (-1)^i\ee_{n+1} +2 X_{n+1} \ee_{n+1}, \pi \cdot D\zeta \rangle \dT\\
			&= \int \Big( \Div_{\overset{\rightarrow}{T}} \left( \zeta \left((-1)^i+2 X_{n+1} \right) \ee_{n+1} \right)
			-2  \zeta \pi_{n+1, n+1} \Big) \dT\\
			&=  \int \left( -\zeta \left((-1)^i+2 X_{n+1} \right) \ee_{n+1} \cdot \overset{\rightarrow}{H}
			-2 \zeta  g^{n+1,n+1} \right) \dT,\\
			&\leq \int \zeta\left( 7\Cr{23} \left|D^2 \Phii \right| - 2 \left( 1 - |\partial_{n+1} \Phii|^2- \Cr{10}|D\Phii|^4 \right)
			\right) \dT	\\
			&\leq \int \zeta\left( 7\Cr{23} \left|D^2 \Phii \right| - 2 \left( 1 -(1+ \Cr{10}) |D \Phii|^2 \right)
			\right) \dT	\\
			&\leq 0,
			\end{align*}
			where $\pi(x)$ denotes the orthogonal projection to the tangent plane of $T$ at $x$ and we used \eqref{eq:normG}, \cite[Theorem 3.2]{DeLellisBoundary} and the fact $\big( \spt(\zeta \ee_{n+1}) \cap \spt(\partial T) \big) \subset \big( \spt(\zeta) \cap \spt(\partial T) \big) = \emptyset$.

			Consider the nonnegative, convex function 
			\begin{align*}
			f: \R \to \R,\ t \mapsto 
			\begin{cases} t-2\kappa_T,& \textnormal{if}\ t \geq 2\kappa_T\\
			-t-2\kappa_T,& \textnormal{if}\ t \leq -2\kappa_T\\
			0, &\textnormal{else}
			\end{cases}.
			\end{align*}
			Notice that $f((-1)^i X_{n+1} + X_{n+1}^2)$ vanishes on $\spt(\partial T)$. If $f$ were additionally smooth, than by \cite[Lemma 7.5(3)]{allard2} $f((-1)^i X_{n+1} + X_{n+1}^2)$ would be $T$-subharmonic. Therefore, we take a smooth nonnegative mollifier $\eta$ satisfying $\spt(\eta) \subset (-1,1)$ and $\int_\R \eta(x) dx=1$. Define $\eta_\epsilon (x):=\frac{1}{\epsilon} \eta(x/\epsilon)$ and $f_\epsilon := f \ast \eta_\epsilon$. $f_\epsilon$ is smooth, convex and converges uniformly to $f$ when $\epsilon \downarrow 0$. Therefore $f_\epsilon \circ ((-1)^i X_{n+1} + X_{n+1}^2)$ is $T$-subharmonic and by \cite[Theorem 7.5(6)]{allard2}
			\begin{equation}\label{eq:subharmonic}
			\begin{split}
			\sup_{\BC_{1-\sigma}\cap \spt(T)} &f\big((-1)^i X_{n+1} + X_{n+1}^2\big)^2\\
			&= \sup_{a\in \pp^{-1}(0)} \sup_{\tauu_a (\BB_{1-\sigma}) \cap \spt(T)}f\big((-1)^i X_{n+1} + X_{n+1}^2\big)^2\\
			&=\sup_{a\in \pp^{-1}(0)} \lim_{\epsilon \downarrow 0} \Big(\sup_{\tauu_a (\BB_{1-\sigma}) \cap \spt(T)}  f_\epsilon \circ \big((-1)^i X_{n+1} + X_{n+1}^2\big)\Big)^2\\
			&\leq \sup_{a\in \pp^{-1}(0)} \lim_{\epsilon \downarrow 0} \left( \frac{\Cl{25}}{\sigma^n} \int_{\tauu_a(\BB_{1-\sigma/2})} (f_\epsilon \circ \big((-1)^i X_{n+1} + X_{n+1}^2\big))^2 \dT \right)\\
			&\leq \frac{\Cr{25}}{\sigma^n} \int_{\BC_{1-\sigma/2}} f^2\big((-1)^i X_{n+1} + X_{n+1}^2\big) \dT.
			\end{split}
			\end{equation}
			We deduce further that in $\BB_2$ the following holds
			\begin{equation}\label{eq:xLessf}
			\begin{split}
			X_{n+1}^2- 40 \kappa_T &\leq \big(|X_{n+1}|+X_{n+1}^2 \big)^2- 40 \kappa_T\\
			&\leq \begin{cases}
			\big( X_{n+1} + X_{n+1}^2 \big)^2-20\kappa_T, &\text{ if } |X_{n+1} + X_{n+1}^2| \geq 2\kappa_T\\
			0, &\text{else}
			\end{cases}\\
			&\ +
			\begin{cases}
			\big( -X_{n+1} + X_{n+1}^2 \big)^2-20\kappa_T, &\text{ if } |X_{n+1} - X_{n+1}^2| \geq 2\kappa_T\\
			0, &\text{else}
			\end{cases}\\
			&\leq f^2\big( X_{n+1} + X_{n+1}^2 \big) + f^2\big( -X_{n+1} + X_{n+1}^2 \big)
			\end{split}
			\end{equation}			
			and
			\begin{equation}\label{eq:fLessX}
			\begin{split}
			f^2\big( X_{n+1} + X_{n+1}^2 \big) + f^2\big( -X_{n+1} + X_{n+1}^2 \big)
			&\leq 2 \left( \big( X_{n+1} + X_{n+1}^2 \big)^2 +\big(-X_{n+1} + X_{n+1}^2 \big)^2 + 8\kappa_T^2 \right)\\
			&\leq 4 \big( |X_{n+1}| + X_{n+1}^2 \big)^2 + 16\kappa_T^2\\
			&\leq 36\big(X_{n+1}^2+ \kappa_T^2 \big).
			\end{split}
			\end{equation}
			Putting \eqref{eq:subharmonic}, \eqref{eq:xLessf} and \eqref{eq:fLessX}, we conclude
			\begin{align*}
			\sup_{\BC_{1-\sigma}\cap \spt(T)} X_{n+1}^2 
			&\leq \frac{\Cr{25}}{\sigma^n} \int_{\BC_{1-\sigma/2}} \left( f^2\big(X_{n+1} + X_{n+1}^2\big) +f^2\big(-X_{n+1} + X_{n+1}^2\big) \right) \dT +40\kappa_T\\
			&\leq \frac{36 \Cr{25}}{\sigma^n} \int_{\BC_{1-\sigma/2}} \left( X_{n+1}^2 +\kappa_T^2 \right)\dT +40\kappa_T\\
			&\leq \frac{\Cr{21}}{\sigma^n} \left( \int_{\BC_{1-\sigma/2}} X_{n+1}^2 \dT + \kappa_T \right).
			\end{align*}
			For $(ii.)$, we specify $\Cl{270}$ later and let
			$$\tilde{C} :=12 \cdot 3^{3n+2}\left(7+2m +2\Cr{6}+ \Cr{270} \right) \Cr{21}\big(1+m\omm_n\big).$$
			\textit{Case 1: }
			$ \displaystyle \EE_C(T,1) + \kappa_T + \Aa \geq 3^{n+2}\big(1+m \omm_n \big) \frac{\sigma^{n+1}}{\tilde{C}}$.\\
			In this case, as $\spt(T) \subset \BB_3$, we can bound
			\begin{align*}
			\int X_{n+1}^2 \dT 
			\leq 3^{n+2}\big(1+m\omm_n \big) 
			\leq \frac{\tilde{C}}{\sigma^{n+1}} \big(\EE_C(T,1) + \kappa_T + \Aa \big).
			\end{align*}
			\textit{Case 2: }
			$ \displaystyle \EE_C(T,1) + \kappa_T + \Aa < 3^{n+2}\big(1+m \omm_n \big) \frac{\sigma^{n+1}}{\tilde{C}}
			\hfill \refstepcounter{equation}(\theequation)\label{eq:case2}$.\\
			Here, we aim to show that $\BC_{1-\sigma/2} \cap \spt(T) \subset \BB_1$. If this were true, the following would conclude the lemma. Namely, recall the normal vector $\nu$ from the proof of Lemma \ref{lem:areacomp}. Then, by Cauchy's inequality, we can deduce
			\begin{equation}\label{eq:firstPartX2}
			\begin{split}
			\int_{\BB_1} X_{n+1}^2 \dT 
			&= \int_{\BB_1} \big( \langle X, \frac{\nu}{|\nu|} \rangle + \langle X, \ee_{n+1}-­\frac{\nu}{|\nu|} \rangle \big)^2 \dT\\
			&\leq 2 \int_{\BB_1} \left( |X^\perp|^2 + |X|^2 \left|\ee_{n+1}-­\frac{\nu}{|\nu|} \right|^2 \right) \dT\\
			&\leq 2 \int_{\BB_1} \left( |X^\perp|^2 |X|^{-n-2} + \left\| \ee_{n+1} \cdot \ee_{n+1}^\top -­\frac{1}{|\nu|^2} \nu \cdot \nu^\top \right\|^2 \right) \dT\\
			\end{split}
			\end{equation}
			Now, we recall that the cylindrical excess can also be expressed by
			$$\frac{1}{r^n} \int_{\BC_r} \|\pi - \pp \|^2 \dT,$$
			where $\pi(x)$ still denotes the orthogonal projection to the tangent plane of $T$ at $x$
			We compute for $(x,y) \in \BB_1$
			\begin{align*}
			\left(\pi - \pp \right)(x,y)
			&= \left(  M \cdot g^{-1} \cdot M^T(x,y)^T - \langle (x,y), \frac{\nu}{|\nu|} \rangle \frac{\nu}{|\nu|} \right)- \sum_{j=1}^n x_j \ee_j\\
			&= B(x,y) + x_{n+1} \ee_{n+1} - \langle (x,y), \frac{\nu}{|\nu|} \rangle \frac{\nu}{|\nu|},
			\end{align*}
			where
			$$B(x,y) :=  M \cdot g^{-1} \cdot M^T (x,y)^T-(x,0)^T.$$
			Using \eqref{eq:inverseMetric} we estimate
			$$|B(x,y)| \leq \Cl{27} |D \Phi|.$$ 
			Hence, by Corollary \ref{cor:sphericalExc} and the inequality \eqref{eq:spherCylindExc}, we can continue the estimate of \eqref{eq:firstPartX2} in the following way:
			\begin{equation}\label{eq:integralX2}
			\begin{split}
			\int_{\BB_1} X_{n+1}^2 \dT 
			&\leq 2 \left( \EE_S(T,1) + \Cr{6}(\Aa + \kappa_T) + \int_{\BB_1} 2 \left( \|\pi - \pp \|^2 + \|B\|^2 \right) \dT \right)\\
			&\leq 2 \EE_S(T,1) + 2\Cr{6}(\Aa + \kappa_T) + 4\EE_C(T,1) + \Cr{27}\Aa^2\\
			&\leq \left(6+2m+2\Cr{6} \right)(\EE_C(T,1) + \kappa_T) + \left(2\Cr{6} + \Cr{270} \right)\Aa.
			\end{split}
			\end{equation}
			As $(6+2m+2\Cr{6} +\Cr{270}) \leq \tilde{C} \leq \tilde{C} \sigma^{-n-1}$, we are left with proving that $$\BC_{1-\sigma/2} \cap \spt(T) \subset \BB_1.$$
			First, we notice that due to a similar reasoning as we did for $(i.)$ and using \eqref{eq:integralX2}, we have
			\begin{align} \label{eq:sigma12}
			\sup_{\BB_{1-\sigma/6} \cap \spt(T)} X_{n+1}^2
			&\leq \frac{6^n}{\sigma^n} \Cr{21} \left( \int_{\BB_1} X_{n+1}^2 \dT + \kappa_T \right) \notag \\ \notag
			&\leq \frac{6^n \Cr{21}}{\sigma^n} \left( \left(7+2m+2\Cr{6} \right)(\EE_C(T,1) + \kappa_T) + \left(2\Cr{6} + \Cr{270} \right) \Aa \right)\\
			&\leq \frac{\sigma}{12}.
			\end{align}
			As a next step, we show that $\spt\big((\partial T) \LL \BC_{1-\sigma/3 } \big) \subset \UU_{1-\sigma/6}.\hfill \refstepcounter{equation}(\theequation)\label{eq:subsetdT}$\\
			We argue by continuity: Assume by contradiction that this is not the case. Then we would find a $ z\in \R^{n-1}$ such that $\big(z, \varphi_T(z), \psi_T(z), \Phii(z, \varphi_T(z), \psi_T(z)) \big) \in \BC_{1-\sigma/3 } \backslash \UU_{1-\sigma/6}$, hence, $|(z, \varphi_T(z))| < 1-\sigma/3$ but $\big|\big(z, \varphi_T(z), \psi_T(z), \Phii(z, \varphi_T(z), \psi_T(z)) \big) \big| \geq 1-\sigma/6$. Then it must hold that
			\begin{equation}\label{eq:phiT}
			\psi_T(z)^2 + \left| \Phii(z, \varphi_T(z), \psi_T(z)) \right|²
			\geq \left( 1-\frac{\sigma}{6} \right)^2 - \left( 1-\frac{\sigma}{3} \right)^2
			= \frac{\sigma}{3}-\frac{\sigma^2}{12} .
			\end{equation}
			Consider now for $t\in [0,1]$ the curve $\gamma(t):=\big(tz,\varphi_T(tz),\psi_T(tz), \Phii(tz, \varphi_T(tz), \psi_T(tz)) \big) \in \R^{n+k}$. As $\gamma(0)=0$ and $\gamma(1) \notin \UU_{1-\sigma/6}$, there is by the mean value Theorem a $t \in [0,1]$ such that $|\gamma(t)|=1-\sigma/6.$ Let $\tilde{s}:= \min\{t \in [0,1]: |\gamma(t)|=1-\sigma/6\} >0.$ Then for all $0<s<\tilde{s}$, we have $\gamma(s) \in \UU_{1-\sigma/6}$ and by \eqref{eq:sigma12}, $\psi_T(sz)^2 < \sigma/12$. But then we get by \eqref{eq:phiT}
			\begin{align*}
			|\gamma(\tilde{s})-\gamma(s)|
			&\geq \left|\psi_T(\tilde{s}z) -\psi_T(sz) \right|\\
			&\geq \sqrt{\frac{\sigma}{3}-\frac{\sigma^2}{12}-\left| \Phii(\tilde{s}z, \varphi_T(\tilde{s}z), \psi_T(\tilde{s}z)) \right|^2 }- \sqrt{\frac{\sigma}{12}}\\
			&\geq \sqrt{ \frac{\sigma}{4} - |D\Phii|^2 \left(1-\frac{\sigma}{3} \right)^2 }- \sqrt{\frac{\sigma}{12}}\\
			&\geq \frac{\sqrt{\sigma}}{24},
			\end{align*}
			where we used the assumption of the lemma in the last inequality. As $0<s<\tilde{s}$ was arbitrary, this contradicts the continuity of $\gamma$. Hence, \eqref{eq:subsetdT} holds true.
			
			And then $\spt(T) \LL \BC_{1-\sigma/2}$ stays in the unit ball: We denote by $p$ to projection to $\R^{n+1}$. Then as $T$ is minimizing in $\M$, $p_\#T$ is minimizing a parametric integrand described Lemma \ref{lem:minSurfEqu}. Then we can use \cite[Corollary 4.2]{hardt} to deduce that $\spt(p_\#T)$ is contained in the convex hull of $\spt(\partial(p_\# T))$. Hence, $\spt(p_\#T \LL \BC_{1-\sigma/2}) \subset \BB_{1- \sigma/6}$. Using the fact that
			$T= (\id, \Phii)_\# p_\# T$ and $|D \Phii| \leq \sigma /6$, we conclude that $\spt(T) \LL \BC_{1-\sigma/2} \subset \BB_1$.
		\end{proof}
	\subsection{Proof of Remark \ref{rmk:scale3}}
		\begin{proof}
			\begin{enumerate}[(i.)]
				\item we choose $\sigma= 1/4$ in Lemma \ref{lem:XLessExc} and get that
				$$ \sup_{\BC_{3/4} \cap \spt(T)} X_{n+1}^2 
				\leq 4^{2n+1} \Cr{21} \Cr{22} \big( \EE_C(T,1) +\kappa_T+\Aa \big)
				\leq \left( \frac{1}{8} \right)^2.$$
				\item We first check, whether we created additional boundary while taking the intersection with $\UU_3$. If this were the case, then for $|\omega| \leq \frac{1}{8}$, there is a point 
				$$(u, v) \in \{ x\in \gamm_\omega(\M): |(x_1,\dots,x_{n-1})| \textstyle \leq \frac{1}{2}, |x_n| < \frac{1}{2} \} 
				\cap \gamm_\omega \left( X_{n+1}^{-1} \left( \left[-\frac{1}{8}, \frac{1}{8} \right]\right) \cap \partial \UU_{3/4} \cap \M \right)$$
				with 
				\begin{itemize}
					\item	$u= \big(x_1,\dots, x_{n-1}, x_n\cos(\omega)-x_{n+1}\sin(\omega), x_n\sin(\omega)+x_{n+1}\cos(\omega) \big)$
					\item $v= \Phii \big(x_1,\dots, x_{n-1}, x_n\cos(\omega)-x_{n+1}\sin(\omega), x_n\sin(\omega)+x_{n+1}\cos(\omega) \big)$
					\item $ \displaystyle |x_{n+1}| \leq \frac{1}{8}$
					\item $ \displaystyle x_1^2 + \cdots + x_{n+1}^2+ |\Phii(x_1, \dots x_{n+1})|^2 = \frac{9}{16}$
					\item $ \displaystyle x_1^2+\cdots + x_{n-1}^2 \leq \frac{1}{4}$
					\item $\displaystyle |x_n\cos(\omega)-x_{n+1}\sin(\omega)| < \frac{1}{2}$.
				\end{itemize}
				This implies that $x_n^2 \geq \frac{19}{64}- |\Phii(x_1, \dots, x_{n+1})|^2 \geq \frac{9}{32}$ and hence,
				\begin{align*}
				\frac{1}{2} &> |x_n\cos(\omega)-x_{n+1}\sin(\omega)|\\
				&\geq \sqrt{\frac{9}{32}} \cos(\omega) + \frac{1}{8}(\cos(\omega)-\sin(\omega))\\
				&\geq \frac{\sqrt{19}-1}{8} \cos \left(\frac{1}{8} \right) + \frac{1}{8}\left(\cos \left(\frac{1}{8} \right)-\sin \left(\frac{1}{8} \right) \right)\\
				& > \frac{1}{2}.
				\end{align*}
				Hence, there is no such $x$ and the intersection is trivial, thus we have
				$$\partial \big( (\muu_{4\#} \gamm_{\omega\#} T) \LL \UU_3 \big) =
				\big( \partial (\muu_{4\#} \gamm_{\omega\#} T) \big) \LL \UU_3.$$
				The remaining conditions for $(\muu_{4\#} \gamm_{\omega\#} T) \LL \UU_3$ to belong to $\TT$ follow like in the original paper \cite{hardtSimon}.
				\item We write $(\muu_{r\#} \gamm_{\omega\#} T) \LL \UU_3 = (\muu_{r/4\#} \muu_{4\#} \gamm_{\omega\#} T) \LL \UU_3$ in order to use Remark \ref{rmk:scale3}. As in the original paper \cite{hardtSimon}, we deduce
				\begin{equation*}
				\sup \Big\{ x_{n+1}^2: x \in \spt\big((\gamm_{\omega\#}T) \LL \BC_{1/2}\big) \Big\} 
				\leq 4 \bigg(\omega^2 + \sup_{\BC_{3/4} \cap \spt(T)} |X_{n+1}| \bigg).
				\end{equation*}
				Hence, by using Lemma \ref{lem:excLessX} (with $\sigma \uparrow 0$ and Lemma \ref{lem:XLessExc}, we have 
				\begin{align}
				\EE_C\big( (\muu_{4\#} \gamm_{\omega\#} T) &\LL \UU_3 ,1 \big) 
				\leq \Cr{18} \left( \Cr{19} \sup_{\BC_2 \cap \spt(\muu_{4\#} \gamm_{\omega\#} T)} X_{n+1}^2 + \frac{\kappa_T+ \Aa}{4} \right)\notag \\
				&\leq \Cr{18} \left( 16 \Cr{19} \sup_{\BC_{1/2} \cap \spt(\gamm_{\omega\#} T)} X_{n+1}^2  +\kappa_T+ \Aa  \right)\notag\\
				&\leq \Cr{18} \left( 4^3 \Cr{19} \omega^2+ 4^3 \Cr{19} \sup_{\BC_{3/4} \cap \spt(T)} X_{n+1}^2  + \kappa_T + \Aa \right)\\
				&\leq \frac{\Cr{29}}{\Cr{28}} \left(\omega^2 + \EE_C(T,1) + \kappa_T  + \Aa \right) \label{eq:excRotSmall}\\
				&\leq \frac{1}{\Cr{28}}.\notag
				\end{align}
				Thus, we can use Remark \ref{rmk:scale3} and conclude.
			\end{enumerate}
		\end{proof}
	
	\subsection{Proof of Lemma \ref{lem:sphercylindr}}
		\begin{proof}
			The plan to prove this lemma is as follows: First, we bound the excess with $\int X_{n+1}^2 \dT$ by Lemma \ref{lem:excLessX}. Then, we construct a vectorfield and compute the associated first variation. By minimality of $T$ this can be expressed by the mean curvature vector. Moreover, by the choice of the vectorfield, we can bound $\int X_{n+1}^2 \dT$ with $\int |X^\perp |^2|X|^{-2} \dT$. By Corollary \ref{cor:sphericalExc} this carries over to the spherical excess.\\
			Let $T$ be as in the lemma and $\Cr{18}$ as in Lemma \ref{lem:excLessX}. Moreover, we define
			\begin{align*}
			\Cr{43}&= 2^{2n+2}\Cr{21}\Cr{22},\\
			\Cr{46}&=3^{2n+8}\Cr{18} (1+m\omm_n).
			\end{align*}
			We apply Lemma \ref{lem:XLessExc} with $\sigma=1/2$ to deduce
			$$\sup_{\BC_{1/2}\cap \spt(T)} X_{n+1}^2 
			\leq 2^{2n+1}\Cr{21}\Cr{22}\big( \EE_C(T,1)+\kappa_T + \Aa \big) \leq \frac{1}{2}.$$
			Hence, for all $x=(\tilde{x}, \tilde{y}) \in \BC_{1/2}\cap \spt(T)$ the following holds
			\begin{align}\label{eq:sptTU1}
			|x|^2 \leq (1+|D\Phii(\tilde{x})|^2)( |\pp(x)|^2 + x_{n+1}^2) \leq \frac{4}{3} \left(\frac{1}{4} + \frac{1}{2} \right) = 1.
			\end{align}
			For $x=(\tilde{x},\tilde{y}) \in \R^{n+k}$ the projection to the tangent space of $\M$ at $(\tilde{x}, \Phii(\tilde{x}))$ is given by 
			\begin{align*}
			P= P_{\tilde{x}} := M g^{-1} M^T 
			=	\begin{pmatrix}
			\id \\
			D\Phii
			\end{pmatrix}
			g^{-1}
			\begin{pmatrix}
			\id & D\Phii
			\end{pmatrix}
			=\begin{pmatrix}
			g^{-1} &  g^{-1} \cdot D\Phii\\
			(g^{-1} \cdot D\Phii)^T & D\Phii^T \cdot  g^{-1}  \cdot D\Phii
			\end{pmatrix}.
			\end{align*}
			Therefore
			\begin{align}\label{eq:traceEstimate}
			\textnormal{tr}_{n+1}(P):= \sum_{i=1}^{n+1} P_{ii} \leq n+1 + \C |D\Phii|^2
			\end{align}
			and
			\begin{align}\label{eq:ProjectionEstimate}
			\left|(P-\id) \begin{pmatrix} \tilde{x} \\ 0 \end{pmatrix} \right|
			= \left| \begin{pmatrix} g^{-1}\tilde{x}-\tilde{x} \\ D\Phii (g^{-1}\tilde{x}) \end{pmatrix} \right|
			\leq \Cl{42}|D\Phii(\tilde{x})|,
			\end{align}
			where we used \eqref{eq:inverseMetric}.
			
			Denote by $\nuu$ the outer unit normal vector being tangent to $\M$ and normal to the approximate tangent space of $T$. As $\nuu =(\nuu_1, \dots, \nuu_{n+k}) \in \text{span}\{(\ee_i, \partial_i \Phii): i \leq n+1 \}$, we have
			$$ \nuu_{n+1+j} = \sum_{i=1}^{n+1} \nuu_i \partial_i \Phii^j \qquad \text{ for all } j \leq k-1.$$
			Denote by $\tilde{\nuu}= (\nuu_1, \dots, \nuu_{k+1})$. Then the following holds
			\begin{align}\label{eq:normNormal}
			| \nuu | \leq (1+|D\Phii|) \left| \tilde{\nuu} \right|.
			\end{align}
			Moreover, define $A:=\UU_1 \setminus B_{1/4}$ where $B_{1/4} =\BB^{n+1}_{1/4} \times \R^{k-1}$. Denote $\kappa:=\kappa_T$, $\varepsilon:= \sqrt{\EE_C(T,1/3)}$, $\beta:= 4\Cr{46}^{-1/2}$ and for all $x \in \R^{n+k}$ let
			$$\lambda(x):= \max \left\{0, \frac{x_{n+1}}{|\tilde{x}|}- \beta \varepsilon -\kappa\right\}.$$
			Then in $A$ we have
			\begin{equation}\label{eq:derivLambda}
			\begin{split}
			\left| \langle (\tilde{X},0), D_{\overset{\to}{T}} \lambda \rangle \right| &\leq
			\left| \langle (\tilde{X},0), D_{\overset{\to}{T}} \left( \frac{X_{n+1}}{|\tilde{X}|} \right) \rangle \right|\\
			&=\left| \langle (\tilde{X},0), \left( (P - \nuu \otimes \nuu) \left( \frac{\ee_{n+1}}{|\tilde{X}|} - \frac{X_{n+1}}{|\tilde{X}|^3}(\tilde{X},0) \right) \right) \rangle \right|\\
			&\leq \left| \frac{\nuu_{n+1}}{|\tilde{X}|} \langle \tilde{X}, \tilde{\nuu} \rangle -  \frac{X_{n+1}}{|\tilde{X}|^3} \langle \tilde{X}, \tilde{\nuu} \rangle^2 \right| + 8\Cr{42} |D\Phii|\\
			&\leq  2\left| \frac{\langle \tilde{X}, \tilde{\nuu} \rangle}{|\tilde{X}|} \right| + 8\Cr{42}|D\Phii|.
			\end{split}
			\end{equation}
			Let $k \in \N$ with $k\geq1$ and choose a $\CC^1$ function $\mu_k:\R \to \R$ such that for $t\geq 1/4$ we have
			$$\mu_k(t) = \max\{0, t^{-n}-1\}^{1+1/k}.$$
			Moreover, let $h_k: \R^{n+k} \to \R^{n+k}$ be a $\CC^1$ vectorfield satisfying $ \left. h_k \right|_{B_{1/4} \cap \spt(T)} \equiv 0$ and 
			$$ h_k(x)= \lambda^2(x) \mu_k \big( |\tilde{x}| \big)(\tilde{x},0) \qquad \text{ for } x \notin B_{1/4}.$$
			Notice that for $x \in \big( \spt(\partial T) \cap \UU_2 \big) \subset \big\{x \in \R^{n+k}: x_{n+1} \leq |\tilde{x}|(\beta \varepsilon + \kappa)\big\}$ we have $\lambda(x)=0$, and when $|\tilde{x}| \geq 1$, $\mu_k(|\tilde{x}|)=0$. Hence, $h_k$ vanishes on 
			$$\spt(\partial T) \cup \big(B_{1/4} \cap \spt(T) \big) \cup \big\{x \in \R^{n+k}: x_{n+1} \leq |\tilde{x}|(\beta \varepsilon + \kappa)\big\}$$
			and by \cite[Thereom 3.2]{DeLellisBoundary},  $\displaystyle \int_{\UU_3} \Div_{\overset{\to}{T}} h_k\ \dT=- \int h_k \cdot \overset{\to}{H}_{{T}}\ \dT$.\hfill \refstepcounter{equation}(\theequation)\label{eq:firstVar}
			
			We compute
			\begin{align*}
			\Div_{\overset{\to}{T}} h_k
			&= \sum_{j=1}^{n+1} \left((P- \nuu \otimes \nuu)(2X_j \lambda \mu_k D\lambda +X_j \lambda^2 \frac{\mu_k'}{|\tilde{X}|} (\tilde{X},0) + \ee_j \lambda^2\mu_k) \right)_j\\
			&= 2 \lambda \mu_k \langle (\tilde{X},0), D_{\overset{\to}{T}} \lambda \rangle + \lambda^2 \mu_k' \langle (\tilde{X},0), (P- \nuu \otimes \nuu) \frac{(\tilde{X},0)}{|\tilde{X}|} \rangle + \textnormal{tr}_{n+1}(P- \nuu \otimes \nuu)\lambda^2 \mu_k.
			\end{align*}
			Using \eqref{eq:firstVar}, \eqref{eq:traceEstimate}, \eqref{eq:ProjectionEstimate}, \eqref{eq:normNormal} and \eqref{eq:derivLambda} we find 
			\begin{align*}
			\lim_{k \to \infty} &\int_A  h_k \cdot \overset{\to}{H}_{{T}}\ \dT\\
			&\leq \lim_{k \to \infty} \int_A 4 \lambda \mu_k \left| \frac{\langle \tilde{X}, \tilde{\nuu} \rangle}{|\tilde{X}|} \right| +  \lambda^2 \mu_k' \langle \tilde{X}, (\id- \tilde{\nuu} \otimes \tilde{\nuu}) \frac{\tilde{X}}{|\tilde{X}|} \rangle  + n\lambda^2 \mu_k \dT + \Cl{44} \Aa\\
			&= \int_A 4 \lambda (|\tilde{X}|^{-n}-1) \left| \frac{\langle \tilde{X}, \tilde{\nuu} \rangle}{|\tilde{X}|} \right| +  \lambda^2 n |\tilde{X}|^{-n}-\lambda^2 n |\tilde{X}|^{-n-2} \langle \tilde{X}, \tilde{\nuu} \rangle^2 + n\lambda^2 (|\tilde{X}|^{-n}-1) \dT\\ &
			\quad + \Cr{44} \Aa\\
			&= \int_A \left( 4\lambda (|\tilde{X}|^{-n}-1) \left| \frac{\langle \tilde{X}, \tilde{\nuu} \rangle}{|\tilde{X}|} \right| -\lambda^2 n |\tilde{X}|^{-n-2}\langle \tilde{\nuu}, \tilde{X} \rangle^2 - n\lambda^2 \right) \dT + \Cr{44} \Aa
			\end{align*}
			and hence,
			\begin{align*}
			n \int_A \lambda^2 \ \dT
			&\leq \int_A \left( 4\lambda (|\tilde{X}|^{-n}-1) \left| \frac{\langle \tilde{X}, \tilde{\nuu} \rangle}{|\tilde{X}|} \right| -\lambda^2 n |\tilde{X}|^{-n-2}\langle \tilde{\nuu}, \tilde{X} \rangle^2\right) \dT + \C \Aa\\
			&\leq \C \left( \int_A \lambda  \left| \frac{\langle \tilde{X}, \tilde{\nuu} \rangle}{|\tilde{X}|} \right| \dT +  \Aa \right)\\
			&\leq \frac{n}{2} \int_A \lambda^2 \ \dT + \frac{\Cl{45}}{2} \left( \int_A  \left| \frac{\langle \tilde{X}, \tilde{\nuu} \rangle}{|\tilde{X}|} \right|^2 \dT + \Aa \right).
			\end{align*}
			We conclude
			\begin{align*}
			\int_A \lambda^2 \ \dT \leq \Cr{45} \left( \int_A  \left| \frac{\langle \tilde{X}, \tilde{\nuu} \rangle}{|\tilde{X}|} \right|^2 \dT + \Aa \right).
			\end{align*}
			We argue in the same way to prove the same inequality for 
			$$\tilde{\lambda} := \min \left\{0, \frac{X_{n+1}}{|\tilde{X}|}+\beta \varepsilon +\kappa\right\}.$$
			As the $\spt(\lambda) = \big\{x \in \R^{n+k}: x_{n+1} \geq |\tilde{x}|(\beta \varepsilon + \kappa)\big\}$ and $\spt(\tilde{\lambda}) = \big\{x \in \R^{n+k}: x_{n+1} \leq - |\tilde{x}|(\beta \varepsilon + \kappa)\big\}$, we see that
			$\spt(\lambda^2 +\tilde{\lambda}^2) = \big\{x \in \R^{n+k}: |x_{n+1}| \geq |\tilde{x}|(\beta \varepsilon + \kappa)\big\}$ and hence
			\begin{align*}
			\int_A X_{n+1}^2 &\dT
			\leq \int_A \frac{X_{n+1}^2}{|\tilde{X}|^2} \dT\\
			&= \int_A \left( \frac{X_{n+1}}{|\tilde{X}|} -(\beta \varepsilon+\kappa) \right)\left( \frac{X_{n+1}}{|\tilde{X}|} +(\beta \varepsilon+\kappa) \right) \dT + (\beta \varepsilon+\kappa)^2 \lVert T \rVert(A)\\
			&\leq \int_A \left| \frac{X_{n+1}}{|\tilde{X}|} -(\beta \varepsilon+\kappa) \right| \left| \frac{X_{n+1}}{|\tilde{X}|} +(\beta \varepsilon+\kappa) \right|  \1_{\spt(\lambda^2+\tilde{\lambda}^2)} \dT + (\beta \varepsilon+\kappa)^2 \lVert T \rVert(A)\\
			&\leq \frac{1}{2}\int_A \left( \lambda^2 + \tilde{\lambda}^2 \right) \dT + 2( \beta^2 \varepsilon^2 + \kappa^2)\lVert T \rVert(A)\\
			&\leq  \C \left( \int_A  \left| \frac{\langle \tilde{X}, \nuu \rangle}{|\tilde{X}|} \right|^2 \dT + \Aa \right) + 2( \beta^2 \varepsilon^2 + \kappa^2)\lVert T \rVert(A)\\
			&\leq \Cr{45} \left( \int_A  |X^\perp|^2 |X|^{-n-2} \dT + \Aa \right) + 2( \beta^2 \varepsilon^2 + \kappa^2)\lVert T \rVert(A).
			\end{align*}
			Notice that by the assumption of the lemma
			$$ \int_{B_{1/4}} X_{n+1}^2 \dT
			\leq \frac{\EE_C(T,1)}{\Cr{46}} \lVert T \rVert(B_{1/4})
			= \frac{\EE_C(T,1)}{16} \beta^2 \lVert T \rVert(B_{1/4})
			{\leq \epsilon^2 \beta^2 \lVert T \rVert(B_{1/4}) }.$$
			We use Lemma \ref{lem:excLessX} (with $T$, $\sigma$ replaced by $(\muu_{3\#}T) \LL \UU_3$, $1/2$), \eqref{eq:sptTU1} and Corollary \ref{cor:sphericalExc} (with $s=1$) to deduce
			\begin{align*}
			\varepsilon^2 
			&= \EE_C \big( ({\muu_3}_\# T)\LL \UU_3, 1 \big)\\
			&\leq 4 \Cr{18} \left( \kappa_{({\muu_3}_\# T)\LL \UU_3} + \int_{\BC_{3/2}} X_{n+1}^2 \dd \lVert \muu_{3\#} T \rVert + \Aa_{\muu_3(\M)} \right) \\
			&\leq 4 \cdot 3^n \Cr{18} \left( \kappa + \int_{\BC_{1/2}} X_{n+1}^2 \dT + \Aa \right)\\
			&\leq 3^{n+2} \Cr{18} \bigg( \kappa + \int_{\UU_1} X_{n+1}^2 \dT + \Aa \bigg)\\
			&\leq  3^{n+2} \Cr{18} \left( \Cr{45} \left(\int_A  |X^\perp|^2 |X|^{-n-2} \dT +2\Aa \right)  + 2\MM(T)(\beta^2 \varepsilon^2 + \kappa) \right)\\
			&\leq  3^{n+2} \Cr{18} \left( \Cr{45} \left(\EE_S(T,1) +\Cr{6} \kappa +(2+ \Cr{6})\Aa \right)  + 2\MM(T)(\beta^2 \varepsilon^2 + \kappa) \right)\\
			&\leq \frac{ 3^{2n+3} \Cr{18} \big(1+m\al(n)\big) 16 }{3^{2n+8} \big(1+m\al(n)\big)\Cr{18}} \varepsilon^2
			+ \frac{\Cr{47}}{2} \big(\EE_S(T,1) +\kappa + \Aa \big)\\
			&\leq \frac{\varepsilon^2}{2} + \frac{\Cr{47}}{2}\big(\EE_S(T,1) + \kappa + \Aa \big).
			\end{align*}
		\end{proof}

\bibliographystyle{unsrt}
\bibliography{BoundaryRegularityPaper}

\begin{thebibliography}{10}

\bibitem{Rado}
Tibor Rad\'{o}.
\newblock On {P}lateau's {P}roblem.
\newblock {\em Annals of Mathematics}, 31(3):457--469, 1930.

\bibitem{douglas}
Jesse Douglas.
\newblock Solution of the {P}roblem of {P}lateau.
\newblock {\em Transactions of the American Mathematical Society},
  33(1):263--321, 1931.

\bibitem{federerFleming}
Herbert Federer and Wendell~H. Fleming.
\newblock Normal and {I}ntegral {C}urrents.
\newblock {\em Annals of Mathematics}, 72(3):458--520, 1960.

\bibitem{allard_thesis_boundary}
William~K Allard.
\newblock {On boundary regularity for Plateau’s problem}.
\newblock {\em Bulletin of the American Mathematical Society}, 75(3):522--523,
  1969.

\bibitem{hardtSimon}
Robert Hardt and Leon Simon.
\newblock Boundary {R}egularity and {E}mbedded {S}olutions for the {O}riented
  {P}lateau {P}roblem.
\newblock {\em Annals of mathematics}, 110(3):439--486, 1979.

\bibitem{DeLellisBoundary}
Camillo De~Lellis, Guido De~Philippis, Jonas Hirsch, and Annalisa Massaccesi.
\newblock On the boundary behavior of mass-minimizing integral currents.
\newblock {\em arXiv preprint arXiv:1809.09457}, 2018.

\bibitem{bombieri}
Enrico Bombieri, Ennio De~Giorgi, and Enrico Giusti.
\newblock Minimal cones and the {B}ernstein problem.
\newblock {\em Inventiones mathematicae}, 7(3):243--268, 1969.

\bibitem{allard2}
William~K Allard.
\newblock On the first variation of a varifold.
\newblock {\em Annals of mathematics}, pages 417--491, 1972.

\bibitem{allard3}
William~K Allard.
\newblock On the first variation of a varifold: boundary behavior.
\newblock {\em Annals of Mathematics}, pages 418--446, 1975.

\bibitem{simonsCone}
James Simons.
\newblock {Minimal Varieties in Riemannian Manifolds}.
\newblock {\em Annals of Mathematics}, 88(1):62--105, 1968.

\bibitem{almgrenBig}
Frederick~J Almgren~Jr.
\newblock {Almgren’s big regularity paper, volume 1 of World Scientific
  Monograph Series in Mathematics}, 2000.

\bibitem{delellisMultivaluedFunctions}
Camillo De~Lellis.
\newblock {Almgren's Q-valued functions revisited}.
\newblock In {\em Proceedings of the International Congress of Mathematicians
  2010 (ICM 2010) (In 4 Volumes) Vol. I: Plenary Lectures and Ceremonies Vols.
  II--IV: Invited Lectures}, pages 1910--1933. World Scientific, 2010.

\bibitem{delellisSpadaroMultivaluedCurrents}
Camillo De~Lellis and Emanuele Spadaro.
\newblock Multiple valued functions and integral currents.
\newblock {\em arXiv preprint arXiv:1306.1188}, 2013.

\bibitem{delellisSpadaroOne}
Camillo De~Lellis and Emanuele Spadaro.
\newblock {Regularity of area minimizing currents I: gradient $L^p$ estimates}.
\newblock {\em Geometric and Functional Analysis}, 24(6):1831--1884, 2014.

\bibitem{delellisSpadaroTwo}
Camillo De~Lellis and Emanuele Spadaro.
\newblock Regularity of area minimizing currents ii: center manifold.
\newblock {\em Annals of Mathematics}, pages 499--575, 2016.

\bibitem{delellisSpadaroThree}
Camillo De~Lellis and Emanuele Spadaro.
\newblock Regularity of area minimizing currents iii: blow-up.
\newblock {\em Annals of Mathematics}, pages 577--617, 2016.

\bibitem{SchoenSimon}
Richard Schoen and Leon Simon.
\newblock Regularity of stable minimal hypersurfaces.
\newblock {\em Communications on Pure and Applied Mathematics}, 34(6):741--797,
  1981.

\bibitem{gilbarg}
David Gilbarg and Neil~S Trudinger.
\newblock {\em Elliptic {P}artial {D}ifferential {E}quations of {S}econd
  {O}rder}, volume 224.
\newblock Springer Science \& Business Media, 2001.

\bibitem{federer}
Herbert Federer.
\newblock {\em Geometric measure theory}.
\newblock Springer, 1969.

\bibitem{treves}
Fran{\c{c}}ois Tr{\`e}ves.
\newblock {\em Basic linear partial differential equations}, volume~62.
\newblock Academic press, 1975.

\bibitem{pucciSerrin}
Patrizia Pucci and James~B Serrin.
\newblock {\em The maximum principle}, volume~73.
\newblock Springer Science \& Business Media, 2007.

\bibitem{brothers}
John~E Brothers.
\newblock Existence and structure of tangent cones at the boundary of an area
  minimizing integral current.
\newblock {\em Indiana University Mathematics Journal}, 26(6):1027--1044, 1977.

\bibitem{simon}
Leon Simon et~al.
\newblock {\em Lectures on geometric measure theory}.
\newblock The Australian National University, Mathematical Sciences Institute,
  Centre for Mathematics \& its Applications, 1983.

\bibitem{hardt}
Robert~M Hardt.
\newblock On boundary regularity for integral currents or flat chains modulo
  two minimizing the integral of an elliptic integrand.
\newblock {\em Communications in Partial Differential Equations},
  2(12):1163--1232, 1977.

\end{thebibliography}

\end{document}